\newtheorem {theorem}{Theorem}[section]
\newtheorem {lemma}[theorem]{{\bf Lemma}}
\newtheorem {corollary}[theorem]{{\bf Corollary}}
\newtheorem {proposition}[theorem]{{\bf Proposition}}
\theoremstyle{remark}
\newtheorem{remark}{{\bf Remark}}[section]
\theoremstyle{definition}
\newtheorem {definition}{{\bf Definition}}[section]
\theoremstyle{plain} \numberwithin {equation}{section}
\newcommand{\hrefemail}[1]{\href{mailto:#1}{#1}}
\title[]{The free boundary of steady axisymmetric inviscid flow with vorticity $\uppercase\expandafter{\romannumeral2}$: near the non-degenerate points}
\author[Du, Yang]{Lili Du$^{\lowercase {1},\lowercase{2},\dagger}$,  Chunlei Yang$^{\lowercase{2},\ddagger}$}
\email{$^{\dagger}$\hrefemail{ dulili@szu.edu.cn}}
\thanks{$^{\dagger}$ The first author is supported by National Nature Science Foundation of China under Grant 12125102.}
\email{$^{\ddagger}$\hrefemail{yang\_chunlei@stu.scu.edu.cn}}
\thanks{$^{\ddagger}$ Corresponding author}
\thanks{Accepted for publication in Comm.Math.Phys.}
\begin{document}
\maketitle
    \begin{center}
		$^{1}$ School of Mathematical Science, Shenzhen University,
	
	    Shenzhen 518061, P.~R.~China.
	
	    $^{2}$ Department of Mathematics, Sichuan University, 
	
	    Chengdu 610064, P.~R.~China.
	\end{center}
	\begin{center}
		%\emph{Dedicated to Professor Zhouping Xin on the occasion of his 65th birthday. }  
	\end{center}
    \begin{abstract}
    	This is the sequel of the recent work (Du, Huang, Pu, Commun. Math. Phys, 2023, 400, 2137-2179) on axially symmetric gravity water waves with general vorticities, which has investigated the singular wave profile of the free boundary near the degenerate points. In this companion paper, we are interested in the regularity of the free surface of the water wave near the non-degenerate points. Precisely, we showed that the free boundary is $C^{1,\gamma}$ smooth for some $\gamma\in(0,1)$ near all non-degenerate points. The problem is intrinsically intertwined with the regularity theory of the semilinear Bernoulli-type free boundary problem. Our approach is closely related to the monotonicity formula developed by Weiss in his celebrated work (Weiss, J. Geom. Anal. 9: 317-326, 1999), and to a partial boundary Harnack inequality for the one-phase free boundary problem, which is due to De Silva (De Silva, Interfaces Free Bound. 13, 223-238, 2011). Mathematically, we associate the existence of viscosity solutions with the Weiss boundary-adjusted energy. Compared to the classical approach of Caffarelli (Caffarelli, Ann. Scoula Norm. Sup. Pisa, 15, 583-602, 1988), we provide an alternative proof of the existence of viscosity solutions for a large class of semilinear free boundary problems.
    \end{abstract}
\tableofcontents
\section{Introduction and Main Results}
In this paper we study the regularity of the free boundary of the following semilinear non-homogeneous elliptic equation with the Bernoulli-type boundary condition
\begin{align}\label{Formula: governed equation}
	\begin{cases}
		\begin{alignedat}{2}
			\operatorname{div}\left(\frac{1}{x}\nabla\psi\right)&=-xf(\psi)\quad&&\text{ in }\quad\Omega\cap\{\psi>0\},\\
			\frac{1}{x^{2}}|\nabla\psi|^{2}&=-y\quad&&\text{ on }\quad\Omega\cap\partial\{\psi>0\},
		\end{alignedat}
	\end{cases}
\end{align}
where $\psi$ is the stream function for 3D axisymmetric incompressible fluid and $\Omega$ is a connected open subset relative to the right half-plane $\mathbb{R}_{+}^{2}=\{(x,y)\in\mathbb{R}^{2}\colon x\geqslant0\}$. Here $f(\psi)$ is the vorticity function and $\Omega\cap\partial\{\psi>0\}$ is the free boundary of problem \eqref{Formula: governed equation}. The problem \eqref{Formula: governed equation} arises in the free boundary problem of incompressible inviscid flow in the gravity field, please see Section 2 in \cite{DHP2022} for derivation of the physical problem.

It is worth noting that the boundary condition $|\nabla\psi|^{2}=-x^{2}y$ on the free boundary suggests that the gradient of the stream function degenerates (in the sense that $|\nabla\psi|=0$ on the free boundary) at the symmetry axis and the $x$-axis (please see Fig \ref{Fig: NonvsD} below). The classification of all possible singular wave profiles for various \emph{degenerate points} has been completed very recently \cite{DHP2022}. As we know, the behavior of the free boundary away from the degenerate points is distinguished from the one near the degenerate points. The purpose of this paper is to examine the regularity of the free boundary near points that are away from the symmetry axis and the $x$-axis. We call such points \emph{non-degenerate free boundary points} (in the sense that $|\nabla\psi|\geqslant c_{0}$ on the free boundary for some $c_{0}>0$) or \emph{topological free boundary points} (please see Fig \ref{Fig: NonvsD} below). Additionally, we will demonstrate that the free boundary is $C^{1,\gamma}$ ($0<\gamma<1$) smooth near all such points.
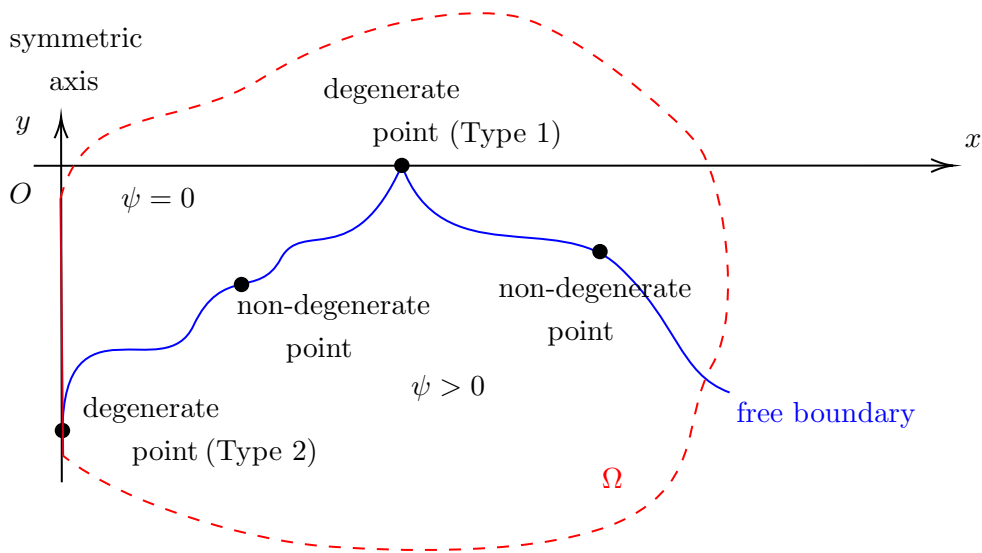
\begin{figure}[ht]
\tikzset{every picture/.style={line width=0.75pt}} %set default line width to 0.75pt        

\begin{tikzpicture}[x=0.75pt,y=0.75pt,yscale=-0.95,xscale=0.95]
%uncomment if require: \path (0,300); %set diagram left start at 0, and has height of 300

%Straight Lines [id:da44697993828833904] 
\draw [line width=0.75]    (30.33,248) -- (29.97,65.95) ;
\draw [shift={(29.97,63.95)}, rotate = 89.89] [color={rgb, 255:red, 0; green, 0; blue, 0 }  ][line width=0.75]    (10.93,-3.29) .. controls (6.95,-1.4) and (3.31,-0.3) .. (0,0) .. controls (3.31,0.3) and (6.95,1.4) .. (10.93,3.29)   ;
%Straight Lines [id:da28597531987070846] 
\draw [line width=0.75]    (16.31,88.77) -- (473,88.67) ;
\draw [shift={(475,88.67)}, rotate = 179.99] [color={rgb, 255:red, 0; green, 0; blue, 0 }  ][line width=0.75]    (10.93,-3.29) .. controls (6.95,-1.4) and (3.31,-0.3) .. (0,0) .. controls (3.31,0.3) and (6.95,1.4) .. (10.93,3.29)   ;
%Curve Lines [id:da12553836194955714] 
\draw [color={rgb, 255:red, 0; green, 0; blue, 255 }  ,draw opacity=1 ][line width=0.75]    (30.65,221.96) .. controls (31.5,153.06) and (81.6,200.08) .. (96,169.23) .. controls (110.4,138.37) and (128.64,156.18) .. (139.52,135.61) .. controls (150.4,115.04) and (175.68,143.29) .. (200,88.6) ;
%Curve Lines [id:da9539356963431389] 
\draw [color={rgb, 255:red, 0; green, 0; blue, 255 }  ,draw opacity=1 ][line width=0.75]    (200,88.6) .. controls (219.73,143.2) and (277.76,111.03) .. (307.84,139.13) .. controls (337.93,167.22) and (337.93,193.31) .. (363.53,202.84) ;
%Straight Lines [id:da4476456225654917] 
\draw [line width=0.75]    (200,88.6) ;
\draw [shift={(200,88.6)}, rotate = 0] [color={rgb, 255:red, 0; green, 0; blue, 0 }  ][fill={rgb, 255:red, 0; green, 0; blue, 0 }  ][line width=0.75]      (0, 0) circle [x radius= 3.35, y radius= 3.35]   ;
%Straight Lines [id:da9225618143109948] 
\draw [line width=0.75]    (298.85,131.95) ;
\draw [shift={(298.85,131.95)}, rotate = 0] [color={rgb, 255:red, 0; green, 0; blue, 0 }  ][fill={rgb, 255:red, 0; green, 0; blue, 0 }  ][line width=0.75]      (0, 0) circle [x radius= 3.35, y radius= 3.35]   ;
%Straight Lines [id:da22101731883820497] 
\draw [line width=0.75]    (30.65,221.96) ;
\draw [shift={(30.65,221.96)}, rotate = 0] [color={rgb, 255:red, 0; green, 0; blue, 0 }  ][fill={rgb, 255:red, 0; green, 0; blue, 0 }  ][line width=0.75]      (0, 0) circle [x radius= 3.35, y radius= 3.35]   ;
%Curve Lines [id:da09509501461043834] 
\draw [color={rgb, 255:red, 255; green, 0; blue, 0 }  ,draw opacity=1 ][line width=0.75]  [dash pattern={on 4.5pt off 4.5pt}]  (29.67,105.33) .. controls (42,56) and (87,72) .. (130.33,44.67) .. controls (173.67,17.33) and (231,7.33) .. (258.33,14) .. controls (285.67,20.67) and (339.67,48.67) .. (353,65.33) .. controls (366.33,82) and (401.67,208) .. (386.33,234.67) .. controls (371,261.33) and (354.33,263.17) .. (315.67,283.33) .. controls (277,303.5) and (192.67,282.67) .. (159.67,280) .. controls (126.67,277.33) and (62.33,260) .. (31,234.67) ;
%Straight Lines [id:da4084041455090395] 
\draw [color={rgb, 255:red, 208; green, 2; blue, 27 }  ,draw opacity=1 ][line width=0.75]    (29.67,105.33) -- (31,234.67) ;
%Straight Lines [id:da006906442842020466] 
\draw [line width=0.75]    (120,148.5) ;
\draw [shift={(120,148.5)}, rotate = 0] [color={rgb, 255:red, 0; green, 0; blue, 0 }  ][fill={rgb, 255:red, 0; green, 0; blue, 0 }  ][line width=0.75]      (0, 0) circle [x radius= 3.35, y radius= 3.35]   ;

% Text Node
\draw (2.87,17.81) node [anchor=north west][inner sep=0.75pt]   [align=left] {symmetric\\ \ \ \ \ axis};
% Text Node
\draw (2.63,95.82) node [anchor=north west][inner sep=0.75pt]    {$O$};
% Text Node
\draw (479.45,71.45) node [anchor=north west][inner sep=0.75pt]    {$x$};
% Text Node
\draw (5.8,62.52) node [anchor=north west][inner sep=0.75pt]    {$y$};
% Text Node
\draw (298.32,239.55) node [anchor=north west][inner sep=0.75pt]  [color={rgb, 255:red, 208; green, 2; blue, 27 }  ,opacity=1 ]  {$\textcolor[rgb]{1,0,0}{\Omega }$};
% Text Node
\draw (203.13,192.21) node [anchor=north west][inner sep=0.75pt]    {$\psi  >0$};
% Text Node
\draw (58.52,97.7) node [anchor=north west][inner sep=0.75pt]    {$\psi =0$};
% Text Node
\draw (365.53,205.84) node [anchor=north west][inner sep=0.75pt]   [align=left] {\textcolor[rgb]{0,0,1}{free boundary}};
% Text Node
\draw (159.36,43.02) node [anchor=north west][inner sep=0.75pt]   [align=left] {degenerate\\ \ \ \ \ \ point};
% Text Node
\draw (246.67,143.33) node [anchor=north west][inner sep=0.75pt]   [align=left] {non-degenerate\\ \ \ \ \ \ point};
% Text Node
\draw (39.36,203.69) node [anchor=north west][inner sep=0.75pt]   [align=left] {degenerate\\ \ \ \ \ \ point};
% Text Node
\draw (222,64.33) node [anchor=north west][inner sep=0.75pt]   [align=left] {(Type 1)};
% Text Node
\draw (100.17,224.17) node [anchor=north west][inner sep=0.75pt]   [align=left] {(Type 2)};
% Text Node
\draw (116.17,151.83) node [anchor=north west][inner sep=0.75pt]   [align=left] {non-degenerate\\ \ \ \ \ \ point};

\end{tikzpicture}
   \caption{Degenerate points v.s. Non-degenerate points}
   \label{Fig: NonvsD}
\end{figure}

The rest of this section begins with a brief introduction to some background knowledge concerning the problem \eqref{Formula: governed equation}. The background concerns the semilinear one-phase Bernoulli-type problem (Subsection \ref{Subsection: one-phase Bernoulli-type problem}) and the axially symmetric water wave problem (Subsection \ref{Subsection: axially symmetric water wave problem}), respectively. The regularity of the free boundary in both lower and higher dimensions is included, as well as the \emph{Stokes conjecture} associated with the water wave problem. In Subsection \ref{Subsection: assumptions, notations and definitions}, we line up our basic assumptions, notations, and definitions, and finally state our main theorem in Subsection \ref{Subsection: main results}.
\subsection{Regularity of the one-phase Bernoulli-type problem}\label{Subsection: one-phase Bernoulli-type problem}
From the perspective of mathematics, \eqref{Formula: governed equation} can be regarded as a semilinear one-phase Bernoulli-type problem with a non-zero right-hand side. One-phase Bernoulli-type problems involve solving a PDE with unknown interfaces or surfaces, called free boundaries, and determining how smooth they are. The initial research focused on the homogeneous case, and the classical homogeneous one-phase Bernoulli-type problem was aimed at the minimization problem
\begin{align}\label{Formula: ACproblem}
    \min_{u}\left\lbrace\int_{\Omega}|\nabla u|^{2}+Q^{2}(X)I_{\{u>0\}}\:dX\colon u\in H^{1}(\Omega)\quad u=u^{0}\text{ on }S\subset\partial\Omega\right\rbrace.
\end{align}
Here $u^{0}\in H^{1}(\Omega)$ denotes the boundary function and is assumed to be non-negative, $I_{\{u>0\}}$ denotes the characteristic function of the set $\{u>0\}$. \caps{Alt} and \caps{Caffarelli} first systematically investigated minimizers of \eqref{Formula: ACproblem}. In their pioneering work \cite{AC1981}, they demonstrated that the free boundary is $C^{1,\gamma}$ ($0<\gamma<1$) smooth in two dimensions under the non-degenerate assumption 
\begin{align}\label{Formula: ACproblem1}
    0<Q_{\mathrm{min}}\leqslant Q(X)\leqslant Q_{\mathrm{max}}<+\infty\quad\text{ in }\Omega,
\end{align}
where $Q_{\mathrm{min}}$ and $Q_{\mathrm{max}}$ are two positive constants. The underlying ideas and mathematical tools that originated from this artwork were universal and could be applied to numerous, more complicated, homogeneous one-phase Bernoulli-type problems. See e.g. \cite{ACF1984,DP2005,OY1990}. The primary studies of the one-phase non-homogeneous Bernoulli-type problem are inadequate and limited. \caps{Friedman} first considered the axially symmetric finite cavity problem \cite{F1983} using a variational approach. He considered the minima of the problem
\begin{align}\label{Formula: F1983}
    \min_{u}\left\lbrace\int_{\Omega}\frac{1}{y}|\nabla u|^{2}-2P(X)yu+Q^{2}yI_{\{u>0\}}\:dX\colon u\in H^{1}(\Omega),u=u^{0}\text{ on }S\subset\partial\Omega\right\rbrace,
\end{align}
where $Q>0$ is a given constant. He showed that the free boundary is $C^{1,\gamma}$ smooth locally in $\Omega\subset\mathbb{R}^{2}\cap\{y>0\}$, provided that \eqref{Formula: ACproblem1} is satisfied and that $P(x)$ is a non-negative Lipschitz function in $\bar{\Omega}$. This result was further analyzed by the first author and his collaborator in \cite{CD2020}. They investigated the minima of the semilinear variational problem 
\begin{align}\label{Formula: CD2020}
    \min_{u}\left\lbrace\int_{\Omega}|\nabla u|^{2}+F(u)+Q^{2}(X)I_{\{u>0\}}\colon u\in H^{1}(\Omega), u=u^{0}\text{ on }S\subset\partial\Omega\right\rbrace,
\end{align}
and their results also indicated that the free boundary is $C^{1,\gamma}$ smooth locally in $\Omega\subset\mathbb{R}^{2}$. Based on that, they established the well-posedness of incompressible inviscid jet and cavitation flow with general vorticity. Comparatively, to \eqref{Formula: F1983}, which treats incompressible fluid with the positive constant vorticity, \eqref{Formula: CD2020} generalizes the case to the general vorticity.  

Furthermore, the structure of the free boundary in higher dimensions ($d\geqslant2$) associated with the problem \eqref{Formula: ACproblem} also has a wealthy result and has been a significant line of research in the PDE community in the last 30 years. It was due to \caps{Weiss} \cite{W1999}, who first realized that there exists a critical dimension $d^{*}\geqslant3$, so that the free boundary is smooth if $d<d^{*}$. In fact, it is now acknowledged that $d^{*}\in\{5,6,7\}$ and we refer to \cite{CJK2004,JS2015,SJ2009} for detailed research. In a recent work \cite{DY2023}, we studied the existence of $d^{*}$ for a semilinear minima problem and proved a similar result to the homogeneous case. The final twist is the precise value of $d^{*}$. To the best of our knowledge, this is still a big open problem, even to the homogeneous case.

The classical viscosity approach of \caps{Caffarelli} \cite{C1987,C1989,C1988} was developed by \caps{De Silva} \cite{S2011}, she studied the \emph{viscosity solutions} to the one-phase non-homogeneous Bernoulli-type free boundary problem
\begin{align}\label{Formula: S2011}
\begin{cases}
\begin{alignedat}{2}
\sum_{i,j=1}^{d}a_{ij}(x)D_{ij}u(x)&=f(x)\quad&&\text{ in }\varOmega^{+}(u):=\Omega\cap\{u>0\},\\
|\nabla u|&=Q(x)\quad&&\text{ on }\varGamma(u):=\Omega\cap\partial\{u>0\}.
\end{alignedat}
\end{cases}
\end{align}
She proved that \emph{flat} or \emph{Lipschitz} free boundaries are smooth. Compared to \caps{Caffarelli}'s approach, her method was quite flexible and accessible and applied to more general operators, even nonlinear ones. The core and innovative points of the argument were to bring out a partial boundary Harnack inequality and an improvement of flatness procedure for viscosity solutions to \eqref{Formula: S2011}. These techniques were then proven to be useful in the context of two-phase Bernoulli-type problems, and the existence and regularity of some two-phase Bernoulli-type problems were successfully established in a series of works \cite{SFS2015,SFS2014,SFS2015JMPA,SFS2016,SFS2017}. Notably, the regularity of these two-phase problems was obtained for more complex governing operators than the Laplacian, and for the more general free boundary condition of the form $u_{\nu}^{+}=G(u_{\nu}^{-},x)$. 

Inspired by \caps{De Silva}'s work, we will investigate viscosity solutions to our problem \eqref{Formula: governed equation}. The key ingredient in this paper is a Weiss-type monotonicity formula and a partial boundary Harnack inequality to problem \eqref{Formula: governed equation}, which is the main difference to the classical regularity work \cite{CD2020,F1983}. Another contribution of our work is to prove that local minimizers are Lipschitz viscosity solutions to problem \eqref{Formula: governed equation}. On the one hand, we relate the existence of viscosity solutions with the Weiss boundary-adjusted energy. On the other hand, this observation provides an alternative proof of the existence of viscosity solutions. In fact, \caps{De Silva}, \caps{Ferrari} and \caps{Salsa} have put forward in \cite{SFS2015} Section 6 an open problem:
\begin{quotation}
	\emph{... In general, we need to assume Lipschitz regularity of our solution. Indeed, in this generality, the existence of Lipschitz viscosity solutions with proper measure theoretical properties of the free boundary is an open problem and it will be object of future investigations...}
\end{quotation}
This problem was first answered in the context of two-phase problems in  \cite{SFS2015nonl} via Perron's method. However, the method in this paper depends on the Weiss boundary-adjusted energy, and we believe that techniques developed in our proof of existence of viscosity solutions are of independent interest and can be used on other types of free boundary problems.
\subsection{Axially symmetric water wave problem}\label{Subsection: axially symmetric water wave problem}
As we all know, three-dimensional waves are difficult to tackle, even for the irrotational flows. One way to recover this obstacle is to consider axially symmetric motions. As suggested in \cite{T1996} Chapter 16:
\begin{quotation}
	\emph{In proceeding to consider motion in three dimensions, we can no longer have recourse to the complex potential. The simplest case is that in which the motion is the same in every plane through a certain line called the axis}.
\end{quotation}
In the late 20th century, \caps{Caffarelli} and \caps{Friedman} \cite{ACF1982,ACF1983} investigated axially symmetric jet with gravity for irrotational incompressible waves ($f\equiv0$). Their approach was then fully developed in the study of compressible subsonic impinging jet \cite{CDZ2021} and incompressible impinging jet with gravity \cite{CDX2023}. These mentioned studies aimed to construct some global axially symmetric flows under physical boundary assumptions and can be understood as global well-posedness results. Locally, mathematicians are interested in those free boundary points at which the relative velocity vector field is zero. Such points are called \emph{stagnation points}, and waves with stagnation points are called \emph{extreme waves}. 

The study of extreme waves has a rich history and the primary research was to consider the existence and the profile of it in two dimensions. The most famous conjecture, which was due to \emph{Stokes}, predicted the existence of extreme waves in two dimensions and that the profile of extreme waves has corners with an included angle of $120^{\circ}$ at the crests. Literatures \cite{CS2004,CS2007,H2006,V2008,V2009} related to the Stokes conjecture aims to prove the existence of extreme waves under the least hypotheses on the structure of waves. In more recent developments \cite{VW2011}, \caps{V\u{a}rv\u{a}ruc\u{a}} and \caps{Weiss} proved the generalized Stokes conjecture for two-dimensional irrotational waves under the growth assumption near the free boundary, and this method was successfully applied to rotational waves in \cite{VW2012}.

However, the study of Stokes conjecture in the context of $3$-D axially symmetric case is very meager. The main drawback here is that the first equation in \eqref{Formula: governed equation} will completely break down near the axis of symmetry. This observation suggests that there are new degenerate points (or singularities) in $3$-D axisymmetric problems. In \cite{VW2014}, \caps{V\u{a}rv\u{a}ruc\u{a}} and \caps{Weiss} studied the possible singular profiles of the free boundary near the stagnation points of \eqref{Formula: governed equation} when $f\equiv0$, and they found that the free boundary will behave like a Stokes wave near those points on the $x$-axis (except the origin). In addition, they analyzed the stagnation points on the $y$-axis (except the origin) and the origin. Compared to the $2$-D case, new phenomena, such as vertical cusps and Garabedian pointed bubbles, have appeared. This result was then generalized to the case $f\neq 0$ in \cite{DHP2022} by the first author and his collaborators. 

Furthermore, it should be noted that the regularity of the free boundary away from stagnation points is also of particular interest. In his celebrated work \cite{V2009} on the Stokes conjecture for rotational waves, \caps{V\u{a}rv\u{a}ruc\u{a}} mentioned that:
\begin{quotation}
    \emph{Some problems left open by the present article are: ... the regularity of the wave profile away from stagnation points...}
\end{quotation}
In \cite{S2011}, \caps{De Silva} demonstrated that  the free boundary away from the stagnation points are smooth for two-dimensional rotational waves. Our paper shows that free boundaries are smooth away from the stagnation points in the $3$-D axisymmetric case. We therefore believe that for rotational waves, the profile of the free boundary away from stagnation points is smooth. Finally, we point out that another motivating factor for studying the regularity of the free boundary away from the degenerate points is that the notion of variational solutions and weak solutions defined in the previous work \cite{DHP2022} will coincide if the free boundary is smooth away from stagnation points.
\subsection{Assumptions, notations and definitions}\label{Subsection: assumptions, notations and definitions}
We now list some basic assumptions, definitions and notations that are helpful for our future discussion.
\subsubsection{Basic assumptions}
We assume that $f(z)\in C^{1,\beta}(\mathbb{R})$ so that
\begin{align}\label{Formula: assumptions on f}
	0\leqslant f(z)\leqslant F_{0}\quad\text{ for }z\leqslant0\quad\text{ and }\quad-F_{0}\leqslant f'(z)\leqslant 0\quad\text{ for }z\in\mathbb{R},
\end{align}
where $F_{0}\geqslant0$ is a given constant. Define $F(z):=\int_{0}^{z}f(s)\:ds$ and it follows from \eqref{Formula: assumptions on f} that $F''(z)\leqslant0$ for all $z\in\mathbb{R}$. Therefore, for any $p$, $q\in\mathbb{R}$, 
\begin{align*}
	F(q)-F(p)\geqslant F'(q)\cdot(q-p).
\end{align*}
\begin{remark}
    It should be noted that if $X_{0}\in\partial\{\psi>0\}\cap\Omega$, then $f(\psi(X_{0}))=f(0)\geqslant0$ and this implies that the vorticity is non-negative on the free boundary.
\end{remark}
In order to remain consistent with the previous work \cite{DHP2022}, we assume that the free boundary $\partial\{\psi>0\}$ is contained in the quarter plane $\{(x,y)\colon x\geqslant 0, y\leqslant0\}$ throughout the paper. However, we must \emph{address} that our regularity results depend only on the fact that near non-degenerate points, the gradient of solutions does not vanish. In fact, if we do not make such an assumption, then all results hold for the following problem
\begin{align*}
    \begin{cases}
        \begin{alignedat}{2}
			\operatorname{div}\left(\frac{1}{x}\nabla\psi\right)&=-xf(\psi)\quad&&\text{ in }\Omega\cap\{\psi>0\},\\
			\frac{1}{x^{2}}|\nabla\psi|^{2}&=\max\{y,0\}\quad&&\text{ on }\Omega\cap\partial\{\psi>0\}.
        \end{alignedat}
     \end{cases}
\end{align*}
\subsubsection{Notations}
We denote by $X=(x,y)$ a point in the \emph{quarter plane} $\mathbb{R}_{+,-}^{2}:=\{(x,y)\in \mathbb{R} ^{2}\colon x\geqslant 0, y\leqslant0\}$, by $dX=dxdy$ the \emph{volume element}, by $\mathcal{L}^{2}$ the two-dimensional \emph{Lebesgue measure} and by $B_{r}(X_{0}):=\{X\in\mathbb{R}^{2}\colon|X-X_{0}|<r\}$ the \emph{solid ball} of center $X_{0}$ and radius $r$. For any $t\in\mathbb{R}$, we denote by $t^{\pm}=\max\{\pm t,0\}$, which represents the \emph{positive} and \emph{negative} parts of $t$, respectively. Therefore $t=t^{+}-t^{-}$ and $|t|=t^{+}+t^{-}$. We will use functions in  \emph{weighted Lebesgue} $L_{\mathrm{w}}^{2}(E)$ and \emph{Sobolev space} $W_{\mathrm{w}}^{1,2}(E)$, which are defined by 
\begin{align*}
    L_{\mathrm{w}}^{2}(E):=\left\lbrace g\colon E\to\mathbb{R}\text{ is measurable and }\int_{E}\frac{1}{x}|g|^{2}\:dX<+\infty\right\rbrace,
\end{align*}
and
\begin{align*}
    W_{\mathrm{w}}^{1,2}(E):=\left\lbrace g\in L_{\mathrm{w}}^{2}(E)\colon\pd{g}{x}\in L_{\mathrm{w}}^{2}(E)\text{ and }\pd{g}{y}\in L_{\mathrm{w}}^{2}(E)\right\rbrace,
\end{align*}
respectively. 

The unit sphere in two dimensions will be denoted by $\mathbb{S}^{1}$ and we identify $\mathbb{S}^{1}$ with $\partial B_{1}$. In particular, for a given function $u\colon B_{r}(X_{0})\to\mathbb{R}$ with $X_{0}=(x_{0},y_{0})$, we will express it in polar coordinates by $u=u(r,\theta)$ and we obtain 
\begin{align*}
	|\nabla u|^{2}=(\partial_{r}u)^{2}+\frac{1}{r^{2}}(\partial_{\theta}u)^{2}\qquad\text{ and }\qquad\Delta u=\partial_{rr}u+\frac{1}{r}\partial_{r}u+\frac{1}{r^{2}}\partial_{\theta\theta}u.
\end{align*}
We denote by $N_{\psi}$ the set of all non-degenerate points (please see Fig \ref{Fig: Npsi} (a)) and it is defined to be the set
\begin{align*}
	N_{\psi}:=\{(x_{0},y_{0})\in\Omega\cap\partial\{\psi>0\}\colon x_{0}>0,y_{0}<0\}.
\end{align*}
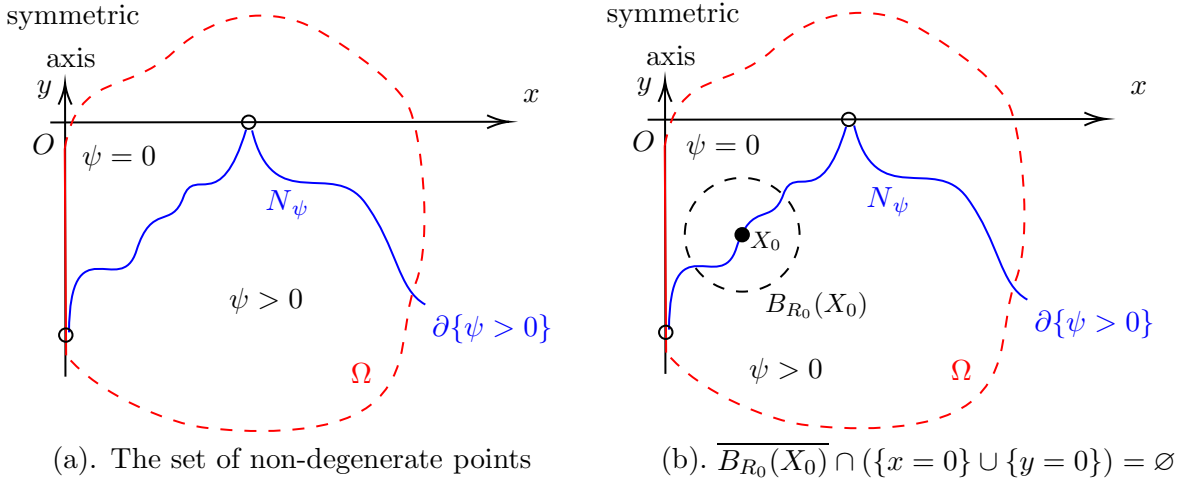
\begin{figure}[ht]

\tikzset{every picture/.style={line width=0.75pt}} %set default line width to 0.75pt        

\begin{tikzpicture}[x=0.75pt,y=0.75pt,yscale=-0.95,xscale=0.95]
%uncomment if require: \path (0,300); %set diagram left start at 0, and has height of 300

%Straight Lines [id:da482369786864123] 
\draw [line width=0.75]    (34.36,192.9) -- (34.16,47.09) ;
\draw [shift={(34.16,45.09)}, rotate = 89.92] [color={rgb, 255:red, 0; green, 0; blue, 0 }  ][line width=0.75]    (10.93,-3.29) .. controls (6.95,-1.4) and (3.31,-0.3) .. (0,0) .. controls (3.31,0.3) and (6.95,1.4) .. (10.93,3.29)   ;
%Straight Lines [id:da6470547116047758] 
\draw [line width=0.75]    (26.8,65.02) -- (253.67,64.67) ;
\draw [shift={(255.67,64.67)}, rotate = 179.91] [color={rgb, 255:red, 0; green, 0; blue, 0 }  ][line width=0.75]    (10.93,-3.29) .. controls (6.95,-1.4) and (3.31,-0.3) .. (0,0) .. controls (3.31,0.3) and (6.95,1.4) .. (10.93,3.29)   ;
%Curve Lines [id:da7145408877217898] 
\draw [color={rgb, 255:red, 0; green, 0; blue, 255 }  ,draw opacity=1 ][line width=0.75]    (36.13,170.61) .. controls (36.59,115.28) and (61.98,154.42) .. (69.74,129.64) .. controls (77.5,104.86) and (87.33,119.16) .. (93.19,102.64) .. controls (99.06,86.12) and (111.4,112.54) .. (124.51,68.62) ;
%Curve Lines [id:da7271242930685178] 
\draw [color={rgb, 255:red, 0; green, 0; blue, 255 }  ,draw opacity=1 ][line width=0.75]    (127.47,69.02) .. controls (138.11,112.87) and (167.69,82.9) .. (183.9,105.46) .. controls (200.11,128.02) and (200.11,148.98) .. (213.91,156.63) ;
%Curve Lines [id:da2161059657124278] 
\draw [color={rgb, 255:red, 255; green, 0; blue, 0 }  ,draw opacity=1 ][line width=0.75]  [dash pattern={on 4.5pt off 4.5pt}]  (34,78.32) .. controls (40.64,38.7) and (64.89,51.55) .. (88.24,29.6) .. controls (111.6,7.65) and (136.94,9.31) .. (151.67,14.67) .. controls (166.4,20.02) and (197.82,42.45) .. (205,55.84) .. controls (212.19,69.22) and (249.76,149.83) .. (241.5,171.25) .. controls (233.24,192.67) and (211.34,197.55) .. (190.5,213.75) .. controls (169.66,229.95) and (121.84,220.74) .. (104.05,218.59) .. controls (86.27,216.45) and (51.6,202.53) .. (34.71,182.19) ;
%Straight Lines [id:da9501657777829908] 
\draw [color={rgb, 255:red, 255; green, 0; blue, 0 }  ,draw opacity=1 ][line width=0.75]    (34,78.32) -- (34.71,182.19) ;
%Straight Lines [id:da2697178402435201] 
\draw [line width=0.75]    (34.53,171.98) ;
\draw [shift={(34.53,171.98)}, rotate = 0] [color={rgb, 255:red, 0; green, 0; blue, 0 }  ][line width=0.75]      (0, 0) circle [x radius= 3.35, y radius= 3.35]   ;
%Straight Lines [id:da8375145645498758] 
\draw [line width=0.75]    (125.79,64.89) ;
\draw [shift={(125.79,64.89)}, rotate = 0] [color={rgb, 255:red, 0; green, 0; blue, 0 }  ][line width=0.75]      (0, 0) circle [x radius= 3.35, y radius= 3.35]   ;
%Straight Lines [id:da8705952529353287] 
\draw [line width=0.75]    (333.71,191.5) -- (333.51,45.7) ;
\draw [shift={(333.51,43.7)}, rotate = 89.92] [color={rgb, 255:red, 0; green, 0; blue, 0 }  ][line width=0.75]    (10.93,-3.29) .. controls (6.95,-1.4) and (3.31,-0.3) .. (0,0) .. controls (3.31,0.3) and (6.95,1.4) .. (10.93,3.29)   ;
%Straight Lines [id:da37536971672705643] 
\draw [line width=0.75]    (324.82,63.63) -- (551.68,63.28) ;
\draw [shift={(553.68,63.27)}, rotate = 179.91] [color={rgb, 255:red, 0; green, 0; blue, 0 }  ][line width=0.75]    (10.93,-3.29) .. controls (6.95,-1.4) and (3.31,-0.3) .. (0,0) .. controls (3.31,0.3) and (6.95,1.4) .. (10.93,3.29)   ;
%Curve Lines [id:da10854287680972496] 
\draw [color={rgb, 255:red, 0; green, 0; blue, 255 }  ,draw opacity=1 ][line width=0.75]    (335.48,169.22) .. controls (335.94,113.88) and (361.33,153.02) .. (369.09,128.24) .. controls (376.85,103.47) and (386.68,117.77) .. (392.55,101.25) .. controls (398.41,84.73) and (410.75,111.14) .. (423.86,67.23) ;
%Curve Lines [id:da8531487741102324] 
\draw [color={rgb, 255:red, 0; green, 0; blue, 255 }  ,draw opacity=1 ][line width=0.75]    (427.86,66.56) .. controls (438.49,110.41) and (468.08,80.44) .. (484.29,103) .. controls (500.5,125.56) and (500.5,146.52) .. (514.3,154.17) ;
%Curve Lines [id:da32858116445100727] 
\draw [color={rgb, 255:red, 255; green, 0; blue, 0 }  ,draw opacity=1 ][line width=0.75]  [dash pattern={on 4.5pt off 4.5pt}]  (333.35,76.93) .. controls (339.99,37.31) and (364.24,50.16) .. (387.6,28.21) .. controls (410.95,6.26) and (436.29,7.92) .. (451.02,13.27) .. controls (465.75,18.63) and (497.17,41.06) .. (504.36,54.44) .. controls (511.54,67.83) and (545.26,143.83) .. (537,165.25) .. controls (528.74,186.67) and (512.84,195.55) .. (492,211.75) .. controls (471.16,227.95) and (421.19,219.34) .. (403.4,217.2) .. controls (385.62,215.06) and (350.95,201.14) .. (334.07,180.8) ;
%Straight Lines [id:da011325424821609031] 
\draw [color={rgb, 255:red, 255; green, 0; blue, 0 }  ,draw opacity=1 ][line width=0.75]    (333.35,76.93) -- (334.07,180.8) ;
%Straight Lines [id:da30482925146268847] 
\draw [line width=0.75]    (333.88,170.59) ;
\draw [shift={(333.88,170.59)}, rotate = 0] [color={rgb, 255:red, 0; green, 0; blue, 0 }  ][line width=0.75]      (0, 0) circle [x radius= 3.35, y radius= 3.35]   ;
%Straight Lines [id:da26884055521424943] 
\draw [line width=0.75]    (425.14,63.49) ;
\draw [shift={(425.14,63.49)}, rotate = 0] [color={rgb, 255:red, 0; green, 0; blue, 0 }  ][line width=0.75]      (0, 0) circle [x radius= 3.35, y radius= 3.35]   ;
%Straight Lines [id:da8680901326603485] 
\draw [line width=0.75]    (372,121.5) ;
\draw [shift={(372,121.5)}, rotate = 0] [color={rgb, 255:red, 0; green, 0; blue, 0 }  ][fill={rgb, 255:red, 0; green, 0; blue, 0 }  ][line width=0.75]      (0, 0) circle [x radius= 3.35, y radius= 3.35]   ;
%Shape: Circle [id:dp0739393867632423] 
\draw  [color={rgb, 255:red, 0; green, 0; blue, 0 }  ,draw opacity=1 ][dash pattern={on 4.5pt off 4.5pt}][line width=0.75]  (343.38,121.5) .. controls (343.38,105.69) and (356.19,92.88) .. (372,92.88) .. controls (387.81,92.88) and (400.63,105.69) .. (400.63,121.5) .. controls (400.63,137.31) and (387.81,150.13) .. (372,150.13) .. controls (356.19,150.13) and (343.38,137.31) .. (343.38,121.5) -- cycle ;

% Text Node
\draw (26.17,226.17) node [anchor=north west][inner sep=0.75pt]   [align=left] {(a). The set of non-degenerate points};
% Text Node
\draw (3.65,4.39) node [anchor=north west][inner sep=0.75pt]   [align=left] {symmetric\\ \ \ \ \ axis};
% Text Node
\draw (16.2,69.29) node [anchor=north west][inner sep=0.75pt]    {$O$};
% Text Node
\draw (261.18,47.05) node [anchor=north west][inner sep=0.75pt]    {$x$};
% Text Node
\draw (18.6,42.54) node [anchor=north west][inner sep=0.75pt]    {$y$};
% Text Node
\draw (175.31,184.71) node [anchor=north west][inner sep=0.75pt]  [color={rgb, 255:red, 208; green, 2; blue, 27 }  ,opacity=1 ]  {$\textcolor[rgb]{1,0,0}{\Omega }$};
% Text Node
\draw (88.56,143.2) node [anchor=north west][inner sep=0.75pt]    {$\psi  >0$};
% Text Node
\draw (41.22,74.66) node [anchor=north west][inner sep=0.75pt]    {$\psi =0$};
% Text Node
\draw (132.72,97.66) node [anchor=north west][inner sep=0.75pt]  [color={rgb, 255:red, 248; green, 231; blue, 28 }  ,opacity=1 ]  {$\textcolor[rgb]{0,0,1}{N}\textcolor[rgb]{0,0,1}{_{\psi }}$};
% Text Node
\draw (329.85,226.33) node [anchor=north west][inner sep=0.75pt]   [align=left] {(b). };
% Text Node
\draw (358,224.4) node [anchor=north west][inner sep=0.75pt]    {$\overline{B_{R_{0}}( X_{0})} \cap (\{x=0\} \cup \{y=0\}) =\varnothing $};
% Text Node
\draw (303,3) node [anchor=north west][inner sep=0.75pt]   [align=left] {symmetric\\ \ \ \ \ axis};
% Text Node
\draw (315.55,67.89) node [anchor=north west][inner sep=0.75pt]    {$O$};
% Text Node
\draw (317.95,41.15) node [anchor=north west][inner sep=0.75pt]    {$y$};
% Text Node
\draw (474.66,183.32) node [anchor=north west][inner sep=0.75pt]  [color={rgb, 255:red, 208; green, 2; blue, 27 }  ,opacity=1 ]  {$\textcolor[rgb]{1,0,0}{\Omega }$};
% Text Node
\draw (373.91,181.81) node [anchor=north west][inner sep=0.75pt]    {$\psi  >0$};
% Text Node
\draw (342.57,69.27) node [anchor=north west][inner sep=0.75pt]    {$\psi =0$};
% Text Node
\draw (432.57,96.27) node [anchor=north west][inner sep=0.75pt]  [color={rgb, 255:red, 248; green, 231; blue, 28 }  ,opacity=1 ]  {$\textcolor[rgb]{0,0,1}{N}\textcolor[rgb]{0,0,1}{_{\psi }}$};
% Text Node
\draw (382,148.9) node [anchor=north west][inner sep=0.75pt]  [font=\small]  {$B_{R_{0}}( X_{0})$};
% Text Node
\draw (374.5,118.4) node [anchor=north west][inner sep=0.75pt]  [font=\footnotesize]  {$X_{0}$};
% Text Node
\draw (564.18,43.05) node [anchor=north west][inner sep=0.75pt]    {$x$};
% Text Node
\draw (472.3,152.57) node [anchor=north west][inner sep=0.75pt]    {$\textcolor[rgb]{0,0,1}{\partial }\textcolor[rgb]{0,0,1}{\{}\textcolor[rgb]{0,0,1}{\psi  >0}\textcolor[rgb]{0,0,1}{\}}$};
% Text Node
\draw (176.41,154.53) node [anchor=north west][inner sep=0.75pt]    {$\textcolor[rgb]{0,0,1}{\partial }\textcolor[rgb]{0,0,1}{\{}\textcolor[rgb]{0,0,1}{\psi  >0}\textcolor[rgb]{0,0,1}{\}}$};

\end{tikzpicture}  
    \caption{The set of non-degenerate points $N_{\psi}$ and $B_{R_{0}}(X_{0})$.}
    \label{Fig: Npsi}
\end{figure}
Typically, one has the inclusion
\begin{align*}
	\partial\{\psi>0\}\subset\underbrace{\left(S_{\psi}^{s}\cup S_{\psi}^{a}\cup\{(0,0)\}\right)}_{\text{ Set of degenerate points}}\cup N_{\psi}.
\end{align*}
Here $S_{\psi}^{s}$ and $S_{\psi}^{a}$ are the set of Type 1 and Type 2 degenerate points, respectively (See \cite{DHP2022} for the precise definition). For any $X_{0}=(x_{0},y_{0})\in N_{\psi}$, we define  
\begin{align}\label{Formula: R0}
	R_{0}:=\frac{1}{2}\min\left\{ \frac{x_{0}}{2},-\frac{y_{0}}{2} \right\}.
\end{align}
It can be checked easily that (please see Fig \ref{Fig: Npsi} (b)) 
\begin{align*}
	\overline{B_{R_{0}}(X_{0})}\cap(\{x=0\}\cup\{y=0\})=\varnothing.
\end{align*}
Note that $R_{0}$ exists (depending on $X_{0}$ and the definition of $N_{\psi}$).

Let $\rho_{n}>0$ be a vanishing sequence as $n\to\infty$, we define the \emph{blow-up sequence}
\begin{align}\label{Formula: Type4}
	\psi_{n}(X):=\frac{\psi(X_{0}+\rho_{n}X)}{\rho_{n}}\quad\text{ for any }X_{0}\in N_{\psi}.
\end{align}
For any $X_{0}\in N_{\psi}$ and for any $r<\frac{R_{0}}{2}$, we define
\begin{align}\label{Formula: D(1)}
	\mathcal{D}_{1,X_{0},\psi}(r)=\int_{B_{r}(X_{0})}\left(\frac{1}{x}|\nabla\psi|^{2}-xyI_{\{\psi>0\}}-x\psi f(\psi)\right)\:dX,
\end{align}
and the \emph{boundary} $L^{2}$ energy
\begin{align}\label{Formula: D(2)}
	\mathcal{D}_{2,X_{0},\psi}(r):=\int_{\partial B_{r}(X_{0})}\frac{1}{x}\psi^{2}\:d\mathcal{H}^{1},
\end{align}
respectively. Furthermore, we define the so-called \emph{Weiss boundary-adjusted energy}
\begin{align}\label{Formula: D(3)}
	\mathcal{D}_{X_{0},\psi}(r):=r^{-2}\mathcal{D}_{1,X_{0},\psi}(r)-r^{-3}\mathcal{D}_{2,X_{0},\psi}(r).
\end{align}
In this paper, we study the following minimization problem
\begin{align*}
    \min_{\psi}\left\lbrace J(\psi)\colon\psi\in W_{\mathrm{w}}^{1,2}(B_{R_{0}}(X_{0})),X_{0}\in N_{\psi}\right\rbrace,
\end{align*}
where
\begin{align*}
	J(\psi):=\int_{B_{R_{0}}(X_{0})}\left(\frac{1}{x}|\nabla\psi|^{2}-2xF(\psi)-xyI_{\{\psi>0\}}\right)\:dX,
\end{align*}
and $F(\psi):=\int_{0}^{\psi}f(z)\:dz$.
\begin{remark}
    The idea of subtracting the boundary $L^{2}$ energy \eqref{Formula: D(2)} from \eqref{Formula: D(1)} was due to \caps{Weiss} \cite{W1999}, who investigated for the first time the one-homogeneous Lipschitz cones of the classical Alt-Caffarelli functional. His fundamental idea is that the Dirichlet energy for a homogeneous Bernoulli-type problem will gain some monotonicity after subtracting the boundary $L^{2}$ energy, which will eventually lead to the homogeneity of blow-up limits. Since our problem involves the axis of symmetry, the classical $|\nabla u|^{2}$ and $u^{2}$ included in the \caps{Weiss}'s monotonicity formula (we refer to \cite{W1999} Theorem 1.2 for the precise definition) are adapted to $\frac{1}{x}|\nabla\psi|^{2}$ and $\frac{1}{x}\psi^{2}$, respectively. Moreover, we will prove in Section \ref{Section: homogeneity of minimizers} that such an adjustment does not affect the monotonicity of $\mathcal{D}_{X_{0},\psi}$ defined in \eqref{Formula: D(3)}. And based on this observation, we successfully obtain the desired homogeneity of blow-up limits to our problem.
\end{remark}
\subsubsection{Definitions}
In this subsection, we introduce the concept of \emph{local minimizers} and \emph{viscosity solutions}. Both two objects are fundamental in the research of free boundary problems. Technically, minimizers of a given functional lead to a PDE that holds locally, whereas the information related to the regularity of the free boundary are often  hidden in viscosity solutions of a free boundary problem. We begin with the definition of local minimizers.
\begin{definition}[Local minimizers]
	Let $X_{0}\in N_{\psi}$, and we say that $\psi\in W_{\mathrm{w}}^{1,2}(B_{R_{0}}(X_{0}))$ is a \emph{local minimizer} of $J(\psi)$ if and only if 
	\begin{align*}
		J(\psi)\leqslant J(\phi)
	\end{align*}
	for every $\phi\in W_{\mathrm{w}}^{1,2}(B_{R_{0}}(X_{0}))$ with $\phi=\psi$ on $\partial B_{R_{0}}(X_{0})$. 
\end{definition}
\begin{remark}
If $X_{0}=(x_{0},y_{0})\in N_{\psi}$ and $r\leqslant\min\{\tfrac{x_{0}}{2},\tfrac{R_{0}}{2}\}$, then $\|\psi\|_{W_{\mathrm{w}}^{1,2}(B_{r}(X_{0}))}$ is equivalent to the usual $W^{1,2}$ norm of $\psi$. Indeed, via a direct computation, one has
\begin{align*}
	\frac{2}{3x_{0}}\int_{B_{r}(X_{0})}\psi^{2}\:dX\leqslant\int_{B_{r}(X_{0})}\frac{1}{x}\psi^{2}\:dX\leqslant\frac{2}{x_{0}}\int_{B_{r}(X_{0})}\psi^{2}\:dX,
\end{align*}
and
\begin{align*}
	\frac{2}{3x_{0}}\int_{B_{r}(X_{0})}|\nabla\psi|^{2}\:dX\leqslant\int_{B_{r}(X_{0})}\frac{1}{x}|\nabla\psi|^{2}\:dX\leqslant\frac{2}{x_{0}}\int_{B_{r}(X_{0})}|\nabla\psi|^{2}\:dX.
\end{align*}
In the rest of the paper, we will apply the classical norm $\|\cdot\|_{W^{1,2}(B_{r}(X_{0}))}$ for the abuse of notations. Moreover, the equality $\phi=\psi$ on $\partial B_{R_{0}}(X_{0})$ in the above definition is understood in the sense of traces for classical $W_{0}^{1,2}$ norms.
\end{remark}
\begin{remark}
	The existence of local minimizers of $J(\psi)$ in $B_{R_{0}}(X_{0})$ can be obtained by specifying a given Sobolev trace on $\partial B_{R_{0}}(X_{0})$. In other words, given any nonnegative $\Psi_{0}\in W^{1,2}(B_{R_{0}}(X_{0}))$ we can find a minimizer $\psi$ to the functional $J(\psi; B_{R_{0}}(X_{0}))$ which satisfies $\psi-\Psi_{0}\in W_{0}^{1,2}(B_{R_{0}}(X_{0}))$.
\end{remark}
In Section \ref{Section: regularity of the non-degenerate points}, we will use the notion of  viscosity solutions to the problem \eqref{Formula: governed equation}. (See also \cite{CIL1992} for a comprehensive research on viscosity solutions.) To this end, we first need the notion of "touch" .
\begin{definition}[Touch]
	Let $X_{0}\in N_{\psi}$ and let $\psi$, $\phi\in C^{0}(B_{R_{0}}(X_{0}))$, we say that $\phi$ \emph{touches} $\psi$ from below at $X_{1}\in B_{R_{0}}(X_{0})$ if $\psi(X_{1})=\phi(X_{1})$ and 
	\begin{align*}
	    \psi(X)\geqslant\phi(X)\quad\text{ in a small neighborhood } B_{r}(X_{1})\text{ of }X_{1}.
	\end{align*}
	If this inequality is strict in $B_{r}(X_{1})$, we say that $\phi$ touches $\psi$ \emph{strictly} from below. Touching (strictly) from above can be defined in a similar way, replacing $\geqslant$ by $\leqslant$.
\end{definition}
We are now in a position to define viscosity solutions to the problem \eqref{Formula: governed equation}.
\begin{definition}[Viscosity solutions]\label{Definition: viscosity solutions}
	Let $X_{0}\in N_{\psi}$ and let $\psi$ be a nonnegative continuous function in $B_{R_{0}}(X_{0})$. We say that $\psi$ is a \emph{viscosity solution} to \eqref{Formula: governed equation} in $B_{R_{0}}(X_{0})$, if for every point $X_{1}=(x_{1},y_{1})\in\overline{B_{R_{0}}(X_{0})\cap\{\psi>0\}}$, the following does hold:
	\begin{enumerate}
		\item If $X_{1}=(x_{1},y_{1})\in B_{R_{0}}(X_{0})\cap\{\psi>0\}$ and
		\begin{itemize}
			\item if $\phi\in C^{2}(B_{R_{0}}(X_{0})\cap\{\psi>0\})$ touches $\psi$ from below at $X_{1}$, then 
			\begin{align*}
				\operatorname{div}\left(\frac{1}{x}\nabla\phi\right)(X_{1})+x_{1}f(\phi(X_{1}))\leqslant0,
			\end{align*}
			\item and if $\phi\in C^{2}(B_{R_{0}}(X_{0})\cap\{\psi>0\})$ touches $\psi$ from above at $X_{1}=(x_{1},y_{1})$, then
			\begin{align*}
				\operatorname{div}\left(\frac{1}{x}\nabla\phi\right)(X_{1})+x_{1}f(\phi(X_{1}))\geqslant0.
			\end{align*}
		\end{itemize}
		\item If $X_{1}=(x_{1},y_{1})\in B_{R_{0}}(X_{0})\cap\partial\{\psi>0\}$ and
		\begin{itemize}
			\item if $\phi\in C^{2}(B_{R}(X_{0}))$ touches $\psi$ from below at $X_{1}$, then $|\nabla\phi(X_{1})|\leqslant x_{1}\sqrt{-y_{1}}$,
			\item if $\phi\in C^{2}(B_{R}(X_{0}))$ touches $\psi$ from above at $X_{1}$, then $|\nabla\phi(X_{1})|\geqslant x_{1}\sqrt{-y_{1}}$.
		\end{itemize}
	\end{enumerate}
\end{definition}
\subsection{Plan of the paper and main results}\label{Subsection: main results}
Our paper is composed as follows: 

In Section \ref{Section: preliminaries}, we provide some primary properties of local minimizers in Proposition \ref{Proposition: properties for local minimizers}, including the non-negativity, the H\"{o}lder continuity, and the governing equation (weak sense) of $\psi$ in $B_{R_{0}}(X_{0})$. In Section \ref{Section: optimal regularity and non-degeneracy of local minimizers}, we focus on the optimal regularity for $\psi$ in $B_{R_{0}}(X_{0})$. We prove that $\psi\in C^{0,1}(B_{R_{0}}(X_{0}))$ in Proposition \ref{Proposition: Lipschitz regularity for local minimizers}. As a direct consequence, minimizers satisfy an optimal linear growth property near the free boundary (Lemma \ref{Lemma: Optimal linear growth}). Later, we establish a non-degeneracy lemma for minimizers (Lemma \ref{Lemma: Non-degeneracy of minimizers}). In Section \ref{Section: homogeneity of minimizers}, we aim to investigate the convergence of blow-up limits (recalling \eqref{Formula: Type4}) as $n\to\infty$, and we show that there exists a so-called \emph{blow-up limit} $\psi_{0}$ so that $\psi_{n}$ converges uniformly to $\psi_{0}$ and $I_{\{\psi_{n}>0\}}$ converges to $I_{\{\psi_{0}>0\}}$ locally in $L_{\mathrm{loc}}^{1}(\mathbb{R}^{2})$ (Lemma \ref{Lemma: Structure of the blow-up limits}). As a direct corollary, we obtain some basic properties of $\psi_{0}$ in Corollary \ref{Corollary: Properties of blow-up limits}. Next, we have to deal with the homogeneity of $\psi_{0}$, and our tool is the functional $\mathcal{D}_{X_{0},\psi}(r)$ introduced in \eqref{Formula: D(3)}. We obtain the monotonicity of $\mathcal{D}_{X_{0},\psi}(r)$ with respect to $r$ in Lemma \ref{Lemma: monotonicity formula}, and in Lemma \ref{Lemma: Homogeneity of minimizers} the homogeneity of $\psi_{0}$. Moreover, we obtain an $\mathcal{L}^{2}$ density estimate with respect to all non-degenerate points in Corollary \ref{Corollary: Density}. As a further application of the monotonicity formula, we prove in Proposition \ref{Proposition: minimizers are viscosity solutions} that local minimizers are viscosity solutions to the problem \eqref{Formula: governed equation}. In Section \ref{Section: regularity of the non-degenerate points}, we consider a more general free boundary problem \eqref{Formula: R(1)} and our main result in this section is Proposition \ref{Proposition: Flatness implies C1alpha}, which indicates that for viscosity solutions to the general free boundary problem \eqref{Formula: R(1)}, flat free boundary points are locally $C^{1,\gamma}$ for some $\gamma\in(0,1)$. Last, we obtain the uniqueness of blow-up limits in Lemma \ref{Lemma: Uniqueness of blow-up limits}, and we immediately obtain that free boundaries are Lipschitz functions near non-degenerate points. (Corollary \ref{Corollary: free boundary Lipschitz}). This, together with Proposition \ref{Proposition: Flatness implies C1alpha}, yields that free boundaries are $C^{1,\gamma}$ near all non-degenerate points for some $\gamma<\beta$. 

In summary, our main results in this article can be concluded as follows. 
\begin{theorem}\label{Theorem: main(1)}
	Let $\psi$ be a local minimizer of $J$ in $B_{R_{0}}(X_{0})$, then $\psi$ is a Lipschitz viscosity solution to the problem \eqref{Formula: governed equation} in $B_{R_{0}}(X_{0})$. Furthermore, for every non-degenerate free boundary point $X_{0}\in N_{\psi}$, we have
	\begin{enumerate}
	\item the limit $\mathcal{D}_{X_{0},\psi}(0^{+}):=\lim_{r\to0^{+}}\mathcal{D}_{X_{0},\psi}(r)$ exists and 
	\begin{align}\label{Formula: maintheorem(1)}
		\mathcal{D}_{X_{0},\psi}(0^{+})=-x_{0}y_{0}\frac{\omega_{2}}{2},
	\end{align}
	where $\omega_{2}=\mathcal{L}^{2}(B_{1})$.
	\item There exists a small neighborhood $B_{\rho}$ of $X_{0}$, the size of which depends on $X_{0}$ and $F_{0}$ ($L^{\infty}$-norm of $f$), so that $\partial\{\psi>0\}\cap B_{\rho}(X_{0})$ is locally the graph of a $C^{1,\gamma}$ function for some $\gamma\in(0,\beta)$.
	\end{enumerate}
\end{theorem}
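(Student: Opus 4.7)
The statement consolidates ingredients developed across Sections~\ref{Section: preliminaries}--\ref{Section: regularity of the non-degenerate points}, so the work here is mostly to assemble them in the right order. The Lipschitz regularity is exactly the content of Proposition~\ref{Proposition: Lipschitz regularity for local minimizers}, and the fact that every local minimizer is a viscosity solution of \eqref{Formula: governed equation} is Proposition~\ref{Proposition: minimizers are viscosity solutions}; together they give the first assertion at once.

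For part (1), I would first invoke the monotonicity of $r\mapsto\mathcal{D}_{X_{0},\psi}(r)$ provided by Lemma~\ref{Lemma: monotonicity formula}, combined with the two-sided bounds that follow from the Lipschitz estimate and the non-degeneracy Lemma~\ref{Lemma: Non-degeneracy of minimizers}, to conclude that $\mathcal{D}_{X_{0},\psi}(0^{+})$ exists and is finite. To identify the value, fix any blow-up sequence $\psi_{n}$ as in \eqref{Formula: Type4}. By Lemma~\ref{Lemma: Structure of the blow-up limits} we may extract a subsequence so that $\psi_{n}\to\psi_{0}$ locally uniformly and $I_{\{\psi_{n}>0\}}\to I_{\{\psi_{0}>0\}}$ in $L_{\mathrm{loc}}^{1}(\mathbb{R}^{2})$. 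Because $x_{0}>0$, under the rescaling in \eqref{Formula: Type4} the weight $1/x$ converges to the constant $1/x_{0}$ and the forcing term $xf(\psi)$ drops out in the limit; hence $\psi_{0}$ is a global minimizer of the classical Alt--Caffarelli functional with constant $Q^{2}=-x_{0}y_{0}$. Lemma~\ref{Lemma: Homogeneity of minimizers} says $\psi_{0}$ is one-homogeneous, and in two dimensions the only non-negative one-homogeneous global minimizers are the half-plane profiles $\psi_{0}(X)=\sqrt{-x_{0}y_{0}}\,(X\cdot\nu)^{+}$ for some unit vector $\nu$. Plugging this explicit profile into \eqref{Formula: D(3)} yields $\mathcal{D}_{X_{0},\psi_{0}}(1)=-x_{0}y_{0}\,\omega_{2}/2$; combined with the scaling identity $\mathcal{D}_{X_{0},\psi}(\rho_{n})=\mathcal{D}_{X_{0},\psi_{n}}(1)+o(1)$ and the continuity of $\mathcal{D}$ under the $L_{\mathrm{loc}}^{1}$ convergence of the characteristic functions, this transfers to \eqref{Formula: maintheorem(1)}.

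For part (2), the plan is to use the blow-up analysis just carried out as the input of a De Silva-type improvement-of-flatness scheme. Since every blow-up limit at $X_{0}$ is a half-plane solution, the free boundary is $\varepsilon$-flat at all sufficiently small scales: for any prescribed small $\varepsilon>0$ there exists $\rho_{\varepsilon}>0$ such that $\partial\{\psi>0\}\cap B_{\rho_{\varepsilon}}(X_{0})$ lies in a strip of width $\varepsilon\rho_{\varepsilon}$ around a straight line. Choosing $\varepsilon$ smaller than the flatness threshold of Proposition~\ref{Proposition: Flatness implies C1alpha}, applied to $\psi$ as a viscosity solution of the more general free boundary problem \eqref{Formula: R(1)} (into which \eqref{Formula: governed equation} fits by absorbing $1/x$ and the right-hand side), yields $C^{1,\gamma}$ regularity of $\partial\{\psi>0\}$ in a smaller ball for some $\gamma\in(0,\beta)$. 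The uniqueness of the blow-up direction, Lemma~\ref{Lemma: Uniqueness of blow-up limits}, together with the Lipschitz graph representation provided by Corollary~\ref{Corollary: free boundary Lipschitz}, gives the local graph structure that lets this pointwise flatness be upgraded to the claimed local graphical $C^{1,\gamma}$ representation on a fixed ball $B_{\rho}(X_{0})$ whose size depends only on $X_{0}$ and $F_{0}$.

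The main obstacle is not the flatness-to-smoothness step, which is essentially black-boxed in Proposition~\ref{Proposition: Flatness implies C1alpha}, but rather the identification of $\mathcal{D}_{X_{0},\psi}(0^{+})$. One has to check that the lower-order term $-x\psi f(\psi)$ and the variable weight $1/x$ do not spoil either the monotonicity of $\mathcal{D}_{X_{0},\psi}$ or the clean scaling behavior of the blow-up, and then prove that $\mathcal{D}_{X_{0},\psi_{n}}(1)\to\mathcal{D}_{X_{0},\psi_{0}}(1)$. The delicate piece in this convergence is the characteristic-function term $-xyI_{\{\psi>0\}}$, which is only semicontinuous in general and hence forces one to exploit the strong $L_{\mathrm{loc}}^{1}$ convergence of $I_{\{\psi_{n}>0\}}$ (Lemma~\ref{Lemma: Structure of the blow-up limits}) in full.
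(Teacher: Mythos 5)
Your proposal tracks the paper's own proof: the Lipschitz viscosity property follows from Propositions~\ref{Proposition: Lipschitz regularity for local minimizers} and~\ref{Proposition: minimizers are viscosity solutions}, the Weiss density identity from Lemmas~\ref{Lemma: monotonicity formula}--\ref{Lemma: Homogeneity of minimizers} together with Corollaries~\ref{Corollary: Properties of blow-up limits} and~\ref{Corollary: Density}, and the $C^{1,\gamma}$ graph from the blow-up argument at the end of Section~\ref{Section: regularity of the non-degenerate points} feeding into Proposition~\ref{Proposition: Flatness implies C1alpha} via Lemmas~\ref{Lemma: iteration}, \ref{Lemma: Uniqueness of blow-up limits} and Corollary~\ref{Corollary: free boundary Lipschitz}. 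One cosmetic slip: since the free boundary condition is $x^{-2}|\nabla\psi|^{2}=-y$, the half-plane blow-up at $X_{0}$ has slope $x_{0}\sqrt{-y_{0}}$, not $\sqrt{-x_{0}y_{0}}$, but this does not affect \eqref{Formula: maintheorem(1)} because the Dirichlet and boundary $L^{2}$ contributions to $\mathcal{D}_{X_{0},\psi_{0}}(1)$ cancel for every one-homogeneous profile.
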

\section{Preliminaries}\label{Section: preliminaries}
In this section, we list some fundamental properties of local minimizers, which are used throughout the paper.
\begin{proposition}[Properties for local minimizers]\label{Proposition: properties for local minimizers}
	Let $X_{0}\in N_{\psi}$ and let $\psi$ be a local minimizer of $J$ in $B_{R_{0}}(X_{0})$, then
	\begin{enumerate}
		\item $\psi\in C^{0,\alpha}(B_{r}(X_{0}))$ for some $\alpha\in(0,1)$ and any $r<R_{0}$.
		\item $\psi\geqslant0$ in $B_{R_{0}}(X_{0})$.
		\item $\psi$ satisfies $\operatorname{div}\left(\frac{1}{x}\nabla\psi\right)+xf(\psi)\geqslant0$ in $B_{R_{0}}(X_{0})$ in the weak sense.
		\item $\psi$ satisfies $\operatorname{div}\left(\frac{1}{x}\nabla\psi\right)+xf(\psi)=0$ in $B_{R_{0}}(X_{0})\cap\{\psi>0\}$ in the classical sense.
	\end{enumerate}
\end{proposition}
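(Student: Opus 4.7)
I would tackle the four items in the order (2), (1), (3), (4), since non-negativity streamlines the arguments for Hölder continuity, and the equation in $\{\psi>0\}$ is the easiest consequence once continuity is in hand. Throughout, the fact that $B_{R_0}(X_0)$ is away from $\{x=0\}$ means $1/x$ is bounded between positive constants on the ball (as already recorded in the remark), so the weighted setting is equivalent to a classical uniformly elliptic setting; I will use this reduction freely.

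For (2), I would take $\psi^{+}=\max\{\psi,0\}$ as competitor. Since the prescribed boundary data is nonnegative, $\psi^{+}$ is admissible. Pointwise $|\nabla\psi^{+}|^{2}\leqslant|\nabla\psi|^{2}$ and $I_{\{\psi^{+}>0\}}=I_{\{\psi>0\}}$; on $\{\psi<0\}$, the assumption $f\geqslant 0$ on $(-\infty,0]$ forces $F(\psi)\leqslant 0=F(\psi^{+})$, so $-2xF(\psi^{+})\leqslant-2xF(\psi)$ because $x>0$. Hence $J(\psi^{+})\leqslant J(\psi)$, and equality via minimization, together with continuity (to be shown next), forces $\mathcal{L}^{2}(\{\psi<0\})=0$. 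For (1), I would run the standard De Giorgi--Campanato comparison: for $B_{r}(X_{1})\subset B_{R_{0}}(X_{0})$ let $v$ solve the linear weighted equation $\operatorname{div}(\tfrac{1}{x}\nabla v)=-xf(\psi)$ in $B_{r}(X_{1})$ with $v=\psi$ on $\partial B_{r}(X_{1})$. Using $v$ (extended by $\psi$) as a competitor and expanding $\int\tfrac{1}{x}(|\nabla\psi|^{2}-|\nabla v|^{2})=\int\tfrac{1}{x}|\nabla(\psi-v)|^{2}+2\int xf(\psi)(v-\psi)$, together with Lipschitzness of $F$, $|xy|\leqslant C$, and the trivial bound on the indicator difference $\leqslant|B_{r}|$, yields (after Poincaré absorption)
\begin{equation*}
\int_{B_{r}(X_{1})}|\nabla(\psi-v)|^{2}\,dX\leqslant Cr^{2}.
\end{equation*}
Coupling this with the Campanato decay of $v$ (a weak solution of a linear uniformly elliptic equation with bounded right-hand side) and iterating via the standard hole-filling/Giaquinta--Campanato lemma delivers an estimate of the form $\mathrm{osc}_{B_{\rho}(X_{1})}\psi\leqslant C\rho^{\alpha}$ for some $\alpha\in(0,1)$, which is exactly the desired Hölder continuity.

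For (3), I would use a one-sided inner variation $\psi_{\varepsilon}:=\psi-\varepsilon\phi$ with $0\leqslant\phi\in C_{c}^{\infty}(B_{R_{0}}(X_{0}))$ and $\varepsilon>0$. The key point is $\{\psi_{\varepsilon}>0\}\subseteq\{\psi>0\}$, so that, because $-xy\geqslant 0$ on $B_{R_{0}}(X_{0})$, the indicator contribution only decreases: $-xy\,I_{\{\psi_{\varepsilon}>0\}}\leqslant -xy\,I_{\{\psi>0\}}$. Therefore
\begin{equation*}
0\leqslant J(\psi_{\varepsilon})-J(\psi)\leqslant\int\tfrac{1}{x}\bigl(|\nabla\psi-\varepsilon\nabla\phi|^{2}-|\nabla\psi|^{2}\bigr)\,dX-2\int x\bigl(F(\psi-\varepsilon\phi)-F(\psi)\bigr)\,dX,
\end{equation*}
and dividing by $\varepsilon$ and passing to $\varepsilon\to 0^{+}$ gives $\int\tfrac{1}{x}\nabla\psi\cdot\nabla\phi\,dX\leqslant\int xf(\psi)\phi\,dX$ for every such $\phi$, i.e.\ $\operatorname{div}(\tfrac{1}{x}\nabla\psi)+xf(\psi)\geqslant 0$ weakly. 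For (4), on the open set $\{\psi>0\}$ (open by (1)) I can use a two-sided variation $\psi+\varepsilon\phi$ for $\phi\in C_{c}^{\infty}(\{\psi>0\})$: for $|\varepsilon|$ small, $\psi+\varepsilon\phi>0$ on $\mathrm{supp}\,\phi$, so $I_{\{\psi+\varepsilon\phi>0\}}\equiv I_{\{\psi>0\}}$ there and the free-boundary term drops out. The first variation then yields the weak equation $\operatorname{div}(\tfrac{1}{x}\nabla\psi)+xf(\psi)=0$ in $\{\psi>0\}$, which classical (weighted) elliptic regularity promotes to the classical sense, since $1/x$ is smooth on this ball and $f\in C^{1,\beta}$.

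The main obstacle is item (1): the indicator term prevents a direct Moser iteration, and one must carefully absorb lower-order terms in a Caccioppoli-type inequality before invoking Campanato decay of the linear comparison function. The other three items are streamlined first-variation computations once the signs of $x$, $y$, $F$, and the inclusion of positivity sets under perturbation have been correctly exploited.
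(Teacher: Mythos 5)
Your proposal is correct and follows essentially the same strategy as the paper's Appendix~A: a truncation competitor for non\nobreakdash-negativity (you use $\psi^{+}$ directly; the paper uses $\psi-\varepsilon\min\{\psi,0\}$), a comparison with an equation replacement on small balls combined with Campanato iteration for the H\"older continuity, and one- and two-sided inner variations for items (3) and (4). The only substantive differences are cosmetic---you take a linear comparison function $v$ where the paper takes the semilinear replacement $\operatorname{div}\left(\frac{1}{x}\nabla\phi\right)+xf(\phi)=0$, you reorder (1) and (2), and your energy identity has a sign slip (it should read $+2\int xf(\psi)(\psi-v)$, not $(v-\psi)$); also note that the $\psi^{+}$ argument for (2) actually closes entirely in $W^{1,2}$, so the forward reference to continuity there is unnecessary.
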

The proof of the above Proposition is rather technical, so we give it in Appendix A in order not to disrupt the flow of this paper. We only remark here that if $X_{0}\in N_{\psi}$ and if $\psi$ is a local minimizer of $J(\psi)$ in $B_{R_{0}}(X_{0})$, then the first property above tells us that $\psi$ is a continuous function in $B_{R_{0}}(X_{0})$. This means that one can choose any continuous functions to “touch” $\psi$ from below(above) at any point $X_{1}\in B_{R_{0}}(X_{0})$. Besides, the continuity of $\psi$ in $B_{r}(X_{0})$ implies that the set $\{\psi>0\}\cap B_{r}(X_{0})$ is open. In addition, the second property in the above Proposition suggests us that we only need to consider $f(z)$ defined for $z\geqslant0$. In fact, minimizers of $J$ in $B_{R_{0}}(X_{0})$ possess higher regularity in the phase $B_{R_{0}}(X_{0})\cap\{\psi>0\}$.
\begin{corollary}\label{Corollary: higher regularity in the positive phase}
    Let $X_{0}\in N_{\psi}$ and let $\psi$ be a local minimizer of $J$ in $B_{R_{0}}(X_{0})$, then $\psi$ satisfies the equation
    \begin{align*}
        \operatorname{div}\left(\frac{1}{x}\nabla\psi\right)+xf(\psi)=0\quad\text{ in }B_{R_{0}}(X_{0})\cap\{\psi>0\}
    \end{align*}
    in the classical sense. Moreover, $\psi\in C^{2,\alpha}(B_{R_{0}}(X_{0})\cap\{\psi>0\})$.
\end{corollary}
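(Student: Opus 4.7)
The plan is to observe that item (4) of Proposition \ref{Proposition: properties for local minimizers} already delivers the first claim: $\psi$ solves $\operatorname{div}(\tfrac{1}{x}\nabla\psi)+xf(\psi)=0$ classically in $B_{R_{0}}(X_{0})\cap\{\psi>0\}$. The new content of the corollary is thus purely the $C^{2,\alpha}$ interior regularity of $\psi$ in the positive phase, and for that the strategy is a standard two-step elliptic bootstrap made possible by the fact that $B_{R_{0}}(X_{0})$ stays strictly away from the symmetry axis.

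More precisely, the first thing I would record is the geometric observation that the choice $R_{0}=\tfrac{1}{2}\min\{x_{0}/2,-y_{0}/2\}$ in \eqref{Formula: R0} forces $x\geqslant x_{0}/2>0$ on $\overline{B_{R_{0}}(X_{0})}$, so the weight $1/x$ is $C^{\infty}$ and bounded above and below there. Consequently the equation can be put in non-divergence form
\begin{equation*}
\Delta\psi-\frac{1}{x}\partial_{x}\psi=-x^{2}f(\psi)\qquad\text{in }B_{R_{0}}(X_{0})\cap\{\psi>0\},
\end{equation*}
which is a uniformly elliptic linear operator with smooth coefficients acting on $\psi$, with a right-hand side depending on $\psi$ only in zeroth order. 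By the continuity of $\psi$ (Proposition \ref{Proposition: properties for local minimizers}(1)) the set $\{\psi>0\}\cap B_{R_{0}}(X_{0})$ is open, so the discussion can be localized to subdomains compactly contained in it.

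The bootstrap itself would be carried out in two stages. First, since $\psi$ is continuous and $f$ is $C^{1,\beta}$, the right-hand side $-x^{2}f(\psi)$ is bounded, and the classical interior $W^{2,p}$ theory (Calder\'on--Zygmund) applied to the operator above yields $\psi\in W^{2,p}_{\mathrm{loc}}(\{\psi>0\}\cap B_{R_{0}}(X_{0}))$ for every $p<\infty$; Sobolev embedding then gives $\psi\in C^{1,\alpha'}_{\mathrm{loc}}$ for any $\alpha'\in(0,1)$. Second, since $f\in C^{1,\beta}$ is locally Lipschitz and $\psi\in C^{1,\alpha'}$, the composition $f(\psi)$ lies in $C^{0,\alpha}$ for $\alpha:=\min\{\alpha',\beta\}$, hence so does the right-hand side. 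Interior Schauder estimates for the same linear operator then upgrade $\psi$ to $C^{2,\alpha}_{\mathrm{loc}}(\{\psi>0\}\cap B_{R_{0}}(X_{0}))$, completing the proof.

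I do not anticipate a genuine obstacle here: the argument is entirely routine once the coefficient $1/x$ has been seen to be smooth on the ball in question. The only point that requires the tiniest bit of care is the composition step $\psi\mapsto f(\psi)$ in the bootstrap, but the assumption $f\in C^{1,\beta}(\mathbb{R})$ of \eqref{Formula: assumptions on f} makes this immediate. In particular, no free boundary analysis enters; all work happens strictly inside the open positive set, where the problem reduces to a semilinear elliptic PDE with smooth, non-degenerate coefficients.
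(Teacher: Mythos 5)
Your argument is correct and follows the same basic strategy as the paper: since $X_{0}\in N_{\psi}$ forces $x\geqslant x_{0}/2>0$ on $\overline{B_{R_{0}}(X_{0})}$, the operator is uniformly elliptic with smooth coefficients there, and interior elliptic regularity (Schauder) upgrades $\psi$ to $C^{2,\alpha}$ in the open positive phase. The paper is slightly shorter because it invokes Proposition \ref{Proposition: properties for local minimizers}(1) directly to get $\psi\in C^{0,\alpha}$, hence $f\circ\psi\in C^{0,\alpha}$, and applies Schauder in one step, whereas your intermediate $W^{2,p}$ bootstrap (starting only from continuity of $\psi$) is valid but unnecessary.
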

\begin{proof}
    Since $\psi$ is H\"{o}lder continuous in $B_{R_{0}}(X_{0})\cap\{\psi>0\}$, we have that $f\circ\psi$ is also a H\"{o}lder continuous function in $B_{R_{0}}(X_{0})\cap\{\psi>0\}$. On the other hand, since $X_{0}\in N_{\psi}$, it is easy to deduce that $\dfrac{1}{x}$ is uniformly bounded from below and above in $B_{R_{0}}(X_{0})$ by a constant that depends only on $x_{0}$. Thus, we are in a position to apply the standard Schauder estimates in \cite{GT1998} to conclude that $\psi\in C^{2,\alpha}(B_{R_{0}}(X_{0})\cap\{\psi>0\})$.
\end{proof}
\section{Optimal regularity and non-degeneracy of local minimizers}\label{Section: optimal regularity and non-degeneracy of local minimizers}
This section aims to derive the optimal regularity for local minimizers. Firstly, we have proved in Corollary \ref{Corollary: higher regularity in the positive phase} that if $X_{0}\in N_{\psi}$ is a non-degenerate point, then $\psi$ is smooth in $B_{R_{0}}(X_{0})\cap\{\psi>0\}$. However, we do not expect $\psi$ to retain such high regularity in the whole $B_{R_{0}}(X_{0})$. Therefore, it is important to understand the behavior of solutions as they approach the zero phase $B_{r}(X_{0})\cap\{\psi=0\}$. Here we mainly borrow the idea originated from \caps{Alt} and \caps{Caffarelli} \cite{AC1981,ACF1984}. They found that if one stays a fixed amount away from the set $\{\psi=0\}$, then the minimizer $\psi$ starts growing linearly. Based on this observation, it seems that Lipschitz continuity should be the best regularity we hope for the minimizer. We adopt such an idea to our setting and for the abuse of notation, we set
\begin{align*}
	\rho(X)=\operatorname{dist}(X,\{\psi=0\}).
\end{align*}
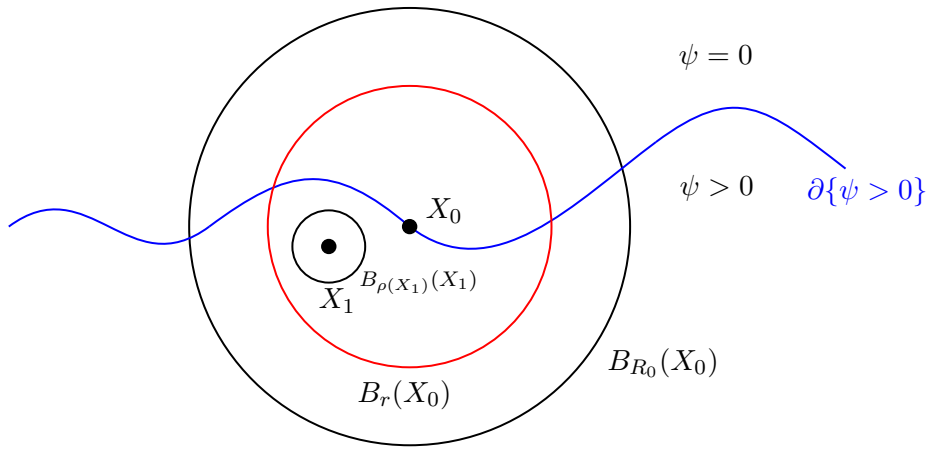
\begin{figure}[ht]
	\tikzset{every picture/.style={line width=0.75pt}} 
	\begin{tikzpicture}[x=0.75pt,y=0.75pt,yscale=-0.9,xscale=0.9]
		\draw [color={rgb, 255:red, 0; green, 0; blue, 255 }  ,draw opacity=1 ][line width=0.75]    (170.17,150.5) .. controls (210.17,120.5) and (231.17,116.83) .. (270.17,150.5) ;
		
		\draw  [line width=0.75]  (160.17,150.5) .. controls (160.17,89.75) and (209.42,40.5) .. (270.17,40.5) .. controls (330.92,40.5) and (380.17,89.75) .. (380.17,150.5) .. controls (380.17,211.25) and (330.92,260.5) .. (270.17,260.5) .. controls (209.42,260.5) and (160.17,211.25) .. (160.17,150.5) -- cycle ;
		 
		\draw [color={rgb, 255:red, 0; green, 0; blue, 255 }  ,draw opacity=1 ][line width=0.75]    (270.17,150.5) .. controls (311.67,182.67) and (353,138) .. (393,108) ;
		
		\draw [color={rgb, 255:red, 0; green, 0; blue, 255 }  ,draw opacity=1 ][line width=0.75]    (70.17,150.5) .. controls (110.17,120.5) and (130.17,180.5) .. (170.17,150.5) ;
		
		\draw [color={rgb, 255:red, 0; green, 0; blue, 255 }  ,draw opacity=1 ][line width=0.75]    (393,108) .. controls (433,78) and (447.67,90) .. (487.67,121.33) ;
		 
		\draw [line width=0.75]    (270.17,150.5) -- (270.17,150.5) ;
		\draw [shift={(270.17,150.5)}, rotate = 90] [color={rgb, 255:red, 0; green, 0; blue, 0 }  ][fill={rgb, 255:red, 0; green, 0; blue, 0 }  ][line width=0.75]      (0, 0) circle [x radius= 3.35, y radius= 3.35]   ;
		 
		\draw  [color={rgb, 255:red, 255; green, 0; blue, 0 }  ,draw opacity=1 ][line width=0.75]  (199.39,150.5) .. controls (199.39,111.41) and (231.08,79.72) .. (270.17,79.72) .. controls (309.26,79.72) and (340.95,111.41) .. (340.95,150.5) .. controls (340.95,189.59) and (309.26,221.28) .. (270.17,221.28) .. controls (231.08,221.28) and (199.39,189.59) .. (199.39,150.5) -- cycle ;
		
		\draw  [color={rgb, 255:red, 0; green, 0; blue, 0 }  ,draw opacity=1 ][line width=0.75]  (211.67,160.5) .. controls (211.67,150.47) and (219.8,142.33) .. (229.83,142.33) .. controls (239.87,142.33) and (248,150.47) .. (248,160.5) .. controls (248,170.53) and (239.87,178.67) .. (229.83,178.67) .. controls (219.8,178.67) and (211.67,170.53) .. (211.67,160.5) -- cycle ;
		
		\draw [line width=0.75]    (229.83,160.5) ;
		\draw [shift={(229.83,160.5)}, rotate = 0] [color={rgb, 255:red, 0; green, 0; blue, 0 }  ][fill={rgb, 255:red, 0; green, 0; blue, 0 }  ][line width=0.75]      (0, 0) circle [x radius= 3.35, y radius= 3.35]   ;
		
		% Text Node
		\draw (367.33,210.07) node [anchor=north west][inner sep=0.75pt]    {$B_{R_{0}}( X_{0})$};
		% Text Node
		\draw (276.67,134.73) node [anchor=north west][inner sep=0.75pt]    {$X_{0}$};
		% Text Node
		\draw (242.67,226.07) node [anchor=north west][inner sep=0.75pt]    {$B_{r}( X_{0})$};
		% Text Node
		\draw (223.33,180.07) node [anchor=north west][inner sep=0.75pt]    {$X_{1}$};
		% Text Node
		\draw (243.67,170.73) node [anchor=north west][inner sep=0.75pt]  [font=\scriptsize]  {$B_{\rho ( X_{1})}( X_{1})$};
		% Text Node
		\draw (403.5,122.9) node [anchor=north west][inner sep=0.75pt]    {$\psi  >0$};
		% Text Node
		\draw (403,56.9) node [anchor=north west][inner sep=0.75pt]    {$\psi =0$};
		% Text Node
		\draw (467,124.4) node [anchor=north west][inner sep=0.75pt]    {$\textcolor[rgb]{0,0,1}{\partial }\textcolor[rgb]{0,0,1}{\{}\textcolor[rgb]{0,0,1}{\psi  >0}\textcolor[rgb]{0,0,1}{\}}$};
	\end{tikzpicture}
    \caption{$B_{R_{0}}(X_{0})$ and $B_{\rho(X_{1})}(X_{1})$.}
    \label{Fig: balls}
\end{figure}
\begin{proposition}[Lipschitz regularity]\label{Proposition: Lipschitz regularity for local minimizers}
	Let $X_{0}=(x_{0},y_{0})\in N_{\psi}$, let $\psi$ be a local minimizer of $J$ in $B_{R_{0}}(X_{0})$, and let $B_{r}(X_{0})\subset B_{R_{0}}(X_{0})$ with $r<R_{0}$. Then 
	\begin{align}\label{Formula: Lipschitz estimate for minimizer}
		\frac{|\nabla\psi(X)|}{x}\leqslant C\sqrt{-y}\quad\text{ in }B_{r}(X_{0}).
	\end{align}
	Here the constant $C$ depends only  on $X_{0}$ and $F_{0}$ and $\left(1-\tfrac{r}{R_{0}}\right)^{-1}$.
\end{proposition}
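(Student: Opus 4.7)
The plan is to reduce the pointwise gradient bound to a linear growth estimate on $\psi$ near the free boundary, then combine with interior elliptic gradient estimates in the positivity set. Fix $X_{1}=(x_{1},y_{1})\in B_{r}(X_{0})\cap\{\psi>0\}$ and set $d:=\rho(X_{1})$. On the ball $B_{d}(X_{1})\subset\{\psi>0\}$, Corollary \ref{Corollary: higher regularity in the positive phase} gives $\psi\in C^{2,\alpha}$ and $\operatorname{div}(x^{-1}\nabla\psi)=-xf(\psi)$ in the classical sense. Because $X_{0}\in N_{\psi}$, the weight $1/x$ and its reciprocal are bounded on $B_{R_{0}}(X_{0})$ by constants depending only on $x_{0}$, so the equation rewrites as a uniformly elliptic linear equation with bounded coefficients and right-hand side bounded by $C(x_{0},F_{0})$, using $0\leqslant f\leqslant F_{0}$. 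The standard interior gradient estimate then yields
\begin{align*}
|\nabla\psi(X_{1})|\leqslant\frac{C}{d}\sup_{B_{d/2}(X_{1})}\psi+Cd.
\end{align*}
Hence the proposition will follow once we establish the growth estimate
\begin{align*}
\sup_{B_{d/2}(X_{1})}\psi\leqslant C\sqrt{-y_{1}}\,x_{1}\,d,
\end{align*}
with $C$ depending only on $X_{0}$, $F_{0}$, and $(1-r/R_{0})^{-1}$. If $d\geqslant(R_{0}-r)/4$, the bound is immediate from the local $L^{\infty}$ bound on $\psi$ (a standard consequence of Proposition \ref{Proposition: properties for local minimizers} combined with De Giorgi--Nash--Moser for weighted subsolutions), so I may assume $d\leqslant(R_{0}-r)/4$ and then $B_{\kappa d}(Y_{1})\subset B_{R_{0}}(X_{0})$ for any $Y_{1}\in\partial\{\psi>0\}$ with $|X_{1}-Y_{1}|=d$ and $\kappa=3$.

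The growth estimate is proved by a comparison argument of Alt--Caffarelli type, adapted to the weighted semilinear functional $J$. Fix such a $Y_{1}\in\partial\{\psi>0\}$ and set $B:=B_{\kappa d}(Y_{1})$. The strategy is to construct a competitor $\tilde{\psi}$ which agrees with $\psi$ outside $B$, and inside $B$ is a perturbation of a suitable PDE replacement $v$ of $\psi$ lowered by a small parameter $\varepsilon>0$ via a smooth cutoff supported in $B$, so as to shrink the positivity set near $Y_{1}$ without changing boundary values. Plugging into the minimality inequality $J(\psi)\leqslant J(\tilde{\psi})$, the three pieces of $J$ are controlled as follows: the weighted Dirichlet term $\int x^{-1}|\nabla\cdot|^{2}$ is handled by integration by parts against the PDEs satisfied by $\psi$ and $v$, producing a non-positive principal contribution plus errors of size $O(\varepsilon^{2})$; the semilinear piece $\int -2xF(\psi)$ varies by $O(\varepsilon\,\mathcal{L}^{2}(B))$ thanks to the Lipschitz bound on $F$; and the Bernoulli piece $\int -xy\,I_{\{\psi>0\}}$ gains a quantity at least $c\,|y_{1}|\,x_{1}\,\mathcal{L}^{2}(\{0<\psi\leqslant\varepsilon\}\cap B)$, since on $B_{R_{0}}(X_{0})$ the integrand $-xy$ is bounded below by a positive constant multiple of $|y_{1}|x_{1}$. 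Optimizing over $\varepsilon$ converts these inequalities into a sharp spherical mean value bound $r^{-1}\int_{\partial B_{r}(Y_{1})}x^{-1}\psi\,d\mathcal{H}^{1}\leqslant C\sqrt{-y_{1}}\,x_{1}$ for $r\in(d,\kappa d/2)$. Combined with the weighted interior Harnack inequality for the non-negative solution $\psi$ of $\operatorname{div}(x^{-1}\nabla\psi)=-xf(\psi)$ in $B_{d}(X_{1})\subset\{\psi>0\}$, this promotes to the pointwise bound $\sup_{B_{d/2}(X_{1})}\psi\leqslant C\sqrt{-y_{1}}\,x_{1}\,d$.

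The principal obstacle is to carry out the energy balance in the weighted semilinear setting with the correct rate: one must show that the gain in the Bernoulli term (linear in $\varepsilon$ with coefficient proportional to $|y_{1}|x_{1}$) dominates the Dirichlet and semilinear losses (quadratic and linear in $\varepsilon$ respectively, but with coefficients independent of $-y_{1}$), so that an optimal choice of $\varepsilon$ yields precisely the rate $\sqrt{-y_{1}}\,x_{1}$ rather than a weaker bound depending on the global $L^{\infty}$ size of $\psi$. This requires careful bookkeeping of the cross-terms between the weight $1/x$ and the cutoff, together with an appropriate comparison with the semilinear Dirichlet replacement $v$ (which is available since $F$ is concave by $F''=f'\leqslant0$, making the associated functional convex). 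Once the growth estimate is in place, the interior gradient estimate above together with the trivial lower bound $x_{1}\sqrt{-y_{1}}\geqslant c(x_{0},y_{0})>0$ valid on $B_{r}(X_{0})$ absorbs the additive $Cd$ term into $C\sqrt{-y_{1}}\,x_{1}$ up to enlarging the constant, completing the proof.
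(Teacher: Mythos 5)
Your overall framework is the right one and matches the paper's: reduce to a linear growth estimate $\psi(X_{1})\leqslant Cx_{1}\sqrt{-y_{1}}\,\rho(X_{1})$ and then bootstrap via interior Schauder estimates in $\{\psi>0\}$; your Step handling the interior gradient bound, the $d\geqslant(R_{0}-r)/4$ dichotomy, and the absorption of the inhomogeneous $Cd$ term is essentially the paper's Step II verbatim.

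The gap is in the growth estimate itself. The competitor you describe — a PDE replacement $v$ ``lowered by a small parameter $\varepsilon$ via a smooth cutoff supported in $B$, so as to shrink the positivity set near $Y_{1}$'' — is the non-degeneracy competitor (compare Lemma~\ref{Lemma: Non-degeneracy of minimizers}, where one truncates at level $\varepsilon$ to force vanishing), not the Lipschitz one. The Alt--Caffarelli Lipschitz estimate runs in the opposite direction: one assumes for contradiction that $\psi(X_{1})/\rho(X_{1})>Mx_{1}$ for large $M$, rescales so the free boundary distance is $1$, and lets $\Psi\geqslant\phi_{\rho}$ be the semilinear replacement on a unit ball centered at a free boundary point $\tilde X$. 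Since $\Psi\geqslant\phi_{\rho}$, the replacement \emph{enlarges} the positivity set, and minimality then gives $\int|\nabla(\Psi-\phi_{\rho})|^{2}\leqslant C(-y_{1})\mathcal{L}^{2}(\{\phi_{\rho}=0\})$ as in \eqref{Formula: Lip(4(1))}. Two further ingredients are indispensable and are absent from your sketch: (i) a Harnack inequality spreads the hypothesis $\phi_{\rho}(0)>M$ into $\Psi\geqslant cM-CF_{0}$ on an intermediate ball, and an exponential barrier then forces $\Psi\geqslant(cM-CF_{0})(1-|X-\tilde X|)$, as in \eqref{Formula: Lip(5)}--\eqref{Formula: Lip(6)}; (ii) a trace/Poincar\'{e} inequality (the content of Lemma~3.2 in \cite{AC1981}, Lemma~2.2 in \cite{ACF1984}) converts the cone lower bound on $\Psi$ over $\{\phi_{\rho}=0\}$ into $\int|\nabla(\Psi-\phi_{\rho})|^{2}\geqslant c(cM-CF_{0})^{2}\mathcal{L}^{2}(\{\phi_{\rho}=0\})$. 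Combining (i), (ii), and the minimality bound yields $(cM-CF_{0})^{2}\leqslant C(-y_{1})$, hence $M\leqslant C\sqrt{-y_{1}}$, which is exactly the $\sqrt{-y_{1}}$ rate. Your bookkeeping (``Dirichlet loss $O(\varepsilon^{2})$, semilinear loss $O(\varepsilon)$, Bernoulli gain $c|y_{1}|x_{1}\mathcal{L}^{2}(\{0<\psi\leqslant\varepsilon\})$, optimize over $\varepsilon$'') has no mechanism that produces the quadratic dependence on $M$ against the linear dependence on $-y_{1}$; as written it would at best bound $\|\psi\|_{L^{\infty}}$-type quantities and would not yield the sharp scale $\sqrt{-y_{1}}\,x_{1}$. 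You should replace the $\varepsilon$-truncation competitor by the contradiction argument above, with the trace inequality as the closing step.
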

\begin{proof}
	\textbf{Step I.} We begin by proving that
	\begin{align}\label{Formula: Lip(1)}
		\frac{\psi(X)}{\rho(X)}\leqslant C\sqrt{-y}\quad\text{ for any }X\in B_{r}(X_{0}),
	\end{align}
	where $C$ is a constant depending on $X_{0}$, $F_{0}$ and $\left(1-\frac{r}{R_{0}}\right)^{-1}$. To this end, let $\rho_{0}=R_{0}-r$ and let $X_{1}=(x_{1},y_{1})\in B_{r}(X_{0})\cap\{\psi>0\}$ with $\rho(X_{1})<\rho_{0}$. It is easy to check that  $B_{\rho(X_{1})}(X_{1})\subset B_{R_{0}}(X_{0})\cap\{\psi>0\}$ (please see Fig \ref{Fig: balls}). Assume now that
	\begin{align}\label{Formula: Lip(2)}
		\frac{\psi(X_{1})}{\rho(X_{1})}>Mx_{1},
	\end{align}
	and we aim to derive an upper bound for $M$. Consider the rescaling 
	\begin{align}\label{Formula: Lip(3)}
		\phi_{\rho}(X):=\frac{\psi(X_{1}+\rho X)}{\rho x_{1}}\quad\text{ with }\rho=\rho(X_{1}),
	\end{align}
    and one has
    \begin{align*}
    	\operatorname{div}\left(\frac{x_{1}\nabla\phi_{\rho}}{x_{1}+\rho x}\right)+\rho(x_{1}+\rho x)f(\rho x_{1}\phi_{\rho})=0\quad\text{ in }B_{1}(0).
    \end{align*}
    It follows from \eqref{Formula: Lip(2)} and \eqref{Formula: Lip(3)} that
    \begin{align*}
    	\phi_{\rho}(0)=\frac{\psi(X_{1})}{\rho x_{1}}>M.
    \end{align*}
    A straightforward computation gives
    \begin{align*}
    	|\rho(x_{1}+rx)f(\rho x_{1}\phi_{\rho})|\leqslant C\rho_{0} x_{0}|f(0)|+C\rho_{0}^{2}x_{0}^{2}\max_{s\geqslant0}|f'(s)|\phi_{\rho},
    \end{align*}
    where the constant $C$ depends only on $x_{0}$. Thanks to the Harnack inequality in \cite{T1967}, we have 
    \begin{align}\label{Formula: Lip(3(1))}
    	\phi_{\rho}(X)\geqslant cM-CF_{0}\quad\text{ in }B_{3/4}(0),
    \end{align} 
    where $c$ and $C$ are constants depending only on $X_{0}$ and $F_{0}$. On the other hand, there exists $\tilde{X}=(\tilde{x},\tilde{y})\in\partial B_{1}(0)\cap\{\phi_{\rho}=0\}$. Let $\Psi$ be a function which satisfies
    \begin{align*}
    	\begin{cases}
    		\operatorname{div}\left(\frac{x_{1}\nabla\Psi}{x_{1}+\rho x}\right)+\rho(x_{1}+\rho x)f(\rho x_{1}\Psi)=0\quad&\text{ in }B_{1}(\tilde{X}),\\
    		\Psi=\phi_{\rho}\quad&\text{ outside }B_{1}(\tilde{X}).
    	\end{cases}
    \end{align*}
    Since $\operatorname{div}\left(\frac{x_{1}\nabla\phi_{\rho}}{x_{1}+\rho x}\right)+\rho(x_{1}+\rho x)f(\rho x_{1}\phi_{\rho})\geqslant0$ in $B_{1}(\tilde{X})$. It follows from the maximum principle that
    \begin{align}\label{Formula: Lip(4)}
    	\Psi(X)\geqslant\phi_{\rho}(X)\quad\text{ in }B_{1}(\tilde{X}).
    \end{align}
    Then we have
    \begin{align*}
    	0&\leqslant J(\Psi)-J(\phi_{\rho})\\
    	&\leqslant\int_{B_{1}(\tilde{X})}\frac{-x_{1}^{2}|\nabla(\phi_{\rho}-\Psi)|^{2}}{x_{1}+\rho x}+\frac{2x_{1}\nabla\Psi}{x_{1}+\rho x}\nabla(x_{1}(\Psi-\phi_{\rho}))\:dX\\
    	&\quad+\int_{B_{1}(\tilde{X})}-2\rho(x_{1}+\rho x)F'(\rho x_{1}\Psi)(x_{1}(\Psi-\phi_{\rho}))\:dX\\
    	&\quad+\int_{B_{1}(\tilde{X})}-(x_{1}+\rho x)(y_{1}+\rho y)\left(I_{\{\Psi>0\}}-I_{\{\phi_{\rho}>0\}}\right)\:dX\\
    	&\leqslant\int_{B_{1}(\tilde{X})}\frac{-x_{1}^{2}|\nabla(\phi_{\rho}-\Psi)|^{2}}{x_{1}+\rho x}\:dX+\int_{B_{1}(\tilde{X})}-(x_{1}+\rho x)(y_{1}+\rho y)I_{\{\phi_{\rho}=0\}}\:dX,
    \end{align*}
    where we have used the equation that $\Psi$ satisfied in the second inequality. This implies that
    \begin{align}\label{Formula: Lip(4(1))}
    	\begin{alignedat}{3}
    		\int_{B_{1}(\tilde{X})}|\nabla(\Psi-\phi_{\rho})|^{2}\:dX&\leqslant C\int_{B_{1}(\tilde{X})}\frac{x_{1}|\nabla(\Psi-\phi_{\rho})|^{2}}{x_{1}+\rho x}\:dX\\
    		&\leqslant\frac{C}{x_{1}}\int_{B_{1}(\tilde{X})}(x_{1}+\rho x)(-y_{1}-\rho y)I_{\{\phi_{\rho}=0\}}\:dX\\
    		&\leqslant C\int_{B_{1}(\tilde{X})}(-y_{1}-\rho y)I_{\{\phi_{\rho}=0\}}\:dX\\
    		&\leqslant C(-y_{1})\int_{B_{1}(\tilde{X})}I_{\{\phi_{\rho}=0\}}\:dX,
    	\end{alignedat}
    \end{align}
    where $C$ depends only on $X_{0}$ and $\left(1-\tfrac{r}{R_{0}}\right)^{-1}$. It follows then from \eqref{Formula: Lip(3(1))} and \eqref{Formula: Lip(4)} that
    \begin{align*}
    	\Psi(X)\geqslant\phi_{\rho}(X)\geqslant cM-CF_{0}\quad\text{ in }B_{3/4}(0)\cap B_{1}(\tilde{X}).
    \end{align*}
    Applying Harnack's inequality for $\Psi$ in $B_{1}(\tilde{X})$ gives
    \begin{align}\label{Formula: Lip(5)}
    	\Psi(X)\geqslant C_{0}\quad\text{ in }B_{1/2}(\tilde{X}),
    \end{align}
    where $C_{0}:=cM-CF_{0}$. Define $\varphi(X)=C_{0}(e^{-\mu|X-\tilde{X}|^{2}}-e^{-\mu})$, and after a straightforward computation, one has
    \begin{align*}
    	\operatorname{div}\left(\frac{x_{1}\nabla\varphi}{x_{1}+\rho x}\right)+\rho(x_{1}+\rho  x)f(\rho x_{1}\varphi)>0\quad\text{ in }B_{1}(\tilde{X})\setminus B_{1/2}(\tilde{X}),
    \end{align*}
    provided that $\mu$ is sufficiently large. Since $\Psi\geqslant\varphi$ on $\partial(B_{1}(\tilde{X})\setminus B_{1/2}(\tilde{X}))$, it is easy to deduce from the maximum principle that
    \begin{align*}
    	\Psi(X)\geqslant\varphi(X)\geqslant C_{0}(e^{-\mu|X-\tilde{X}|^{2}}-e^{-\mu})\geqslant cC_{0}(1-|X-\tilde{X}|)\quad\text{ in }B_{1}(\tilde{X})\setminus B_{1/2}(\tilde{X}),
    \end{align*}
    which together with \eqref{Formula: Lip(5)} gives that
    \begin{align}\label{Formula: Lip(6)}
    	\Psi(X)\geqslant(cM-CF_{0})(1-|X-\tilde{X}|)\quad\text{ in }B_{1}(\tilde{X}).
    \end{align}
    With the aid of \eqref{Formula: Lip(4(1))} and \eqref{Formula: Lip(6)}, proceeding as in Lemma 3.2 in \cite{AC1981} and Lemma 2.2 in \cite{ACF1984}, we obtain
    \begin{align*}
    	(cM-CF_{0})^{2}\leqslant C(-y_{1}),
    \end{align*}
    that is,
    \begin{align*}
    	M\leqslant C\left(\sqrt{-y_{1}}+F_{0}\right)\leqslant C\sqrt{-y_{1}},
    \end{align*}
    where $C$ depends on $X_{0}$, $F_{0}$ and $\left(1-\tfrac{r}{R_{0}}\right)^{-1}$. This implies that
    \begin{align*}
    	\frac{\psi(X_{1})}{\rho(X_{1})}\leqslant Cx_{1}\sqrt{-y_{1}}.
    \end{align*}
    Now take a point $X_{2}=(x_{2},y_{2})\in B_{r}(X_{0})$ with $\rho(X_{2})>\rho_{0}$, then there must exist a point $X_{1}\in B_{\rho_{0}/2}(X_{2})$ with $\rho(X_{1})<\rho_{0}$. Thanks to Harnack's inequality for $\psi$ in $B_{\rho_{0}}(X_{2})$, one has
    \begin{align*}
    	\psi(X_{2})\leqslant C(\psi(X_{1})+\rho_{0}F_{0})\leqslant C\left(\rho(X_{1})x_{1}\sqrt{-y_{1}}\right)\leqslant C\left(\rho(X_{2})x_{2}\sqrt{-y_{2}}\right).
    \end{align*}
    Notice that for any $X\in B_{r}(X_{0})$, one can repeat this argument step by step, and after a finite steps, one has
    \begin{align*}
    	\frac{\psi(X)}{\rho(X)}\leqslant Cx\sqrt{-y},
    \end{align*}
    where $C$ depends only on $X_{0}$, $F_{0}$ and $\left(1-\tfrac{r}{R_{0}}\right)^{-1}$. Hence, we complete the proof of \eqref{Formula: Lip(1)}.
    \item \textbf{Step II.} In this step, we will finish the proof of Proposition \ref{Proposition: Lipschitz regularity for local minimizers}. For any $X_{1}\in B_{r}(X_{0})$, denote by $\rho_{0}=R_{0}-r$, and we consider the following two cases.
    
    \textbf{Case 1}. $\rho(X_{1})<\rho_{0}$. It then follows from \eqref{Formula: Lip(1)}
    \begin{align*}
    	\operatorname{div}\left(\frac{x_{1}\nabla\phi_{\rho}}{x_{1}+\rho x}\right)+\rho(x_{1}+\rho x)f(\rho x_{1}\phi_{\rho})=0\quad\text{ in }B_{1}(0),
    \end{align*}
    and
    \begin{align*}
    	0\leqslant\phi_{\rho}\leqslant\frac{C(x_{1}+\rho x)\sqrt{(-y_{1}-\rho y)}\rho(X_{1}+\rho X)}{\rho x_{1}}\leqslant C\sqrt{-y_{1}}\quad\text{ in }B_{1}(0),
    \end{align*}
    where the constant $C$ depends only on $X_{0}$, $F_{0}$ and $\left(1-\tfrac{r}{R_{0}}\right)^{-1}$. Applying Schauder estimates in \cite{GT1998}, one has
    \begin{align*}
    	|\nabla\phi_{\rho}(0)|\leqslant C\sqrt{-y_{1}}.
    \end{align*}
    Combined with the definition of $\phi_{\rho}$ in \eqref{Formula: Lip(3)}, one has
    \begin{align*}
    	|\nabla\psi(X_{1})|=x_{1}|\nabla\phi_{\rho}(0)|\leqslant Cx_{1}\sqrt{-y_{1}}.
    \end{align*}
    
    \textbf{Case 2}. $\rho(X_{1})\geqslant\rho_{0}$. Since $B_{\rho_{0}}(X_{1})\subset B_{R_{0}}(X_{0})\cap\{\psi>0\}$. Denote by $\phi_{0}:=\frac{\psi(X_{1}+\rho_{0}X)}{\rho_{0}x_{1}}$, it follows from \eqref{Formula: Lip(1)} that
    \begin{align*}
    	\operatorname{div}\left(\frac{x_{1}\nabla\phi_{0}}{x_{1}+\rho x}\right)+\rho(x_{1}+\rho x)f(\rho x_{1}\phi_{0})=0\quad\text{ in }B_{1}(0),
    \end{align*}
    and
    \begin{align*}
    	0\leqslant\phi_{0}\leqslant\frac{C(x_{1}+\rho_{0}x)\sqrt{(-y_{1}-\rho_{0}y)}\rho(X_{1}+\rho_{0}X)}{\rho_{0}x_{1}}\leqslant C\sqrt{-y_{1}-\rho_{0}y}\quad\text{ in }B_{1}(0).
    \end{align*}
    Using the Schauder estimates in \cite{GT1998}, one has
    \begin{align*}
    	|\nabla\phi_{0}(0)|\leqslant C\sqrt{-y_{1}},
    \end{align*}
    and
    \begin{align*}
    	|\nabla\psi|=x_{1}|\nabla\phi_{0}(0)|\leqslant Cx_{1}\sqrt{-y_{1}}.
    \end{align*}
\end{proof}
\begin{remark}\label{Remark: Growth estimates of minimizers}
	Let $X_{0}\in N_{\psi}$ be a non-degenerate point and let $\psi$ be a local minimizer of $J$ in $B_{R_{0}}(X_{0})$, then the estimate \eqref{Formula: Lipschitz estimate for minimizer} implies that
	\begin{align*}
		\psi^{2}(x,y)\leqslant Cx^{2}(-y)|X-X_{0}|^{2}\quad\text{ in }B_{R_{0}}(X_{0}).
	\end{align*}
    It should be noted that this estimate with respect to $\psi^{2}$ will be used in our proof of monotonicity formula.
\end{remark}
With the aid of Proposition \ref{Proposition: Lipschitz regularity for local minimizers}, we obtain the following Lemma immediately.
\begin{lemma}[Optimal linear growth]\label{Lemma: Optimal linear growth}
	Let $X_{0}=(x_{0},y_{0})\in N_{\psi}$ and let $\psi$ be a local minimizer of $J$ in $B_{R_{0}}(X_{0})$, then there exists a positive constant $C^{*}$ such that for any $B_{r}(X)\subset\subset B_{R_{0}}(X_{0})$ with center $X=(x,y)$, such that 
	\begin{align}\label{Formula: Lingro(1)}
		\frac{1}{r}\fint_{\partial B_{r}(X)}\psi\:d\mathcal{H}^{1}\geqslant C^{*}x\sqrt{-y},
	\end{align}
	implies 
	\begin{align*}
		\psi(x,y)>0\quad\text{ in }B_{r}(X).
	\end{align*}
\end{lemma}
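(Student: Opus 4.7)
The plan is to argue by contrapositive. I assume that the hypothesis \eqref{Formula: Lingro(1)} holds and that there nevertheless exists a point $Z\in\overline{B_{r}(X)}$ with $\psi(Z)=0$, and I derive a contradiction by bounding the left-hand side of \eqref{Formula: Lingro(1)} from above by a multiple of $x\sqrt{-y}$ with a constant strictly smaller than $C^{*}$, for a suitable choice of $C^{*}$.

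The main ingredient is the Lipschitz estimate of Proposition \ref{Proposition: Lipschitz regularity for local minimizers}. Since $B_{r}(X)\subset\subset B_{R_{0}}(X_{0})$, I first fix an intermediate radius $r'$ with $|X-X_{0}|+r\leq r'<R_{0}$ so that $\overline{B_{r}(X)}\subset B_{r'}(X_{0})$. Proposition \ref{Proposition: Lipschitz regularity for local minimizers} applied on $B_{r'}(X_{0})$ then yields a constant $C_{1}=C_{1}(X_{0},F_{0},(1-r'/R_{0})^{-1})$ such that $|\nabla\psi(W)|\leq C_{1}\,W_{x}\sqrt{-W_{y}}$ at every $W=(W_{x},W_{y})\in\overline{B_{r}(X)}$. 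Next I exploit the geometry recorded in \eqref{Formula: R0}: since $\overline{B_{R_{0}}(X_{0})}$ is disjoint from both coordinate axes, the quantities $W_{x}$ and $-W_{y}$ are uniformly comparable to $x_{0}$ and $|y_{0}|$ for $W\in B_{R_{0}}(X_{0})$, so there is $K=K(X_{0})$ with $W_{x}\sqrt{-W_{y}}\leq K\,x\sqrt{-y}$ for every $W\in B_{r}(X)$. Combining the two estimates gives $|\nabla\psi(W)|\leq C_{1}K\,x\sqrt{-y}$ on $\overline{B_{r}(X)}$.

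Using $\psi(Z)=0$ and the convexity of $\overline{B_{r}(X)}$, for each $Y\in\partial B_{r}(X)$ the fundamental theorem of calculus along the segment from $Z$ to $Y$ gives
\[
\psi(Y)=\psi(Y)-\psi(Z)\leq|Y-Z|\sup_{W\in[Z,Y]}|\nabla\psi(W)|\leq 2rC_{1}K\,x\sqrt{-y}.
\]
Averaging this pointwise bound over $\partial B_{r}(X)$ yields
\[
\frac{1}{r}\fint_{\partial B_{r}(X)}\psi\,d\mathcal{H}^{1}\leq 2C_{1}K\,x\sqrt{-y},
\]
so choosing $C^{*}:=2C_{1}K+1$ contradicts \eqref{Formula: Lingro(1)}. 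Hence no such zero $Z$ can exist and $\psi>0$ on $B_{r}(X)$. The only delicate point is the uniformity of $C^{*}$: the Lipschitz constant $C_{1}$ from Proposition \ref{Proposition: Lipschitz regularity for local minimizers} degenerates as $r'\to R_{0}$, so $C^{*}$ necessarily inherits a dependence on the separation between $B_{r}(X)$ and $\partial B_{R_{0}}(X_{0})$ that is built into the notation $\subset\subset$; once this separation is fixed, both $C_{1}$ and $K(X_{0})$ are admissible and no further machinery is required.
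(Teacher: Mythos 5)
Your proof is correct and follows the same strategy as the paper's: assume a zero of $\psi$ inside the ball, use the Lipschitz bound from Proposition \ref{Proposition: Lipschitz regularity for local minimizers} to control $\psi$ on $\partial B_{r}(X)$, and derive a contradiction with \eqref{Formula: Lingro(1)}. In fact your write-up is more careful than the paper's (the paper's displayed inequality compares $\psi(X_{1})$ with $\psi(X_{0})$, both of which vanish, and is best read as a terse placeholder for the averaging argument you spell out), and your closing remark about the dependence of $C^{*}$ on the separation from $\partial B_{R_{0}}(X_{0})$ correctly identifies the only subtlety.
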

\begin{proof}
	Assume for the sake of contradiction that there exists a point $X_{1}=(x_{1},y_{1})\in B_{R_{0}}(X_{0})\cap\{\psi=0\}$. Then
	\begin{align*}
		\psi(X_{1})=\psi(X_{1})-\psi(X_{0})\leqslant Cx_{1}\sqrt{-y_{1}}r,
	\end{align*}
	which yields a contradiction to \eqref{Formula: Lingro(1)}.
\end{proof}
We now establish a non-degeneracy lemma.
\begin{lemma}[Non-degeneracy]\label{Lemma: Non-degeneracy of minimizers}
	Let $X_{0}=(x_{0},y_{0})\in N_{\psi}$ and let $\psi$ be a local minimizer of $J$ in $B_{R_{0}}(X_{0})$, then for any $\kappa\in(0,1)$ there exists a constant $c_{\kappa}^{*}$ such that for any small $B_{r}(X)\subset B_{R_{0}}(X_{0})$ with center $X=(x,y)$ and $r\leqslant\min\{\tfrac{x}{2},\tfrac{R_{0}}{2},\tfrac{c_{\kappa}^{*}\sqrt{-y-\kappa}}{F_{0}},\tfrac{1}{2}\}$, such that 
	\begin{align}\label{Formula: ND(0)}
		\frac{1}{r}\left(\fint_{B_{r}(X)}\psi^{2}\:dX\right)^{1/2}\leqslant c_{\kappa}^{*}x\sqrt{-y-\kappa r},
	\end{align}
    implies
    \begin{align*}
    	\psi(x,y)=0\quad\text{ in }B_{\kappa r}(X).
    \end{align*}
\end{lemma}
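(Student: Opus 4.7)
My plan is to argue by contradiction, following the classical Alt--Caffarelli non-degeneracy scheme adapted to the weighted operator $\operatorname{div}(\tfrac{1}{x}\nabla\cdot)$ and the semilinear source $xf(\psi)$. Suppose toward contradiction that $\psi(Y_{0})>0$ for some $Y_{0}\in B_{\kappa r}(X)$. By continuity (Proposition~\ref{Proposition: properties for local minimizers}(1)) the set $E:=\{\psi>0\}\cap B_{\kappa r}(X)$ is open of positive measure, and the Lipschitz estimate \eqref{Formula: Lipschitz estimate for minimizer} gives the quantitative lower bound $|E|\geqslant c\,\psi(Y_{0})^{2}/(x^{2}(-y))$.

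The natural competitor is $\tilde\psi:=0$ on $B_{\kappa r}(X)$, $\tilde\psi:=\psi$ outside $B_{r}(X)$, and on the annulus $A:=B_{r}(X)\setminus\overline{B_{\kappa r}(X)}$, $\tilde\psi:=v$ where $v$ is the unique solution of $\operatorname{div}(\tfrac{1}{x}\nabla v)+xf(v)=0$ on $A$ with $v=0$ on $\partial B_{\kappa r}(X)$ and $v=\psi$ on $\partial B_{r}(X)$. Because $\psi$ is a weighted subsolution of the same PDE by Proposition~\ref{Proposition: properties for local minimizers}(3) and $f'\leqslant 0$ via \eqref{Formula: assumptions on f}, the difference $\psi-\tilde\psi$ satisfies a linear weighted elliptic inequality with a non-negative zero-order coefficient (via $f(\psi)-f(\tilde\psi)=f'(\xi)(\psi-\tilde\psi)$); the weak maximum principle therefore delivers $0\leqslant\tilde\psi\leqslant\psi$ on $B_{r}(X)$, whence $I_{\{\psi>0\}}-I_{\{\tilde\psi>0\}}=I_{\{\psi>0\}}\chi_{B_{\kappa r}(X)}$.

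Expanding $J(\psi)\leqslant J(\tilde\psi)$, applying the identity $|\nabla\psi|^{2}-|\nabla\tilde\psi|^{2}=|\nabla(\psi-\tilde\psi)|^{2}+2\nabla\tilde\psi\cdot\nabla(\psi-\tilde\psi)$, testing the subsolution inequality for $\psi$ against $\psi-\tilde\psi\geqslant 0$, and integrating by parts against the equation for $\tilde\psi$ on $A$ (the boundary contribution on $\partial B_{\kappa r}(X)$ has the favourable sign by Hopf's lemma, since $\tilde\psi$ is a positive strong solution in $A$ vanishing on $\partial B_{\kappa r}(X)$) yields, after combining with $F(\psi)-F(\tilde\psi)\leqslant F_{0}(\psi-\tilde\psi)$ and Cauchy--Schwarz, the compact estimate
\begin{align*}
    \int_{B_{r}(X)}\tfrac{1}{x}|\nabla(\psi-\tilde\psi)|^{2}\,dX+\int_{B_{\kappa r}(X)}(-xy)I_{\{\psi>0\}}\,dX\leqslant CF_{0}\,x\,r\,\|\psi\|_{L^{2}(B_{r}(X))}.
\end{align*}
The left side is at least $\tfrac{x}{2}(-y-\kappa r)|E|$; the right side, under the smallness hypothesis and the size restriction $F_{0}r\leqslant c_{\kappa}^{*}\sqrt{-y-\kappa r}$, is at most $C(c_{\kappa}^{*})^{2}x^{2}(-y-\kappa r)r^{2}$. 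Thus $|E|\leqslant C'(c_{\kappa}^{*})^{2}x\,r^{2}$; combined with the Lipschitz lower bound on $|E|$ this forces $\sup_{B_{\kappa r}(X)}\psi\leqslant C''c_{\kappa}^{*}x^{3/2}\sqrt{-y}\,r$.

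The final step iterates this sup control on a geometric sequence of shrinking balls: the preceding $L^{\infty}$ bound feeds back into the $L^{2}$-mean hypothesis at the scale $\kappa r$, producing a decay of $\sup_{B_{\kappa^{n}r}(X)}\psi$ that vanishes as $n\to\infty$, provided $c_{\kappa}^{*}$ is chosen small enough (its dependence on $\kappa$ arising through the $(1-\kappa)^{-1}$-growth of the Hopf/Schauder constants on the narrow annulus $A$); translating the center across $B_{\kappa r}(X)$ and applying the same iteration then propagates $\psi=0$ throughout $B_{\kappa r}(X)$, contradicting $\psi(Y_{0})>0$. The main technical obstacle is the Dirichlet lower bound with the correct sign of the Hopf boundary contribution on $\partial B_{\kappa r}(X)$, together with the careful bookkeeping of the $\kappa$-dependence of the elliptic constants along the iteration; the weighted factor $1/x$ is otherwise harmless thanks to the two-sided bound $3x_{0}/4\leqslant x\leqslant 5x_{0}/4$ on $B_{R_{0}}(X_{0})$.
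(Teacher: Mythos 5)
Your overall blueprint (compare $\psi$ against a competitor that vanishes on $B_{\kappa r}(X)$ and solves the PDE in the annulus, then absorb) is the Alt--Caffarelli idea, but the comparison claim that drives it is false. You assert that because $\psi$ is a weak subsolution and $f'\leqslant 0$, the maximum principle gives $0\leqslant\tilde\psi\leqslant\psi$ on $B_r(X)$. Check the direction: set $w:=\psi-\tilde\psi$. Then $w$ satisfies $\operatorname{div}(\tfrac{1}{x}\nabla w)-|x\,f'(\xi)|\,w\geqslant 0$, i.e.\ $w$ is a \emph{subsolution} of a linear operator with the correct zero-order sign. For subsolutions the maximum principle controls $\max w$ by boundary values, not $\min w$; it gives no lower bound. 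Indeed the estimate $\tilde\psi\leqslant\psi$ can already fail for the Laplacian: take $\psi(X)=|X|^2-1$ on $B_2\setminus B_1$ (subharmonic, $\psi=0$ on $|X|=1$, $\psi=3$ on $|X|=2$) and $v$ harmonic with the same boundary data; then $v(|X|)=3\log|X|/\log 2$ exceeds $\psi$ throughout the middle of the annulus. So you cannot conclude $\{\tilde\psi>0\}\subseteq\{\psi>0\}$, and the identity $I_{\{\psi>0\}}-I_{\{\tilde\psi>0\}}=I_{\{\psi>0\}}\chi_{B_{\kappa r}(X)}$ on which the rest of the estimate depends is unjustified. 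The paper sidesteps exactly this by taking $\Phi:=\min\{\psi_r,\phi\}$ with the barrier $\phi$ solving the PDE on $B_{\sqrt{\kappa}}\setminus B_\kappa$, which makes $\Phi\leqslant\psi_r$ true by construction rather than by a comparison principle, while still allowing integration by parts against the equation for $\phi$ on the set $\{\Phi=\phi\}$.

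There are two other divergences worth noting. First, you rely on Hopf's lemma for the sign of the boundary term; the paper instead uses a crude interior $C^1$ elliptic estimate for $\phi$ on $\partial B_\kappa$, $\sup|\nabla\phi|\leqslant C(\varepsilon+rF_0)$, which is all that is needed and avoids strong-maximum-principle hypotheses. Second, your final iteration on a geometric sequence of shrinking balls to propagate $\psi\equiv 0$ is more machinery than the problem requires and is not clearly closed (you would need the re-centered $L^2$-mean hypothesis to be preserved at each scale). The paper closes in one step: after the absorption they obtain $\int_{B_\kappa(0)}|\nabla\psi_r|^2+(x_1+rx)(-y_1-ry)I_{\{\psi_r>0\}}\,dX\leqslant \eta\int_{B_\kappa(0)}(\cdots)\,dX$ with $\eta<1$ for $c_\kappa^*$ small, so the integral vanishes identically and $\psi_r\equiv 0$ on $B_\kappa(0)$ directly.
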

\begin{proof}
	Let $X_{1}\in B_{R_{0}}(X_{0})$ and let us consider the function
	\begin{align*}
		\psi_{r}(X):=\frac{\psi(X_{1}+rX)}{r x_{1}},
	\end{align*}
	for some $r>0$ so that $\{X_{1}+rX\colon X\in B_{1}(0)\}\subset B_{R_{0}}(X_{0})$. Then $\psi_{r}$ satisfies
	\begin{align*}
		\operatorname{div}\left(\frac{x_{1}\nabla\psi_{r}}{x_{1}+rx}\right)+r(x_{1}+rx)f(rx_{1}\psi_{r})\geqslant0\quad\text{ in }B_{1}(0).
	\end{align*}
    Denote
    \begin{align}\label{Formula: ND(1)}
    	\varepsilon:=\sup_{B_{\sqrt{\kappa}}(0)}\psi_{r},
    \end{align}
    then it follows from Theorem 8.17 in \cite{GT1998} that
    \begin{align*}
    	\varepsilon\leqslant C\left[\left(\fint_{B_{1}(0)}\psi_{r}^{2}\:dX\right)^{1/2}+rF_{0}\right].
    \end{align*}
    It follows from \eqref{Formula: ND(0)} that
    \begin{align}\label{Formula: ND(2(1))}
    	\left(\fint_{B_{1}(0)}\psi_{r}^{2}\:dX\right)^{1/2}\leqslant c_{\kappa}^{*}(x_{1}+rx)\sqrt{-y_{1}-\kappa r}\leqslant c_{\kappa}^{*}C(x_{0})\sqrt{-y_{1}-\kappa r}.
    \end{align}
    Here $C(x_{0})$ is a constant depending only on the point $X_{0}$. Let $\phi$ be a function defined by
    \begin{align}\label{Formula: ND(3)}
    	\begin{cases}
    		\begin{alignedat}{3}
    			\operatorname{div}\left(\frac{x_{1}\nabla\phi}{x_{1}+rx}\right)+r(x_{1}+rx)f(rx_{1}\phi)&=0\quad&&\text{ in }B_{\sqrt{\kappa}}(0)\setminus B_{\kappa}(0),\\
    			\phi&=0\quad&&\text{ in }B_{\kappa}(0),\\
    			\phi&=\varepsilon\quad&&\text{ outside }B_{\sqrt{\kappa}}(0).
    		\end{alignedat}
    	\end{cases}
    \end{align}
    Set $\Phi:=\min\{\psi_{r},\phi\}$ and by the minimality of $\psi$, one has
    \begin{align*}
    	&\int_{B_{\sqrt{\kappa}}(0)}\frac{|x_{1}\nabla\psi_{r}|^{2}}{x_{1}+rx}-2(x_{1}+rx)F(rx_{1}\psi_{r})+(x_{1}+rx)(-y_{1}-ry)I_{\{\psi_{r}>0\}}\:dX\\
    	&\leqslant\int_{B_{\sqrt{\kappa}}(0)}\frac{|x_{1}\nabla\Phi|^{2}}{x_{1}+rx}-2(x_{1}+rx)F(rx_{1}\Phi)+(x_{1}+rx)(-y_{1}-ry)I_{\{\Phi>0\}}\:dX.
    \end{align*}
    Since $\phi=0$ in $B_{\kappa}(0)$, we have $\Phi=0$ in $B_{\kappa}(0)$. Therefore,
    \begin{align*}
    	&\int_{B_{\kappa}(0)}\frac{|x_{1}\nabla\psi_{r}|^{2}}{x_{1}+rx}-2(x_{1}+rx)F(r x_{1}\psi_{r})+(x_{1}+r x)(-y_{1}-r y)I_{\{\psi_{r}>0\}}\:dX\\
    	&\leqslant\int_{B_{\sqrt{\kappa}}(0)\setminus B_{\kappa}(0)}\left(\frac{|x_{1}\nabla\Phi|^{2}}{x_{1}+rx}-\frac{|x_{1}\nabla\psi_{r}|^{2}}{x_{1}+rx}\right)-2(x_{1}+rx)(F(rx_{1}\Phi)-F(rx_{1}\psi_{r}))\:dX.
    \end{align*}
    Here we were using the fact $I_{\{\Phi>0\}}-I_{\{\psi_{r}>0\}}\leqslant0$, since outside the ball $B_{\kappa}(0)$ and $\Phi=0$ whenever $\psi_{r}=0$. Since $\int_{B_{\sqrt{\kappa}}(0)\setminus B_{\kappa}(0)}\frac{|x_{1}\nabla(\Phi-\psi_{r})|^{2})}{x_{1}+\rho x}\:dX\geqslant0$, we have
    \begin{align}\label{Formula: ND(3(1))}
    	\begin{alignedat}{6}
    		&\int_{B_{\sqrt{\kappa}}(0)\setminus B_{\kappa}(0)}\left(\frac{|x_{1}\nabla\Phi|^{2}}{x_{1}+rx}-\frac{|x_{1}\nabla\psi_{r}|^{2}}{x_{1}+rx}\right)-2(x_{1}+rx)(F(rx_{1}\Phi)-F(rx_{1}\psi_{r}))\:dX\\
    		&\leqslant\int_{B_{\sqrt{\kappa}}(0)\setminus B_{\kappa}(0)}\frac{2x_{1}\nabla\Phi}{x_{1}+rx}(x_{1}\nabla(\Phi-\psi_{r}))-2r(x_{1}+rx)F'(rx_{1}\Phi)(x_{1}(\Phi-\psi_{r}))\:dX\\
    		&=\int_{B_{\sqrt{\kappa}}(0)\setminus B_{\kappa}(0)}\frac{2x_{1}\nabla\phi}{x_{1}+rx}(x_{1}\nabla(\Phi-\psi_{r}))-2r(x_{1}+rx)F'(rx_{1}\phi)(x_{1}(\Phi-\psi_{r}))\:dX\\
    		&\leqslant\int_{\partial(B_{\sqrt{\kappa}}(0)\setminus B_{\kappa}(0))}\frac{2x_{1}^{2}}{x_{1}+rx}(\Phi-\psi_{r})\pd{\phi}{\nu}\:d\mathcal{H}^{1},
    	\end{alignedat}
    \end{align}
    where $\nu$ is the unit normal vector. Recalling \eqref{Formula: ND(3)}, it is easy to deduce from the elliptic estimates that
    \begin{align*}
    	\sup_{\partial B_{\kappa}(0)}|\nabla\phi|\leqslant C(x_{0},\kappa,F_{0})(\varepsilon+rF_{0}).
    \end{align*}
    On the other hand, it follows from  $F''(z)\leqslant0$ for all $z\in\mathbb{R}$ that
    \begin{align*}
    	F(rx_{1}\psi_{r})\leqslant F'(0)rx_{1}\psi_{r}\leqslant F_{0}rx_{1}\psi_{r},
    \end{align*}
    which gives
    \begin{align}\label{Formula: ND(3(2))}
    	\begin{alignedat}{2}
    		&\int_{B_{\kappa}(0)}\frac{|x_{1}\nabla\psi_{r}|^{2}}{x_{1}+rx}-2(x_{1}+rx)F_{0}rx_{1}\psi_{r}+(x_{1}+rx)(-y_{1}-ry)I_{\{\psi_{r}>0\}}\:dX\\
    		&\leqslant\int_{B_{\kappa}(0)}\frac{|x_{1}\nabla\psi_{r}|^{2}}{x_{1}+rx}-2(x_{1}+rx)F(rx_{1}\psi_{r})+(x_{1}+rx)(-y_{1}-ry)I_{\{\psi_{r}>0\}}\:dX.
    	\end{alignedat}
    \end{align}
    Consequently, with the aid of trace theorem, it follows from \eqref{Formula: ND(3(1))} and \eqref{Formula: ND(3(2))} that 
    \begin{align}\label{Formula: ND(4)}
    	\begin{alignedat}{7}
    		&\int_{B_{\kappa}(0)}|\nabla\psi_{r}|^{2}+I_{\{\psi_{r}>0\}}(x_{1}+rx)(-y_{1}-ry)\:dX\\
    		&\leqslant C(x_{0})\int_{B_{\kappa}(0)}\frac{|x_{1}\nabla\psi_{r}|^{2}}{x_{1}+rx}-2(x_{1}+rx)F(rx_{1}\psi_{r})+(x_{1}+rx)(-y_{1}-ry)I_{\{\psi_{r}>0\}}\:dX\\
    		&\qquad+C(x_{0})\int_{B_{\kappa}(0)}F_{0}r\psi_{r}\:dX\\
    		&\leqslant C(x_{0},F_{0})\left(\int_{\partial B_{\kappa}(0)}(\Phi-\psi_{r})\pd{\phi}{\nu}\:d\mathcal{H}^{1}+\int_{B_{\kappa}(0)}\psi_{r}\:dX\right)\\
    		&\leqslant C(x_{0},\kappa,F_{0})(\varepsilon+rF_{0})\left(\int_{\partial B_{\kappa}(0)}\psi_{r}\:d\mathcal{H}^{1}+\int_{B_{\kappa}(0)}\psi_{r}\:dX\right)\\
    		&\leqslant C(x_{0},\kappa,F_{0})\varepsilon_{0}\left(\int_{B_{\kappa}(0)}\psi_{r}I_{\{\psi_{r}>0\}}\:dX+C\int_{B_{\kappa}(0)}|\nabla\psi_{r}|I_{\{\psi_{r}>0\}}\:dX\right),
    	\end{alignedat}
    \end{align}
    where $\varepsilon_{0}:=\varepsilon+\rho F_{0}$. Therefore,
    \begin{align*}
    	\begin{alignedat}{4}
    		&\int_{B_{\kappa}(0)}\psi_{r}I_{\{\psi_{r}>0\}}\:dX+C\int_{B_{\kappa}(0)}|\nabla\psi_{r}|I_{\{\psi_{r}>0\}}\:dX\\
    		&\leqslant\frac{\varepsilon_{0}}{C(x_{0})(-y_{1}-\kappa r)}\int_{B_{\kappa}(0)}(x_{1}+rx)(-y_{1}-ry)I_{\{\psi_{r}>0\}}\:dX\\
    		&\qquad+\frac{C}{C(x_{0})\sqrt{-y_{1}-\kappa r}}\int_{B_{\kappa}(0)}|\nabla\psi_{r}|^{2}+(x_{1}+rx)(-y_{1}-ry)I_{\{\psi_{r}>0\}}\:dX\\
    		&\leqslant\bar{C}\int_{B_{\kappa}(0)}|\nabla\psi_{r}|^{2}+(x_{1}+rx)(-y_{1}-ry)I_{\{\psi_{r}>0\}}\:dX,
    	\end{alignedat}
    \end{align*}
    where
    \begin{align*}
    	\bar{C}=C(x_{0})\frac{1}{\sqrt{(-y_{1}-\kappa r)}}\left(\frac{\varepsilon_{0}}{\sqrt{(-y_{1}-\kappa r)}}+1\right).
    \end{align*}
    This together with \eqref{Formula: ND(4)} gives
    \begin{align}\label{Formula: ND(6)}
    	\begin{alignedat}{2}
    		&\int_{B_{\kappa}(0)}|\nabla\psi_{r}|^{2}+(x_{1}+rx)(-y_{1}-ry)I_{\{\psi_{r}>0\}}\:dX\\
    		&\leqslant\frac{C(x_{0},\kappa,F_{0})\varepsilon_{0}}{\sqrt{(-y_{1}-\kappa r)}}\left(\frac{\varepsilon_{0}}{\sqrt{-y_{1}-\kappa r}}+1\right)\int_{B_{\kappa}(0)}|\nabla\psi_{r}|^{2}+(x_{1}+rx)(-y_{1}-ry)I_{\{\psi_{r}>0\}}\:dX.
    	\end{alignedat}
    \end{align}
    With the aid of \eqref{Formula: ND(1)} and \eqref{Formula: ND(2(1))}, one has
    \begin{align*}
    	&\frac{\varepsilon_{0}}{\sqrt{(-y_{1}-\kappa r)}}=\frac{\varepsilon+rF_{0}}{\sqrt{(-y_{1}-\kappa r)}}\\
    	&\leqslant C(x_{0},\kappa,F_{0})\left[\frac{1}{\sqrt{(-y_{1}-\kappa r)}}\left(\fint_{B_{1}(0)}\psi^{2}\:dX\right)^{1/2}+\frac{r F_{0}}{\sqrt{(-y_{1}-\kappa r)}}\right]\\
    	&\leqslant C(x_{0},\kappa,F_{0})c_{\kappa}^{*}.
    \end{align*}
    This implies that $\tfrac{\varepsilon_{0}}{\sqrt{-y_{1}-\kappa r}}$ is small enough, provided that $c_{\kappa}^{*}$ is small enough. This together with \eqref{Formula: ND(6)} implies that
    \begin{align*}
    	\int_{B_{\kappa}(0)}|\nabla\psi_{r}|^{2}+(x_{1}+rx)(-y_{1}-ry)I_{\{\psi_{r}>0\}}\:dX=0,
    \end{align*}
    for sufficiently small $c_{\kappa}^{*}$. This implies directly that $\psi_{r}=0$ in $B_{\kappa}(0)$, if $c_{\kappa}^{*}$ is small enough.
\end{proof}
\begin{remark}
	 Despite our considerations in this paper being focused on free boundaries that are away from the axis of symmetry, we cannot just ignore the axis of symmetry and deal the problem like the two-dimensional case (such as \cite{CD2020}). Actually, the axis still makes for a drastic difference to the classical two-dimensional problems. For instance, the non-degeneracy property we have established in the previous lemma is entirely different from that in \cite{CD2020}, based on a simple fact that we do not have the mean value property (which holds only for the Laplacian) for a general linear elliptic operator.
\end{remark}
\section{Homogeneity of blow-up limits}\label{Section: homogeneity of minimizers}
This section aims to the local behavior of local minimizers near the non-degenerate points. Let $X_{0}=(x_{0},y_{0})\in N_{\psi}$ be a non-degenerate point, let $\psi$ be a local minimizer of $J$ in $B_{R_{0}}(X_{0})$, and we consider the following blow-up sequence, 
\begin{align}\label{Formula: BA(1)}
	\psi_{n}(X):=\frac{\psi(X_{0}+\rho_{n}X)}{\rho_{n}},
\end{align}
where $\rho_{n}\to0$ as $n\to\infty$. It follows from Proposition \eqref{Proposition: Lipschitz regularity for local minimizers} that
\begin{align*}
	|\nabla\psi_{n}(X)|=|\nabla\psi(X_{0}+\rho_{n}X)|\leqslant C(x_{0}+\rho_{n}x)\sqrt{-(y_{0}+\rho_{n}y)}\leqslant C(X_{0}).
\end{align*}
Since $\psi_{n}(0)=0$, we have by a diagonal argument and the theorem of Ascoli-Arzela that there exists a non-relabel subsequence, still denoted by $\psi_{n}$, and a function
\begin{align}\label{Formula: BA(2)}
	\psi_{0}\in W_{\mathrm{loc}}^{1,\infty}(\mathbb{R}^{2}),
\end{align}
such that 
\begin{align}\label{Formula: BA(2(1))}
	\psi_{n}\to\psi_{0}\quad\text{ in }\quad C_{\mathrm{loc}}^{\alpha}(\mathbb{R}^{2})\quad\text{ for some }\quad\alpha\in(0,1),
\end{align}
and
\begin{align*}
	\nabla\psi_{n}\stackrel{*}{\rightharpoonup}\nabla\psi_{0}\quad\text{ in }\quad L_{\mathrm{loc}}^{\infty}(\mathbb{R}^{2}).
\end{align*}
\begin{figure}[ht]

\tikzset{every picture/.style={line width=0.75pt}} %set default line width to 0.75pt        

\begin{tikzpicture}[x=0.75pt,y=0.75pt,yscale=-1,xscale=1]
%uncomment if require: \path (0,300); %set diagram left start at 0, and has height of 300

%Shape: Ellipse [id:dp06351606472762361] 
\draw  [line width=0.75]  (146.79,153.22) .. controls (146.79,138.72) and (159.18,126.97) .. (174.46,126.97) .. controls (189.74,126.97) and (202.13,138.72) .. (202.13,153.22) .. controls (202.13,167.72) and (189.74,179.47) .. (174.46,179.47) .. controls (159.18,179.47) and (146.79,167.72) .. (146.79,153.22) -- cycle ;
%Curve Lines [id:da6406356897441443] 
\draw [line width=0.75]    (161.02,194.87) .. controls (200.68,187.01) and (151.83,127.08) .. (182.75,110.93) ;
%Straight Lines [id:da9634266211094158] 
\draw [line width=0.75]    (174.46,153.22) ;
\draw [shift={(174.46,153.22)}, rotate = 0] [color={rgb, 255:red, 0; green, 0; blue, 0 }  ][fill={rgb, 255:red, 0; green, 0; blue, 0 }  ][line width=0.75]      (0, 0) circle [x radius= 3.35, y radius= 3.35]   ;
%Straight Lines [id:da5230228711287246] 
\draw [line width=0.75]    (155.41,172.13) -- (174.46,153.22) ;
%Shape: Ellipse [id:dp11046677036260943] 
\draw  [line width=0.75]  (309.58,152.05) .. controls (309.58,129.1) and (329.19,110.5) .. (353.39,110.5) .. controls (377.58,110.5) and (397.19,129.1) .. (397.19,152.05) .. controls (397.19,175) and (377.58,193.6) .. (353.39,193.6) .. controls (329.19,193.6) and (309.58,175) .. (309.58,152.05) -- cycle ;
%Curve Lines [id:da945276547395618] 
\draw [line width=0.75]    (338.71,208.26) .. controls (372.32,173.41) and (334.23,140.25) .. (371.42,101.15) ;
%Straight Lines [id:da665117297320637] 
\draw [line width=0.75]    (353.39,152.05) ;
\draw [shift={(353.39,152.05)}, rotate = 0] [color={rgb, 255:red, 0; green, 0; blue, 0 }  ][fill={rgb, 255:red, 0; green, 0; blue, 0 }  ][line width=0.75]      (0, 0) circle [x radius= 3.35, y radius= 3.35]   ;
%Straight Lines [id:da2347668891907757] 
\draw [line width=0.75]    (323.02,182.34) -- (353.39,152.05) ;
%Straight Lines [id:da44595824389867755] 
\draw [line width=0.75]    (223.53,151.94) -- (288.76,152.15) ;
\draw [shift={(290.76,152.16)}, rotate = 180.18] [color={rgb, 255:red, 0; green, 0; blue, 0 }  ][line width=0.75]    (10.93,-3.29) .. controls (6.95,-1.4) and (3.31,-0.3) .. (0,0) .. controls (3.31,0.3) and (6.95,1.4) .. (10.93,3.29)   ;

% Text Node
\draw (178.75,143.77) node [anchor=north west][inner sep=0.75pt]    {$X_{0}$};
% Text Node
\draw (157.7,149.44) node [anchor=north west][inner sep=0.75pt]    {$r$};
% Text Node
\draw (188.35,97.16) node [anchor=north west][inner sep=0.75pt]    {$\psi $};
% Text Node
\draw (374.01,85.46) node [anchor=north west][inner sep=0.75pt]    {$\psi _{X_{0}}$};
% Text Node
\draw (324.19,154.55) node [anchor=north west][inner sep=0.75pt]    {$1$};
% Text Node
\draw (360.94,140.52) node [anchor=north west][inner sep=0.75pt]    {$0$};
% Text Node
\draw (121.37,174.95) node [anchor=north west][inner sep=0.75pt]    {$\psi  >0$};
% Text Node
\draw (179.18,176.65) node [anchor=north west][inner sep=0.75pt]    {$\psi =0$};
% Text Node
\draw (278.29,187.9) node [anchor=north west][inner sep=0.75pt]    {$\psi _{X_{0}}  >0$};
% Text Node
\draw (369.26,188.75) node [anchor=north west][inner sep=0.75pt]    {$\psi _{X_{0}} =0$};
% Text Node
\draw (226.9,130.77) node [anchor=north west][inner sep=0.75pt]   [align=left] {Blow-up};
% Text Node
\draw (131.14,207.53) node [anchor=north west][inner sep=0.75pt]    {$B_{r}( X_{0})$};
% Text Node
\draw (338.48,209.87) node [anchor=north west][inner sep=0.75pt]    {$B_{1}( 0)$};

\end{tikzpicture}
    \caption{The method of blow-up analysis.}
    \label{Fig: blow-up}
\end{figure}
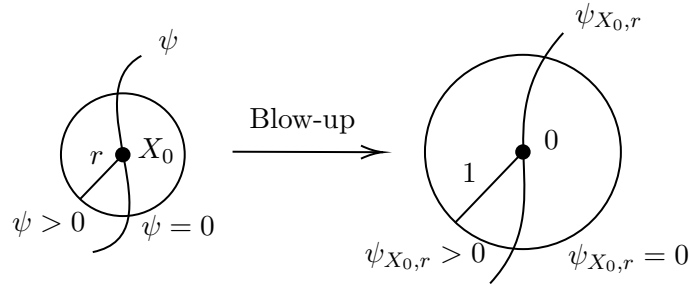
\begin{remark}
	The technique we employed above is called a \emph{blow-up analysis}. The idea is to employ a sequence of balls with vanishing radii to zoom in on any point so that the local behavior of such points can be grasped (please see Fig \ref{Fig: blow-up}). \caps{Alt and Caffarelli} first brought this approach \cite{AC1981} to scrutinize local minimizers.
\end{remark}
\begin{lemma}[Structure of the blow-up limits]\label{Lemma: Structure of the blow-up limits}
	Let $\psi_{n}$ and $\psi_{0}$ be defined as in \eqref{Formula: BA(1)} and \eqref{Formula: BA(2)}, then 
	\begin{enumerate}
		\item $\partial\{\psi_{n}>0\}\to\partial\{\psi_{0}>0\}$ locally in the Hausdorff distance of $\mathbb{R}^{2}$,
		\item $I_{\{\psi_{n}>0\}}\to I_{\{\psi_{0}>0\}}$ in $L_{\mathrm{loc}}^{1}(\mathbb{R}^{2})$,
		\item Every blow-up limit $\psi_{0}$ of $\psi_{n}$ is a global minimizer of
		\begin{align*}
			\mathcal{J}(\phi):=\int_{B_{1}(0)}\frac{|\nabla\phi|^{2}}{x_{0}}-x_{0}y_{0}I_{\{\phi>0\}}\:dX,\quad X=(x,y)\in\mathbb{R}^{2},
		\end{align*}
	    defined for all $\phi\in W^{1,2}(B_{1}(0))$ so that $\phi=\psi_{0}$ on $\partial B_{1}(0)$.
	    \item $\psi_{n}$ converges to $\psi_{0}$ strongly in $W_{\mathrm{loc}}^{1,2}(\mathbb{R}^{2})$.
	\end{enumerate}
\end{lemma}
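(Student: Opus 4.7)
The plan is to combine the $C^{\alpha}_{\mathrm{loc}}$ convergence $\psi_{n}\to\psi_{0}$ already in hand with the linear growth and non-degeneracy estimates of Lemma~\ref{Lemma: Optimal linear growth} and Lemma~\ref{Lemma: Non-degeneracy of minimizers}. These estimates transfer verbatim to the rescaled maps $\psi_{n}$ with constants that stay uniform in $n$, because $X_{0}\in N_{\psi}$ keeps both $x_{0}+\rho_{n}x$ and $-(y_{0}+\rho_{n}y)$ bounded above and away from zero on any fixed ball, and $\psi_{n}$ itself minimizes the natural rescaling $J_{n}$ of $J$ on $B_{R_{0}/\rho_{n}}(0)$. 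The Hausdorff convergence of free boundaries in (i) is then a two-sided dichotomy: if $\psi_{0}(Z_{0})>0$, uniform convergence gives $\psi_{n}>0$ near $Z_{0}$ for large $n$; conversely, if $\psi_{0}\equiv 0$ in some $B_{r}(Z_{0})$, uniform convergence forces $r^{-1}(\fint_{B_{r}(Z_{0})}\psi_{n}^{2}\:dX)^{1/2}\to 0$, and Lemma~\ref{Lemma: Non-degeneracy of minimizers} yields $\psi_{n}\equiv 0$ in $B_{\kappa r}(Z_{0})$ for large $n$. Claim~(ii) then follows from~(i) together with $\mathcal{L}^{2}(\partial\{\psi_{0}>0\})=0$, which is a standard density consequence of non-degeneracy and the uniform Lipschitz bound.

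For the global minimality in (iii), I would fix $R>0$ and any admissible $\phi\in W^{1,2}(B_{R}(0))$ with $\phi=\psi_{0}$ on $\partial B_{R}(0)$, and construct a competitor $\tilde\phi_{n}$ for $\psi_{n}$ on $B_{R}(0)$ by setting $\tilde\phi_{n}=\phi$ inside $B_{R-\delta}(0)$ and interpolating linearly to $\psi_{n}$ in the thin annulus $B_{R}\setminus B_{R-\delta}$, with $\delta$ sent to zero at the end. The rescaled minimality $J_{n}(\psi_{n};B_{R})\leqslant J_{n}(\tilde\phi_{n};B_{R})$ then has the following limit. On the left, the Dirichlet term passes by weak lower semicontinuity (the weight $1/(x_{0}+\rho_{n}x)$ converges uniformly to $1/x_{0}$), the potential term vanishes since $F(\rho_{n}\psi_{n})/\rho_{n}\to 0$ locally uniformly by the uniform Lipschitz bound of Proposition~\ref{Proposition: Lipschitz regularity for local minimizers}, and the Bernoulli term passes by claim~(ii). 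On the right, the Dirichlet integral on $B_{R-\delta}$ equals $\int_{B_{R-\delta}}|\nabla\phi|^{2}/(x_{0}+\rho_{n}x)\,dX$ which tends to $\int_{B_{R-\delta}}|\nabla\phi|^{2}/x_{0}\,dX$ by dominated convergence, and the annular contribution is $O(\delta)$ by the uniform Lipschitz bound on both $\phi$ and $\psi_{n}$. Letting first $n\to\infty$ and then $\delta\to 0$ yields $\mathcal{J}(\psi_{0};B_{R})\leqslant \mathcal{J}(\phi;B_{R})$.

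Finally, the strong $W^{1,2}_{\mathrm{loc}}$ convergence in~(iv) follows by specializing the argument in~(iii) to $\phi=\psi_{0}$: the resulting upper bound $\limsup_{n}\int_{B_{R}}|\nabla\psi_{n}|^{2}/(x_{0}+\rho_{n}x)\,dX\leqslant \int_{B_{R}}|\nabla\psi_{0}|^{2}/x_{0}\,dX$, combined with the matching weak-semicontinuous lower bound, gives convergence of weighted Dirichlet norms and hence strong convergence on every $B_{R}$. I expect the principal obstacle to lie in the annular cut-off bookkeeping in~(iii): the indicator term forbids the naive competitor $\phi_{n}=\psi_{n}+(\phi-\psi_{0})$ (since $\{\phi=0\}$ could have positive measure, in which case $I_{\{\phi_{n}>0\}}$ need not converge to $I_{\{\phi>0\}}$), so one must control both the Dirichlet energy and the indicator integral on $B_{R}\setminus B_{R-\delta}$ simultaneously and close the argument using the uniform Lipschitz estimate. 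A secondary subtlety is keeping the rescaled constants in Lemma~\ref{Lemma: Non-degeneracy of minimizers} uniform in $n$ when applying it to $\psi_{n}$, which is fine because these constants depend continuously on $(x_{0},y_{0})\in N_{\psi}$.
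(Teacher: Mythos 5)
Your outline tracks the paper's own proof in all its essentials: the dichotomy with the strong maximum principle for (i), the density-estimate route to (ii), a cutoff competitor construction for (iii), and convergence of weighted Dirichlet energies for (iv). Two small remarks on the differences. In (iii) your linear interpolation in a thin annulus $B_{R}\setminus B_{R-\delta}$ is a legitimate alternative to the paper's smooth cutoff $\eta$ sent to $1$; both bound the extra indicator and Dirichlet contributions by the measure of the transition layer. Your (iv) is arguably cleaner than the paper's: you obtain $\limsup_{n}\int\frac{|\nabla\psi_{n}|^{2}}{x_{0}+\rho_{n}x}\,dX\leqslant\int\frac{|\nabla\psi_{0}|^{2}}{x_{0}}\,dX$ by plugging $\phi=\psi_{0}$ into the minimality inequality used for (iii), whereas the paper integrates by parts against the equation for $\psi_{n}$ in $\{\psi_{n}>0\}$; your route avoids the boundary-term bookkeeping implicit in that argument.

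There is, however, one genuine gap. For (ii) you claim that $\mathcal{L}^{2}(\partial\{\psi_{0}>0\})=0$ is ``a standard density consequence of non-degeneracy and the uniform Lipschitz bound.'' That is not true as written. Non-degeneracy plus Lipschitz regularity give only the \emph{lower} density bound $\mathcal{L}^{2}(B_{r}(Z_{0})\cap\{\psi_{0}>0\})\geqslant c\,\mathcal{L}^{2}(B_{r}(Z_{0}))$; they are perfectly compatible with $\{\psi_{0}>0\}$ having full density at a free boundary point (for instance $\psi_{0}(X)=|X|$ is Lipschitz, non-degenerate, and vanishes only at the origin). The matching \emph{upper} density bound — which is what actually forces $\mathcal{L}^{2}(\partial\{\psi_{0}>0\})=0$ — requires the minimality of the $\psi_{n}$: assume $\mathcal{L}^{2}(B_{r}(Z_{n})\cap\{\psi_{n}=0\})\to 0$, replace $\psi_{n}$ by the solution $\Phi_{n}$ of the rescaled equation with the same boundary data on $B_{r}(Z_{n})$, and use minimality (as in \eqref{Formula: Lip(4(1))}) to conclude $\int_{B_{r}}|\nabla(\Phi_{n}-\psi_{n})|^{2}\,dX\leqslant C\,\mathcal{L}^{2}(B_{r}\cap\{\psi_{n}=0\})\to 0$. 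In the limit $\psi_{0}$ would then be harmonic, non-negative, and vanish at $Z_{0}$, hence $\equiv 0$ on $B_{r}(Z_{0})$ by the strong maximum principle, contradicting the non-degeneracy of the blow-up limit at the free boundary point $Z_{0}$. You should insert this harmonic-replacement step explicitly rather than attribute the upper bound to the Lipschitz estimate.
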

\begin{proof}
	(1). Let $Z_{0}=(\eta_{0},\zeta_{0})\in\mathbb{R}^{2}$ and let $B_{r}(Z_{0})$ be such that $B_{r}(Z_{0})\cap\partial\{\psi_{0}>0\}=\varnothing$. Then there are only two possibilities: either $\psi_{0}>0$ in $B_{r}(Z_{0})$ or $\psi_{0}\equiv0$ in $B_{r}(Z_{0})$. In the first case, since $\psi_{n}\to\psi_{0}$ in $C^{\alpha}(\bar{B}_{r/2}(Z_{0}))$ and $\inf_{\bar{B}_{r/2}(Z_{0})}\psi_{0}>0$, we have that $\psi_{n}>0$ in $\bar{B}_{r/2}(Z_{0})$ for all $n$ sufficiently large, consequently, $B_{r/2}(Z_{0})\cap\partial\{\psi_{n}>0\}=\varnothing$ for all $n$ sufficiently large. In the second case $\psi_{0}\equiv0$ in $B_{r}(Z_{0})$, since $\psi_{n}\to\psi_{0}$ in $C^{\alpha}(\bar{B}_{3r/4}(Z_{0}))$, we have that for all $\delta>0$ there exists $N_{\delta}\in\mathbb{N}$ such that
	\begin{align*}
		|\psi_{n}(Z)|\leqslant\delta,
	\end{align*}
    for all $Z\in\bar{B}_{3r/4}(Z_{0})$ and for all $n\geqslant N_{\delta}$. Hence, if $\delta=\delta(r)$ is sufficiently small, we have by rescaling that
    \begin{align*}
    	\frac{1}{\tfrac{3}{4}r\rho_{n}}\left(\fint_{B_{\tfrac{3}{4}r\rho_{n}}(X_{0}+\rho_{n}Z_{0})}\psi^{2}\:dX\right)^{1/2}\leqslant\frac{4\delta}{3r}\leqslant c_{\tfrac{1}{2}}^{*}x_{0}\sqrt{-y_{0}-\tfrac{1}{2}r},
    \end{align*}
    for all $n\geqslant N_{\delta}$. Here $c^{*}_{\tfrac{1}{2}}$ is the constant in Lemma \ref{Lemma: Non-degeneracy of minimizers}. It then follows from Lemma \ref{Lemma: Non-degeneracy of minimizers} that $\psi_{n}\equiv0$ in $B_{\tfrac{3}{8}r}(Z_{0})$. Hence, also in this case,
    \begin{align*}
    	B_{\tfrac{1}{4}r}(Z_{0})\cap\partial\{\psi_{n}>0\}=\varnothing,
    \end{align*}
    for all $n$ sufficiently large.
    
    On the other hand, if $B_{r}(Z_{0})\subset\mathbb{R}^{2}$ such that $B_{r}(Z_{0})\cap\partial\{\psi_{n}>0\}=\varnothing$, then up to a further subsequence, we may assume that either $\psi_{n}>0$ in $B_{r}(Z_{0})$ for all $n\in\mathbb{N}$ or $\psi_{n}\equiv0$ in $B_{r}(Z_{0})$. In the first case, we have that 
    \begin{align*}
    	\operatorname{div}\left(\frac{\nabla\psi_{n}}{x_{0}+\rho_{n}x}\right)+\rho_{n}(x_{0}+\rho_{n}x)f(\rho_{n}\psi_{n})=0\quad\text{ in }B_{r}(Z_{0}),
    \end{align*}
    which implies that
    \begin{align}\label{Formula: BA(4)}
    	\Delta\psi_{0}=0\quad\text{ in }B_{r/2}(Z_{0}),\qquad\psi_{0}\geqslant0\quad\text{ in }B_{r/2}(Z_{0}).
    \end{align} 
    The strong maximum principle gives that $\psi_{0}\equiv0$ or $\psi_{0}>0$ in $B_{r/2}(Z_{0})$, which implies that
    \begin{align}\label{Formula: BA(5)}
    	B_{r/4}(Z_{0})\cap\partial\{\psi_{0}>0\}=\varnothing.
    \end{align}
    Finally, if $\psi_{n}\equiv0$ in $B_{r}(Z_{0})$ for all $n\in\mathbb{N}$, then $\psi_{0}\equiv0$ in $B_{r}(Z_{0})$ and \eqref{Formula: BA(5)} continuous to hold. 
    
    A standard compactness argument shows that $\partial\{\psi_{n}>0\}\to\partial\{\psi_{0}>0\}$ locally in the Hausdorff distance in $\mathbb{R}^{2}$.
    
    (2). Let $Z_{0}=(\eta_{0},\zeta_{0})\in\partial\{\psi_{0}>0\}$. Since $\partial\{\psi_{n}>0\}\to\partial\{\psi_{0}>0\}$ locally in the Hausdorff distance, there exists a sequence $Z_{n}\in\partial\{\psi_{n}>0\}$, $Z_{n}=(\eta_{n},\zeta_{n})\in\mathbb{R}^{2}$ so that $Z_{n}\to Z_{0}$ and $X_{0}+\rho_{n}Z_{n}\in \partial\{\psi>0\}$. The non-degeneracy (Proposition \ref{Proposition: Lipschitz regularity for local minimizers}) gives
    \begin{align*}
    	\frac{1}{s}\left(\fint_{B_{s}(X_{0}+\rho_{n}Z_{n})}\psi^{2}dX\right)^{1/2}\geqslant c_{\tfrac{1}{2}}^{*}(x_{0}+\rho_{n}\eta_{n})\sqrt{-y_{0}-\rho_{n}\zeta_{n}-\frac{1}{2}s},
    \end{align*}
    for all $s>0$ sufficiently small. Taking $s=r\rho_{n}$ and changing variables, we get
    \begin{align*}
    	\frac{1}{r}\left(\fint_{B_{r}(Z_{n})}\psi_{n}^{2}dX\right)^{1/2}\geqslant c_{\tfrac{1}{2}}^{*}(x_{0}+\rho_{n}\eta_{n})\sqrt{-y_{0}-\rho_{n}\zeta_{n}-\frac{1}{2}r}.
    \end{align*}
	Hence
    \begin{align}\label{Formula: BA(7)}
    	\frac{1}{r}\left(\fint_{B_{r}(Z_{0})}\psi_{0}^{2}\:dX\right)^{1/2}\geqslant c_{\tfrac{1}{2}}^{*}x_{0}\sqrt{-y_{0}-\frac{1}{2}r}.
    \end{align}
    We now claim that 
    \begin{align*}
    	\mathcal{L}^{2}(\partial\{\psi_{0}>0\})=0.
    \end{align*}
    To this end, it suffices to prove that there exists a positive constant $c\in(0,1)$ so that
    \begin{align}\label{Formula: BA(9)}
    	c<\frac{\mathcal{L}^{2}(B_{r}(Z_{0})\cap\{\psi_{0}>0\})}{\mathcal{L}^{2}(B_{r}(Z_{0}))}<1-c.
    \end{align}
    By Lemma \ref{Lemma: Non-degeneracy of minimizers}, there exists $Y\in B_{\tfrac{1}{2}r}(Z_{0})$ such that $\psi_{n}(Y)\geqslant cr>0$. Thus $\psi_{n}>0$ in $B_{\tfrac{1}{2}\kappa r}(Y)$ for a small $\kappa>0$. Thus, $\psi_{0}>0$ in $B_{\tfrac{1}{4}\kappa r}(Y)$. This gives the lower estimate in \eqref{Formula: BA(9)} follows.
    
    We now prove the upper bound in \eqref{Formula: BA(9)}. Assume the contrary, then $\mathcal{L}^{2}(B_{r}(Z_{0})\cap\{\psi_{0}=0\})=0$ and this implies that there must exist a non-relabel subsequence so that $\mathcal{L}^{2}(B_{r}(Z_{n})\cap\{\psi_{n}=0\})\to0$ as $n\to\infty$. Let $\Phi_{n}$ be a sequence of Lipschitz functions satisfying
    \begin{align*}
    	\begin{cases}
    		\begin{alignedat}{2}
    			\operatorname{div}\left(\frac{\nabla\Phi_{n}}{x_{0}+\rho_{n}x}\right)+\rho_{n}(x_{0}+\rho_{n}x)f(\rho_{n}\Phi_{n})&=0\quad&&\text{ in }B_{r}(Z_{n}),\\
    			\Phi_{n}&=\psi_{n}\quad&&\text{ on }\partial B_{r}(Z_{n}).
    		\end{alignedat}
    	\end{cases}
    \end{align*}
    By the minimality of $\psi_{n}$ and a similar calculation as \eqref{Formula: Lip(4(1))}, one has
    \begin{align}\label{Formula: BA(10)}
    	\int_{B_{r}(Z_{n})}|\nabla(\Phi_{n}-\psi_{n})|^{2}\:dX\leqslant C(X_{0})\int_{B_{r}(Z_{n})}I_{\{\psi_{n}=0\}}\:dX\to0,\quad\text{ as }n\to\infty
    \end{align}
    Recalling \eqref{Formula: BA(2(1))}, we have $\psi_{n}\to\psi_{0}$ and $\Phi_{n}\to\Phi_{0}$ uniformly in $\bar{B}_{\tfrac{1}{4}r}(Z_{0})$. On the other hand, $\Delta\Phi_{0}=0$ in $B_{r/2}(Z_{0})$. The estimate \eqref{Formula: BA(10)} implies that $\psi_{0}=\Phi_{0}+c$. Therefore, $\Delta\psi_{0}=0$ in $B_{\tfrac{1}{4}r}(Z_{0})$. Since $\psi_{0}\geqslant0$ and $\psi_{0}(0)=0$ in $B_{\tfrac{1}{4}r}(Z_{0})$, it follows from the strong maximal principle that $\psi_{0}\equiv0$ in $B_{\tfrac{1}{4}r}(Z_{0})$, which yields a contradiction to \eqref{Formula: BA(7)}.
    
    (3). Consider a function $\eta\in C_{0}^{1}(B_{1};[0,1])$ and for every $\phi\in W^{1,2}(B_{1}(0))$ with $\phi=\psi_{0}$ on $\partial B_{1}(0)$. Define
    \begin{align*}
    	\phi_{n}(X):=\phi(X)+(1-\eta(X))(\psi_{n}(X)-\psi_{0}(X)).
    \end{align*}
    Observe that if $X\in\partial B_{1}(0)$,
    \begin{align*}
    	\phi_{n}(X)=\psi_{n}(X)=\frac{\psi(X_{0}+\rho_{n}X)}{\rho_{n}}.
    \end{align*}
    Define a new function
    \begin{align*}
    	\Psi_{n}(X):=\begin{cases}
    		\rho_{n}\phi_{n}\left(\frac{X-X_{0}}{\rho_{n}}\right)\quad&\text{ if }X\in B_{\rho_{n}}(X_{0}),\\
    		\psi(X)\quad&\text{ if }X\notin B_{\rho_{n}}(X_{0}).
    	\end{cases}
    \end{align*}
    By the optimality of $\psi$, one has
    \begin{align*}
    	\int_{B_{\rho_{n}}(X_{0})}\frac{|\nabla\psi|^{2}}{x}-2xF(\psi)-xyI_{\{\psi>0\}}\:dX\leqslant\int_{B_{\rho_{n}}(X_{0})}\frac{|\nabla\Psi_{n}|^{2}}{x}-2xF(\Psi_{n})-xyI_{\{\Psi_{n}>0\}}\:dX.
    \end{align*}
    Changing the variables gives
    \begin{align*}
    	&\int_{B_{1}(0)}\left[\frac{|\nabla\psi(X_{0}+\rho_{n}X)}{x_{0}+\rho_{n}x}-2(x_{0}+\rho_{n}x)F(\psi(X_{0}+\rho_{n}X))\right]\rho_{n}^{2}\:dX\\
    	&\qquad+\int_{B_{1}(0)}(x_{0}+\rho_{n}x)(-y_{0}-\rho_{n}y)I_{\{\psi>0\}}(X_{0}+\rho_{n}X)\rho_{n}^{2}\:dX\\
    	&\leqslant\int_{B_{1}(0)}\left[\frac{|\nabla\Psi_{n}(X_{0}+\rho_{n}X)|^{2}}{x_{0}+\rho_{n}x}-2(x_{0}+\rho_{n}x)F(\Psi_{n}(X_{0}+\rho_{n}X))\right]\rho_{n}^{2}\:dX\\
    	&\qquad+\int_{B_{1}(0)}(x_{0}+\rho_{n}x)(-y_{0}-\rho_{n}y)I_{\{\Psi_{n}>0\}}(X_{0}+\rho_{n}X)\rho_{n}^{2}\:dX\\
    	&\leqslant\int_{B_{1}(0)}\left[\frac{|\nabla\phi_{n}(X)|^{2}}{x_{0}+\rho_{n}x}-2(x_{0}+\rho_{n}x)F(\rho_{n}\phi_{n}(X))\right]\rho_{n}^{2}\:dX\\
    	&\qquad+\int_{B_{1}(0)}(x_{0}+\rho_{n}x)(-y_{0}-\rho_{n}y)I_{\{\phi_{n}>0\}}\rho_{n}^{2}\:dX.
    \end{align*}
    It follows from the definition of $\psi_{n}$ that
    \begin{align*}
    	&\int_{B_{1}(0)}\frac{|\nabla\psi_{n}(X)|^{2}}{x_{0}+\rho_{n}x}-2(x_{0}+\rho_{n}x)F(\rho_{n}\psi_{n}(X))+(x_{0}+\rho_{n}x)(-y_{0}-\rho_{n}y)I_{\{\psi_{n}>0\}}\:dX\\
    	&\leqslant\int_{B_{1}(0)}\frac{|\nabla\phi_{n}(X)|^{2}}{x_{0}+\rho_{n}x}-2(x_{0}+\rho_{n}x)F(\rho_{n}\phi_{n}(X))(x_{0}+\rho_{n}x)(-y_{0}-\rho_{n}y)I_{\{\phi_{n}>0\}}\:dX.
    \end{align*}
    A straightforward computation gives
    \begin{align*}
    	\nabla\phi_{n}=\nabla\phi+(1-\eta)(\nabla\psi_{n}-\nabla\psi_{0})-\nabla\eta(\psi_{n}-\psi_{0}).
    \end{align*}
    Therefore,
    \begin{align*}
    	|\nabla\phi_{n}|^{2}-|\nabla\psi_{n}|^{2}&=|\nabla\phi|^{2}+|\nabla\eta|^{2}|\psi_{n}-\psi_{0}|^{2}-2(\psi_{n}-\psi_{0})\nabla\eta\cdot\nabla\phi+(1-\eta)^{2}|\nabla\psi_{n}-\nabla\psi_{0}|^{2}\\
    	&\quad+2(1-\eta)(\nabla\psi_{n}-\nabla\psi_{0})\cdot(\nabla\phi-\nabla\eta(\psi_{n}-\psi_{0}))-|\nabla\psi_{n}|^{2}\\
    	&\leqslant|\nabla\phi|^{2}+|\nabla\eta|^{2}|\psi_{n}-\psi_{0}|^{2}-2(\psi_{n}-\psi_{0})\nabla\eta\nabla\phi-2\nabla\psi_{n}\nabla\psi_{0}+|\nabla\psi_{0}|^{2}\\
    	&\quad+2(1-\eta)(\nabla\psi_{n}-\nabla\psi_{0})(\nabla v-\nabla\eta(\psi_{n}-\psi_{0})).
    \end{align*}
    Consequently,
    \begin{align*}
    	&\int_{B_{1}(0)}\frac{2\nabla\psi_{n}\cdot\nabla\psi_{0}-|\nabla\psi_{0}|^{2}}{x_{0}+\rho_{n}x}+(x_{0}+\rho_{n}x)(-y_{0}-\rho_{n}y)I_{\{\psi_{n}>0\}}\:dX\\
    	&\leqslant\int_{B_{1}(0)}\frac{|\nabla\phi|^{2}}{x_{0}+\rho_{n}x}+\left(I_{\{\phi>0\}}+I_{\{\eta<1\}}\right)(x_{0}+\rho_{n}x)(-y_{0}-\rho_{n}y)\:dX\\
    	&\quad+\int_{B_{1}(0)}-2(x_{0}+\rho_{n}x)\left(F(\rho_{n}\phi_{n}(X))-F(\rho_{n}\psi_{n}(X))\right)\:dX\\
    	&\quad+\int_{B_{1}(0)}\frac{1}{x_{0}+\rho_{n}x}\left[(\nabla\psi_{n}-\nabla\psi_{0})\cdot(2(1-\eta)\nabla\phi-2(1-\eta)(\psi_{n}-\psi_{0})\nabla\eta)\right]\:dX\\
    	&\quad+\int_{B_{1}(0)}\frac{1}{x_{0}+\rho_{n}x}\left[(\psi_{n}-\psi_{0})\nabla\eta(2\nabla\phi+(\psi_{n}-\psi_{0})\nabla\eta)\right]\:dX.
    \end{align*}
    Since $\psi_{n}\rightharpoonup\psi_{0}$ in $L^{2}(\partial B_{1}(0))$ and $F(0)=0$, we have by letting $n\to\infty$ that
    \begin{align*}
    	\int_{B_{1}(0)}\frac{|\nabla\psi_{0}|^{2}}{x_{0}}-x_{0}y_{0}I_{\{\psi_{0}>0\}}\:dX\leqslant\int_{B_{1}(0)}\frac{|\nabla\phi|^{2}}{x_{0}}+\left(I_{\{\phi>0\}}+I_{\{\eta<1\}}\right)(-x_{0}y_{0})\:dX.
    \end{align*}
    The desired result follows by choosing a sequence of functions $\eta_{n}\nearrow1$.
    
    (4). Since $\psi_{n}$ converges weakly to $\psi_{0}$ in $W_{\mathrm{loc}}^{1,2}(\mathbb{R}^{2})$, it suffices to show that $\nabla\psi_{n}$ converges to $\nabla\psi_{0}$ strongly in $L_{\mathrm{loc}}^{2}(\mathbb{R}^{2})$. Equivalently, it suffices to prove that for any $R>0$,
    \begin{align*}
    	\limsup_{n\to\infty}\int_{\bar{B}_{R}}\frac{1}{x_{0}+\rho_{n}x}|\nabla\psi_{n}|^{2}\eta\:dX=\int_{\bar{B}_{R}}\frac{1}{x_{0}}|\nabla\psi_{0}|^{2}\eta\:dX\quad\text{ for all }\eta\in C_{0}^{1}(B_{R}(0)),
    \end{align*}
    where $\bar{B}_{R}:=B_{R}(0)$. Recalling \eqref{Formula: BA(4)}, one has that $\psi_{0}$ is harmonic in $\bar{B}_{R}(0)\cap\{\psi_{0}>0\}$. With the aid of the fact that $\psi_{n}$ converges to $\psi_{0}$ locally and uniformly in $\{\psi_{0}>0\}$, one has
    \begin{align*}
    	&\int_{\bar{B}_{R}}\frac{1}{x_{0}+\rho_{n}x}|\nabla\psi_{n}|^{2}\eta\:dX\\
    	&=-\int_{\bar{B}_{R}}\psi_{n}\operatorname{div}\left(\frac{\nabla\psi_{n}}{x_{0}+\rho_{n}x}\right)\eta\:dX-\int_{\bar{B}_{R}}\psi_{n}\frac{1}{x_{0}+\rho_{n}x}\nabla\psi_{n}\nabla\eta\:dX\\
    	&\to-\int_{\bar{B}_{R}}\psi_{0}\Delta\psi_{0}\eta\:dX-\int_{\bar{B}_{R}}\psi_{0}\frac{1}{x_{0}}\nabla\psi_{0}\nabla\eta\:dX\\
    	&=\int_{\bar{B}_{R}}\frac{1}{x_{0}}|\nabla\psi_{0}|^{2}\eta\:dX,
    \end{align*}
    as wanted.
\end{proof}
As a direct corollary, we list in the following Corollary some properties of blow-up limits, the one says that $\psi_{0}$ satisfies the Laplace equation and the other one says that $\psi_{0}$ is a two-dimensional half-plane solution, provided that $\psi_{0}$ is a one-homogeneous function.
\begin{corollary}[Properties of the blow-up limits]\label{Corollary: Properties of blow-up limits}
	Let $\psi_{n}$ and $\psi_{0}$ be given as in \eqref{Formula: BA(1)} and \eqref{Formula: BA(2)}, then
	\begin{enumerate}
		\item $\psi_{0}$ satisfies $\Delta\psi_{0}=0$ in $\{\psi_{0}>0\}$ and $\dfrac{1}{x_{0}^{2}}|\nabla\psi_{0}|^{2}=\sqrt{-y_{0}}$ on $\partial\{\psi_{0}>0\}$.
	    \item If $\psi_{0}$ is a one-homogeneous function in the sense that $\psi_{0}(rX)=r\psi_{0}(X)$ for every $X\in\mathbb{R}^{2}$ and every $r>0$, then there is a unit vector $\nu\in\mathbb{R}^{2}$ such that
	    \begin{align*}
	    	\psi_{0}(X)=x_{0}\sqrt{-y_{0}}(X\cdot\nu)^{+}\quad\text{ for every }X=(x,y)\in\mathbb{R}^{2}.
	    \end{align*}
	\end{enumerate}
\end{corollary}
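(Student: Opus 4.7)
The plan is to pass to the limit in the equations satisfied by the blow-up sequence $\psi_n$ to obtain part (1), and then for part (2) to classify one-homogeneous nonnegative harmonic functions in two dimensions via separation of variables.

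For the interior Laplace equation in part (1), recall from Corollary \ref{Corollary: higher regularity in the positive phase} that each $\psi_n$ satisfies
\begin{align*}
\operatorname{div}\left(\frac{\nabla \psi_n}{x_0 + \rho_n x}\right) + \rho_n(x_0 + \rho_n x) f(\rho_n \psi_n) = 0 \quad \text{in } \{\psi_n > 0\}.
\end{align*}
Given $\eta \in C_c^\infty(\{\psi_0 > 0\})$, the local Hausdorff convergence $\partial\{\psi_n > 0\} \to \partial\{\psi_0 > 0\}$ from item (1) of Lemma \ref{Lemma: Structure of the blow-up limits} guarantees $\operatorname{supp}(\eta) \Subset \{\psi_n > 0\}$ for all large $n$. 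Multiplying by $\eta$, integrating by parts, and passing to the limit using the strong $W^{1,2}_{\mathrm{loc}}$ convergence $\psi_n \to \psi_0$ (item (4)), the bound $|f| \leqslant F_0$, and $\rho_n \to 0$, we obtain $\int \tfrac{1}{x_0}\nabla\psi_0 \cdot \nabla \eta \, dX = 0$. Since $x_0 > 0$ is a constant, this yields $\Delta \psi_0 = 0$ in $\{\psi_0 > 0\}$. For the free boundary condition, exploit that $\psi_0$ is a global minimizer of the frozen-coefficient functional $\mathcal{J}$ (item (3)). For $\xi \in C_c^1(B_1(0); \mathbb{R}^2)$ set $\phi_t(X) = \psi_0((\operatorname{id} + t\xi)^{-1}(X))$; the first-order expansion of $\mathcal{J}(\phi_t) \geqslant \mathcal{J}(\psi_0)$ at $t = 0$, combined with $\Delta\psi_0 = 0$ in $\{\psi_0 > 0\}$ and integration by parts, produces the distributional identity
\begin{align*}
\frac{1}{x_0^2}|\nabla\psi_0|^2 = -y_0 \quad \text{on } \partial\{\psi_0 > 0\}.
\end{align*}

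For part (2), one-homogeneity gives $\psi_0(r, \theta) = r\, g(\theta)$ in polar coordinates, and $\Delta\psi_0 = 0$ in $\{\psi_0 > 0\}$ becomes $g''(\theta) + g(\theta) = 0$ on every maximal open arc where $g > 0$. On each such arc, $g(\theta) = R\sin(\theta - \theta_0)$ must vanish at both endpoints, forcing the arc length to equal $\pi$ exactly. Non-degeneracy (Lemma \ref{Lemma: Non-degeneracy of minimizers}) rules out $\psi_0 \equiv 0$, while the positive density of $\{\psi_0 = 0\}$ at every free boundary point, established in item (2) of the proof of Lemma \ref{Lemma: Structure of the blow-up limits}, excludes a configuration of two disjoint arcs (the \emph{double half-plane} solution $g(\theta) = R|\sin(\theta - \theta_0)|$, whose zero set has measure zero). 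Hence $g(\theta) = R\sin(\theta - \theta_0)^+$ and $\psi_0(X) = R(X \cdot \nu)^+$ for some unit vector $\nu \in \mathbb{R}^2$. Substituting this explicit form into the free boundary condition from part (1) gives $R^2 = -x_0^2 y_0$, so $R = x_0\sqrt{-y_0}$.

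The main obstacle is the rigorous derivation of the free boundary condition, since $I_{\{\psi_n > 0\}}$ is discontinuous and one cannot pass to the limit directly in a pointwise Bernoulli-type identity. The $L^1_{\mathrm{loc}}$ convergence of the indicator functions from item (2) of Lemma \ref{Lemma: Structure of the blow-up limits}, together with the Hadamard-type first variation of $\mathcal{J}$ at the global minimizer $\psi_0$, is what makes this rigorous; the boundary integral must be extracted carefully from the bulk identity, which is most transparent once the one-homogeneous form of $\psi_0$ is available in part (2).
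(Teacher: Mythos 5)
Your proposal is correct and follows essentially the same route as the paper: harmonicity of $\psi_0$ is obtained by passing to the limit in the rescaled equations, the free boundary condition is extracted from the first domain variation of the frozen-coefficient functional $\mathcal{J}$ at the global minimizer $\psi_0$, and the classification of one-homogeneous blow-ups proceeds by separation of variables together with the density estimate \eqref{Formula: BA(9)} to rule out the double half-plane. Incidentally, you wrote the boundary condition as $\dfrac{1}{x_0^2}|\nabla\psi_0|^2=-y_0$, which is in fact the correct identity (consistent with the paper's own integration-by-parts computation and with the explicit form $x_0\sqrt{-y_0}(X\cdot\nu)^+$); the $\sqrt{-y_0}$ appearing in the corollary's statement is a typographical slip.
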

\begin{proof}
	(1). We have already proved in the previous Lemma that $\psi_{0}$ is harmonic in $\{\psi_{0}>0\}$ (recall \eqref{Formula: BA(4)}). We now derive the boundary condition that $\psi_{0}$ is achieved on $\partial\{\psi_{0}>0\}$. Since $\psi_{0}$ is a global minimizer of the functional
	\begin{align*}
		\mathcal{J}(\phi):=\int_{B_{1}(0)}\frac{|\nabla\phi|^{2}}{x_{0}}-x_{0}y_{0}I_{\{\phi>0\}}\:dX,
	\end{align*}
    it is easy to deduce from the expression of the first domain variation formula (for instance page 1269 in \cite{VW2014}) that
    \begin{align*}
    	0&=\int_{B_{1}(0)}\left(\frac{1}{x_{0}}|\nabla\psi_{0}|^{2}-x_{0}y_{0}I_{\{\psi_{0}>0\}}\right)\operatorname{div}\eta-\frac{2}{x_{0}}\nabla\psi_{0} D\eta\nabla\psi_{0}\:dX,
    \end{align*}
    where $\eta=(\eta_{1},\eta_{2})\in C_{0}^{1}(\mathbb{R}^{2};\mathbb{R}^{2})$. Integration by parts,
    \begin{align*}
    	0&=\int_{B_{1}(0)\cap\{\psi_{0}>0\}}\operatorname{div}\left(\left(\frac{1}{x_{0}}|\nabla\psi_{0}|^{2}-x_{0}y_{0}\right)\eta-\frac{2}{x_{0}}\nabla\psi_{0}\nabla\psi_{0}\cdot\eta\right)\:dX\\
    	&=\lim_{\varepsilon\to0}\int_{\partial\{\psi_{0}>\varepsilon\}}\left(\frac{1}{x_{0}}|\nabla\psi_{0}|^{2}-x_{0}y_{0}\right)(\eta\cdot\nu)-\frac{2}{x_{0}}|\nabla\psi_{0}|^{2}(\eta\cdot\nu)\:d\mathcal{H}^{1}\\
    	&=\lim_{\varepsilon\to0}\int_{\partial\{\psi_{0}>\varepsilon\}}\left(-x_{0}y_{0}-\frac{1}{x_{0}}|\nabla\psi_{0}|^{2}\right)\:d\mathcal{H}^{1},
    \end{align*}
    as desired.
    	
    (2). We consider the Laplacian in polar coordinates, and we write $\psi_{0}(r,\theta)=rc(\theta)$ where $c(\theta)=\psi_{0}|_{\partial B_{1}}$ denotes the trace, then
    \begin{align}\label{Formula: BA(12)}
    	\begin{alignedat}{2}
    		0=\Delta\psi_{0}(r,\theta)&=\pd[2]{\psi_{0}}{r}+\frac{1}{r}\pd{\psi_{0}}{r}+\frac{1}{r^{2}}\partial_{\theta\theta}\psi_{0}(r,\theta)\\
    		&=\frac{1}{r}(c(\theta)+c''(\theta)).
    	\end{alignedat}
    \end{align}
    Since $\psi_{0}$ is continuous, we have that $\{c>0\}\subset\mathbb{S}^{1}$ is open and so it is a countable union of disjoint arcs. Recalling $\psi_{0}(0)=0$ and \eqref{Formula: BA(9)}, it is easy to deduce that  $\{c>0\}\neq\mathbb{S}^{1}$. It follows from \eqref{Formula: BA(12)} that on each arc $I\subset\{c>0\}$, the trace $c$ is a solution of the ODE
    \begin{align*}
    	c''(\theta)+c(\theta)=0\quad\text{ in }I,\qquad c>0\quad\text{ in }I,\qquad c=0\quad\text{ on }\partial I.
    \end{align*} 
    Thus, up to a translation $I=(0,\pi)$ and $c(\theta)$ is a multiple of $\sin\theta$ on $I$. Thus, $\{c>0\}$ is a union of disjoint arcs, each one of length $\pi$. Therefore, these arcs can be at most two. Now, by the fact that $0\in\partial\{\psi_{0}>0\}$, we have from \eqref{Formula: BA(9)} that $\mathcal{L}^{2}(\{\psi_{0}>0\}\cap B_{1})<\mathcal{L}^{2}(B_{1})=\pi$. The assumption that $\psi_{0}$ is a one-homogeneous function gives that $\mathcal{H}^{1}(\{c>0\}\cap\partial B_{1})<2\pi$. Thus, $\{c>0\}$ is an arc of length $\pi$ and $\psi_{0}$ is of the form $a(x\cdot\nu)^{+}$ for some constant $a>0$. Since $\psi_{0}$ minimizes $\mathcal{J}(\phi)$, one has $a=x_{0}\sqrt{-y_{0}}$, and this concludes the proof.
\end{proof}
At this stage, we proved that every blow-up limit $\psi_{0}$ of $\psi$ is a non-negative and non-constantly vanishing harmonic function defined in $\mathbb{R}^{2}$. Next, we use the Weiss-type boundary adjusted energy defined in \eqref{Formula: D(3)} to prove that all the blow-up limits of $\psi$ are \emph{one-homogeneous functions}. The most significant result in this section is the following monotonicity formula.
\begin{lemma}[Monotonicity formula]\label{Lemma: monotonicity formula}
	Let $X_{0}\in N_{\psi}$ and let $\psi$ be a local minimizer of $J$ in $B_{R_{0}}(X_{0})$, and let $\mathcal{D}_{X_{0},\psi}(r)$ be defined as in \eqref{Formula: D(3)}, then for a.e. $r\in(0,\tfrac{R_{0}}{2})$,
	\begin{align}\label{Formula: Monotonicity formula}
		\begin{alignedat}{2}
			\frac{d\mathcal{D}_{X_{0},\psi}(r)}{dr}&=2r^{-2}\int_{\partial B_{r}(X_{0})}\frac{1}{x}\left(\nabla\psi\cdot\nu-\frac{\psi}{r}\right)^{2}\:d\mathcal{H}^{1}\\
			&+r^{-4}\int_{\partial B_{r}(X_{0})}\frac{x-x_{0}}{x^{2}}\psi^{2}\:d\mathcal{H}^{1}-r^{-3}J_{0}(r)-r^{-3}K_{1}(r),
		\end{alignedat}
	\end{align}
	where $\nu$ is the unit vector on $\partial B_{r}(X_{0})$ and
	\begin{align*}
		J_{0}(r):=\int_{B_{r}(X_{0})}\left[\frac{x-x_{0}}{x^{2}}|\nabla\psi|^{2}+((x-x_{0})y+(y-y_{0})x)I_{\{\psi>0\}}\right]\:dX,
	\end{align*}
	and
	\begin{align*}
		K_{1}(r)&:=\int_{B_{r}(X_{0})}(2F(\psi)(x-x_{0})+4xF(\psi))\:dX-r\int_{\partial B_{r}(X_{0})}(2xF(\psi)-x\psi f(\psi))\:d\mathcal{H}^{1}.
	\end{align*}
\end{lemma}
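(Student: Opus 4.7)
The plan is to combine three ingredients: a direct radial differentiation of the two summands of $\mathcal{D}_{X_{0},\psi}(r)$, an energy identity coming from the PDE satisfied by $\psi$ in $\{\psi>0\}$, and a Pohozaev-type identity derived from the minimality of $\psi$ via an inner (radial) domain variation.

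First, by the coarea formula, $\tfrac{d}{dr}\mathcal{D}_{1,X_{0},\psi}(r)=\int_{\partial B_{r}(X_{0})}g\,d\mathcal{H}^{1}$ with $g:=\tfrac{1}{x}|\nabla\psi|^{2}-xyI_{\{\psi>0\}}-x\psi f(\psi)$. For $\mathcal{D}_{2,X_{0},\psi}$, I parametrize $\partial B_{r}(X_{0})$ by $X=X_{0}+r\sigma$ with $\sigma\in\partial B_{1}$ and differentiate under the integral; converting back to $\partial B_{r}(X_{0})$ produces
\[
\tfrac{d\mathcal{D}_{2,X_{0},\psi}}{dr}=\int_{\partial B_{r}(X_{0})}\Bigl[\tfrac{2\psi\nabla\psi\cdot\nu}{x}+\tfrac{\psi^{2}}{xr}-\tfrac{(x-x_{0})\psi^{2}}{rx^{2}}\Bigr]d\mathcal{H}^{1}.
\]
Applying the product rule to $r^{-2}\mathcal{D}_{1}(r)-r^{-3}\mathcal{D}_{2}(r)$ yields an intermediate form of $\tfrac{d\mathcal{D}_{X_{0},\psi}}{dr}$ in which the boundary terms $2r^{-4}\int\psi^{2}/x$ and $r^{-4}\int(x-x_{0})\psi^{2}/x^{2}$ already appear, while the remaining problematic piece is the single volume contribution $-2r^{-3}\mathcal{D}_{1}(r)$. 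Using the equation $\operatorname{div}(\tfrac{1}{x}\nabla\psi)=-xf(\psi)$ from Proposition 2.1 (4) together with $\psi\equiv 0$ on the free boundary, testing with $\psi$ times a radial cutoff and passing to the limit gives the energy identity
\[
\int_{B_{r}(X_{0})}\tfrac{|\nabla\psi|^{2}}{x}\,dX=\int_{\partial B_{r}(X_{0})}\tfrac{\psi\,\nabla\psi\cdot\nu}{x}\,d\mathcal{H}^{1}+\int_{B_{r}(X_{0})}x\psi f(\psi)\,dX,
\]
so that $-2r^{-3}\mathcal{D}_{1}(r)=-2r^{-3}\int_{\partial B_{r}}\psi\nabla\psi\cdot\nu/x+2r^{-3}\int_{B_{r}}xyI_{\{\psi>0\}}$, reducing the volume content to a single piece in $xyI_{\{\psi>0\}}$.

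The crucial remaining input is a Pohozaev-type identity from an inner variation. Let $\zeta\in C_{c}^{1}([0,R_{0}))$ approximate $I_{[0,r]}$ and set $\xi(X):=(X-X_{0})\zeta(|X-X_{0}|)$, so that for $|\tau|$ small, $\Phi_{\tau}(X):=X+\tau\xi(X)$ is a diffeomorphism fixing $\partial B_{R_{0}}(X_{0})$; then $\phi_{\tau}:=\psi\circ\Phi_{\tau}^{-1}$ is an admissible competitor and minimality gives $\tfrac{d}{d\tau}J(\phi_{\tau})|_{\tau=0}=0$. Expanding $dY=(1+\tau\operatorname{div}\xi)dX+O(\tau^{2})$, $\nabla_{Y}\phi_{\tau}=(I-\tau(D\xi)^{T})\nabla\psi+O(\tau^{2})$, $\tfrac{1}{x(Y)}=\tfrac{1}{x}-\tau\tfrac{\xi_{1}}{x^{2}}+O(\tau^{2})$, and using the crucial observation $I_{\{\phi_{\tau}>0\}}\circ\Phi_{\tau}=I_{\{\psi>0\}}$ (since $\Phi_{\tau}$ is a diffeomorphism), then taking the distributional limit $\zeta\nearrow I_{[0,r]}$ so that $\zeta'(\rho)\rho\to-r\,\delta_{\rho=r}$, produces
\[
\begin{aligned}
&2r\!\int_{\partial B_{r}}\!\tfrac{(\nabla\psi\cdot\nu)^{2}}{x}-r\!\int_{\partial B_{r}}\!\tfrac{|\nabla\psi|^{2}}{x}+r\!\int_{\partial B_{r}}\!xyI_{\{\psi>0\}}+2r\!\int_{\partial B_{r}}\!xF(\psi)\\
&\qquad=2\!\int_{B_{r}}\!xyI_{\{\psi>0\}}+J_{0}(r)+\!\int_{B_{r}}\![2(x-x_{0})F(\psi)+4xF(\psi)].
\end{aligned}
\]
Substituting this identity into the intermediate formula, all volume contributions cancel, the combination $2r^{-2}\int(\nabla\psi\cdot\nu)^{2}/x - 4r^{-3}\int\psi\nabla\psi\cdot\nu/x + 2r^{-4}\int\psi^{2}/x$ assembles by completion of the square into $2r^{-2}\int\tfrac{1}{x}(\nabla\psi\cdot\nu-\psi/r)^{2}$, and the residual boundary pieces on $\partial B_{r}$ combine with the bulk terms to form exactly $r^{-3}K_{1}(r)$ as defined in the statement.

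The principal obstacle is the inner-variation step. The weight $1/x$ and the gravity-type term $-xyI_{\{\psi>0\}}$ transform non-trivially under the radial perturbation, and this is precisely why the parasitic bulk corrections $J_{0}(r)$ and the volume part $\int_{B_{r}}[2(x-x_{0})F(\psi)+4xF(\psi)]$ of $K_{1}(r)$ appear here but not in the classical unweighted Alt--Caffarelli Weiss formula. Because the free boundary is only known a priori to be a set of locally finite perimeter with the Lipschitz bound of Proposition 3.1, the transport of $I_{\{\psi>0\}}$ under $\Phi_{\tau}$ must be handled at the level of the energy functional (not pointwise on the free boundary), and the passage $\zeta\nearrow I_{[0,r]}$ has to be justified for a.e. $r\in(0,R_{0}/2)$ so that the radial boundary traces on $\partial B_{r}(X_{0})$ are well-defined.
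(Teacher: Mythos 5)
Your proposal is correct and follows essentially the same strategy as the paper: a domain variation under $\Phi_{\tau}(X)=X+\tau\zeta(|X-X_{0}|)(X-X_{0})$ to derive the Poho\v{z}aev-type identity; the divergence structure of the equation to convert $\int_{B_r}\frac{1}{x}|\nabla\psi|^{2}-x\psi f(\psi)$ into a boundary integral; and a direct radial differentiation of $r^{-2}\mathcal{D}_{1}-r^{-3}\mathcal{D}_{2}$ followed by completion of the square. I verified that your intermediate Poho\v{z}aev identity agrees, after rearranging, with the one the paper records, and that the residual bulk and boundary terms assemble into $-r^{-3}J_{0}(r)-r^{-3}K_{1}(r)$ exactly.
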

\begin{proof}
	\textbf{Step I}. In this step, we prove that if $\psi$ is a local minimizer of $J$ in $B_{R_{0}}(X_{0})$, then for any vector field $\eta(X)=(\eta_{1}(X),\eta_{2}(X))\in C_{0}^{1}(B_{R_{0}}(X_{0});\mathbb{R}^{2})$, one has
	\begin{align}\label{Formula: BA(13)}
		\begin{alignedat}{3}
			0&=\int_{B_{R}(X_{0})}\left(\frac{1}{x}|\nabla\psi|^{2}-2xF(\psi)-xyI_{\{\psi>0\}}\right)\operatorname{div}\eta\:dX\\
			&\quad-\int_{B_{R}(X_{0})}\frac{2}{x}\nabla\psi D\eta\nabla\psi\:dX\\
			&\quad+\int_{B_{R}(X_{0})}\left(-\frac{1}{x^{2}}|\nabla\psi|^{2}-2F(\psi)-yI_{\{\psi>0\}}\right)\eta_{1}\:dX\\
			&\quad-\int_{B_{R}(X_{0})}xI_{\{\psi>0\}}\eta_{2}\:dX.
		\end{alignedat}
	\end{align}
    To this end, we define $\tau_{\varepsilon}(X)=X+\varepsilon\eta(X):=\tilde{X}$ for any $\varepsilon>0$ and set $\psi_{\varepsilon}(\tilde{X}):=(\psi\circ\tau_{\varepsilon}^{-1})(\tilde{X})$. Denote
    \begin{align*}
    	J(\psi_{\varepsilon}(\tilde{X}))=\int_{B_{R}(X_{0})}\frac{1}{\tilde{x}}|\nabla\psi_{\varepsilon}(\tilde{X})|^{2}-2\tilde{x}F(\psi_{\varepsilon}(\tilde{X}))-\tilde{x}\tilde{y}I_{\{\psi_{\varepsilon}>0\}}\:d\tilde{X}:=J_{1}+J_{2}+J_{3}.
    \end{align*}
    It follows that
    \begin{align*}
    	J_{1}(\psi_{\varepsilon})&=\int_{B_{R}(X_{0})}\frac{1}{\tilde{x}}|\nabla\psi_{\varepsilon}(\tilde{X})|^{2}\:d\tilde{X}=\int_{B_{R}(X_{0})}\frac{1}{x+\varepsilon\eta_{1}}|\nabla\psi(\tau_{\varepsilon}(X))|^{2}J(\tau_{\varepsilon})\:dX\\
    	&=\int_{B_{R}(X_{0})}\frac{1}{x+\varepsilon\eta_{1}}|\nabla(\psi_{\varepsilon}\circ\tau_{\varepsilon})[D\tau_{\varepsilon}]^{-1}(X)|^{2}(1+\varepsilon\operatorname{div}\eta+o(\varepsilon))\:dX\\
    	&=\int_{B_{R}(X_{0})}\frac{1}{x+\varepsilon\eta_{1}}|\nabla\psi(\operatorname{Id}-\varepsilon D\eta)|^{2}(1+\varepsilon\operatorname{div}\eta+o(\varepsilon))\:dX\\
    	&=\int_{B_{R}(X_{0})}\frac{1}{x+\varepsilon\eta_{1}}|\nabla\psi|^{2}\:dX+\varepsilon\int_{B_{R}(X_{0})}\frac{1}{x+\varepsilon\eta_{1}}|\nabla\psi|^{2}\operatorname{div}\eta\:dX\\
    	&\quad-2\varepsilon\int_{B_{R}(X_{0})}\frac{1}{x+\varepsilon\eta_{1}}\nabla\psi D\eta(\nabla\psi)\:dX+o(\varepsilon),
    \end{align*}
    and
    \begin{align*}
    	J_{2}(\psi_{\varepsilon})&=\int_{B_{r}(X_{0})}-2\tilde{x}F(\psi_{\varepsilon}(\tilde{X}))\:d\tilde{X}\\
    	&=\int_{B_{r}(X_{0})}-2(x+\varepsilon\eta_{1})F(\psi_{\varepsilon}(\tau_{\varepsilon}(X)))J(\psi_{\varepsilon}(X))\:dX\\
    	&=\int_{B_{r}(X_{0})}-2(x+\varepsilon\eta_{1})F(\psi_{\varepsilon}(\tau_{\varepsilon}(X)))(1+\varepsilon\operatorname{div}\eta+o(\varepsilon))\:dX\\
    	&=\int_{B_{r}(X_{0})}-2xF(\psi(X))\:dX-\varepsilon\int_{B_{r}(X_{0})}2xF(\psi(X))\operatorname{div}\eta\:dX\\
    	&\quad-\varepsilon\int_{B_{r}(X_{0})}2\eta_{1}F(\psi(X))\:dX+o(\varepsilon),
    \end{align*}
    and
    \begin{align*}
    	J_{3}(\psi_{\varepsilon})&=\int_{B_{R}(X_{0})}-\tilde{x}\tilde{y}I_{\{\psi_{\varepsilon}>0\}}\:d\tilde{X}\\
    	&=\int_{B_{R}(X_{0})}-(x+\varepsilon\eta_{1})(y+\varepsilon\eta_{2})I_{\{\psi_{\varepsilon}>0\}}(\tau_{\varepsilon}(X))J(\tau_{\varepsilon}(X))\:dX\\
    	&=\int_{B_{R}(X_{0})}-xyI_{\{\psi>0\}}\:dX-\varepsilon\int_{B_{R}(X_{0})}(\eta_{1}y+\eta_{2}x)I_{\{\psi>0\}}\:dX\\
    	&\quad-\varepsilon\int_{B_{R}(X_{0})}xyI_{\{\psi>0\}}\operatorname{div}\eta\:dX+o(\varepsilon).
    \end{align*}
    Since $\psi$ is a local minimizer of $J$ in $B_{R_{0}}(X_{0})$, one has
    \begin{align*}
    	0=\lim_{\varepsilon\to0}\frac{J(\psi_{\varepsilon};B_{R}(X_{0}))-J(\psi;B_{R}(X_{0}))}{\varepsilon},
    \end{align*}
    this together with $J_{1}$, $J_{2}$ and $J_{3}$ gives \eqref{Formula: BA(13)}.
    
    \textbf{Step II}. In this step, we will prove the following Poho\v{z}aev-type identity:
    \begin{align}\label{Formula: BA(14)}
    	\begin{alignedat}{4}
    		&\int_{B_{r}(X_{0})}2xyI_{\{\psi>0\}}\:dX-r\int_{\partial B_{r}(X_{0})}xyI_{\{\psi>0\}}\:d\mathcal{H}^{1}\\
    		&=-r\int_{\partial B_{r}(X_{0})}\frac{1}{x}|\nabla\psi|^{2}\:d\mathcal{H}^{1}-\int_{B_{r}(X_{0})}\frac{x-x_{0}}{x^{2}}|\nabla\psi|^{2}\:dX+r\int_{\partial B_{r}(X_{0})}\frac{2}{x}(\nabla\psi\cdot\nu)^{2}\:d\mathcal{H}^{1}\\
    		&\quad-\int_{B_{r}(X_{0})}(2F(\psi)(x-x_{0})+4xF(\psi))\:dX+r\int_{\partial B_{r}(X_{0})}2xF(\psi)\:d\mathcal{H}^{1}\\
    		&\quad-\int_{B_{r}(X_{0})}(y(x-x_{0})+x(y-y_{0}))I_{\{\psi>0\}}\:dX.
    	\end{alignedat}
    \end{align}
    For any $\varepsilon>0$ define 
    \begin{align*}
    	\zeta_{\varepsilon}(t):=\max\{0,\min\{1,\tfrac{r-t}{\varepsilon}\}\}\quad r\in(0,R_{0}/2]
    \end{align*}
    and set $\eta_{\varepsilon}(X):=\zeta_{\varepsilon}(|X-X_{0}|)(X-X_{0})$. It follows that
    \begin{align*}
    	\operatorname{div}\eta_{\varepsilon}(X)&=2\zeta_{\varepsilon}(|X-X_{0}|)+\zeta'_{\varepsilon}\frac{(X-X_{0})}{|X-X_{0}|}\cdot(X-X_{0})\\
    	&=2\zeta_{\varepsilon}(|X-X_{0}|)+\zeta'_{\varepsilon}(|X-X_{0}|)|X-X_{0}|
    \end{align*}
    and
    \begin{align*}
    	D\eta_{\varepsilon}=\zeta_{\varepsilon}(|X-X_{0}|)\delta_{ij}+\zeta'_{\varepsilon}(|X-X_{0}|)\frac{(X_{i}-X_{0i})\cdot(X_{j}-X_{0j})}{|X-X_{0}|},
    \end{align*}
    for $i$, $j=1$, $2$. Here $X_{i}$ denotes the $i$th component of the point $X$. For instance, if $X=(x,y)$, then $X_{1}=x$ and $X_{2}=y$. Introducing $\eta_{\varepsilon}$ as test functions into the identity \eqref{Formula: BA(13)} and passing $\varepsilon\to0$ yields the following results
    \begin{align*}
    	&\int_{B_{R}(X_{0})}\left(\frac{1}{x}|\nabla\psi|^{2}-2xF(\psi)-xyI_{\{\psi>0\}}\:dX\right)\operatorname{div}\eta_{\varepsilon}\:dX\\
    	&=\int_{B_{R}(X_{0})}\left(\frac{1}{x}|\nabla\psi|^{2}-2xF(\psi)-xyI_{\{\psi>0\}}\right)(2\zeta_{\varepsilon}(|X-X_{0}|)+\zeta_{\varepsilon}'(|X-X_{0}|)|X-X_{0}|)\:dX\\
    	&\to2\int_{B_{r}(X_{0})}\left(\frac{1}{x}|\nabla\psi|^{2}-2xF(\psi)-xyI_{\{\psi>0\}}\right)\:dX\\
    	&\quad-r\int_{\partial B_{r}(X_{0})}\left(\frac{1}{x}|\nabla\psi|^{2}-2xF(\psi)-xyI_{\{\psi>0\}}\right)\:d\mathcal{H}^{1},
    \end{align*}
    and
    \begin{align*}
    	&\int_{B_{R}(X_{0})}\frac{2}{x}\nabla\psi D\eta_{\varepsilon}\nabla\psi\:dX=\int_{B_{R}(X_{0})}\frac{2}{x}\left[\partial_{i}\psi\left(\zeta_{\varepsilon}(|X-X_{0}|)\delta_{ij}\right)\partial_{j}\psi\right]\:dX\\
    	&\quad+\int_{B_{R}(X_{0})}\frac{2}{x}\left[\partial_{i}\psi\left(\zeta_{\varepsilon}'(|X-X_{0}|)\frac{(X_{i}-X_{0i})\cdot(X_{j}-X_{0j})}{|X-X_{0}|}\right)\partial_{j}\psi\right]\:dX\\
    	&=\int_{B_{R}(X_{0})}\frac{2}{x}(\partial_{1}\psi)^{2}\zeta_{\varepsilon}(|X-X_{0}|)\:dX+\int_{B_{R}(X_{0})}\frac{2}{x}(\partial_{2}\psi)^{2}\zeta_{\varepsilon}(|X-X_{0}|)\:dX\\
    	&\quad+\int_{B_{R}(X_{0})}\frac{2}{x}(\partial_{1}\psi)^{2}\zeta_{\varepsilon}'(|X-X_{0}|)\frac{(x-x_{0})^{2}}{|X-X_{0}|}\:dX\\
    	&\quad+\int_{B_{R}(X_{0})}\frac{2}{x}(\partial_{2}\psi)^{2}\zeta_{\varepsilon}'(|X-X_{0}|)\frac{(y-y_{0})^{2}}{|X-X_{0}|}\:dX\\
    	&\quad+\int_{B_{R}(X_{0})}\frac{2}{x}\left[(\partial_{1}\psi)(\partial_{2}\psi)2\zeta_{\varepsilon}'(|X-X_{0}|)\frac{(x-x_{0})(y-y_{0})}{|X-X_{0}|}\right]\:dX\\
    	&=\int_{B_{R}(X_{0})}-\frac{2}{x}|\nabla\psi|^{2}\zeta_{\varepsilon}(|X-X_{0}|)\:dX\\
    	&\quad+\int_{B_{R}(X_{0})}-\frac{2}{x}\left[\pd{\psi}{x}\cdot\frac{(x-x_{0})}{|X-X_{0}|}+\pd{\psi}{y}\cdot\frac{(y-y_{0})}{|X-X_{0}|}\right]^{2}|X-X_{0}|\zeta_{\varepsilon}'(|X-X_{0}|)\:dX\\
    	&\to\int_{B_{r}(X_{0})}-\frac{2}{x}|\nabla\psi|^{2}\:dX+r\int_{\partial B_{r}(X_{0})}\frac{2}{x}(\nabla\psi\cdot\nu)^{2}\:d\mathcal{H}^{1},
    \end{align*}
    and
    \begin{align*}
    	&\int_{B_{R}(X_{0})}\left(-\frac{1}{x^{2}}|\nabla\psi|^{2}-2F(\psi)-yI_{\{\psi>0\}}\right)\eta_{1}\:dX\\
    	&=\int_{B_{R}(X_{0})}\left(-\frac{1}{x^{2}}|\nabla\psi|^{2}-2F(\psi)-yI_{\{\psi>0\}}\right)\zeta_{\varepsilon}(|X-X_{0}|)(x-x_{0})\:dX\\
    	&\to\int_{B_{r}(X_{0})}-\frac{x-x_{0}}{x^{2}}|\nabla\psi|^{2}\:dX+\int_{B_{r}(X_{0})}-2(x-x_{0})F(\psi)\:dX+\int_{B_{r}(X_{0})}-y(x-x_{0})I_{\{\psi>0\}}\:dX,
    \end{align*}
    and
    \begin{align*}
    	&\int_{B_{R}(X_{0})}-xI_{\{\psi>0\}}\eta_{2}\:dX=\int_{B_{R}(X_{0})}-xI_{\{\psi>0\}}\zeta_{\varepsilon}(|X-X_{0}|)(y-y_{0})\:dX\\
    	&\to\int_{B_{r}(X_{0})}-x(y-y_{0})I_{\{\psi>0\}}\:dX,\qquad\text{ as }\varepsilon\to 0.
    \end{align*}
    Combining all the calculations above gives \eqref{Formula: BA(14)}.
    
    \textbf{Step III}. In this step, we finish the proof of this Lemma. Recalling the definition of $\mathcal{D}_{X_{0},\psi}$, a straightforward computation gives
    \begin{align*}
    	\frac{d(r^{-2}\mathcal{D}_{1,X_{0},\psi}(r))}{dr}&=-2r^{-3}\int_{B_{r}(X_{0})}\left(\frac{1}{x}|\nabla\psi|^{2}-xyI_{\{\psi>0\}}-x\psi f(\psi)\right)\:dX\\
    	&\quad+r^{-2}\int_{\partial B_{r}(X_{0})}\left(\frac{1}{x}|\nabla\psi|^{2}-xyI_{\{\psi>0\}}-x\psi f(\psi)\right)\:d\mathcal{H}^{1}\\
    	&=-2r^{-3}\int_{B_{r}(X_{0})}\left(\frac{1}{x}|\nabla\psi|^{2}-x\psi f(\psi)\right)\:dX\\
    	&\quad+r^{-3}\left(\int_{B_{r}(X_{0})}2xyI_{\{\psi>0\}}\:dX-r\int_{\partial B_{r}(X_{0})}xyI_{\{\psi>0\}}\:d\mathcal{H}^{1}\right)\\
    	&\quad+r^{-2}\int_{\partial B_{r}(X_{0})}\left(\frac{1}{x}|\nabla\psi|^{2}-x\psi f(\psi)\right)\:d\mathcal{H}^{1}.
    \end{align*} 
    Inserting \eqref{Formula: BA(14)} into the fourth line gives
    \begin{align}\label{Formula: BA(15)}
    	\begin{alignedat}{5}
    		&\frac{d(r^{-2}\mathcal{D}_{1,X_{0},\psi}(r))}{dr}\\
    		&=2r^{-2}\int_{\partial B_{r}(X_{0})}\frac{1}{x}(\nabla\psi\cdot\nu)^{2}\:d\mathcal{H}^{1}-2r^{-3}\int_{B_{r}(X_{0})}\left(\frac{1}{x}|\nabla\psi|^{2}-x\psi f(\psi)\right)\:dX\\
    		&\quad-r^{-3}\int_{B_{r}(X_{0})}\left[\frac{x-x_{0}}{x^{2}}|\nabla\psi|^{2}+(y(x-x_{0})+x(y-y_{0}))I_{\{\psi>0\}}\right]\:dX\\
    		&\quad-r^{-3}\int_{B_{r}(X_{0})}(2F(\psi)(x-x_{0})+4xF(\psi))\:dX\\
    		&\quad-r^{-3}\int_{\partial B_{r}(X_{0})}-r(2xF(\psi)-x\psi f(\psi))\:d\mathcal{H}^{1}.
    	\end{alignedat}
    \end{align}
    Since $\operatorname{div}\left(\frac{1}{x}\nabla\psi\right)+xf(\psi)=0$ in $B_{R_{0}}(X_{0})\cap\{\psi>0\}$, it is easy to deduce by an integration by parts that
    \begin{align}\label{Formula: BA(16)}
    	\int_{B_{r}(X_{0})}\left(\frac{1}{x}|\nabla\psi|^{2}-x\psi f(\psi)\right)\:dX=\int_{\partial B_{r}(X_{0})}\frac{1}{x}\psi\nabla\psi\cdot\nu\:d\mathcal{H}^{1}.
    \end{align} 
    On the other hand, a direct calculation gives
    \begin{align*}
    	&\frac{d}{dr}\left(r^{-3}\int_{\partial B_{r}(X_{0})}\frac{1}{x}\psi^{2}\:d\mathcal{H}^{1}\right)=\frac{d}{dr}\left(r^{-2}\int_{\partial B_{1}}\frac{1}{x_{0}+rx}\psi^{2}(x_{0}+rx)\:d\mathcal{H}^{1}\right)\\
    	&=-2r^{-3}\int_{\partial B_{1}}\frac{1}{x_{0}+rx}\psi^{2}(x_{0}+rx)\:d\mathcal{H}^{1}-r^{-2}\int_{\partial B_{1}}\frac{x}{(x_{0}+rx)^{2}}\psi^{2}(x_{0}+rx)\:d\mathcal{H}^{1}\\
    	&\quad+r^{-2}\int_{\partial B_{1}}\frac{2}{x}\psi(x_{0}+rx)\nabla\psi(x_{0}+rx)\cdot x\:d\mathcal{H}^{1}\\
    	&=-2r^{-4}\int_{\partial B_{r}(X_{0})}\frac{1}{x}\psi^{2}\:d\mathcal{H}^{1}-r^{-4}\int_{\partial B_{r}(X_{0})}\frac{x-x_{0}}{x^{2}}\psi^{2}\:d\mathcal{H}^{1}+2r^{-3}\int_{\partial B_{r}(X_{0})}\frac{1}{x}\psi\nabla\psi\cdot\nu\:d\mathcal{H}^{1}.
    \end{align*}
    Introducing \eqref{Formula: BA(16)} into \eqref{Formula: BA(15)}, and we have 
    \begin{align*}
    	&\frac{d\mathcal{D}_{X_{0},\psi}(r)}{dr}\\
    	&=2r^{-2}\int_{\partial B_{r}(X_{0})}\frac{1}{x}(\nabla\psi\cdot\nu)^{2}-2\frac{1}{x}\nabla\psi\cdot\nu\frac{\psi}{r}+\frac{1}{x}\left(\frac{\psi}{r}\right)^{2}\:d\mathcal{H}^{1}\\
    	&\quad+r^{-4}\int_{\partial B_{r}(X_{0})}\frac{x-x_{0}}{x^{2}}\psi^{2}\:d\mathcal{H}^{1}-r^{-3}J_{0}(r)-r^{-3}K_{1}(r)\\
    	&=2r^{-2}\int_{\partial B_{r}(X_{0})}\frac{1}{x}\left(\nabla\psi\cdot\nu-\frac{\psi}{r}\right)^{2}\:d\mathcal{H}^{1}\\
    	&\quad+r^{-4}\int_{\partial B_{r}(X_{0})}\frac{x-x_{0}}{x^{2}}\psi^{2}\:d\mathcal{H}^{1}-r^{-3}J_{0}(r)-r^{-3}K_{1}(r).
    \end{align*}
    This concludes the proof.
\end{proof}
We are now in a position to prove that for every $X_{0}\in N_{\psi}$ and every local minimizer of $J$, the blow-up limit of $\psi$ is a one-homogeneous function. This result highly depends on the fact that the limit $\lim_{r\to0^{+}}\mathcal{D}_{X_{0},\psi}(r)$ exists.
\begin{lemma}[Homogeneity of minimizers]\label{Lemma: Homogeneity of minimizers}
	Let $X_{0}\in N_{\psi}$ and let $\psi$ be a local minimizer of $J$ in $B_{R_{0}}(X_{0})$, then the following does hold:
	\begin{enumerate}
		\item The limit $\mathcal{D}_{X_{0},\psi}(0^{+})=\lim_{r\to0^{+}}\mathcal{D}_{X_{0},\psi}(r)$ exists and is finite.
		\item Every blow-up limit $\psi_{0}$ of $\psi_{n}$ is a one-homogeneous function for each $X\in\mathbb{R}^{2}$.
		\item The weighted energy $\mathcal{D}_{X_{0},\psi}(0^{+})$ satisfies
		\begin{align*}
			\mathcal{D}_{X_{0},\psi}(0^{+})=-x_{0}y_{0}\int_{B_{1}(0)}I_{\{\psi_{0}>0\}}\:dX.
		\end{align*}
	\end{enumerate}
\end{lemma}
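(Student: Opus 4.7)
The plan is to treat the three statements in sequence, with part (1) coming from almost-monotonicity of $\mathcal{D}_{X_0,\psi}$, part (2) from a scaling/rigidity argument, and part (3) from an elementary integration-by-parts for one-homogeneous functions.

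For (1), I start from the monotonicity formula of Lemma \ref{Lemma: monotonicity formula}. The first term on the right is non-negative, so it is enough to bound the three ``error'' terms from below by an $L^1(0,R_0/2)$ function of $r$. Using the Lipschitz bound from Proposition \ref{Proposition: Lipschitz regularity for local minimizers} together with Remark \ref{Remark: Growth estimates of minimizers}, I have $|\nabla\psi|\leqslant C$ and $\psi(X)^2\leqslant C|X-X_0|^2$ on $B_{R_0/2}(X_0)$, while $|x-x_0|\leqslant r$, $|y-y_0|\leqslant r$ on $B_r(X_0)$, and $x\geqslant x_0/2$ throughout. A direct size count then yields
\begin{align*}
r^{-4}\int_{\partial B_r(X_0)}\frac{|x-x_0|}{x^2}\psi^2\:d\mathcal{H}^1
 + r^{-3}|J_0(r)| + r^{-3}|K_1(r)| \leqslant C(X_0,F_0),
\end{align*}
so $\frac{d}{dr}\mathcal{D}_{X_0,\psi}(r)\geqslant -C$ for a.e.\ $r\in(0,R_0/2)$. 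Hence $r\mapsto \mathcal{D}_{X_0,\psi}(r)+Cr$ is non-decreasing, and since the same Lipschitz/growth bounds force $|\mathcal{D}_{X_0,\psi}(r)|\leqslant C$, the left limit $\mathcal{D}_{X_0,\psi}(0^+)$ exists and is finite.

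For (2), fix $\sigma>0$ and change variables $X=X_0+\rho_n Z$ in $\mathcal{D}_{X_0,\psi}(\rho_n\sigma)$. Using $\psi(X_0+\rho_n Z)=\rho_n\psi_n(Z)$ and $\nabla\psi(X_0+\rho_n Z)=\nabla\psi_n(Z)$ together with the $W^{1,2}_{\mathrm{loc}}$ strong convergence $\psi_n\to\psi_0$ and the $L^1_{\mathrm{loc}}$ convergence $I_{\{\psi_n>0\}}\to I_{\{\psi_0>0\}}$ from Lemma \ref{Lemma: Structure of the blow-up limits}, one verifies that the $x\psi f(\psi)$ piece vanishes in the limit (it carries an extra factor $\rho_n$) and obtains
\begin{align*}
\mathcal{D}_{X_0,\psi}(\rho_n\sigma)\longrightarrow W(\sigma):=\frac{1}{x_0}\left(\sigma^{-2}\!\!\int_{B_\sigma(0)}\!\!|\nabla\psi_0|^2\,dZ - \sigma^{-3}\!\!\int_{\partial B_\sigma(0)}\!\!\psi_0^2\,d\mathcal{H}^1\right) - \sigma^{-2}x_0 y_0\!\!\int_{B_\sigma(0)}\!\!I_{\{\psi_0>0\}}\,dZ.
\end{align*}
By (1) the left-hand side converges to $\mathcal{D}_{X_0,\psi}(0^+)$, which is independent of $\sigma$; therefore $W(\sigma)\equiv \mathcal{D}_{X_0,\psi}(0^+)$. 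Applying the classical Weiss monotonicity formula to $\psi_0$ (which is a global minimizer of the homogeneous functional $\mathcal{J}$, has no right-hand side, and is harmonic in $\{\psi_0>0\}$ by Corollary \ref{Corollary: Properties of blow-up limits}) gives
\begin{align*}
W'(\sigma)=\frac{2}{\sigma^{2}x_0}\int_{\partial B_\sigma(0)}\left(\nabla\psi_0\cdot\nu-\frac{\psi_0}{\sigma}\right)^{2}\:d\mathcal{H}^1.
\end{align*}
Constancy of $W$ forces the integrand to vanish for a.e.\ $\sigma$, i.e.\ $\nabla\psi_0\cdot X=\psi_0$; this is Euler's identity and yields the one-homogeneity of $\psi_0$.

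Part (3) then falls out of (1), (2) and an integration by parts. Since $\psi_0$ is one-homogeneous and harmonic in $\{\psi_0>0\}$, on the sphere $\partial B_1(0)$ we have $\nabla\psi_0\cdot\nu=\psi_0$, and
\begin{align*}
\int_{B_1(0)}|\nabla\psi_0|^{2}\:dZ=\int_{\partial B_1(0)}\psi_0\,\nabla\psi_0\cdot\nu\:d\mathcal{H}^1 - \int_{B_1(0)\cap\{\psi_0>0\}}\psi_0\,\Delta\psi_0\:dZ=\int_{\partial B_1(0)}\psi_0^2\:d\mathcal{H}^1.
\end{align*}
Substituting this into $W(1)$ cancels the Dirichlet and boundary terms, leaving $W(1)=-x_0 y_0\int_{B_1(0)}I_{\{\psi_0>0\}}\,dZ$, which by (1) and (2) equals $\mathcal{D}_{X_0,\psi}(0^+)$.

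The main obstacle is the passage to the limit in (2): one must show that every term in $\mathcal{D}_{X_0,\psi}(\rho_n\sigma)$ converges to the corresponding term of $W(\sigma)$ for a.e.\ $\sigma$, which requires the strong $W^{1,2}_{\mathrm{loc}}$-convergence established in Lemma \ref{Lemma: Structure of the blow-up limits}(4) (for the Dirichlet part and the boundary trace), the $L^1_{\mathrm{loc}}$-convergence of the characteristic functions (for the $xy I_{\{\psi>0\}}$ part), and a careful verification that the vorticity term $x\psi f(\psi)$, which produces an extra $\rho_n$ after rescaling, is genuinely negligible.
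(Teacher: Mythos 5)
Your proof is correct and reaches the same conclusion, but for part (2) you take a genuinely different route than the paper. The paper integrates its monotonicity identity \eqref{Formula: Monotonicity formula} over the shrinking interval $(\rho_n r_1,\rho_n r_2)$, changes variables so that the ``good'' term becomes $\int_{B_{r_2}\setminus B_{r_1}}2|X'|^{-4}\frac{1}{x_0+\rho_n x'}\bigl(\nabla\psi_n\cdot X'-\psi_n\bigr)^2\,dX'$, and then sends $n\to\infty$: the right-hand side collapses because $\mathcal{D}(\rho_n r_2)-\mathcal{D}(\rho_n r_1)\to 0$ and each error term is $O(\rho_n)$, while the left-hand side passes to the limit by the strong $W^{1,2}_{\mathrm{loc}}$-convergence of Lemma \ref{Lemma: Structure of the blow-up limits}(4), forcing $\nabla\psi_0\cdot X-\psi_0\equiv 0$. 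You instead identify the limit $W(\sigma)=\lim_n\mathcal{D}_{X_0,\psi}(\rho_n\sigma)$, observe that it is constant in $\sigma$ by part (1), and invoke the classical Weiss monotonicity formula for the global minimizer $\psi_0$ of $\mathcal{J}$ to conclude $W'(\sigma)=\frac{2}{x_0\sigma^2}\int_{\partial B_\sigma}(\partial_\nu\psi_0-\psi_0/\sigma)^2=0$. What the paper's route buys is self-containedness — it never needs Weiss's formula for the limiting homogeneous problem as external input, only its own Lemma \ref{Lemma: monotonicity formula}; what your route buys is that the argument is packaged more cleanly at the level of the limit object $\psi_0$ and the error terms are disposed of once and for all in the identification of $W(\sigma)$. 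Both proofs require the same convergences from Lemma \ref{Lemma: Structure of the blow-up limits} (strong $W^{1,2}_{\mathrm{loc}}$, uniform for the boundary trace, $L^1_{\mathrm{loc}}$ for the characteristic function, plus the negligibility of $x\psi f(\psi)$), and you do flag these. Parts (1) and (3) are essentially the paper's argument: bounded error terms give almost-monotonicity (hence existence of the limit), and the value of $\mathcal{D}(0^+)$ follows from the Euler identity $\nabla\psi_0\cdot X=\psi_0$ and integration by parts.
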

\begin{proof}
	(1). In view of Lemma \ref{Lemma: monotonicity formula}, it suffices to prove that the terms $r^{-4}\int_{\partial B_{r}(X_{0})}\frac{x-x_{0}}{x^{2}}\psi^{2}\:d\mathcal{H}^{1}$, $r^{-3}J_{0}(r)$ and $r^{-3}K_{1}(r)$ are integrable on $(r,r_{0})$ with $r_{0}\leqslant\tfrac{R_{0}}{2}$. Observe first that
	\begin{align}\label{Formula: BA(17)}
		F(\psi)=\int_{0}^{\psi}f(s)\:ds\leqslant\max_{s\geqslant0}|f(s)|\psi\leqslant C\psi.
	\end{align}
    It follows from Proposition \ref{Proposition: Lipschitz regularity for local minimizers} and Remark \ref{Remark: Growth estimates of minimizers} that
    \begin{align}\label{Formula: BA(18)}
    	\frac{|\nabla\psi|^{2}}{x^{2}}\leqslant C\quad\text{ in }\quad B_{r}(X_{0}),
    \end{align}
    and
    \begin{align}\label{Formula: BA(19)}
    	\frac{\psi^{2}}{x^{2}}\leqslant Cr^{2}\quad\text{ in }\quad B_{r}(X_{0}),
    \end{align}
    where $C$ in \eqref{Formula: BA(18)} and \eqref{Formula: BA(19)} are constants depending only on $X_{0}$. With the aid of \eqref{Formula: BA(17)}-\eqref{Formula: BA(19)}, one has the following estimates
    \begin{align*}
    	r^{-3}J_{0}(r)&=r^{-3}\int_{B_{r}(X_{0})}\left[\frac{x-x_{0}}{x^{2}}|\nabla\psi|^{2}+((x-x_{0})y+(y-y_{0})x)I_{\{\psi>0\}}\right]\:dX\\
    	&\leqslant r^{-3}\int_{B_{r}(X_{0})}Cr\:dX\leqslant C,
    \end{align*}
    and
    \begin{align*}
    	r^{-4}\int_{\partial B_{r}(X_{0})}\frac{x-x_{0}}{x^{2}}\psi^{2}\:d\mathcal{H}^{1}&\leqslant r^{-4}\int_{\partial B_{r}(X_{0})}Cr^{3}\:d\mathcal{H}^{1}\leqslant C,
    \end{align*}
    and
    \begin{align*}
    	r^{-3}K_{1}(r)&=r^{-4}\int_{B_{r}(X_{0})}[2F(\psi)(x-x_{0})+4xF(\psi)]\:dX-r^{-2}\int_{\partial B_{r}(X_{0})}(2xF(\psi)-x\psi f(\psi))\:d\mathcal{H}^{1}\\
    	&\leqslant r^{-4}\int_{B_{r}(X_{0})}(C|x-x_{0}|\psi+C|x|\psi)\:dX+r^{-2}\int_{\partial B_{r}(X_{0})}C|x|\psi\:d\mathcal{H}^{1}\\
    	&\leqslant r^{-4}\int_{B_{r}(X_{0})}C(x_{0}+r)r^{2}\:dX+r^{-2}\int_{\partial B_{r}(X_{0})}C(x_{0}+r)r^{2}\:d\mathcal{H}^{1}\\
    	&\leqslant C(x_{0}+r)+C(x_{0}+r)r.
    \end{align*}
    The above arguments indicate that the term
    \begin{align*}
    	r^{-4}\int_{\partial B_{r}(X_{0})}\frac{x-x_{0}}{x^{2}}\psi^{2}\:d\mathcal{H}^{1}-r^{3}J_{0}(r)-r^{-3}K_{1}(r)
    \end{align*}
    is integrable on $(0,r_{0})$, as wanted.
    
    (2). Integrating the equality \eqref{Formula: Monotonicity formula} with respect to $r$ on $(\rho_{n}r_{1},\rho_{n}r_{2})$ for $0<r_{1}<r_{2}<\tfrac{R_{0}}{2}$ and $\rho_{n}\to0$ as $n\to\infty$ gives
    \begin{align*}
    	\int_{\rho_{n}r_{1}}^{\rho_{n}r_{2}}\frac{d\mathcal{D}_{X_{0},\psi}}{dr}\:dr&=\int_{\rho_{n}r_{1}}^{\rho_{n}r_{2}}2r^{-2}\int_{\partial B_{r}(X_{0})}\frac{1}{x}\left(\nabla\psi\cdot\nu-\frac{\psi}{r}\right)^{2}\:d\mathcal{H}^{1}dr\\
    	&\quad+\int_{\rho_{n}r_{1}}^{\rho_{n}r_{1}}r^{-4}\int_{\partial B_{r}(X_{0})}\frac{x-x_{0}}{x^{2}}\psi^{2}\:d\mathcal{H}^{1}-\int_{\rho_{n}r_{1}}^{\rho_{n}r_{2}}r^{-3}J_{0}(r)\:dr-\int_{\rho_{n}r_{1}}^{\rho_{n}r_{2}}r^{-3}K_{1}(r)\:dr
    \end{align*}
    A straightforward computation gives
    \begin{align*}
    	&\int_{\rho_{n}r_{1}}^{\rho_{n}r_{2}}\frac{d\mathcal{D}_{X_{0},\psi}}{dr}\:dr\\
    	&=\int_{\rho_{n}r_{1}}^{\rho_{n}r_{2}}2|X-X_{0}|^{-2}\int_{\partial B_{r}(X_{0})}\frac{1}{x}\left(\nabla\psi\cdot\nu-\frac{\psi}{|X-X_{0}|}\right)^{2}\:d\mathcal{H}^{1}dr\\
    	&=\int_{B_{\rho_{n}r_{2}}(X_{0})\setminus B_{\rho_{n}r_{1}}(X_{0})}2|X-X_{0}|^{-2}\frac{1}{x}\left(\nabla\psi\cdot\nu-\frac{\psi}{|X-X_{0}|}\right)^{2}\:dX\\
    	&=\int_{B_{r_{2}}(0)\setminus B_{r_{1}}(0)}2|\rho_{n}X'|^{-2}\frac{1}{x_{0}+\rho_{n}x'}\left(\nabla\psi(X_{0}+\rho_{n}X')\cdot\frac{X'}{|X|}-\frac{\psi(X_{0}+\rho_{n}X)}{\rho_{n}|X'|}\right)^{2}\rho_{n}^{2}\:dX'\\
    	&=\int_{B_{r_{2}}(0)\setminus B_{r_{1}}(0)}2|X'|^{-4}\frac{1}{x_{0}+\rho_{n}x'}\left(\nabla\psi_{n}\cdot X'-\psi_{n}\right)^{2}\:dX'.
    \end{align*}
    Thus,
    \begin{align*}
    	&2\int_{B_{r_{2}}(0)\setminus B_{r_{1}}(0)}|X'|^{-4}\frac{1}{x_{0}+\rho_{n}x'}\left(\nabla\psi_{n}\cdot X'-\psi_{n}\right)^{2}\:dX'\\
    	&\leqslant\mathcal{D}_{X_{0},\psi}(\rho_{n}r_{2})-\mathcal{D}_{X_{0},\psi}(\rho_{n}r_{1})+\int_{\rho_{n}r_{1}}^{\rho_{n}r_{2}}r^{-4}\int_{\partial B_{r}(X_{0})}\frac{|x-x_{0}|}{x^{2}}\psi^{2}\:d\mathcal{H}^{1}dr\\
    	&\quad+\int_{\rho_{n}r_{1}}^{\rho_{n}r_{2}}r^{-3}|J_{0}(r)|\:dr+\int_{\rho_{n}r_{1}}^{\rho_{n}r_{2}}r^{-3}|K_{1}(r)|\:dr\\
    	&\to0\quad\text{ as }n\to\infty.
    \end{align*}
    The desired homogeneity then follows from the strong convergence of $\psi_{n}$ to $\psi_{0}$.
    
    (3). It follows from \eqref{Formula: D(3)} and a direct calculation that
    \begin{align*}
    	\mathcal{D}_{X_{0},\psi}(0^{+})&=\lim_{n\to\infty}\rho_{n}^{-2}\int_{B_{\rho_{n}}(X_{0})}\frac{1}{x}|\nabla\psi|^{2}-xyI_{\{\psi>0\}}-x\psi f(\psi)\:dX\\
    	&\quad-\lim_{n\to\infty}\rho_{n}^{-3}\int_{\partial B_{\rho_{n}}(X_{0})}\frac{1}{x}\psi^{2}\:d\mathcal{H}^{1}\\
    	&=-\lim_{n\to\infty}\int_{B_{1}(0)}(x_{0}+\rho_{n}x)(y_{0}+\rho_{n}y)I_{\{\psi_{n}>0\}}\:dX\\
    	&\quad-\lim_{n\to\infty}\int_{B_{1}(0)}(x_{0}+\rho_{n}x)\rho_{n}\psi_{n}f(\rho_{n}\psi_{n})\:dX\\
    	&\quad+\lim_{n\to\infty}\int_{B_{1}(0)}\frac{1}{x_{0}+\rho_{n}x}|\nabla\psi_{n}|^{2}\:dX-\lim_{n\to\infty}\int_{\partial B_{1}(0)}\frac{1}{x_{0}+\rho_{n}x}\psi_{n}^{2}\:d\mathcal{H}^{1}\\
    	&=-\lim_{n\to\infty}\int_{B_{1}(0)}(x_{0}+\rho_{n}x)(y_{0}+\rho_{n}y)I_{\{\psi_{n}>0\}}\:dX\\
    	&\quad+\int_{B_{1}(0)}\frac{1}{x_{0}}|\nabla\psi_{0}|^{2}\:dX-\int_{\partial B_{1}(0)}\frac{1}{x_{0}}\psi_{0}^{2}\:d\mathcal{H}^{1}\\
    	&=-x_{0}y_{0}\int_{B_{1}(0)}I_{\{\psi_{0}>0\}}\:dX,
    \end{align*}
    as wanted.
\end{proof}
In view of the last property in the previous Lemma \ref{Lemma: Homogeneity of minimizers}, we have
\begin{align*}
	\mathcal{D}_{X_{0},\psi}(0^{+})=-x_{0}y_{0}\omega_{2}\frac{\mathcal{L}^{2}(B_{1}(0)\cap\{\psi_{0}>0\})}{\mathcal{L}^{2}(B_{1}(0))},
\end{align*}
where $\omega_{2}:=\mathcal{L}^{2}(B_{1}(0))$. In fact, we will obtain a stronger result that if $X_{0}\in N_{\psi}$, then
\begin{align*}
	\frac{\mathcal{L}^{2}(B_{1}(0)\cap\{\psi_{0}>0\})}{\mathcal{L}^{2}(B_{1}(0))}=\frac{1}{2}.
\end{align*}
However, this result depends heavily on the homogeneity of minimizers.
\begin{corollary}\label{Corollary: Density}
	Let $\psi$ be a local minimizer of $J$ in $B_{R_{0}}(X_{0})$, then for every $X_{0}\in N_{\psi}$, the limit
	\begin{align*}
		\lim_{r\to0^{+}}\frac{\mathcal{L}^{2}(B_{r}(X_{0})\cap\{\psi>0\})}{\mathcal{L}^{2}(B_{r}(X_{0}))}:=\gamma
	\end{align*}
	exists and $\gamma=\tfrac{1}{2}$.
\end{corollary}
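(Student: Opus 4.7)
The plan is to read the density limit directly off the structure of blow-up limits established in the preceding lemmas. Let me outline the steps.

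First, I would perform the natural change of variables $X = X_0 + \rho_n X'$ so that for any vanishing sequence $\rho_n \to 0^+$,
\begin{align*}
    \frac{\mathcal{L}^2(B_{\rho_n}(X_0) \cap \{\psi > 0\})}{\mathcal{L}^2(B_{\rho_n}(X_0))} = \frac{\mathcal{L}^2(B_1(0) \cap \{\psi_n > 0\})}{\mathcal{L}^2(B_1(0))} = \frac{1}{\omega_2}\int_{B_1(0)} I_{\{\psi_n > 0\}}\:dX,
\end{align*}
where $\psi_n$ is the blow-up sequence defined in \eqref{Formula: BA(1)}. By item (2) of Lemma \ref{Lemma: Structure of the blow-up limits}, $I_{\{\psi_n > 0\}} \to I_{\{\psi_0 > 0\}}$ in $L^1_{\mathrm{loc}}(\mathbb{R}^2)$ along some subsequence, so passing to the limit gives
\begin{align*}
    \lim_{n \to \infty} \frac{\mathcal{L}^2(B_{\rho_n}(X_0) \cap \{\psi > 0\})}{\mathcal{L}^2(B_{\rho_n}(X_0))} = \frac{1}{\omega_2}\int_{B_1(0)} I_{\{\psi_0 > 0\}}\:dX.
\end{align*}

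Second, I would invoke item (2) of Lemma \ref{Lemma: Homogeneity of minimizers}, which guarantees that every blow-up limit $\psi_0$ is one-homogeneous. Combining this with item (2) of Corollary \ref{Corollary: Properties of blow-up limits} forces
\begin{align*}
    \psi_0(X) = x_0\sqrt{-y_0}\,(X \cdot \nu)^+,
\end{align*}
for some unit vector $\nu \in \mathbb{S}^1$. Consequently, $\{\psi_0 > 0\} = \{X \cdot \nu > 0\}$ is exactly an open half-plane, and hence
\begin{align*}
    \mathcal{L}^2(B_1(0) \cap \{\psi_0 > 0\}) = \tfrac{1}{2}\mathcal{L}^2(B_1(0)) = \tfrac{\omega_2}{2}.
\end{align*}

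Third, I would argue that the limit does not depend on the choice of subsequence $\rho_n$. Given any sequence $r_k \to 0^+$, every subsequence admits a further subsequence along which the associated blow-up sequence converges (in the sense of Lemma \ref{Lemma: Structure of the blow-up limits}) to some one-homogeneous half-plane solution, and the computation above shows that the density ratio along that subsequence tends to $\tfrac{1}{2}$. By the standard Urysohn subsequence principle, the full limit exists and equals $\tfrac{1}{2}$, so $\gamma = \tfrac{1}{2}$ as claimed.

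There is essentially no hard step here: all heavy lifting has been done in Lemma \ref{Lemma: Homogeneity of minimizers} (homogeneity of blow-up limits via the Weiss-type monotonicity formula) and Corollary \ref{Corollary: Properties of blow-up limits} (classification of one-homogeneous global minimizers as half-plane solutions). The only subtlety worth flagging explicitly is the independence of the limit from the blow-up sequence, which is what upgrades the subsequential convergence in Lemma \ref{Lemma: Structure of the blow-up limits} to the existence of a genuine density $\gamma$.
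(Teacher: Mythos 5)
Your proposal is correct and is essentially the same approach as the paper's: pass to the blow-up, use the $L^1_{\mathrm{loc}}$ convergence of $I_{\{\psi_n>0\}}$ from Lemma \ref{Lemma: Structure of the blow-up limits}(2), and identify the limiting density via the classification of one-homogeneous blow-up limits. The one place where you diverge is in how the \emph{full} limit is extracted. The paper's Step I leans on the fact that $\mathcal{D}_{X_0,\psi}(0^+)$ exists as a genuine (not merely subsequential) limit, thanks to Lemma \ref{Lemma: Homogeneity of minimizers}(1), and then reads off the density from the identity $\mathcal{D}_{X_0,\psi}(0^+)=-x_0y_0\,\mathcal{L}^2(B_1\cap\{\psi_0>0\})$; this makes the density a deterministic function of $X_0$ with no reference to which blow-up sequence was used. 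You instead run a Urysohn-type subsequence argument: every subsequence of scales has a further subsequence along which the blow-up converges to some half-plane solution (Lemma \ref{Lemma: Homogeneity of minimizers}(2) plus Corollary \ref{Corollary: Properties of blow-up limits}(2)), and any half-plane slices $B_1$ into a set of measure $\omega_2/2$ regardless of orientation, so every subsequential limit of the density ratio is $\tfrac{1}{2}$. Both mechanisms are sound; yours is more elementary and self-contained since it sidesteps the Weiss-energy bookkeeping and also the external references (Theorem 9.21 in \cite{V2019}, Remark 4.8 in \cite{MTV2022}) that the paper's Step II invokes to pin down the value $\tfrac{1}{2}$, using instead the classification you already have from Corollary \ref{Corollary: Properties of blow-up limits}(2). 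The paper's route buys you a slightly stronger viewpoint — the density is seen as a direct byproduct of the Weiss density $\mathcal{D}_{X_0,\psi}(0^+)$, which is reused when proving \eqref{Formula: maintheorem(1)} — but as a proof of the stated corollary your version is complete.
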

\begin{proof}
	\textbf{Step I}. In this step, we prove that $\gamma$ exists and is finite. Notice that in view of the proof in the last property of Lemma \ref{Lemma: Homogeneity of minimizers}, one has
	\begin{align*}
		\lim_{r\to0^{+}}\mathcal{D}_{X_{0},\psi}(r)=\lim_{n\to\infty}\mathcal{D}_{0,\psi_{n}}(1)=-x_{0}y_{0}\mathcal{L}^{2}(B_{1}(0)\cap\{\psi_{0}>0\}).
	\end{align*}
    On the other hand, since $\psi_{0}$ is an one-homogeneous function, one has
    \begin{align*}
    	\lim_{n\to\infty}\int_{B_{1}(0)}\frac{1}{x_{0}+\rho_{n}x}|\nabla\psi_{n}|^{2}-(x_{0}+\rho_{n}x)\rho_{n}\psi_{n}f(\rho_{n}\psi_{n})\:dX-\lim_{n\to\infty}\int_{\partial B_{1}(0)}\frac{1}{x_{0}+\rho_{n}x}\psi_{n}^{2}\:d\mathcal{H}^{1}=0.
    \end{align*}
    Therefore,
    \begin{align*}
    	&\mathcal{L}^{2}(B_{1}(0)\cap\{\psi_{0}>0\})\\
    	&=\frac{1}{-x_{0}y_{0}}\lim_{n\to\infty}\mathcal{D}_{0,\psi_{n}}(1)\\
    	&=\frac{1}{-x_{0}y_{0}}\lim_{n\to\infty}\left[\int_{B_{1}(0)}\frac{1}{x_{0}+\rho_{n}x}|\nabla\psi_{n}|^{2}-(x_{0}+\rho_{n}x)\rho_{n}\psi_{n}f(\rho_{n}\psi_{n})\:dX-\int_{\partial B_{1}(0)}\frac{1}{x_{0}+\rho_{n}x}\psi_{n}^{2}\:d\mathcal{H}^{1}\right]\\
    	&\quad+\frac{1}{-x_{0}y_{0}}\lim_{n\to\infty}\int_{B_{1}(0)}I_{\{\psi_{n}>0\}}(x_{0}+\rho_{n}x)(-y_{0}+\rho_{n}y)\:dX\\
    	&=\lim_{n\to\infty}\int_{B_{1}(0)}I_{\{\psi_{n}>0\}}\:dX=\lim_{n\to\infty}\frac{\int_{B_{\rho_{n}}(X_{0})}I_{\{\psi>0\}}(X_{0}+\rho_{n}X)\:dX}{\rho_{n}^{2}}\\
    	&=\lim_{n\to\infty}\frac{\mathcal{L}^{2}(B_{\rho_{n}}(X_{0})\cap\{\psi>0\})}{\mathcal{L}^{2}(B_{\rho_{n}}(X_{0}))},
    \end{align*}
    as desired.
    
    \textbf{Step II}. We finish the proof in this step. It follows from Lemma \ref{Lemma: Structure of the blow-up limits}, the first property in Corollary \ref{Corollary: Properties of blow-up limits}, and Lemma \ref{Lemma: Homogeneity of minimizers} that $\psi_{0}\in W_{\mathrm{loc}}^{1,2}(\mathbb{R}^{2})$ is a continuous, non-negative, non-constantly vanishing, one-homogeneous function satisfying $\Delta\psi_{0}=0$ in $\{\psi_{0}>0\}$. Expressing the Laplacian of $\psi_{0}$ in polar coordinates and it follows from Theorem 9.21 in \cite{V2019} that $\mathcal{H}^{1}(\{\psi_{0}>0\}\cap\partial B_{1})\geqslant\omega_{2}$. This implies
    \begin{align*}
    	\mathcal{L}^{2}(\{\psi_{0}>0\}\cap B_{1}(0))\geqslant\frac{\mathcal{L}^{2}(B_{1}(0))}{2},
    \end{align*}
    by the homogeneity of $\psi_{0}$. Since $I_{\{\psi_{n}>0\}}$ converges to $I_{\{\psi_{0}>0\}}$ strongly in $L_{\mathrm{loc}}^{1}(\mathbb{R}^{2})$ ( recalling the second property in Lemma \ref{Lemma: Structure of the blow-up limits}), one has
    \begin{align*}
    	\gamma=\lim_{n\to\infty}\frac{\mathcal{L}^{2}(\{\psi>0\}\cap B_{\rho_{n}}(X_{0}))}{\mathcal{L}^{2}(B_{\rho_{n}}(X_{0}))}=\frac{\mathcal{L}^{2}(\{\psi_{0}>0\}\cap B_{1}(0))}{\mathcal{L}^{2}(B_{1}(0))}\geqslant\frac{1}{2}.
    \end{align*}
    In particular, in the case of equality $\gamma=\frac{1}{2}$, we have by Remark 4.8 in \cite{MTV2022} that $\psi_{0}|_{\partial B_{1}}$ is precisely the first eigenvalue on $\mathbb{S}^{1}_{+}$, whose one-homogeneous extension is precisely the one-plane solution $(X\cdot\nu)^{+}$. This finishes the proof.
\end{proof}
We are now in a position to finish the proof of the first part in Theorem \ref{Theorem: main(1)}. In fact, we have already proved \eqref{Formula: maintheorem(1)} in Corollary \ref{Corollary: Density}, we now show that minimizers satisfy the equation in the viscosity sense, which is another application of the homogeneity of blow-up limits.
\begin{proposition}[Minimizers are viscosity solutions]\label{Proposition: minimizers are viscosity solutions}
	Let $X_{0}\in N_{\psi}$ and let $\psi$ be a local minimizer of $J$ in $B_{R_{0}}(X_{0})$, then $\psi$ is a viscosity solution to \eqref{Formula: governed equation} in the sense of Definition \ref{Definition: viscosity solutions}.
\end{proposition}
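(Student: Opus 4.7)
The proof splits naturally into two cases according to whether $X_1$ is an interior point of $\{\psi>0\}$ or a free boundary point.

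For the \textbf{interior case} $X_1 \in B_{R_0}(X_0)\cap\{\psi>0\}$, I would invoke Corollary \ref{Corollary: higher regularity in the positive phase}, which gives $\psi\in C^{2,\alpha}$ there and the equation $\operatorname{div}(\tfrac{1}{x}\nabla\psi)+xf(\psi)=0$ in the classical sense. If $\phi\in C^2$ touches $\psi$ from below at $X_1$, then $\psi-\phi$ attains a local minimum at $X_1$, so $\nabla\phi(X_1)=\nabla\psi(X_1)$ and $\Delta(\psi-\phi)(X_1)\geqslant0$. Expanding $\operatorname{div}(\tfrac{1}{x}\nabla\cdot)=\tfrac{1}{x}\Delta(\cdot)-\tfrac{1}{x^{2}}\partial_x(\cdot)$ and noting the gradient term cancels, we obtain
\[
\operatorname{div}(\tfrac{1}{x}\nabla\phi)(X_1)=\operatorname{div}(\tfrac{1}{x}\nabla\psi)(X_1)-\tfrac{1}{x_1}\Delta(\psi-\phi)(X_1)\leqslant -x_1f(\phi(X_1)),
\]
which is the required viscosity inequality. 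The case of touching from above is symmetric.

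For the \textbf{free boundary case} $X_1\in B_{R_0}(X_0)\cap\partial\{\psi>0\}$, I would run a blow-up argument. Given $\phi\in C^2$ touching $\psi$ from below at $X_1$, consider the rescalings
\[
\psi_\rho(X):=\tfrac{\psi(X_1+\rho X)}{\rho},\qquad \phi_\rho(X):=\tfrac{\phi(X_1+\rho X)}{\rho},\qquad \rho\to 0^+.
\]
By Lemma \ref{Lemma: Structure of the blow-up limits} combined with Lemma \ref{Lemma: Homogeneity of minimizers} and Corollary \ref{Corollary: Properties of blow-up limits}, along a subsequence $\psi_\rho\to\psi_0$ locally uniformly, where $\psi_0(X)=x_1\sqrt{-y_1}(X\cdot\nu)^+$ for some $\nu\in\mathbb{S}^1$. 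On the other hand, since $\phi(X_1)=0$, a first-order Taylor expansion yields $\phi_\rho(X)\to\nabla\phi(X_1)\cdot X$ locally uniformly. Passing to the limit in the pointwise inequality $\phi_\rho\leqslant\psi_\rho$ gives
\[
\nabla\phi(X_1)\cdot X\leqslant x_1\sqrt{-y_1}\,(X\cdot\nu)^+\quad\text{for all }X\in\mathbb{R}^{2}.
\]
Testing with $X=\nabla\phi(X_1)/|\nabla\phi(X_1)|$ (when nonzero; otherwise the conclusion is trivial) yields the sharp bound $|\nabla\phi(X_1)|\leqslant x_1\sqrt{-y_1}$. The touching-from-above case is dual: one treats $\phi^+$ in the blow-up, so that $(\phi^+)_\rho\to(\nabla\phi(X_1)\cdot X)^+$, and the reverse inequality $\phi^+\geqslant\psi$ together with testing $X=\nu$ gives $\nabla\phi(X_1)\cdot\nu\geqslant x_1\sqrt{-y_1}$, hence $|\nabla\phi(X_1)|\geqslant x_1\sqrt{-y_1}$.

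The \emph{main obstacle} has, in fact, already been cleared in the preceding sections: the crucial input is the classification of blow-up limits as one-homogeneous half-plane solutions of amplitude $x_1\sqrt{-y_1}$, which rests on the Weiss-type monotonicity formula (Lemma \ref{Lemma: monotonicity formula}), the compactness package of Lemma \ref{Lemma: Structure of the blow-up limits}, and the ODE analysis on $\mathbb{S}^{1}$ in Corollary \ref{Corollary: Properties of blow-up limits}. Once these are available, the remainder of the argument is a clean passage to the limit of a first-order Taylor approximation against the explicit profile $x_1\sqrt{-y_1}(X\cdot\nu)^+$, the only delicate point being a consistent convention on what "touch from above" means at a free boundary point; adopting the \caps{De Silva} convention via $\phi^+$ (tacit in Definition \ref{Definition: viscosity solutions}) makes the dual statement meaningful and the blow-up passage transparent.
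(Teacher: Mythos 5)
Your proof is correct and follows essentially the same approach as the paper's: interior case via classical $C^{2,\alpha}$ regularity and a second-derivative test at the touching point, free boundary case via blow-up to the half-plane profile $x_1\sqrt{-y_1}(X\cdot\nu)^+$. You streamline the free boundary step slightly by extracting the gradient bound directly from testing the comparison of blow-up limits against a well-chosen vector, whereas the paper first constrains $\nu=e_2$ by invoking Lemma 5.31 of \cite{RTV2018} together with the density estimate of Corollary \ref{Corollary: Density}, but the structure is otherwise identical.
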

\begin{proof}
	\textbf{Step I}. Assume first that $X_{1}=(x_{1},y_{1})\in B_{R_{0}}(X_{0})\cap\{\psi>0\}$. Let $\phi\in C^{\infty}(B_{R_{0}}(X_{0}))$ be a function touching $\psi$ from below at $X_{1}$. Then $\psi$ is $C^{2,\alpha}$ smooth at $X_{1}$ (recalling Corollary \ref{Corollary: higher regularity in the positive phase}). Moreover, by the definition of "touch",  $(\psi-\phi)(X_{1})=0$ and $(\psi-\phi)(X)\geqslant0$ in a small neighborhood of $X_{1}$. Therefore the function $(\psi-\phi)$ attains a local minimum at $X_{1}$, which implies that
	\begin{align*}
		\nabla(\psi-\phi)(X_{1})=0\qquad\text{ and }\qquad\Delta(\psi-\phi)(X_{1})\geqslant0.
	\end{align*}
    Consequently,
    \begin{align*}
    	&\operatorname{div}\left(\frac{1}{x}\nabla\phi\right)(X_{1})+x_{1}f(\phi(X_{1}))\\
    	&=\frac{1}{x_{1}}\Delta\phi(X_{1})-\frac{1}{x_{1}^{2}}\pd{\phi}{x}(X_{1})+x_{1}f(\phi(X_{1}))\\
    	&=\frac{1}{x_{1}}\Delta\phi(X_{1})-\frac{1}{x_{1}^{2}}\pd{\psi}{x}(X_{1})+x_{1}f(\psi(X_{1}))\\
    	&\leqslant\frac{1}{x_{1}}\Delta\psi(X_{1})-\frac{1}{x_{1}^{2}}\pd{\psi}{x}(X_{1})+x_{1}f(\psi(X_{1}))\\
    	&=\operatorname{div}\left(\frac{1}{x}\nabla\psi\right)(X_{1})+x_{1}f(\psi(X_{1}))=0,
    \end{align*}
    where we have used the fact that $\psi$ satisfies the equation $\operatorname{div}\left(\frac{1}{x}\nabla\psi\right)+xf(\psi)$ in the region $B_{R_{0}}(X_{0})\cap\{\psi>0\}=0$ in the last equality. We obtain 
    \begin{align*}
    	\operatorname{div}\left(\frac{1}{x}\nabla\phi\right)(X_{1})+x_{1}f(\phi(X_{1}))\leqslant0,
    \end{align*}
    where $\phi$ is the function which  touches $\psi$ from below. This verifies the first requirement of Definition \ref{Definition: viscosity solutions}. It is easy to check the case when $\phi$ touches $\psi$ from above at $X_{0}$ in a similar way.
    
    \textbf{Step II}. It remains to check that $\psi$ satisfies the free boundary condition
    \begin{align*}
    	\frac{1}{x^{2}}|\nabla\psi|^{2}=-y
    \end{align*}
    in the viscosity sense (Recall Definition \ref{Definition: viscosity solutions}). Suppose first that the function $\phi$ touches $\psi$ from below at $X_{0}\in\partial\{\psi>0\}$. Consider the blow-up sequence
    \begin{align*}
    	\psi_{n}(X)=\frac{\psi(X_{0}+\rho_{n}X)}{\rho_{n}}\qquad\text{ and }\qquad\phi_{n}(X)=\frac{\phi(X_{0}+\rho_{n}X)}{\rho_{n}},
    \end{align*}
    as $\rho_{n}\to0^{+}$. It follows from Lemma \ref{Lemma: Structure of the blow-up limits} that up to a subsequence,
    \begin{align*}
    	\psi_{0}(x)=\lim_{n\to\infty}\psi_{n}(x)\qquad\text{ and }\qquad\phi_{0}=\lim_{n\to\infty}\phi_{n}(X),
    \end{align*}
    the convergence being uniform in $B_{1}$. By Corollary \ref{Corollary: Properties of blow-up limits}, $\psi_{0}$ is harmonic in $\{\psi_{0}>0\}$. Since $\phi$ is a smooth function in $B_{R_{0}}(X_{0})$, we have $\phi_{0}=\xi_{0}\cdot X$, where the vector $\xi_{0}\in\mathbb{R}^{2}$ is precisely the gradient $\nabla\phi(0)$. Without loss of generality we may assume that $\xi_{0}=Ae_{2}$ for some non-negative constant $A\geqslant0$, therefore
    \begin{align*}
    	|\nabla\phi(X_{0})|=|\nabla\phi_{0}(0)|=A\quad\text{ and }\quad\phi_{0}(X)=Ay.
    \end{align*}
    Moreover, we can assume that $A>0$ since otherwise the inequality $|\nabla\phi(X_{0})|\leqslant x_{0}\sqrt{-y_{0}}$ holds trivially.
    
    Since $\psi_{0}\geqslant\phi_{0}$, we obtain that $\psi_{0}>0$ in the set $\{y>0\}$. Thus, $\psi_{0}$ is a one-homogeneous harmonic function defined in the cone $\{\psi_{0}>0\}\supset\{y>0\}$, it follows from Lemma 5.31 in \cite{RTV2018} that there are only two possibilities
    \begin{align*}
    	\psi_{0}(X)=\alpha y^{+}\quad\text{ or }\quad\psi_{0}(X)=\alpha y^{+}+\beta y^{-},
    \end{align*}
    where $\alpha$ and $\beta$ are positive constants. Note that in the second case, one has $\frac{\mathcal{L}^{2}(\{\psi_{0}>0\}\cap B_{1}(0))}{\mathcal{L}^{2}(B_{1}(0))}=1$, therefore it is ruled out since it contradicts with Corollary \ref{Corollary: Density}. Thus,
    \begin{align*}
    	\psi_{0}(X)=\alpha y^{+}\quad\text{ for every }X\in B_{1}(0).
    \end{align*}
    It then follows from Corollary \ref{Corollary: Properties of blow-up limits} that $\alpha=x_{0}\sqrt{-y_{0}}$. It then follows from the inequality $\psi_{0}\geqslant\phi_{0}$ that $x_{0}\sqrt{-y_{0}}\geqslant A$, and so one has $|\nabla\phi(X_{0})|=A\leqslant x_{0}\sqrt{-y_{0}}$. 
    
    Suppose now that $\phi$ touches $\psi$ from above at $X_{0}$. We still consider the blow-up limit $\psi_{0}$ and $\phi_{0}$ defined previously and we also assume that $\phi_{0}$ is given by $\phi_{0}=Ay$. In this case the set $\{\psi_{0}>0\}\subset\{y>0\}$, and Lemma 5.31 in \cite{RTV2018} tells us  $\{\psi_{0}>0\}=\{y>0\}$ and 
    \begin{align*}
    	\psi_{0}(X)=\alpha y^{+}\quad\text{ for every }x\in B_{1}(0).
    \end{align*}
    Along the same argument as in the first case, we have  $\alpha=x_{0}\sqrt{-y_{0}}$. Since $\psi_{0}\leqslant\phi_{0}$, we have that $|\nabla\phi(X_{0})|=|\nabla\phi_{0}(0)|=A\geqslant x_{0}\sqrt{-y_{0}}$, as wanted.
\end{proof}
At this stage, we have proved the former part of the Theorem \ref{Theorem: main(1)}. If $X_{0}\in N_{\psi}$, then $\psi$ is a Lipschitz viscosity solution to the problem \eqref{Formula: governed equation} in $B_{R_{0}}(X_{0})$. Moreover, $X_{0}\in N_{\psi}$ if and only if
\begin{align*}
	\mathcal{D}_{X_{0},\psi}(0^{+})=-x_{0}y_{0}\frac{\omega_{2}}{2}.
\end{align*}
In the concluding section, we will study the regularity of free boundary near non-degenerate points.
\section{Regularity of the non-degenerate points}\label{Section: regularity of the non-degenerate points}
This section aims to study the regularity of the free boundary near points in $N_{\psi}$. We begin with some primary settings. Let $X_{0}\in N_{\psi}$ and let $\psi$ be a local minimizer of $J$ in $B_{R_{0}}(X_{0})$. It follows from Proposition \ref{Proposition: Lipschitz regularity for local minimizers} that $\psi\in C^{0,1}(B_{R_{0}}(X_{0}))$. Define
\begin{align*}
	\tilde{f}(X):=-x^{2}(f\circ\psi)(X)\quad\text{ for every }X=(x,y)\in B_{R_{0}}(X_{0}),
\end{align*}
then after a direct calculation,
\begin{align*}
	|\tilde{f}(X)|=|x^{2}f(\psi(X))|\leqslant(x_{0}+R_{0})^{2}\left(|f(0)|+\max_{s\geqslant0}|f'(s)|\psi\right)\leqslant C(X_{0},F_{0},\|\psi\|_{L^{\infty}(B_{R_{0}}(X_{0}))}),
\end{align*}
where $F_{0}$ is defined in \eqref{Formula: assumptions on f}. We may assume that for every $X_{0}\in N_{\psi}$, $\tilde{f}(X)\in C^{0}(B_{R_{0}}(X_{0}))\cap L^{\infty}(B_{R_{0}}(X_{0}))$ (It should be stressed that the $L^{\infty}$ norm and the $C^{0}$ norm of $\tilde{f}$ depend on $X_{0}$, $F_{0}$ and $\|\psi\|_{L^{\infty}(B_{R_{0}}(X_{0}))}$). Moreover, it follows from Proposition \ref{Proposition: minimizers are viscosity solutions} that if $\psi$ is a local minimizer of $J$ in $B_{R_{0}}(X_{0})$, then $\psi$ solves the following problem
\begin{align}\label{Formula: R(1)}
	\begin{cases}
		\begin{alignedat}{2}
			\Delta\psi-\frac{1}{x}\pd{\psi}{x}&=\tilde{f}(X)\quad&&\text{ in }B_{R_{0}}(X_{0})\cap\{\psi>0\},\\
			|\nabla\psi|&=x\sqrt{-y}\quad&&\text{ on }B_{R_{0}}(X_{0})\cap\partial\{\psi>0\},
		\end{alignedat}
	\end{cases}
\end{align}
in the viscosity sense. Instead of directly studying \eqref{Formula: R(1)}, we begin by analyzing the following more general free boundary problem. 

Let $\Omega$ be a bounded Lipschitz domain in $\mathbb{R}^{2}$, let $a_{ij}(X)\in C^{0,\beta}(\Omega)$ be uniformly elliptic with constants $0<\lambda\leqslant\Lambda<\infty$, let $\tilde{f}(X)\in C^{0}(\Omega)\cap L^{\infty}(\Omega)$ and let $\mathbf{b}(X)=(b_{1}(X),b_{2}(X))\in C^{0}(\Omega)\cap L^{\infty}(\Omega)$. Consider the free boundary problem
\begin{align}\label{Formula: R(2)}
	\begin{cases}
		\displaystyle\sum_{i,j=1}^{2}a_{ij}(X)D_{ij}u+\mathbf{b}(X)\cdot\nabla u=\tilde{f}(X)\quad&\text{ in }\varOmega^{+}(u):=\{u>0\}\cap\Omega,\\
		|\nabla u|=Q(X)\quad&\text{ on }\varGamma(u):=\partial\{u>0\}\cap\Omega.
	\end{cases}
\end{align}
Here $Q(X)\in C^{0,\beta}(\Omega)$, $Q(X)>0$. In this section, we will study the viscosity solutions to the problem \eqref{Formula: R(2)}. In particular, we will show how the improvement of flatness implies the $C^{1,\gamma}$ regularity of the free boundary.
\begin{remark}
	On the one hand, if we choose $a_{ij}=\delta_{ij}$, $\mathbf{b}(X)=(\tfrac{1}{x},0)$, $Q(X)=x\sqrt{-y}$ in \eqref{Formula: R(2)}, then the problem \eqref{Formula: R(2)} and the problem \eqref{Formula: R(1)} are governed by the same equation and free boundary condition, respectively. On the other hand, if $X_{0}\in N_{\psi}$, then it follows from \eqref{Formula: R0} that $\frac{1}{x}\in L^{\infty}(B_{R_{0}}(X_{0}))\cap C^{0}(B_{R_{0}}(X_{0}))$ and $x\sqrt{-y}\in C^{0,\beta}(B_{R_{0}}(X_{0}))$ with $x\sqrt{-y}>0$ in $B_{R_{0}}(X_{0})$. This means that the regularity assumptions we require in \eqref{Formula: R(2)} is automatically satisfied by local minimizers of problem \eqref{Formula: R(1)} in $B_{R_{0}}(X_{0})$ with $X_{0}\in N_{\psi}$. 
\end{remark}
For the abuse of notation, we denote
\begin{align*}
	\mathfrak{L}:=\sum_{i,j=1}^{2}a_{ij}(X)D_{ij}+\mathbf{b}(X)\cdot\nabla,
\end{align*}
and we stress that a constant depending on the Lipschitz norm of $u$, $\lambda$, $\Lambda$, $[a_{ij}]_{C^{0,\beta}}$, $\|\tilde{f}\|_{L^{\infty}}$ and $\|\mathbf{b}\|_{L^{\infty}}$ will be called universal. Our first main result in this section is the following “flatness implies $C^{1,\gamma}$-regularity”. We begin with the definition of “flatness”
\begin{definition}[Flatness]
	Let $u\colon B_{1}(0)\to\mathbb{R}$ be a given function. Let $\varepsilon>0$ be a fixed real number and let $\nu\in\mathbb{R}^{2}$ a unit vector. We say that $u$ is $\varepsilon$-flat, in the direction $\nu$, in $B_{1}(0)$ (please see Fig \ref{Fig: Flatness}), provided that
	\begin{align}\label{Formula: flatness expression}
		(X\cdot\nu-\varepsilon)^{+}\leqslant u(X)\leqslant(X\cdot\nu+\varepsilon)^{+}\quad\text{ in }B_{1}(0).
	\end{align}
	In other words, \eqref{Formula: flatness expression} says that $\{X\cdot\nu<-\varepsilon\}\subset\{u=0\}\subset\{X\cdot\nu<\varepsilon\}$ in $B_{1}(0)$.
\end{definition}
\begin{figure}[ht]
	\tikzset{every picture/.style={line width=0.75pt}} %set default line width to 0.75pt        
	\begin{tikzpicture}[x=0.75pt,y=0.75pt,yscale=-0.9,xscale=0.9]
		%uncomment if require: \path (0,300); %set diagram left start at 0, and has height of 300
		
		%Shape: Circle [id:dp7161468446100958] 
		\draw  [line width=0.75]  (203.54,111.2) .. controls (219.18,76.07) and (260.34,60.27) .. (295.47,75.91) .. controls (330.6,91.55) and (346.4,132.71) .. (330.76,167.84) .. controls (315.12,202.97) and (273.96,218.76) .. (238.83,203.12) .. controls (203.7,187.48) and (187.9,146.33) .. (203.54,111.2) -- cycle ;
		%Straight Lines [id:da7525281649519209] 
		\draw [line width=0.75]    (267.15,139.52) ;
		\draw [shift={(267.15,139.52)}, rotate = 0] [color={rgb, 255:red, 0; green, 0; blue, 0 }  ][fill={rgb, 255:red, 0; green, 0; blue, 0 }  ][line width=0.75]      (0, 0) circle [x radius= 3.35, y radius= 3.35]   ;
		%Curve Lines [id:da6570468990917935] 
		\draw [color={rgb, 255:red, 0; green, 0; blue, 255 }  ,draw opacity=1 ][line width=0.75]    (167.02,74) .. controls (200.77,83.01) and (201.71,128.85) .. (267.15,139.52) ;
		%Curve Lines [id:da2334397227152103] 
		\draw [color={rgb, 255:red, 0; green, 0; blue, 255 }  ,draw opacity=1 ][line width=0.75]    (267.15,139.52) .. controls (316.58,144.97) and (315.73,200.97) .. (391.09,174.31) ;
		%Straight Lines [id:da5943443830186721] 
		\draw [line width=0.75]  [dash pattern={on 4.5pt off 4.5pt}]  (166.6,84.76) -- (397.48,187.01) ;
		%Straight Lines [id:da9703131148169306] 
		\draw [line width=0.75]  [dash pattern={on 4.5pt off 4.5pt}]  (160.75,104.05) -- (391.63,206.3) ;
		%Straight Lines [id:da48486353288297224] 
		\draw [line width=0.75]    (267.15,139.52) -- (303.05,57.39) ;
		\draw [shift={(303.85,55.56)}, rotate = 113.61] [color={rgb, 255:red, 0; green, 0; blue, 0 }  ][line width=0.75]    (10.93,-3.29) .. controls (6.95,-1.4) and (3.31,-0.3) .. (0,0) .. controls (3.31,0.3) and (6.95,1.4) .. (10.93,3.29)   ;

		% Text Node
		\draw (401.29,163.69) node [anchor=north west][inner sep=0.75pt]  [rotate=-24]  {$\textcolor[rgb]{0,0,1}{\varGamma }\textcolor[rgb]{0,0,1}{(}\textcolor[rgb]{0,0,1}{u}\textcolor[rgb]{0,0,1}{)}$};
		% Text Node
		\draw (394.22,185.72) node [anchor=north west][inner sep=0.75pt]  [rotate=-24]  {$2\varepsilon $};
		% Text Node
		\draw (274.13,143.2) node [anchor=north west][inner sep=0.75pt]  [rotate=-24]  {$0$};
		% Text Node
		\draw (293.4,98.69) node [anchor=north west][inner sep=0.75pt]  [rotate=-24]  {$\nu $};
		% Text Node
		\draw (300.13,204.04) node [anchor=north west][inner sep=0.75pt]  [rotate=-24]  {$B_{1}( 0)$};
		% Text Node
		\draw (232.44,86.87) node [anchor=north west][inner sep=0.75pt]  [rotate=-24]  {$u >0$};
		% Text Node
		\draw (212.1,137.47) node [anchor=north west][inner sep=0.75pt]  [rotate=-24]  {$u=0$};	
	\end{tikzpicture}
	\caption{In $B_{1}(0)$, the free boundary is trapped between two parallel hyperplanes at $\varepsilon$-distance from each other}
	\label{Fig: Flatness}
\end{figure}
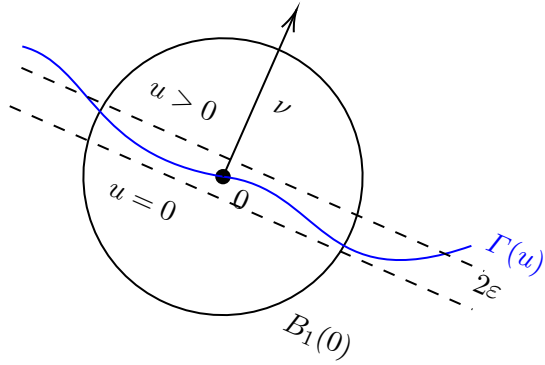
\begin{proposition}[Flatness implies $C^{1,\gamma}$]\label{Proposition: Flatness implies C1alpha}
	Let $u$ be a Lipschitz viscosity solution to \eqref{Formula: R(2)} in $B_{1}$. Assume that $0\in\varGamma(u)$, $Q(0)=1$ and $a_{ij}(0)=\delta_{ij}$. There exists a universal constant $\bar{\varepsilon}>0$ such that, if $u$ is $\varepsilon$-flat in $B_{1}(0)$, that is, 
	\begin{align*}
		(y-\varepsilon)^{+}\leqslant u(X)\leqslant(y+\varepsilon)^{+}\quad\text{ in }B_{1}(0),
	\end{align*}
    with $0\leqslant\varepsilon\leqslant\bar{\varepsilon}$, then $\varGamma(u)$ is $C^{1,\gamma}$ in $B_{1/2}(0)$.
\end{proposition}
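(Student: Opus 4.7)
The plan is to follow the De Silva scheme \cite{S2011}, adapted to the linear operator $\mathfrak{L}$ with drift and right-hand side. The heart of the argument is an \emph{improvement of flatness} lemma, which I would state roughly as: there exist universal constants $\bar\varepsilon>0$ and $\rho\in(0,1/2)$ such that if $u$ is a Lipschitz viscosity solution to \eqref{Formula: R(2)} with $0\in\varGamma(u)$, $Q(0)=1$, $a_{ij}(0)=\delta_{ij}$, and
\begin{align*}
    (X\cdot\nu-\varepsilon)^{+}\leqslant u(X)\leqslant(X\cdot\nu+\varepsilon)^{+} \quad\text{in }B_{1}(0),
\end{align*}
with $\varepsilon\leqslant\bar\varepsilon$, then there is a new unit vector $\nu'$ with $|\nu-\nu'|\leqslant C\varepsilon$ such that
\begin{align*}
    (X\cdot\nu'-\tfrac{\varepsilon}{2}\rho)^{+}\leqslant u(X)\leqslant(X\cdot\nu'+\tfrac{\varepsilon}{2}\rho)^{+} \quad\text{in }B_{\rho}(0).
\end{align*}
Once this is established, a standard dyadic iteration at $0$ (and at every nearby free boundary point by translation) yields that the free boundary is tangentially approximated by hyperplanes with normal directions that converge geometrically; writing $\varepsilon_{k}=2^{-k}\varepsilon_{0}$ gives $|\nu_{k+1}-\nu_{k}|\lesssim \varepsilon_{k}$, and this summable geometric rate produces a modulus of continuity of the form $r^{\gamma}$ for the unit normal, i.e. the $C^{1,\gamma}$ parametrization of $\varGamma(u)$ in $B_{1/2}(0)$. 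Here $\gamma\in(0,1)$ is chosen so that $2^{-\gamma}$ matches the contraction factor coming from the Hölder regularity of $a_{ij}$ and $Q$; one has to track how these coefficients change after each rescaling (see next paragraph).

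The proof of the improvement of flatness step itself splits into three pieces. First, a \emph{partial boundary Harnack inequality} for $\varepsilon$-flat viscosity solutions of $\mathfrak{L} u=\tilde f$ in $\varOmega^{+}(u)$ with the Bernoulli free boundary condition, which allows one to improve the flatness \emph{along} the free boundary; this uses the interior Harnack for $\mathfrak{L}$ (the coefficients are uniformly elliptic and bounded) together with barrier arguments on a narrow strip near the flat side. Second, a \emph{compactness/linearization} argument: assuming the statement fails, one extracts a blow-up sequence of solutions $u_{k}$ that are $\varepsilon_{k}$-flat but do not improve, rescales via the normalized variable $\tilde u_{k}(X)=(u_{k}(X)-X\cdot\nu)/\varepsilon_{k}$, and shows using the partial boundary Harnack above that $\tilde u_{k}$ converges (in Hausdorff distance of graphs) to a Hölder-continuous function $\tilde u_{\infty}$ in $B_{1/2}\cap\{y\geqslant 0\}$ solving a linearized Neumann problem
\begin{align*}
    \Delta \tilde u_{\infty}=0\quad\text{in }B_{1/2}\cap\{y>0\}, \qquad \partial_{y}\tilde u_{\infty}=0 \quad\text{on }B_{1/2}\cap\{y=0\}.
\end{align*}
Note that in the rescaling $u_{k}/\varepsilon_{k}$ the drift $\mathbf{b}\cdot\nabla$ and the source $\tilde f$ pick up factors that vanish in the limit (because $\varepsilon_{k}\to 0$ and we also rescale spatially); similarly the Hölder oscillation of $a_{ij}$ and $Q$ produces only $o(1)$ perturbations. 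Third, \emph{regularity of the limit}: a standard $C^{1,\alpha}$ estimate (or just Taylor expansion) at $0$ for the Neumann problem above yields a linear profile $\tilde u_{\infty}(X)\approx \xi\cdot X$ in $B_{\rho}$, and lifting this back to $u_{k}$ through the definition of $\tilde u_{k}$ produces the improved vector $\nu'$ and contradicts the assumption.

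The main obstacle, and the step that requires genuine care rather than mere bookkeeping, is the partial boundary Harnack inequality in the first piece, because the free boundary condition $|\nabla u|=Q$ is nonlinear and cannot be touched from below by affine functions in the usual way. The fix, following De Silva, is to use carefully constructed radial barriers of the form $c(e^{-\mu|X-X_{0}|^{2}}-e^{-\mu})$ analogous to the one used in the proof of Proposition~\ref{Proposition: Lipschitz regularity for local minimizers}, comparing them to $u$ across the flat strip $\{|y|\leqslant \varepsilon\}$; the comparison is legitimate because $u$ is a viscosity solution. Once the partial boundary Harnack is in place, the rest of the argument proceeds essentially as in \cite{S2011}. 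Finally, applying the above to $\psi$ viewed as a viscosity solution of \eqref{Formula: R(1)} (as guaranteed by Proposition~\ref{Proposition: minimizers are viscosity solutions}), and using that near any $X_{0}\in N_{\psi}$ the coefficients $1/x$, $x\sqrt{-y}$ fulfil the regularity hypotheses, yields the conclusion.
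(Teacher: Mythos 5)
Your proposal follows essentially the same route as the paper: the proof is built on a partial boundary Harnack inequality for $\varepsilon$-flat solutions of $\mathfrak{L}u=\tilde f$, followed by a compactness/linearization argument producing a limit solving the Neumann problem $\Delta\tilde u_\infty=0$ in $\{y>0\}$, $\partial_y\tilde u_\infty=0$ on $\{y=0\}$, and then a geometric iteration of the improvement of flatness. The only small discrepancy is that the barrier actually used in the partial Harnack step (both here and in \cite{S2011}) is the power-type function $c\bigl(|X-\bar X|^{-\gamma}-(3/4)^{-\gamma}\bigr)$ on an annulus rather than the exponential barrier $c(e^{-\mu|X-X_0|^2}-e^{-\mu})$ you name (which the paper uses instead in the Lipschitz estimate of Proposition~\ref{Proposition: Lipschitz regularity for local minimizers}); this does not affect the correctness of the overall scheme.
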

\begin{remark}
	Let $u\in C^{0}(\Omega)$, we say that $u$ is a \emph{viscosity solution} to \eqref{Formula: R(2)} in $\Omega$ if the following conditions are satisfied:
	\begin{enumerate}
		\item $\displaystyle\sum_{i,j=1}^{2}a_{ij}(X)D_{ij}u+\mathbf{b}(X)\cdot\nabla u=\tilde{f}(X)$ in $\varOmega^{+}(u)$ in the viscosity sense. i.e., if $\phi\in C^{2}(\varOmega^{+}(u))$ touches $u$ from below (resp. above) at $X_{1}\in\varOmega^{+}(u)$ then
		\begin{align*}
			\sum_{i,j=1}^{2}a_{ij}(X_{1})D_{ij}\phi(X_{1})+\mathbf{b}(X_{1})\nabla\phi(X_{1})\leqslant(\text{ resp. }\geqslant)\tilde{f}(X_{1}).
		\end{align*}
		\item If $\phi\in C^{2}(\Omega)$ touches $u$ from below (resp. above) at $X_{0}\in\varGamma(u)$ then
		\begin{align*}
			|\nabla\phi(X_{0})|\leqslant(\text{ resp. }\geqslant)Q(X_{0}).
		\end{align*}
	\end{enumerate}
\end{remark}
\begin{remark}
	The method “flatness implies $C^{1,\gamma}$” was originated from the celebrated work \cite{AC1981} of \caps{Alt} and \caps{Caffarelli}, who gave the basic fact that if the free boundary is “flat” enough at some point, then it is regular near that point. It was then due to \caps{De Silva} \cite{S2011}, who for the first time developed the viscosity approach of \caps{Caffarelli}. Compared to \cite{S2011}, the main difference is that our equation \eqref{Formula: R(2)} involves the lower order term $\mathbf{b}\cdot\nabla u$. However, we will prove in this section that if $\mathbf{b}$ is bounded and continuous (correspond to the case that $X_{0}\in N_{\psi}$ is a non-degenerate point), then we still obtain the regularity of the free boundary. 
\end{remark}
\begin{remark}
	Proposition \ref{Proposition: Flatness implies C1alpha} can be easily extended to $d$-dimensions, for $d\geqslant3$, with a slight change in the proof. 
\end{remark}
To prove Proposition \ref{Proposition: Flatness implies C1alpha}, we need the following lemma.
\begin{lemma}\label{Lemma: Flatness implies C1gamma}
	Let $u$ be a viscosity solution to \eqref{Formula: R(2)} in $B_{1}$. Assume that $0\in\varGamma(u)$, $Q(0)=1$ and $a_{ij}(0)=\delta_{ij}$. There exists a universal constant $\bar{\varepsilon}$ such that, if $u$ is $\bar{\varepsilon}$-flat in $B_{1}(0)$, that is,
	\begin{align*}
		(y-\bar{\varepsilon})^{+}\leqslant u(X)\leqslant(y+\bar{\varepsilon})^{+},\quad\text{ in }B_{1},
	\end{align*}
    and
	\begin{align}\label{Formula: R(2(2))}
		[a_{ij}]_{C^{0,\beta}(B_{1})}\leqslant\bar{\varepsilon},\quad\|\tilde{f}\|_{L^{\infty}(B_{1})}\leqslant\bar{\varepsilon},\quad\|\mathbf{b}\|_{L^{\infty}(B_{1})}\leqslant\bar{\varepsilon},\quad[Q]_{C^{0,\beta}(B_{1})}\leqslant\bar{\varepsilon},
	\end{align}
    then $\varGamma(u)$ is $C^{1,\gamma}$ in $B_{1/2}(0)$.
\end{lemma}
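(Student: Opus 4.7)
I would follow the viscosity approach of De Silva \cite{S2011}, suitably adapted to accommodate the first-order drift term $\mathbf{b}\cdot\nabla u$. The heart of the argument is an \emph{improvement of flatness} lemma: there exist universal constants $\bar{\varepsilon}_{0}>0$ and $\rho\in(0,1)$ such that, whenever $\varepsilon\leqslant\bar{\varepsilon}_{0}$ and $u$ is an $\varepsilon$-flat viscosity solution of \eqref{Formula: R(2)} in $B_{1}(0)$ in the direction $e_{2}$, satisfying the smallness conditions \eqref{Formula: R(2(2))} (with $\bar{\varepsilon}$ replaced by $\varepsilon$), then there exists a unit vector $\nu'\in\mathbb{R}^{2}$ with $|\nu'-e_{2}|\leqslant C\varepsilon$ so that $u$ is $(\rho\varepsilon/2)$-flat in $B_{\rho}(0)$ in the direction $\nu'$. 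Once this is in hand, the $C^{1,\gamma}$ regularity of $\Gamma(u)$ in $B_{1/2}(0)$ follows by a standard dyadic iteration: applying the lemma at every free boundary point in $B_{1/2}(0)$ and at every scale $\rho^{k}$ produces a Cauchy sequence of normal directions $\nu_{k}\to\nu_{\infty}$ whose rate of convergence is geometric with ratio $\tfrac{1}{2}$, giving a H\"older exponent $\gamma=\log_{\rho}(1/2)\in(0,1)$.

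\textbf{Proof of the improvement of flatness.} The proof proceeds by compactness and contradiction. Suppose the conclusion fails, so that there are sequences $\varepsilon_{k}\to 0^{+}$ and associated data $u_{k}$, $a_{ij}^{k}$, $\mathbf{b}^{k}$, $\tilde{f}^{k}$, $Q^{k}$ fulfilling all hypotheses, yet no direction $\nu'$ yields an improved flatness. Introduce the normalized functions $\tilde{u}_{k}(X):=\tfrac{u_{k}(X)-y}{\varepsilon_{k}}$, and record their graphs $A_{k}:=\{(X,\tilde{u}_{k}(X))\colon X\in B_{1}(0)\cap\{u_{k}>0\}\}\subset B_{1}(0)\times[-1,1]$, where the codomain bound uses $\varepsilon_{k}$-flatness. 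The first key step is a \emph{partial boundary Harnack inequality} in the spirit of Theorem 3.1 of \cite{S2011}, adapted to the operator $\mathfrak{L}$ with small drift and small right-hand side: it yields a uniform modulus of continuity of $\tilde{u}_{k}$ in the Hausdorff sense. An Arzel\`a--Ascoli type argument then gives, along a subsequence, convergence of $A_{k}$ in the Hausdorff distance to the graph of a H\"older continuous function $\tilde{u}_{0}\colon\overline{B_{1}^{+}(0)}\to\mathbb{R}$, where $B_{1}^{+}(0):=B_{1}(0)\cap\{y>0\}$.

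\textbf{Linearized problem and conclusion.} The next step is to identify $\tilde{u}_{0}$ as a (viscosity, hence classical) solution of the Neumann problem
\begin{equation*}
\Delta\tilde{u}_{0}=0\quad\text{in }B_{1}^{+}(0),\qquad\partial_{y}\tilde{u}_{0}=0\quad\text{on }B_{1}(0)\cap\{y=0\}.
\end{equation*}
The interior harmonicity follows by passing to the limit in $\sum a_{ij}^{k}D_{ij}u_{k}+\mathbf{b}^{k}\cdot\nabla u_{k}=\tilde{f}^{k}$, using $a_{ij}^{k}(0)=\delta_{ij}$, $[a_{ij}^{k}]_{C^{0,\beta}}+\|\mathbf{b}^{k}\|_{L^{\infty}}+\|\tilde{f}^{k}\|_{L^{\infty}}\leqslant\varepsilon_{k}\to 0$, and the uniform Lipschitz bound on $u_{k}$; the boundary condition is extracted from the free boundary relation $|\nabla u_{k}|=Q^{k}$ combined with $Q^{k}(0)=1$ and $[Q^{k}]_{C^{0,\beta}}\to 0$, via comparison with tilted half-plane solutions of the form $\phi_{\nu,Q}(X)=Q\,(X\cdot\nu)^{+}$. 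Standard regularity for the linear Neumann problem yields a first-order Taylor expansion $\tilde{u}_{0}(X)=\tilde{u}_{0}(0)+\xi\,x+O(\rho^{2})$ in $B_{\rho}(0)$ for some scalar $\xi\in\mathbb{R}$. Unwinding the normalization $u_{k}\approx y+\varepsilon_{k}\tilde{u}_{0}$, this expansion translates into improved flatness of $u_{k}$ at scale $\rho$ in the direction $\nu_{k}':=\tfrac{(\varepsilon_{k}\xi,1)}{\sqrt{1+\varepsilon_{k}^{2}\xi^{2}}}$ for all $k$ sufficiently large, contradicting the standing negation of the conclusion.

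\textbf{Main obstacle.} The principal novelty compared with the classical framework of \cite{S2011} is the presence of the drift $\mathbf{b}\cdot\nabla u$, and two points require particular care. First, the partial boundary Harnack inequality must be established for the operator $\mathfrak{L}$ rather than for $\sum a_{ij}D_{ij}$ alone; since $\|\mathbf{b}\|_{L^{\infty}}\leqslant\bar{\varepsilon}$ is small and $u$ is uniformly Lipschitz (ensured \emph{a priori} by the flatness itself), the term $\mathbf{b}\cdot\nabla u$ is of order $\bar{\varepsilon}$ and can be absorbed into a perturbation of $\tilde{f}$, so the De Silva iteration scheme for Harnack-type estimates can be reproduced with a harmless quantitative error. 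Second, in the passage to the linearized limit, we must verify $\tfrac{1}{\varepsilon_{k}}\mathbf{b}^{k}\cdot\nabla u_{k}\to 0$ in the interior; this is ensured by the quantitative bound $\|\mathbf{b}^{k}\|_{L^{\infty}}\leqslant\varepsilon_{k}$ together with the uniform Lipschitz estimate for $u_{k}$. Accordingly, the drift enters only as a lower-order perturbation that vanishes in the limit, and the De Silva scheme applies modulo these controlled perturbative modifications.
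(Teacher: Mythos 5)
Your overall strategy---partial boundary Harnack inequality, compactness, linearization to a Neumann problem, improvement of flatness, and dyadic iteration---does mirror the proof in the paper, and several quantitative features you describe ($|\nu'-e_2|\leqslant C\varepsilon$, $\gamma=\log_\rho(1/2)$) are right. However, there is a genuine flaw in the scaling you impose in the improvement-of-flatness lemma, and in the justification you give that the drift vanishes in the linearized limit. You assume \eqref{Formula: R(2(2))} \emph{with $\bar\varepsilon$ replaced by $\varepsilon$}, namely $\|\mathbf{b}\|_{L^\infty}\leqslant\varepsilon$, $\|\tilde f\|_{L^\infty}\leqslant\varepsilon$, $[Q]_{C^{0,\beta}}\leqslant\varepsilon$, and then claim that $\varepsilon_k^{-1}\,\mathbf{b}^k\cdot\nabla u_k\to0$ is ``ensured by $\|\mathbf{b}^k\|_{L^\infty}\leqslant\varepsilon_k$ and the uniform Lipschitz estimate''. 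This is false: that bound gives $\varepsilon_k^{-1}\|\mathbf{b}^k\|_{L^\infty}\|\nabla u_k\|_{L^\infty}\leqslant L$, which is merely bounded, not vanishing. Writing $u_k=y+\varepsilon_k\tilde u_k$, the normalized equation reads $\sum a_{ij}^kD_{ij}\tilde u_k=\varepsilon_k^{-1}(\tilde f^k-b_2^k)-\mathbf{b}^k\cdot\nabla\tilde u_k$, and the term $\varepsilon_k^{-1}b_2^k$ survives unless $\|\mathbf{b}^k\|_{L^\infty}=o(\varepsilon_k)$. Likewise, if $\|Q-1\|_{L^\infty}$ is only $O(\varepsilon)$, the limiting Neumann condition becomes $\partial_y\tilde u_0=h$ with $h$ of size $O(1)$, and unwinding the normalization produces $u_k\approx\alpha\,(X\cdot\nu)^+$ with slope $\alpha=1+\varepsilon_kh(0)+o(\varepsilon_k)\neq1$, which violates the slope-one flatness hypothesis required at the next iterate.

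The correct scaling for the improvement-of-flatness step, recorded in \eqref{Formula: R(4)} and used throughout Proposition~\ref{Proposition: improvement of flatness}, is $\|\tilde f\|_{L^\infty},\,\|\mathbf{b}\|_{L^\infty},\,\|Q-1\|_{L^\infty}\leqslant\varepsilon^2$, and only $\|a_{ij}-\delta_{ij}\|_{L^\infty}\leqslant\varepsilon$; this is precisely what the paper's remark after Lemma~\ref{Lemma: Flatness implies C1gamma} flags as the key adjustment needed to handle the drift. To reach this from the hypothesis of the lemma (which only gives $\leqslant\bar\varepsilon$), the paper sets $\bar\varepsilon=\tilde\varepsilon^2$ for a suitable $\tilde\varepsilon$, and exploits that rescaling by $\rho_k=\bar r^k$ produces an extra factor $\rho_k$ in $\tilde f_k$, $\mathbf{b}_k$ and $\rho_k^\beta$ in the H\"older seminorms, while the flatness decays only like $2^{-k}$; the constraint $\bar r\leqslant\tfrac14$ then keeps the rescaled data below $\varepsilon_k^2$ at every step (see Lemma~\ref{Lemma: iteration}). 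Your proposal never distinguishes the fixed hypothesis constant $\bar\varepsilon$ from the running parameter $\varepsilon$ of the improvement-of-flatness statement, and without this bookkeeping the iteration does not close.
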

\begin{remark}
	Compared to \cite{S2011}, which considered the case $\mathbf{b}=0$. We found that for a free boundary problem with the first order term $\mathbf{b}\cdot\nabla u$, the order of $\mathbf{b}$ we require should be the same to the order of the right-hand side. This observation helps us to establish a partial Harnack inequality to the  problem \eqref{Formula: R(2)}.
\end{remark}
In order to prove Lemma \ref{Lemma: Flatness implies C1gamma}, we have to establish a Harnack type inequality and an improvement of flatness procedure as in \cite{S2011}. We begin by proving a Harnack type inequality for a solution $u$ to the problem
\begin{align}\label{Formula: R(3)}
	\begin{cases}
		\displaystyle\sum_{i,j=1}^{2}a_{ij}(X)D_{ij}u+\mathbf{b}\cdot\nabla u=\tilde{f}(X)\quad&\text{ in }\varOmega^{+}(u),\\
		|\nabla u|=Q(X)\quad&\text{ on }\varGamma(u),
	\end{cases}
\end{align}
under the assumption $(0<\varepsilon<1)$
\begin{align}\label{Formula: R(4)}
	\|\tilde{f}\|_{L^{\infty}(B_{1})}\leqslant\varepsilon^{2},\quad\|\mathbf{b}\|_{L^{\infty}(B_{1})}\leqslant\varepsilon^{2},\quad\|Q-1\|_{L^{\infty}(\Omega)}\leqslant\varepsilon^{2},\quad\|a_{ij}-\delta_{ij}\|_{L^{\infty}(\Omega)}\leqslant\varepsilon.
\end{align}
\begin{proposition}[Partial Harnack inequality]\label{Proposition: Harnack inequality}
	There exists a universal constant $\bar{\varepsilon}$ such that if $u$ solves \eqref{Formula: R(3)}-\eqref{Formula: R(4)}, and for some point $X_{1}\in\varOmega^{+}(u)\cup\varGamma(u)$,
	\begin{align*}
		(y+a_{0})^{+}\leqslant u(X)\leqslant(y+b_{0})^{+}\quad\text{ in }B_{r}(X_{1})
	\end{align*}
	with 
	\begin{align*}
		b_{0}-a_{0}\leqslant\varepsilon r,\qquad\varepsilon\leqslant\bar{\varepsilon},
	\end{align*}
	then
	\begin{align*}
		(y+a_{1})^{+}\leqslant u(X)\leqslant(y+b_{1})^{+}\quad\text{ in }B_{r/20}(X_{1})
	\end{align*}
	with
	\begin{align*}
		a_{0}\leqslant a_{1}\leqslant b_{1}\leqslant b_{0},\qquad b_{1}-a_{1}\leqslant(1-c)\varepsilon r,
	\end{align*}
	and $c\in(0,1)$ universal.
\end{proposition}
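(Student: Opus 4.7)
The plan is to follow the viscosity approach of \caps{De Silva} \cite{S2011}, adapted to our operator $\mathfrak{L}=\sum a_{ij}D_{ij}+\mathbf{b}\cdot\nabla$, which carries the extra first-order perturbation $\mathbf{b}\cdot\nabla u$. By the scaling $u_{r}(X):=u(X_{1}+rX)/r$ one reduces to the case $r=1$, $X_{1}=0$; the rescaled coefficients $a_{ij}(rX)$, $r\mathbf{b}(rX)$, $r\tilde{f}(rX)$, $Q(rX)$ still satisfy the smallness assumption \eqref{Formula: R(4)} since $r\leqslant 1$ only helps the two $O(\varepsilon^{2})$-terms. After this normalization, fix the interior reference point $\bar{X}:=(0,\tfrac{1}{5})$. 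Because the free boundary is trapped in the thin strip $\{-b_{0}\leqslant y\leqslant -a_{0}\}$ with $b_{0}-a_{0}\leqslant\varepsilon\ll\tfrac{1}{5}$, the ball $B_{1/10}(\bar{X})$ lies entirely in the positive phase $\varOmega^{+}(u)$ and satisfies $u\geqslant\tfrac{1}{10}$ there.

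I would then split the argument by a dichotomy: either $u(\bar{X})\geqslant \bar{y}+\tfrac{a_{0}+b_{0}}{2}$ (Case (i)) or $u(\bar{X})<\bar{y}+\tfrac{a_{0}+b_{0}}{2}$ (Case (ii)). In Case (i) the function $v:=u-(y+a_{0})$ is nonnegative on $B_{1/10}(\bar{X})$, satisfies $\mathfrak{L}v=\tilde{f}-b_{2}$ with $\|\mathfrak{L}v\|_{L^{\infty}}\leqslant 2\varepsilon^{2}$, and obeys $v(\bar{X})\geqslant\tfrac{b_{0}-a_{0}}{2}$. A Krylov--Safonov-type Harnack inequality for $\mathfrak{L}$ then yields
\begin{align*}
    \inf_{B_{1/20}(\bar{X})}v\geqslant c_{0}v(\bar{X})-C_{0}\varepsilon^{2}\geqslant c(b_{0}-a_{0}),
\end{align*}
for a universal $c>0$, provided $\bar{\varepsilon}$ is small enough to make the $O(\varepsilon^{2})$ error negligible against the linear oscillation $b_{0}-a_{0}\leqslant\varepsilon$. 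Case (ii) is completely symmetric: apply the same Harnack estimate to the nonnegative quantity $v':=(y+b_{0})-u$.

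The heart of the proof is to propagate this pointwise gain from $B_{1/20}(\bar{X})$ to the whole $B_{1/20}(0)$, including points arbitrarily close to $\varGamma(u)$. For Case (i) I would construct a radial comparison function $\phi$ supported in $B_{3/4}(0)$, with $\phi\geqslant 1$ on $B_{1/20}(\bar{X})$ and $\mathfrak{L}\phi\leqslant-\mu<0$ on the annular region $\{\phi>0\}\setminus B_{1/20}(\bar{X})$; the standard choice $\phi(X)=c_{1}(|X-Y_{0}|^{-\gamma}-C_{\gamma})^{+}$, with $Y_{0}$ slightly above $\bar{X}$ and $\gamma$ large, is strictly $\mathfrak{L}$-superharmonic for perturbations of the Laplacian controlled by $\varepsilon$. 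Then slide the barrier
\begin{align*}
    \phi_{t}(X):=y+a_{0}+c(b_{0}-a_{0})\phi(X)-t(b_{0}-a_{0})
\end{align*}
downward from $t\gg 1$ until first contact at some $t_{0}\in(0,c]$. Contact cannot occur at an interior point $X_{*}\in\varOmega^{+}(u)$, since $u-\phi_{t_{0}}$ is a nonnegative $\mathfrak{L}$-supersolution on a connected open set, and the strong maximum principle forbids an interior zero. Contact cannot occur at $X_{*}\in\varGamma(u)$ either: by construction $|\nabla\phi|\geqslant c_{2}>0$ on the free-boundary strip $\{y\approx 0\}$, so
\begin{align*}
    |\nabla\phi_{t_{0}}(X_{*})|\geqslant 1+c(b_{0}-a_{0})c_{2}>1+\varepsilon^{2}\geqslant Q(X_{*}),
\end{align*}
violating the viscosity free-boundary condition $|\nabla\phi_{t_{0}}|\leqslant Q$. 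Therefore contact is forced to occur on the outer set $\{\phi=0\}$, which, after reading off the inequality $\phi_{t_{0}}\leqslant u$ inside $B_{1/20}(0)$, yields the desired improved lower bound $u\geqslant y+a_{1}$ with $a_{1}-a_{0}\geqslant c(b_{0}-a_{0})$. Case (ii) follows by a symmetric barrier slid from above, and in each case one obtains $b_{1}-a_{1}\leqslant(1-c)(b_{0}-a_{0})$.

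The main obstacle will be the barrier step: concretely, verifying that the gradient of $\phi$ is bounded below on the relevant part of $\varGamma(u)$ so that the free-boundary condition is strictly violated at any putative contact, while simultaneously arranging $\mathfrak{L}\phi\leqslant-\mu<0$ away from $B_{1/20}(\bar{X})$ uniformly in the allowed perturbations of $a_{ij}$ and $\mathbf{b}$. The precise scaling $\|\tilde{f}\|_{L^{\infty}}$, $\|\mathbf{b}\|_{L^{\infty}}$, $\|Q-1\|_{L^{\infty}}=O(\varepsilon^{2})$ versus $\|a_{ij}-\delta_{ij}\|_{L^{\infty}}=O(\varepsilon)$ assumed in \eqref{Formula: R(4)} is what makes this balance work: the Harnack defect ($O(\varepsilon^{2})$) and the free-boundary gradient error ($O(\varepsilon^{2})$) are both strictly of lower order than the oscillation $b_{0}-a_{0}=O(\varepsilon)$ that we aim to contract, so they can be absorbed.
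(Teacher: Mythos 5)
Your outline follows the same De Silva-type strategy as the paper (pointwise Harnack gain at the interior reference point $\bar{X}=(0,\tfrac{1}{5})$, followed by a sliding-barrier argument to propagate it down to the free boundary), so the structure is correct. However, there is a concrete sign error in the barrier step that invalidates your interior-contact exclusion. You write that the radial comparison function $\phi(X)=c_{1}\bigl(|X-Y_{0}|^{-\gamma}-C_{\gamma}\bigr)^{+}$ satisfies $\mathfrak{L}\phi\leqslant-\mu<0$, i.e., that $\phi$ is a strict supersolution, and you then argue that $u-\phi_{t_{0}}$ is a nonnegative $\mathfrak{L}$-supersolution so the strong maximum principle rules out interior contact. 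But $|X-Y_{0}|^{-\gamma}$ is \emph{subharmonic}: in two dimensions $\Delta|X-Y_{0}|^{-\gamma}=\gamma^{2}|X-Y_{0}|^{-\gamma-2}>0$, and the same holds for $\mathfrak{L}$ once $\gamma$ is chosen large enough relative to $\lambda$, $\Lambda$, $\|\mathbf{b}\|_{L^{\infty}}$. So the correct inequality is $\mathfrak{L}\phi\geqslant\mu>0$, exactly as the paper records for its function $w$ in the annulus $A$. With your claimed sign, the barrier $\phi_{t}=y+a_{0}+c(b_{0}-a_{0})\phi-t(b_{0}-a_{0})$ would satisfy $\mathfrak{L}\phi_{t}<\tilde{f}$, hence $u-\phi_{t_{0}}$ would be a strict \emph{sub}solution, not a supersolution; a nonnegative subsolution can perfectly well have an interior zero minimum (e.g., $|X|^{2}$), so no contradiction arises and your exclusion of interior contact fails. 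The argument only works with the subsolution sign: then $\phi_{t_{0}}-u$ has an interior maximum of value $0$ at the contact point $X_{*}$, which forces $\mathfrak{L}(\phi_{t_{0}}-u)(X_{*})\leqslant 0$ by ellipticity and the first-order condition, contradicting $\mathfrak{L}\phi_{t_{0}}>\tilde{f}=\mathfrak{L}u$. This is what the paper does, sliding the family $v_{t}=p+c_{0}\varepsilon(w-1)+t$ \emph{upward} from $t=0$.

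A secondary point that your proposal handles too briskly: on the free-boundary contact exclusion you claim $|\nabla\phi_{t_{0}}(X_{*})|\geqslant 1+c(b_{0}-a_{0})c_{2}$ merely from $|\nabla\phi|\geqslant c_{2}$. But $\nabla\phi_{t_{0}}=e_{2}+c(b_{0}-a_{0})\nabla\phi$, so one must verify that $\nabla\phi$ points roughly in the $e_{2}$ direction near the free-boundary strip, i.e., that $\nabla\phi\cdot e_{2}\geqslant c_{2}>0$ there, not merely that $|\nabla\phi|$ is bounded below. The paper is explicit about this via the identity $\partial_{y}w=|\nabla w|(\nu_{X}\cdot e_{2})$, with $\nu_{X}\cdot e_{2}$ bounded from below on $\{v_{\bar{t}}\leqslant 0\}\cap A$. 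This is an easy repair once pointed out, but it is the reason the center $Y_{0}$ (or $\bar{X}$ in the paper) must be placed above the free boundary. Once both the sign of the barrier and the directional gradient estimate are corrected, your proof and the paper's coincide.
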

\begin{remark}
	In fact, Proposition \ref{Proposition: Harnack inequality} can be viewed as a weak version of improvement of flatness, it merely says that the flatness is improved in some smaller ball $B_{r}$ in the same direction $\nu$ (please see Fig \ref{Fig: partial boundary harnack}). In other words, the flatness direction $\nu$ does not change.
\end{remark}
\begin{figure}[ht]
	\tikzset{every picture/.style={line width=0.75pt}} %set default line width to 0.75pt        
	\begin{tikzpicture}[x=0.75pt,y=0.75pt,yscale=-0.9,xscale=0.9]
		%uncomment if require: \path (0,300); %set diagram left start at 0, and has height of 300
		
		%Straight Lines [id:da1104547825403972] 
		\draw [line width=0.75]    (185,119.25) -- (374.28,232.72) ;
		\draw [shift={(376,233.75)}, rotate = 210.94] [color={rgb, 255:red, 0; green, 0; blue, 0 }  ][line width=0.75]    (10.93,-3.29) .. controls (6.95,-1.4) and (3.31,-0.3) .. (0,0) .. controls (3.31,0.3) and (6.95,1.4) .. (10.93,3.29)   ;
		%Straight Lines [id:da05507979148315867] 
		\draw [line width=0.75]    (223.38,236.07) -- (309.11,86.35) ;
		\draw [shift={(310.11,84.62)}, rotate = 119.8] [color={rgb, 255:red, 0; green, 0; blue, 0 }  ][line width=0.75]    (10.93,-3.29) .. controls (6.95,-1.4) and (3.31,-0.3) .. (0,0) .. controls (3.31,0.3) and (6.95,1.4) .. (10.93,3.29)   ;
		%Shape: Ellipse [id:dp7176822004357342] 
		\draw  [line width=0.75]  (215.72,138.28) .. controls (230.46,112.75) and (263.46,104.2) .. (289.43,119.19) .. controls (315.4,134.18) and (324.5,167.04) .. (309.75,192.58) .. controls (295.01,218.12) and (262,226.66) .. (236.04,211.67) .. controls (210.07,196.68) and (200.97,163.82) .. (215.72,138.28) -- cycle ;
		%Straight Lines [id:da7305718672672457] 
		\draw [line width=0.75]  [dash pattern={on 4.5pt off 4.5pt}]  (178.13,84.88) -- (398.43,212.24) ;
		%Straight Lines [id:da2583696016086181] 
		\draw [line width=0.75]  [dash pattern={on 4.5pt off 4.5pt}]  (144.14,127.61) -- (370.53,259.43) ;
		%Straight Lines [id:da18344162024047028] 
		\draw [color={rgb, 255:red, 255; green, 0; blue, 0 }  ,draw opacity=1 ][line width=0.75]  [dash pattern={on 4.5pt off 4.5pt}]  (174,99.75) -- (366.13,214.21) ;
		%Straight Lines [id:da731065731476839] 
		\draw [color={rgb, 255:red, 255; green, 0; blue, 0 }  ,draw opacity=1 ][line width=0.75]  [dash pattern={on 4.5pt off 4.5pt}]  (160.5,117.25) -- (354.62,227.64) ;
		%Curve Lines [id:da4244132011499828] 
		\draw [color={rgb, 255:red, 0; green, 0; blue, 255 }  ,draw opacity=1 ][line width=0.75]    (220.76,151.42) .. controls (237.41,143.06) and (281.32,184.27) .. (296.94,175.84) ;
		%Curve Lines [id:da35151488263683883] 
		\draw [color={rgb, 255:red, 0; green, 0; blue, 255 }  ,draw opacity=1 ][line width=0.75]    (177.72,175.71) .. controls (223.69,170.54) and (202.21,156.56) .. (220.76,151.42) ;
		%Curve Lines [id:da49556556880962965] 
		\draw [color={rgb, 255:red, 0; green, 0; blue, 255 }  ,draw opacity=1 ][line width=0.75]    (296.94,175.84) .. controls (312.71,164.34) and (351.24,150.65) .. (382.5,164.46) ;
		%Shape: Ellipse [id:dp6924811109927387] 
		\draw  [color={rgb, 255:red, 255; green, 0; blue, 0 }  ,draw opacity=1 ][line width=0.75]  (233.55,148.58) .. controls (242.7,132.73) and (263.19,127.42) .. (279.31,136.73) .. controls (295.42,146.03) and (301.07,166.43) .. (291.92,182.28) .. controls (282.77,198.13) and (262.28,203.44) .. (246.16,194.13) .. controls (230.04,184.83) and (224.4,164.43) .. (233.55,148.58) -- cycle ;
		%Straight Lines [id:da015030001298603102] 
		\draw [line width=0.75]    (388,231.25) -- (397.47,214) ;
		\draw [shift={(398.43,212.24)}, rotate = 118.77] [color={rgb, 255:red, 0; green, 0; blue, 0 }  ][line width=0.75]    (10.93,-3.29) .. controls (6.95,-1.4) and (3.31,-0.3) .. (0,0) .. controls (3.31,0.3) and (6.95,1.4) .. (10.93,3.29)   ;
		%Straight Lines [id:da769584373748077] 
		\draw [line width=0.75]    (377.85,246.75) -- (371.53,257.7) ;
		\draw [shift={(370.53,259.43)}, rotate = 300] [color={rgb, 255:red, 0; green, 0; blue, 0 }  ][line width=0.75]    (10.93,-3.29) .. controls (6.95,-1.4) and (3.31,-0.3) .. (0,0) .. controls (3.31,0.3) and (6.95,1.4) .. (10.93,3.29)   ;
		
		% Text Node
		\draw (380.3,228.52) node [anchor=north west][inner sep=0.75pt]  [rotate=-30]  {$2\varepsilon $};
		% Text Node
		\draw (387.25,160.6) node [anchor=north west][inner sep=0.75pt]  [rotate=-30]  {$\textcolor[rgb]{0,0,1}{\varGamma }\textcolor[rgb]{0,0,1}{(}\textcolor[rgb]{0,0,1}{u}\textcolor[rgb]{0,0,1}{)}$};
		% Text Node
		\draw (279.99,215.36) node [anchor=north west][inner sep=0.75pt]  [rotate=-30]  {$B_{1}( 0)$};
		% Text Node
		\draw (141.48,88.06) node [anchor=north west][inner sep=0.75pt]  [font=\scriptsize,rotate=-30]  {$\textcolor[rgb]{1,0,0}{2}\textcolor[rgb]{1,0,0}{(}\textcolor[rgb]{1,0,0}{1-c}\textcolor[rgb]{1,0,0}{)}\textcolor[rgb]{1,0,0}{\varepsilon }$};
		% Text Node
		\draw (287.98,188.8) node [anchor=north west][inner sep=0.75pt]  [rotate=-30]  {$\textcolor[rgb]{1,0,0}{B}\textcolor[rgb]{1,0,0}{_{r}}\textcolor[rgb]{1,0,0}{(}\textcolor[rgb]{1,0,0}{0}\textcolor[rgb]{1,0,0}{)}$};
		% Text Node
		\draw (318.77,81.42) node [anchor=north west][inner sep=0.75pt]  [rotate=-30]  {$\nu $};		
	\end{tikzpicture}
    \caption{Partial improvement of flatness.}
    \label{Fig: partial boundary harnack}
\end{figure}
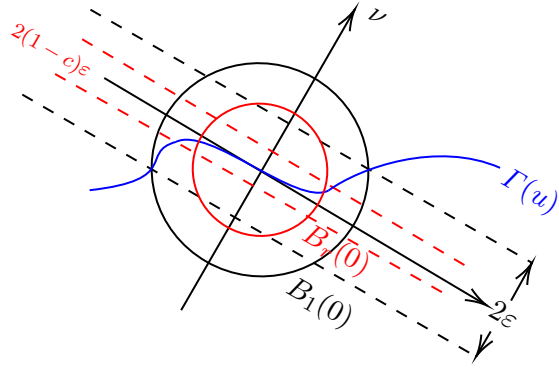
The proof if Proposition \ref{Proposition: Harnack inequality} relies on the following lemma.
\begin{lemma}\label{Lemma: Harnack inequality}
	There exists a universal constant $\bar{\varepsilon}>0$ such that if $u$ is a solution to \eqref{Formula: R(3)}-\eqref{Formula: R(4)} in $B_{1}(0)$ with $0<\varepsilon\leqslant\bar{\varepsilon}$ and $u$ satisfies
	\begin{align}\label{Formula: R(5)}
		(y+\sigma)^{+}\leqslant u(X)\leqslant(y+\sigma+\varepsilon)^{+},\quad x\in B_{1}(0),|\sigma|<1/10,
	\end{align}
	then if at $\bar{X}=(0,\tfrac{1}{5})$,
	\begin{align}\label{Formula: R(5(1))}
		u(\bar{X})\geqslant\left(\tfrac{1}{5}+\sigma+\tfrac{\varepsilon}{2}\right)^{+}
	\end{align}
	then
	\begin{align*}
		u(X)\geqslant(y+\sigma+c\varepsilon)^{+}\quad\text{ in }\bar{B}_{1/2}(0)
	\end{align*}
	for some $c\in(0,1)$ universal. Analogously, if $u(\bar{X})\leqslant\left(\tfrac{1}{5}+\sigma+\tfrac{\varepsilon}{2}\right)^{+}$, then
	\begin{align*}
		u(X)\leqslant(y+(1-c)\varepsilon)^{+}\quad\text{ in }\bar{B}_{1/2}(0).
	\end{align*}
\end{lemma}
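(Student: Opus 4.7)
My plan is to argue following the partial Harnack template of \cite{S2011}, modified to accommodate both the drift $\mathbf{b}\cdot\nabla u$ and the variable leading coefficients $a_{ij}(X)$ present in \eqref{Formula: R(3)}. The starting observation is a linearization: in the open set $D:=B_1(0)\cap\{u>0\}$, set $w(X):=u(X)-(y+\sigma)$. Then \eqref{Formula: R(5)} gives $0\leqslant w\leqslant\varepsilon$ in $D$, and since $D_{ij}(y+\sigma)=0$ while $\nabla(y+\sigma)=e_2$, a direct computation shows that
\begin{align*}
\mathfrak{L}w = \tilde{f}(X)-b_2(X)\quad\text{in }D,
\end{align*}
whose right-hand side is bounded in $L^\infty$ by $2\varepsilon^2$, thanks to \eqref{Formula: R(4)}.

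Next, the location $\bar X=(0,1/5)$ combined with $|\sigma|<1/10$ guarantees that $B_{1/10}(\bar X)\subset\{y+\sigma>0\}\subset D$, so that $w\geqslant 0$ in this ball. The hypothesis \eqref{Formula: R(5(1))} then reads $w(\bar X)\geqslant\varepsilon/2$. Applying the Krylov-Safonov Harnack inequality to $w$, viewed as a non-negative solution of a uniformly elliptic linear equation with $L^\infty$-bounded drift and inhomogeneity of order $\varepsilon^2$, yields
\begin{align*}
w(X)\geqslant c_0\,w(\bar X)-C\varepsilon^2\geqslant c_1\varepsilon\quad\text{in }\overline{B_{1/20}(\bar X)},
\end{align*}
provided $\bar\varepsilon$ is chosen small.

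To transfer this interior lower bound across $\overline{B_{1/2}(0)}$ and on down to the free boundary, I would construct an explicit radial barrier centered at a pole above $\bar X$, for example
\begin{align*}
\varphi(X):=|X-X_*|^{-\gamma}-R^{-\gamma},\qquad X_*:=(0,1),
\end{align*}
with $R$ and the exponent $\gamma$ chosen so that $\overline{B_{1/2}(0)}\subset B_R(X_*)$ and $\mathfrak{L}\varphi\geqslant 1$ throughout the annulus $B_R(X_*)\setminus\overline{B_{1/20}(\bar X)}$. This positivity of $\mathfrak{L}\varphi$ is preserved under the small perturbations $\|a_{ij}-\delta_{ij}\|_{L^\infty}\leqslant\varepsilon$ and $\|\mathbf{b}\|_{L^\infty}\leqslant\varepsilon^2$, since $\varphi$ has $C^2$ norm depending only on $\gamma$ and $R$. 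An appropriate affine-plus-$\varphi$ combination $\Phi(X):=(y+\sigma)+c_1\varepsilon\,\tilde\varphi(X)$, with $\tilde\varphi$ normalized so that $\tilde\varphi\geqslant 1$ on $\partial B_{1/20}(\bar X)$ and $\tilde\varphi\leqslant 0$ outside a well-chosen set, then qualifies as a strict viscosity subsolution of \eqref{Formula: R(3)} and satisfies $\Phi\leqslant u$ on the boundary of the comparison region. Viscosity comparison in $B_{1/2}(0)\setminus\overline{B_{1/20}(\bar X)}$ yields $u\geqslant\Phi^+$, which unravels to $u(X)\geqslant(y+\sigma+c\varepsilon)^+$ on $\overline{B_{1/2}(0)}$. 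The symmetric assertion when $u(\bar X)\leqslant(1/5+\sigma+\varepsilon/2)^+$ is proved identically, working with $\tilde w(X):=(y+\sigma+\varepsilon)-u(X)$ in place of $w$.

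The main obstacle is precisely the drift term $\mathbf{b}\cdot\nabla u$, absent in \cite{S2011}. The assumption $\|\mathbf{b}\|_{L^\infty}\leqslant\varepsilon^2$ is tight: it is this quadratic smallness that allows $b_2$ to be absorbed into the inhomogeneity at the same $\varepsilon^2$ scale as $\tilde{f}$, preserving the scaling $w(\bar X)\gtrsim\varepsilon$ versus $\|\mathfrak{L}w\|_{L^\infty}\lesssim\varepsilon^2$ that drives the Harnack improvement. A secondary delicate point is verifying, in the barrier step, that $|\nabla\Phi|$ does not exceed $Q$ at points where $\Phi$ vanishes; this is arranged by exploiting the unit slope of $y+\sigma$ together with the hypothesis $\|Q-1\|_{L^\infty}\leqslant\varepsilon^2$.
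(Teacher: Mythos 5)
Your approach follows the same template as the paper: an interior Harnack estimate for $w=u-(y+\sigma)$ near $\bar X$ (identical to the paper's Step I, where this quantity is written $u-p$ with $p(X)=y+\sigma$), followed by a radial barrier and a comparison argument to propagate the gain across $\bar B_{1/2}(0)$. However, the barrier geometry you describe does not close. Placing the pole at $X_*=(0,1)$ and demanding $\bar B_{1/2}(0)\subset B_R(X_*)$ forces $R>3/2$; then $\varphi(X)=|X-X_*|^{-\gamma}-R^{-\gamma}$ is strictly positive on all of $\partial B_{1/2}(0)$, so $\Phi=(y+\sigma)+c_1\varepsilon\tilde\varphi>(y+\sigma)$ there. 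On that outer boundary the only information from \eqref{Formula: R(5)} is $u\geqslant(y+\sigma)^+$, which can be an equality, so the required boundary ordering $\Phi\leqslant u$ fails on $\partial B_{1/2}(0)$ and the comparison never starts. No choice of $R$ repairs this, because $\partial B_{1/2}(0)$ is not a level set of a function radial about $X_*$. The paper instead centers the barrier at $\bar X$ itself and takes outer radius $3/4$: then $B_{3/4}(\bar X)\subset B_1(0)$, the barrier $w$ vanishes identically on the outer sphere, the comparison region is a genuine annulus, and the starting slide $v_0=(y+\sigma)+c_0\varepsilon(w-1)\leqslant(y+\sigma)\leqslant u$ is immediate.

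Two further sign slips would defeat the argument even after relocating the pole. You need $\tilde\varphi\leqslant 1$ on $\partial B_{1/20}(\bar X)$, not $\tilde\varphi\geqslant1$, so that $\Phi\leqslant(y+\sigma)+c_1\varepsilon\leqslant u$ there via the Harnack lower bound. And to exclude touching at the free boundary one needs $|\nabla\Phi|>Q$ on $\partial\{\Phi>0\}$, not ``$|\nabla\Phi|$ does not exceed $Q$'': the viscosity definition says a $C^2$ function touching $u$ from below at a free-boundary point must have $|\nabla\Phi|\leqslant Q$, so it is precisely the inequality $|\nabla\Phi|>Q$ that makes such a touching impossible. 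Finally, the step you label ``viscosity comparison'' is not a black box for one-phase problems; the paper implements it as an explicit sliding family $v_t=v+t$, using Lemma 2.4 of De Silva to rule out interior and free-boundary touchings and locating the maximal-touching point in $\bar B_{1/20}(\bar X)$, where the Harnack bound produces the contradiction. That sliding analysis is the crux of the lemma and should be carried out rather than compressed.
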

\begin{proof}
	The proof is similar to Lemma 3.3 in \cite{S2011}, and the main differences involve in the term $\mathbf{b}\cdot\nabla u$. We only prove the first statement and the proof of the second one follows from a similar argument. Denote $p(X)=y+\sigma$. It follows from \eqref{Formula: R(5)} that
	\begin{align}\label{Formula: R(8)}
		u\geqslant p\quad\text{ in }B_{1}.
	\end{align}
	Let
	\begin{align*}
		w(X):=c(|X-\bar{X}|^{-\gamma}-(\tfrac{3}{4})^{-\gamma}).
	\end{align*}
    be defined in the closure of the annulus
    \begin{align}\label{Formula: R(8(1(1))}
    	A:=B_{3/4}(\bar{X})\setminus\bar{B}_{1/20}(\bar{X}).
    \end{align}
    Here $c$ is a constant so that $w$ satisfies the boundary conditions
    \begin{align*}
    	\begin{cases}
    		w=0\quad&\text{ on }\partial B_{3/4}(\bar{X}),\\
    		w=1\quad&\text{ on }\partial B_{1/20}(\bar{X}).
    	\end{cases}
    \end{align*}
    A straightforward computation gives that
    \begin{align*}
    	&\sum_{i,j=1}^{2}a_{ij}(X)D_{ij}w(X)\\
    	&=\gamma(\gamma+2)|X-\bar{X}|^{-\gamma-4}\sum_{i,j=1}^{2}a_{ij}(x_{j}-\bar{x}_{j})(x_{i}-\bar{x}_{i})-\gamma|X-\bar{X}|^{-\gamma-2}\sum_{i,j=1}^{2}a_{ij}\delta_{ij}\\
    	&\geqslant\gamma(\gamma+2)|X-\bar{X}|^{-\gamma-2}(2\lambda)-\gamma|X-\bar{X}|^{-\gamma-2}(2\Lambda)\\
    	&=\gamma|X-\bar{X}|^{-\gamma-2}2((\gamma+2)\lambda-\Lambda).
    \end{align*}
    Thus,
    \begin{align*}
    	\mathfrak{L}w&\geqslant\gamma|X-\bar{X}|^{-\gamma-2}2((\gamma+2)\lambda-\Lambda)-\gamma\|\mathbf{b}\|_{L^{\infty}}|X-\bar{X}|^{-\gamma-1}\\
    	&=\gamma|X-\bar{X}|^{-\gamma-2}(2((\gamma+2)\lambda-\Lambda)-\|\mathbf{b}\|_{L^{\infty}}|X-\bar{X}|)\\
    	&\geqslant\gamma|X-\bar{X}|^{-\gamma-2}(2((\gamma+2)\lambda-\Lambda)-\|\mathbf{b}\|_{L^{\infty}})>0,
    \end{align*}
    provided that the constant $\gamma$ satisfies
    \begin{align*}
    	2((\gamma+2)\lambda-\Lambda)-\|\mathbf{b}\|_{L^{\infty}}>0.
    \end{align*}
    The above argument shows that there exists a constant $\gamma=\gamma(\lambda,\Lambda,\|\mathbf{b}\|_{L^{\infty}})$ so that
    \begin{align}\label{Formula: R(8(1))}
    	\mathfrak{L}w\geqslant\delta>0\quad\text{ in }A,
    \end{align}
    with a positive universal constant $\delta$. Extend $w$ to be equal to $1$ in $\bar{B}_{1/20}(\bar{X})$. Due to the fact that  $|\sigma|<\tfrac{1}{10}$, we have from \eqref{Formula: R(8)} that
    \begin{align*}
    	B_{1/10}(\bar{X})\subset B_{1}(0)\cap\{u>0\}.
    \end{align*}
    Also $B_{1/2}\subset\subset B_{3/4}(\bar{X})\subset\subset B_{1}$. $u-p\geqslant0$ solves
    \begin{align*}
    	\mathfrak{L}(u-p)=\tilde{f}-b_{2}.
    \end{align*}
    By the Harnack inequality, we obtain that in $\bar{B}_{1/20}(\bar{X})$,
    \begin{align*}
    	\begin{alignedat}{2}
    		u(X)-p(X)&\geqslant c(u(\bar{X})-p(\bar{X}))-C\|\tilde{f}-b_{2}\|_{L^{\infty}}\\
    		&\geqslant c(u(\bar{X})-p(\bar{X}))-C(\|\tilde{f}\|_{L^{\infty}}+\|\mathbf{b}\|_{L^{\infty}}).
    	\end{alignedat}
    \end{align*}
    Recalling \eqref{Formula: R(5(1))} and the first and the second inequality in \eqref{Formula: R(4)} we conclude that (for $\varepsilon$ small enough)
    \begin{align}\label{Formula: R(11)}
    	u-p\geqslant c\varepsilon-C\varepsilon^{2}\geqslant c_{0}\varepsilon\quad\text{ in }\bar{B}_{1/20}(\bar{X}).
    \end{align}
    Define now a function
    \begin{align}\label{Formula: R(12)}
    	v(X)=p(X)+c_{0}\varepsilon(w(X)-1)\quad\text{ in }\bar{B}_{3/4}(\bar{X}),
    \end{align}
    and for $t\geqslant0$,
    \begin{align*}
    	v_{t}:=v(x)+t\quad\text{ in }\bar{B}_{3/4}(\bar{X}).
    \end{align*}
    Recalling \eqref{Formula: R(8(1))}, one gets
    \begin{align*}
    	\mathfrak{L}v_{t}=\mathfrak{L}p(X)+c_{0}\varepsilon\mathfrak{L}w(X)\geqslant b_{2}+c_{0}\varepsilon\delta>\varepsilon^{2}\quad\text{ in }A,
    \end{align*}
    provided that $\varepsilon$ is small enough. According to \eqref{Formula: R(8)} and the definition of $v_{t}$ one has
    \begin{align*}
    	v_{0}(X)=v(X)\leqslant p(X)\leqslant u(X)\quad\text{ in }\bar{B}_{3/4}(\bar{X}).
    \end{align*}
    Let now $\bar{t}$ be the largest $t\geqslant0$ so that
    \begin{align}\label{Formula: R(12(1))}
    	v_{t}(X)\leqslant u(X)\quad\text{ in }\bar{B}_{3/4}(\bar{X}).
    \end{align}
    We claim that
    \begin{align*}
    	\bar{t}\geqslant c_{0}\varepsilon.
    \end{align*}
    Suppose not, then $\bar{t}<c_{0}\varepsilon$. Then there must exist $\tilde{X}\in\bar{B}_{3/4}(\bar{X})$ so that $v_{\bar{t}}(\tilde{X})=u(\tilde{X})$. We consider the following cases:
    \begin{enumerate}
    	\item If $\tilde{X}\in\partial B_{3/4}(\bar{X})$, then since $w\equiv0$ on $\partial B_{3/4}(\bar{X})$ and $v_{\bar{t}}=p-c_{0}\varepsilon<p$ on $\partial B_{3/4}(\bar{X})$. This together with $u\geqslant p$ gives
    	\begin{align*}
    		v_{\bar{t}}<u\quad\text{ on }\partial B_{3/4}(\bar{X}),
    	\end{align*}
        a contradiction to \eqref{Formula: R(12(1))}. Thus $\tilde{X}\notin\partial B_{3/4}(\bar{X})$.
        \item If $\tilde{X}\in A$ where $A$ is the annulus defined in \eqref{Formula: R(8(1(1))}. Observe that  $\mathfrak{L}v_{t}>\varepsilon^{2}$ in $A$ and 
        \begin{align}\label{Formula: R(14)}
        	|\nabla v_{\bar{t}}|\geqslant\left|\pd{v_{\bar{t}}}{y}\right|=\left|1+c_{0}\varepsilon\pd{w}{y}\right|\quad\text{ in }A.
        \end{align}
        Since $w$ is radially symmetric, we have
        \begin{align*}
        	\pd{w}{y}=|\nabla w|(\nu_{X}\cdot e_{2})\quad\text{ in }A,
        \end{align*}
        where $\nu_{X}$ is the unit direction of $X-\bar{X}$. It is easy to read from the definition of $w$ that $|\nabla w|>c$ in $A$ and $\nu_{X}\cdot e_{2}$ is bounded below in $\{v_{\bar{t}}\leqslant0\}\cap A$. Thus, from \eqref{Formula: R(14)} we conclude that $|\nabla v_{\bar{t}}|\geqslant 1+c_{2}\varepsilon$ in $\{v_{\bar{t}}\leqslant0\}\cap A$. For $\varepsilon$ small enough and in view of the third inequality in \eqref{Formula: R(4)}, $|\nabla v_{\bar{t}}|>1+\varepsilon^{2}\geqslant Q(X)$ in $A\cap\varGamma(v_{\bar{t}}>0)$. According to Lemma 2.4 in \cite{S2011}, we conclude that  $\tilde{X}$ cannot belong to $A$.
    \end{enumerate}
    The above discussion implies that $\tilde{X}$ can only belong to $\bar{B}_{1/20}(\bar{X})$. Therefore,
    \begin{align*}
    	u(\tilde{X})=v_{\bar{t}}(\tilde{X})\leqslant p(\tilde{X})+\bar{t}<p(\tilde{X})+c_{0}\varepsilon,
    \end{align*}
    which implies 
    \begin{align*}
    	u(\tilde{X})-p(\tilde{X})<c_{0}\varepsilon,
    \end{align*}
    contradicting with \eqref{Formula: R(11)}. In conclusion, $\bar{t}\geqslant c_{0}\varepsilon$. Using the definition of $v$ in \eqref{Formula: R(12)} we obtain
    \begin{align*}
    	u(X)\geqslant v(X)+\bar{t}\geqslant p(X)+c_{0}\varepsilon w(X),
    \end{align*}
    and hence, since in $\bar{B}_{1/2}\subset B_{3/4}(\bar{X})$ one has $w(X)\geqslant c_{2}$ for some universal constant $c_{2}$, we get
    \begin{align*}
    	u(X)-p(X)\geqslant c\varepsilon\quad\text{ in }\bar{B}_{1/2}(0),
    \end{align*}
    as wanted.
\end{proof}
With the aid of Lemma \ref{Lemma: Harnack inequality} are now in a position to prove Proposition \ref{Proposition: Harnack inequality}.
\begin{proof}
	Without loss of generality, assume that $X_{1}=0$ and $r=1$. In view of
	\begin{align*}
		p(X)^{+}\leqslant u(X)\leqslant(p(X)+\varepsilon)^{+}\quad\text{ in }B_{1}(0),
	\end{align*}
	with $p(X)=y+a_{0}$, we consider three cases. If $|a_{0}|<\tfrac{1}{10}$, then we can apply the previous Lemma \ref{Lemma: Harnack inequality}, and the desired statement follows directly. 
	
	If $a_{0}<-\tfrac{1}{10}$, then $0$ belongs to the zero phase of $(p(X)+\varepsilon)^{+}$, which implies that $0$ also belongs to the zero phase of $u$ for sufficiently small $\varepsilon$, a contradiction. 
	
	If $a_{0}>\tfrac{1}{10}$. Then $B_{1/10}\subset B_{1}\cap\varGamma(u)$, and the conclusion follows by the classic Harnack inequality.
\end{proof}
Let now $r=1$, and let $m>0$ be such that
\begin{align*}
	\frac{1}{2}(1/20)^{m+1}\leqslant\frac{\varepsilon}{\bar{\varepsilon}}<\frac{1}{2}(1/20)^{m},
\end{align*}
where $\varepsilon$ and $\bar{\varepsilon}$ are universal constants in Proposition \ref{Proposition: Harnack inequality}. Let $r_{j}:=\frac{1}{2}(1/20)^{j}$. Then for every $j=0$, $1$, $\dots$, $m$, one has $\varepsilon\leqslant\bar{\varepsilon}r_{j}$. Therefore, we can apply the partial boundary Harnack in $B_{r_{j}}(X_{1})$ repeatedly, and we get that there are $a_{0}\leqslant a_{1}\leqslant\cdots\leqslant a_{j}\leqslant\cdots\leqslant a_{n}\leqslant b_{n}\leqslant\cdots\leqslant b_{j}\leqslant\cdots\leqslant b_{1}\leqslant b_{0}$ so that
\begin{align*}
	|b_{j}-a_{j}|\leqslant(1-c)^{j}|b_{0}-a_{0}|,
\end{align*}
and
\begin{align*}
	(y+a_{j})^{+}\leqslant u(X)\leqslant(y+b_{j})^{+}\quad\text{ in }B_{r_{j}}(X_{1}),
\end{align*}
therefore, $|(u(X)-y)-a_{j}|\leqslant(1-c)^{j}\varepsilon$ for $x\in B_{r_{j}}(X_{0})\cap\overline{\varOmega^{+}(u)}$. The triangle inequality implies that $|\tilde{u}(X)-\tilde{u}(X_{1})|\leqslant2(1-c)^{j}$ for $x\in B_{r_{j}}(X_{0})\cap\overline{\varOmega^{+}(u)}$. Choose $j$ so that $r_{j+1}<|X-X_{1}|\leqslant r_{j}$ and let $\gamma$ be the number which satisfies $(1/20)^{\gamma}=1-c$, we immediately obtain the following result.
\begin{corollary}\label{Corollary: Compactness}
	Let $u$ be a solution to \eqref{Formula: R(3)}-\eqref{Formula: R(4)} satisfying \eqref{Formula: R(4)} for $r=1$. Define
	\begin{align*}
		\tilde{u}_{\varepsilon}(X):=\frac{u(X)-y}{\varepsilon}\quad\text{ in }B_{1}(X_{1})\cap\overline{\{u>0\}},
	\end{align*}
	Then in $B_{1}(X_{1})$, $\tilde{u}_{\varepsilon}$ has a H\"{o}lder modulus of continuity at $X_{1}$, outside the ball of radius $\varepsilon/\bar{\varepsilon}$. That is,
    \begin{align*}
    	|\tilde{u}_{\varepsilon}(X)-\tilde{u}_{\varepsilon}(X_{1})|\leqslant C|X-X_{1}|^{\gamma},
    \end{align*}
    for all $X\in B_{1}(X_{1})\cap\overline{\{u>0\}}$ with $|X-X_{1}|\geqslant\tfrac{\varepsilon}{\bar{\varepsilon}}$.
\end{corollary}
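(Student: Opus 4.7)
The plan is to iterate the partial boundary Harnack inequality of Proposition \ref{Proposition: Harnack inequality} on a geometric sequence of balls $B_{r_j}(X_1)$ with $r_j = \tfrac{1}{2}(1/20)^j$, stopping at the largest index $m$ for which $r_m \geq \varepsilon/\bar\varepsilon$. At each such scale, the flatness width $b_j - a_j \leq \varepsilon \leq \bar\varepsilon\, r_j$ meets the hypothesis of the partial Harnack, which is why the iteration is legitimate exactly down to scale $\varepsilon/\bar\varepsilon$.

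First I would prove by induction on $j = 0, 1, \ldots, m$ the existence of monotone sequences $a_0 \leq a_1 \leq \cdots \leq a_j \leq b_j \leq \cdots \leq b_0$ satisfying
\begin{align*}
b_j - a_j \leq (1-c)^j \varepsilon, \qquad (y+a_j)^+ \leq u(X) \leq (y+b_j)^+ \text{ in } B_{r_j}(X_1).
\end{align*}
The base case $j=0$ is the flatness hypothesis of the corollary. The inductive step is a direct application of Proposition \ref{Proposition: Harnack inequality} on $B_{r_j}(X_1)$, whose hypothesis holds because $b_j - a_j \leq (1-c)^j \varepsilon \leq \varepsilon \leq \bar\varepsilon\, r_j$ for $j \leq m$, and which produces $a_{j+1}, b_{j+1}$ with the required containment and shrinkage factor $(1-c)$.

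From the inductive conclusion one has $|u(X) - y - a_j| \leq (1-c)^j \varepsilon$ for every $X \in B_{r_j}(X_1) \cap \overline{\{u>0\}}$. Specializing at $X = X_1$ and using the triangle inequality yields
\begin{align*}
|\tilde u_\varepsilon(X) - \tilde u_\varepsilon(X_1)| \leq 2(1-c)^j \quad \text{in } B_{r_j}(X_1) \cap \overline{\{u>0\}}.
\end{align*}
Given $X$ with $\varepsilon/\bar\varepsilon \leq |X-X_1| \leq 1$, I would choose $j \in \{0, \ldots, m\}$ so that $r_{j+1} < |X-X_1| \leq r_j$, and then define $\gamma$ through the identity $(1/20)^\gamma = 1 - c$. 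A short computation gives $(1-c)^j = (1/20)^{\gamma j} = (2 r_j)^\gamma \leq C\, |X-X_1|^\gamma$, producing the desired H\"older modulus.

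The only subtle point — and really the content of the stopping radius $\varepsilon/\bar\varepsilon$ in the statement — is that the iteration cannot be continued below scale $\varepsilon/\bar\varepsilon$: once $r_j$ drops below this threshold, the relative flatness $\varepsilon/r_j$ would exceed the universal $\bar\varepsilon$, so Proposition \ref{Proposition: Harnack inequality} ceases to apply. This explains precisely why the H\"older estimate is asserted only for points $X$ with $|X-X_1| \geq \varepsilon/\bar\varepsilon$, and why no further decay can be extracted inside that smaller ball without additional information. No real obstacle is anticipated beyond carefully verifying that the hypothesis of Proposition \ref{Proposition: Harnack inequality} is preserved at every step of the induction, and tracking the compatibility of the constants $c$, $\gamma$ and $\bar\varepsilon$.
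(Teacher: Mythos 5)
Your proof is correct and follows the paper's argument exactly: the paper derives Corollary~\ref{Corollary: Compactness} from Proposition~\ref{Proposition: Harnack inequality} by precisely this iteration on the geometric scales $r_j=\tfrac{1}{2}(1/20)^j$ down to the stopping scale $\varepsilon/\bar\varepsilon$, producing the same monotone sequences $a_j,b_j$ with width $(1-c)^j|b_0-a_0|$, the same triangle-inequality step yielding $|\tilde u_\varepsilon(X)-\tilde u_\varepsilon(X_1)|\leqslant 2(1-c)^j$, and the same choice of $\gamma$ via $(1/20)^\gamma=1-c$. You have merely written out the induction that the paper leaves implicit in the paragraph preceding the corollary.
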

We are now in a position to prove the improvement of flatness, from which Proposition \ref{Proposition: Flatness implies C1alpha} follows by a standard iteration argument. Roughly speaking (please see Fig \ref{Fig: improvement of flatness}), it says that if $u$ is $\varepsilon$-flat in some direction $e_{2}$ for instance, then there must exist a new direction $\nu$, which is close enough to $e_{2}$, so that $u$ is $\sigma\varepsilon$-flat for some $\sigma\in(0,1)$ in a smaller ball.
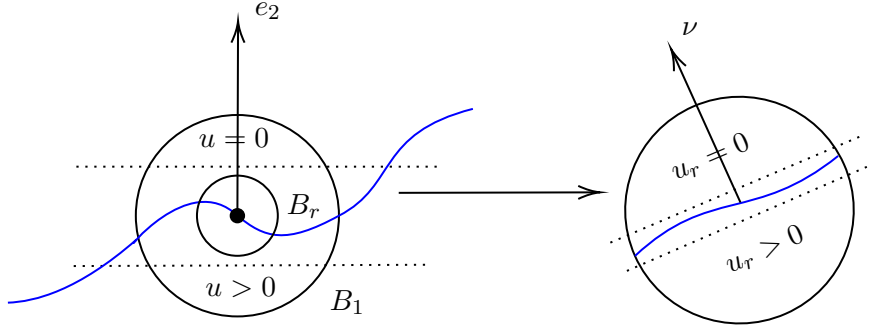
\begin{figure}[ht]

	\tikzset{every picture/.style={line width=0.75pt}} %set default line width to 0.75pt        
	
	\begin{tikzpicture}[x=0.75pt,y=0.75pt,yscale=-0.9,xscale=0.9]
		%uncomment if require: \path (0,300); %set diagram left start at 0, and has height of 300
		
		%Shape: Circle [id:dp27721114970728666] 
		\draw  [line width=0.75]  (122.83,151.5) .. controls (122.83,123.52) and (145.52,100.83) .. (173.5,100.83) .. controls (201.48,100.83) and (224.17,123.52) .. (224.17,151.5) .. controls (224.17,179.48) and (201.48,202.17) .. (173.5,202.17) .. controls (145.52,202.17) and (122.83,179.48) .. (122.83,151.5) -- cycle ;
		%Curve Lines [id:da9123187837432558] 
		\draw [color={rgb, 255:red, 0; green, 0; blue, 255 }  ,draw opacity=1 ][line width=0.75]    (173.5,151.5) .. controls (187.17,162.83) and (197.5,166.25) .. (224.17,151.5) ;
		%Curve Lines [id:da2972902143863154] 
		\draw [color={rgb, 255:red, 0; green, 0; blue, 255 }  ,draw opacity=1 ][line width=0.75]    (123.17,163.5) .. controls (115,173.25) and (149,127.75) .. (173.5,151.5) ;
		%Straight Lines [id:da22903315794966206] 
		\draw [line width=0.75]    (173.5,151.5) ;
		\draw [shift={(173.5,151.5)}, rotate = 0] [color={rgb, 255:red, 0; green, 0; blue, 0 }  ][fill={rgb, 255:red, 0; green, 0; blue, 0 }  ][line width=0.75]      (0, 0) circle [x radius= 3.35, y radius= 3.35]   ;
		%Shape: Circle [id:dp48585251357546033] 
		\draw  [line width=0.75]  (153.33,151.5) .. controls (153.33,140.36) and (162.36,131.33) .. (173.5,131.33) .. controls (184.64,131.33) and (193.67,140.36) .. (193.67,151.5) .. controls (193.67,162.64) and (184.64,171.67) .. (173.5,171.67) .. controls (162.36,171.67) and (153.33,162.64) .. (153.33,151.5) -- cycle ;
		%Straight Lines [id:da5315459799214948] 
		\draw [line width=0.75]  [dash pattern={on 0.84pt off 2.51pt}]  (93.17,126.83) -- (273.83,126.83) ;
		%Straight Lines [id:da037762395905513335] 
		\draw [line width=0.75]  [dash pattern={on 0.84pt off 2.51pt}]  (91.83,176.83) -- (269.17,176.17) ;
		%Shape: Circle [id:dp9313461924715931] 
		\draw  [line width=0.75]  (371.88,171.75) .. controls (359.14,142.96) and (372.16,109.3) .. (400.94,96.56) .. controls (429.73,83.82) and (463.39,96.83) .. (476.13,125.62) .. controls (488.87,154.41) and (475.86,188.07) .. (447.07,200.81) .. controls (418.29,213.55) and (384.62,200.54) .. (371.88,171.75) -- cycle ;
		%Curve Lines [id:da9295050211626954] 
		\draw [color={rgb, 255:red, 0; green, 0; blue, 255 }  ,draw opacity=1 ][line width=0.75]    (371.88,171.75) .. controls (409.35,136.47) and (431.23,155.38) .. (473.25,121.43) ;
		%Straight Lines [id:da16444577719381281] 
		\draw [line width=0.75]  [dash pattern={on 0.84pt off 2.51pt}]  (359.58,165.05) -- (484.28,111.62) ;
		%Straight Lines [id:da33838994116705146] 
		\draw [line width=0.75]  [dash pattern={on 0.84pt off 2.51pt}]  (367.85,179.27) -- (488.48,126.51) ;
		%Straight Lines [id:da3140417928894017] 
		\draw [line width=0.75]    (424.5,144.83) -- (390.65,69.33) ;
		\draw [shift={(389.83,67.5)}, rotate = 65.85] [color={rgb, 255:red, 0; green, 0; blue, 0 }  ][line width=0.75]    (10.93,-3.29) .. controls (6.95,-1.4) and (3.31,-0.3) .. (0,0) .. controls (3.31,0.3) and (6.95,1.4) .. (10.93,3.29)   ;
		%Straight Lines [id:da43429597847127543] 
		\draw [line width=0.75]    (173.5,151.5) -- (173.83,55.5) ;
		\draw [shift={(173.83,53.5)}, rotate = 90.19] [color={rgb, 255:red, 0; green, 0; blue, 0 }  ][line width=0.75]    (10.93,-3.29) .. controls (6.95,-1.4) and (3.31,-0.3) .. (0,0) .. controls (3.31,0.3) and (6.95,1.4) .. (10.93,3.29)   ;
		%Curve Lines [id:da9223012042439305] 
		\draw [color={rgb, 255:red, 0; green, 0; blue, 255 }  ,draw opacity=1 ][line width=0.75]    (59,195.25) .. controls (72.5,194.75) and (94,189.25) .. (123.17,163.5) ;
		%Straight Lines [id:da48755851844882536] 
		\draw [line width=0.75]    (254,140) -- (352.5,139.75) ;
		\draw [shift={(354.5,139.75)}, rotate = 179.86] [color={rgb, 255:red, 0; green, 0; blue, 0 }  ][line width=0.75]    (10.93,-3.29) .. controls (6.95,-1.4) and (3.31,-0.3) .. (0,0) .. controls (3.31,0.3) and (6.95,1.4) .. (10.93,3.29)   ;
		%Curve Lines [id:da960977208049828] 
		\draw [color={rgb, 255:red, 0; green, 0; blue, 255 }  ,draw opacity=1 ][line width=0.75]    (224.17,151.5) .. controls (253.5,136.25) and (242.5,110.25) .. (291,97.75) ;
		
		% Text Node
		\draw (218.17,187.23) node [anchor=north west][inner sep=0.75pt]    {$B_{1}$};
		% Text Node
		\draw (156.17,181.23) node [anchor=north west][inner sep=0.75pt]    {$u >0$};
		% Text Node
		\draw (152.83,105.9) node [anchor=north west][inner sep=0.75pt]    {$u=0$};
		% Text Node
		\draw (196.67,139.07) node [anchor=north west][inner sep=0.75pt]    {$B_{r}$};
		% Text Node
		\draw (393.83,52.57) node [anchor=north west][inner sep=0.75pt]    {$\nu $};
		% Text Node
		\draw (180.67,42.73) node [anchor=north west][inner sep=0.75pt]    {$e_{2}$};
		% Text Node
		\draw (413.48,170.67) node [anchor=north west][inner sep=0.75pt]  [rotate=-336.81]  {$u_{r}>0$};
		% Text Node
		\draw (386.35,124.28) node [anchor=north west][inner sep=0.75pt]  [rotate=-336.81]  {$u_{r}=0$};
	\end{tikzpicture}
    \caption{Improvement of flatness in the ball $B_{1}$}
    \label{Fig: improvement of flatness}
\end{figure}
\begin{proposition}[Improvement of flatness]\label{Proposition: improvement of flatness}
	Let $u$ be a solution to \eqref{Formula: R(3)}-\eqref{Formula: R(4)} in $B_{1}(0)$ satisfying
	\begin{align*}
		(y-\varepsilon)^{+}\leqslant u(X)\leqslant(y+\varepsilon)^{+}\quad\text{ in }B_{1}(0),
	\end{align*}
    with $0\in\varGamma(u)$. Then if $0<r<r_{0}$ for $r_{0}$ universal, $0<\varepsilon\leqslant\varepsilon_{0}$ depending on $r_{0}$, then 
    \begin{align}\label{Formula: R(16)}
    	(X\cdot\nu-r\tfrac{\varepsilon}{2})^{+}\leqslant u(X)\leqslant(X\cdot\nu+r\tfrac{\varepsilon}{2})^{+}\quad\text{ in }B_{r}(0),
    \end{align}
    with $|\nu|=1$ and $|\nu-e_{2}|\leqslant C_{0}\varepsilon^{2}$ for a universal constant $C_{0}$.
\end{proposition}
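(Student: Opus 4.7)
The proof follows the compactness/linearisation scheme of De Silva, with an additional bookkeeping step to absorb the first-order term $\mathbf{b}\cdot\nabla u$. I argue by contradiction: suppose the conclusion fails for some $r\in(0,r_0)$, so there exist sequences $\varepsilon_k\downarrow 0$ and corresponding operators $\mathfrak{L}_k$, data $\tilde f_k$, $Q_k$ satisfying \eqref{Formula: R(4)} with $\varepsilon=\varepsilon_k$, and viscosity solutions $u_k$ that are $\varepsilon_k$-flat in the direction $e_2$ with $0\in\varGamma(u_k)$, yet \eqref{Formula: R(16)} fails at scale $r$ for every unit vector $\nu$ with $|\nu-e_2|\leq C_0\varepsilon_k^2$.

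Introduce the renormalised excess $\tilde u_k(X):=(u_k(X)-y)/\varepsilon_k$ on $B_1\cap\overline{\{u_k>0\}}$. Corollary~\ref{Corollary: Compactness} (the iterated partial Harnack inequality of Proposition~\ref{Proposition: Harnack inequality}) furnishes a uniform H\"older estimate on $\tilde u_k$, with universal exponent $\gamma$, outside an $\varepsilon_k/\bar\varepsilon$-neighbourhood of the free boundary. Combined with the flatness, this forces $\varGamma(u_k)\to B_{1/2}\cap\{y=0\}$ in the local Hausdorff sense. An Arzel\`a--Ascoli plus diagonal extraction therefore yields a subsequence along which the graphs of $\tilde u_k$ converge uniformly to the graph of a H\"older-continuous function $w\colon\overline{B_{1/2}\cap\{y\geq 0\}}\to\mathbb{R}$ with $w(0)=0$ and $|w|\leq 1$.

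The next step is to verify that $w$ is a viscosity solution of the Neumann problem $\Delta w=0$ in $B_{1/2}\cap\{y>0\}$ with $w_y=0$ on $B_{1/2}\cap\{y=0\}$. For the interior equation, writing $u_k=\varepsilon_k\tilde u_k+y$ and noting $\mathfrak{L}_k(y)=b_2^{(k)}$ gives $\mathfrak{L}_k\tilde u_k=(\tilde f_k-b_2^{(k)})/\varepsilon_k$, whose right-hand side is bounded by $2\varepsilon_k$; since $a_{ij}^{(k)}(0)=\delta_{ij}$ with $[a_{ij}^{(k)}]_{C^{0,\beta}}\to 0$ and $\|\mathbf{b}^{(k)}\|_\infty\leq\varepsilon_k^2$, the operators $\mathfrak{L}_k$ converge locally uniformly to $\Delta$, and standard viscosity stability delivers $\Delta w=0$ in $B_{1/2}\cap\{y>0\}$. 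For the boundary condition, if $\phi\in C^2$ touches $w$ strictly from below at $X_0\in B_{1/2}\cap\{y=0\}$, then $\phi_k(X):=y+\varepsilon_k\phi(X)$ touches $u_k$ from below at some $X_k\to X_0$ lying on $\varGamma(u_k)$; the viscosity free-boundary condition at $X_k$ reads $|e_2+\varepsilon_k\nabla\phi(X_k)|\leq Q_k(X_k)=1+O(\varepsilon_k^2)$, and squaring and dividing by $2\varepsilon_k$ yields $\phi_y(X_k)\leq O(\varepsilon_k)$, so $\phi_y(X_0)\leq 0$ in the limit. The symmetric argument from above closes the Neumann condition.

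To conclude, extend $w$ by even reflection across $\{y=0\}$ to obtain a harmonic function in $B_{1/2}$ with $\|w\|_\infty\leq 1$; interior $C^{1,\alpha}$ estimates give
\begin{align*}
|w(X)-\xi\cdot X|\leq C_1 r^{1+\alpha}\quad\text{in }B_r,\qquad \xi:=\nabla w(0),
\end{align*}
with $|\xi|$ universally bounded and, crucially, $\xi\cdot e_2=w_y(0)=0$ by the Neumann condition. Choose $r_0$ so that $C_1 r_0^\alpha\leq \tfrac{1}{8}$. Unwinding the rescaling yields $|u_k(X)-y-\varepsilon_k\xi\cdot X|\leq r\varepsilon_k/8$ on $B_r\cap\overline{\{u_k>0\}}$ for all sufficiently large $k$, and the estimate is extended to $\{u_k=0\}\cap B_r$ using the flatness hypothesis $u_k(X)=0\Rightarrow y\leq\varepsilon_k$ together with $|\xi\cdot e_2|=0$. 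Setting $\nu_k:=(e_2+\varepsilon_k\xi)/|e_2+\varepsilon_k\xi|$, the orthogonality $\xi\cdot e_2=0$ yields $|e_2+\varepsilon_k\xi|=1+O(\varepsilon_k^2)$ and $|\nu_k-e_2|\leq C_0\varepsilon_k^2$, and a direct rearrangement produces the two-sided flatness \eqref{Formula: R(16)} with constant $\varepsilon_k/2$ in $B_r$, contradicting the standing assumption. The principal obstacle, relative to the pure Laplace case in \cite{S2011}, is the viscosity passage to the limit in presence of $\mathbf{b}\cdot\nabla u$; this is resolved by the quantitative smallness $\|\mathbf{b}\|_\infty\leq\varepsilon^2$ imposed in \eqref{Formula: R(2(2))}, which ensures $\mathbf{b}\cdot\nabla u/\varepsilon\to 0$ uniformly after rescaling, so that the linearised operator is exactly the Laplacian.
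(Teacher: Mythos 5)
Your proposal reproduces the paper's compactness/linearisation scheme (modelled on Lemma 4.1 of \cite{S2011}) essentially step by step: the contradiction sequence and renormalised excess $\tilde u_k=(u_k-y)/\varepsilon_k$, H\"older compactness via Corollary~\ref{Corollary: Compactness}, passage to the linearised Neumann problem in the viscosity sense, Taylor expansion of the limit at the origin, and unwinding the rescaling. Replacing the paper's citation of Lemma 2.6 in \cite{S2011} with an even reflection across $\{y=0\}$ and interior estimates is a routine equivalent, and your treatment of the drift term (noting $\mathfrak{L}_k y=b_2^{(k)}$, so $\mathfrak{L}_k\tilde u_k=(\tilde f_k-b_2^{(k)})/\varepsilon_k$ has $O(\varepsilon_k)$ right-hand side) and of the boundary condition (squaring $|e_2+\varepsilon_k\nabla\phi(X_k)|\le 1+O(\varepsilon_k^2)$ and dividing by $\varepsilon_k$) is exactly what the paper does in Step II. The one inaccuracy is in the final bookkeeping: the orthogonality $\xi\cdot e_2=0$ does give $|e_2+\varepsilon_k\xi|=1+O(\varepsilon_k^2)$, but that controls only the normalising scalar, not the direction. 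The horizontal component of $\nu_k=(e_2+\varepsilon_k\xi)/|e_2+\varepsilon_k\xi|$ is $\varepsilon_k\xi_1/|e_2+\varepsilon_k\xi|=O(\varepsilon_k)$, so the genuine bound is $|\nu_k-e_2|\le C_0\varepsilon_k$, not $C_0\varepsilon_k^2$; your "yields" is a non sequitur. This is what De Silva's original lemma asserts, and it is also what Lemma~\ref{Lemma: iteration} in this paper actually invokes ($|\nu_k-\nu_{k-1}|\le C_0\varepsilon_{k-1}$), so the quadratic power in the proposition's statement — which you mirrored — is a transcription slip; the linear bound suffices for the geometric decay of the $\varepsilon_k$ to make $\{\nu_k\}$ Cauchy, and the downstream iteration is unaffected.
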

\begin{proof}
	The proof follows closely Lemma 4.1 in \cite{S2011}, at which considered the case when $\mathbf{b}\equiv0$, we adapt it to our settings. The proof is divided into three steps.
	
	\textbf{Step I}. \emph{Compactness}. Let $r_{0}$ be fixed which is up to be determined. Assume for the sake of contradiction that there is a sequence $\varepsilon_{k}\to0$ and a sequence $u_{k}$ of solutions to \eqref{Formula: R(3)} in $B_{1}(0)$ with coefficient $a_{ij}^{k}$, $\mathbf{b}^{k}$, right-hand side $\tilde{f}^{k}$, and free boundary condition $Q_{k}$ satisfying \eqref{Formula: R(4)}, so that $u_{k}$ satisfies
    \begin{align}\label{Formula: R(17)}
    	(y-\varepsilon_{k})^{+}\leqslant u_{k}(X)\leqslant(y+\varepsilon_{k})^{+}\quad\text{ in }B_{1}(0)\quad\text{ and }0\in\varGamma(u_{k}),
    \end{align}
    but it does not satisfy the conclusion \eqref{Formula: R(16)} of the Lemma. Set
    \begin{align*}
    	\tilde{u}_{k}(X)=\frac{u_{k}(X)-y}{\varepsilon_{k}}\quad\text{ in }B_{1}(0)\cap\overline{\varOmega^{+}(u_{k})}.
    \end{align*}
    Then, it follows from \eqref{Formula: R(17)} that
    \begin{align}\label{Formula: R(17(1))}
    	-1\leqslant\tilde{u}_{k}\leqslant1\quad\text{ in }B_{1}(0)\cap\overline{\varOmega^{+}(u_{k})}.
    \end{align}
    It then follows from Corollary \ref{Corollary: Compactness} that $\tilde{u}_{k}$ satisfies
    \begin{align*}
    	|\tilde{u}_{k}(X)-\tilde{u}_{k}(Y)|\leqslant C|X-Y|^{\gamma}
    \end{align*}
    for a universal constant $C$ with
    \begin{align*}
    	|X-Y|\geqslant\frac{\varepsilon_{k}}{\bar{\varepsilon}}\quad\text{ in }B_{1/2}(0)\cap\overline{\varOmega^{+}(u_{k})}.
    \end{align*}
    It follows from \eqref{Formula: R(17)} and the H\"{o}lder convergence of $\tilde{u}_{k}$ that $\varGamma(u_{k})$ converges to $B_{1}(0)\cap\{y=0\}$ locally in the Hausdorff distance. This together with Ascoli-Arzela Theorem give that up to a subsequence the graph of  $\tilde{u}_{k}$ over $B_{1/2}(0)\cap\overline{\varOmega^{+}(u_{k})}$ converges in the Hausdorff distance to the graph of a H\"{o}lder continuous function $\tilde{u}$ over $B_{1/2}(0)\cap\{y\geqslant0\}$.
    
    \textbf{Step II}. \emph{Limiting solution}. In this step, we aim to show that $\tilde{u}$ solves
    \begin{align}\label{Formula: R(18)}
    	\begin{cases}
    		\Delta\tilde{u}=0\quad&\text{ in }B_{1/2}(0)\cap\{y>0\},\\
    		\pd{\tilde{u}}{y}=0\quad&\text{ on }B_{1/2}(0)\cap\{y=0\},
    	\end{cases}
    \end{align}
    in the viscosity sense, i.e., if $P(X)$ is a quadratic polynomial touching $\tilde{u}$ at $\bar{X}=(\bar{x},\bar{y})\in B_{1/2}(X_{1})\cap\{y\geqslant0\}$ strictly from below, and 
    \begin{enumerate}
    	\item if $\bar{X}\in B_{1/2}(0)\cap\{y>0\}$, then $\Delta P(\bar{X})\leqslant0$;
    	\item if $\bar{X}\in B_{1/2}(0)\cap\{y=0\}$, then $\pd{P}{y}(\bar{X})\leqslant0$.
    \end{enumerate}
    Since $\tilde{u}_{k}\to\tilde{u}$ in $C^{\gamma}$, there exist points $X_{k}\in B_{1/2}(0)\cap\overline{\{u_{k}>0\}}$, $X_{k}\to\bar{X}$ so that
    \begin{align}\label{Formula: R(19)}
    	P(X_{k})+c_{k}=\tilde{u}_{k}(X_{k}),
    \end{align}
    where $c_{k}\to0$ and
    \begin{align}\label{Formula: R(20)}
    	\tilde{u}_{k}(X)\geqslant P(X)+c_{k},
    \end{align}
    in a small neighborhood of $X_{k}$. Set
    \begin{align*}
    	Q(X)=\varepsilon_{k}(P(X)+c_{k})+y,
    \end{align*}
    It follows from the definition of $\tilde{u}_{k}$, \eqref{Formula: R(19)} and \eqref{Formula: R(20)} read
    \begin{align*}
    	u_{k}(X_{k})=Q(X_{k})
    \end{align*}
    and
    \begin{align*}
    	u_{k}(X)\geqslant Q(X)\quad\text{ in a neighborhood of }X_{k}.
    \end{align*}
    We consider the following two cases.
    \begin{enumerate}
    	\item if $\bar{X}\in B_{1/2}(0)\cap\{x_{d}>0\}$, then $X_{k}\in B_{1/2}(0)\cap\{u_{k}>0\}$ and hence, since $Q$ touches $u_{k}$ from below at $X_{k}$, one has
    	\begin{align*}
    		\sum_{i,j=1}^{2}a_{ij}^{k}(X_{k})D_{ij}Q&=\varepsilon_{k}\sum_{i,j=1}^{2}a_{ij}^{k}(X_{k})D_{ij}P\leqslant\tilde{f}_{k}(X_{k})-\varepsilon_{k}\mathbf{b}^{k}\cdot\nabla P-b_{2}:=\tilde{F}_{k},
    	\end{align*}
        and $|\tilde{F}_{k}|\leqslant c\varepsilon_{k}^{2}$, where we have used the assumption that $\|f\|_{L^{\infty}}\leqslant\varepsilon^{2}$ and $\|\mathbf{b}\|_{L^{\infty}}\leqslant\varepsilon^{2}$. This implies that
        \begin{align*}
        	\sum_{i,j=1}^{2}a_{ij}^{k}(X_{k})D_{ij}P\leqslant c\varepsilon_{k}.
        \end{align*}
        Thus, in view of the last inequality in \eqref{Formula: R(4)},
        \begin{align*}
        	\Delta P=\sum_{i,j=1}^{2}(\delta_{ij}-a_{ij}^{k}(X_{k}))D_{ij}P+\sum_{i,j=1}^{k}a_{ij}^{k}(X_{k})D_{ij}P\leqslant C\varepsilon_{k}.
        \end{align*}
        Passing to the limit as $k\to+\infty$ yields $\Delta P\leqslant0$, as wanted.
        \item If $\bar{X}\in B_{1/2}(0)\cap\{y=0\}$, it follows from Remark [Page 226 in \cite{S2011}] that we can assume that $\Delta P>0$. We assume without loss of generality that $\Delta P>0$ and claim that for $k$ large enough, $X_{k}\in\varGamma(u_{k})$. If this is not the case, then $X_{k_{n}}\in B_{1}(0)\cap\{u_{k_{n}}>0\}$ for a subsequence $k_{n}\to\infty$ and as in the previous case,
        \begin{align*}
        	\Delta P\leqslant C\varepsilon_{k_{n}},
        \end{align*}
        which implies that $\Delta p\leqslant0$ by letting $k_{n}\to\infty$. This leads to a contradiction. Thus, $X_{k}\in\varGamma(u_{k})$ for $k$ large enough. Now notice that
        \begin{align*}
        	\nabla Q=\varepsilon_{k}\nabla P+e_{2},
        \end{align*}
        since $Q$ touches $u_{k}$ from below,
        \begin{align*}
        	|\nabla Q(X_{k})|\leqslant Q_{k}(X_{k})\leqslant1+\varepsilon_{k}^{2},
        \end{align*}
        which yields
        \begin{align*}
        	|\nabla Q(X_{k})|^{2}=\varepsilon_{k}|\nabla P(X_{k})|^{2}+1+2\varepsilon_{k}\pd{P}{y}(X_{k})\leqslant1+3\varepsilon_{k}^{2},
        \end{align*}
        dividing by $\varepsilon_{k}$ gives
        \begin{align*}
        	\varepsilon_{k}|\nabla P(X_{k})|^{2}-3\varepsilon_{k}+2\pd{P}{y}(X_{k})\leqslant0.
        \end{align*}
        Passing to the limit as $k\to\infty$ gives $\pd{P}{y}(\bar{X})\leqslant0$, as desired.
    \end{enumerate}
    \textbf{Step III}. \emph{Improvement of flatness}. It follows from \eqref{Formula: R(18)} and \eqref{Formula: R(17(1))} that
    \begin{align*}
    	-1\leqslant\tilde{u}\leqslant1\quad\text{ in }B_{1/2}(0)\cap\{y\geqslant0\}.
    \end{align*}
    It follows from Lemma 2.6 in \cite{S2011} that for the given $r$,
    \begin{align*}
    	|\tilde{u}(X)-\tilde{u}(0)-\nabla\tilde{u}(0)\cdot X|\leqslant C_{0}r^{2}\quad\text{ in }B_{r}(0)\cap\{y\geqslant0\},
    \end{align*}
    for a constant $C_{0}$. Since $0\in\varGamma(\tilde{u})$ and also $\pd{\tilde{u}}{y}(0)=0$, we have
    \begin{align*}
    	X'\cdot\tilde{\nu}-C_{0}r^{2}\leqslant\tilde{u}\leqslant X'\cdot\tilde{\nu}+C_{0}r^{2}\quad\text{ in }B_{r}(0)\cap\{y\geqslant0\},
    \end{align*}
    with $\tilde{\nu}_{1}=\pd{u}{x}(0)$, $|\tilde{\nu}|\leqslant\tilde{C}$. Thus, for $k$ large enough one has
    \begin{align*}
    	X'\cdot\tilde{\nu}-C_{1}r^{2}\leqslant\tilde{u}_{k}\leqslant X'\cdot\tilde{\nu}+C_{1}r^{2}\quad\text{ in }B_{r}(0)\cap\overline{\{u_{k}>0\}}.
    \end{align*}
    It follows from the definition of $\tilde{u}_{k}$ that
    \begin{align}\label{Formula: R(21)}
    	\varepsilon_{k}X'\cdot\tilde{\nu}+y-\varepsilon_{k}C_{1}r^{2}\leqslant u_{k}\leqslant\varepsilon_{k}X'\cdot\tilde{\nu}+y+\varepsilon_{k}C_{1}r^{2}\quad\text{ in }B_{r}(0)\cap\overline{\{u_{k}>0\}}.
    \end{align}
    Set
    \begin{align*}
    	\nu=\left(\frac{\varepsilon_{k}\tfrac{\partial u}{\partial x}(0)}{\sqrt{\varepsilon_{k}^{2}+1}},\frac{1}{\sqrt{\varepsilon_{k}^{2}+1}}\right).
    \end{align*}
    Since, for $k$ large, $1\leqslant\sqrt{\varepsilon_{k}^{2}+1}\leqslant 1+\tfrac{\varepsilon_{k}^{2}}{2}$, we deduce from \eqref{Formula: R(21)} that
    \begin{align*}
    	X\cdot\nu-\varepsilon_{k}^{2}\frac{r}{2}-C_{1}r^{2}\varepsilon_{k}\leqslant u_{k}\leqslant X\cdot\nu+\varepsilon_{k}^{2}\frac{r}{2}+C_{1}r^{2}\varepsilon_{k}^{2}\quad\text{ in }B_{r}\cap\overline{\{u_{k}>0\}}.
    \end{align*}
    If in particular we  choose $r_{0}$ so that $C_{1}r_{0}\leqslant\tfrac{1}{4}$, then $\varepsilon_{k}\leqslant\tfrac{1}{2}$ for $k$ large, one has
    \begin{align*}
    	X\cdot\nu-\varepsilon_{k}\frac{r}{2}\leqslant u_{k}\leqslant X\cdot\nu+\varepsilon_{k}\frac{r}{2}\quad\text{ in }B_{r}\cap\overline{\{u_{k}>0\}},
    \end{align*}
    which together with \eqref{Formula: R(17)} implies that
    \begin{align*}
    	(X\cdot\nu-\varepsilon_{k}\tfrac{r}{2})^{+}\leqslant u_{k}\leqslant(X\cdot\nu+\varepsilon_{k}\tfrac{r}{2})^{+}\quad\text{ in }B_{r}(0).
    \end{align*}
    Thus, $u_{k}$ satisfy \eqref{Formula: R(16)}, and this is a contradiction.
\end{proof}
To prove Lemma \ref{Lemma: Flatness implies C1gamma}, we need the following iteration Lemma.
\begin{lemma}[Iteration]\label{Lemma: iteration}
	Assume that $u$ satisfies the assumption of Lemma \ref{Lemma: Flatness implies C1gamma}, then there exists $\bar{r}$ and a sequence of unit vectors $\{\nu_{k}\}_{k=1}^{\infty}$ so that
	\begin{align}\label{Formula: R(21(2))}
		\left(X\cdot\nu_{k}-\varepsilon_{k}\right)^{+}\leqslant u(X)\leqslant\left(X\cdot\nu_{k}+\varepsilon_{k}\right)^{+}\quad\text{ in }B_{\bar{r}^{k}}(0),
	\end{align}
    with $\varepsilon_{k}:=2^{-k}\varepsilon_{0}(\bar{r})^{2}$ and  $|\nu_{k}-\nu_{k-1}|\leqslant C_{0}\varepsilon_{k-1}$. Here $C_{0}$ and $\varepsilon_{0}$ are universal constants in Proposition \ref{Proposition: improvement of flatness}.
\end{lemma}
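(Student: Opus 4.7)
The approach is a straightforward induction on $k$, using Proposition \ref{Proposition: improvement of flatness} at each step after rescaling and rotating the current ball so that the current ball becomes the unit ball and the current direction $\nu_k$ becomes $e_2$. The base case $k=1$ is a single direct application of Proposition \ref{Proposition: improvement of flatness} with $\nu_0 := e_2$: provided $\bar r\in(0,r_0)$ and $\bar\varepsilon\le\varepsilon_0$ (the universal constants of that proposition), we obtain $\nu_1$ with $|\nu_1-e_2|\le C_0\bar\varepsilon$ and the flatness (\ref{Formula: R(21(2))}) on $B_{\bar r}(0)$ at level $\varepsilon_1$.

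For the inductive step, assume (\ref{Formula: R(21(2))}) at level $k$, pick a rotation $R_k$ sending $e_2$ to $\nu_k$, and introduce
\begin{align*}
    u_k(Y):=\frac{u(\bar r^{k}R_k Y)}{\bar r^{k}},\qquad Y\in B_1(0).
\end{align*}
The inductive hypothesis says precisely that $u_k$ is $\tilde\varepsilon_k$-flat in direction $e_2$ in $B_1(0)$ with $\tilde\varepsilon_k:=\varepsilon_k/\bar r^{k}$, and a direct change of variables shows that $u_k$ is a viscosity solution to a problem of the form (\ref{Formula: R(2)}) in $B_1(0)$ with rescaled coefficients
\begin{align*}
    a_{ij}^{k}(Y)=(R_k^{T}a(\bar r^{k}R_k Y)R_k)_{ij},\quad \mathbf{b}^{k}(Y)=\bar r^{k}R_k^{T}\mathbf{b}(\bar r^{k}R_k Y),\quad \tilde f^{k}(Y)=\bar r^{k}\tilde f(\bar r^{k}R_k Y),\quad Q^{k}(Y)=Q(\bar r^{k}R_k Y).
\end{align*}
The Hölder seminorms of $a_{ij}^{k}$ and $Q^{k}$ pick up a factor of $\bar r^{k\beta}$, while the $L^{\infty}$ norms of $\mathbf{b}^{k}$ and $\tilde f^{k}$ pick up a factor of $\bar r^{k}$, so all the smallness conditions (\ref{Formula: R(2(2))}) are preserved (and in fact strengthened) by the rescaling. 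Applying Proposition \ref{Proposition: improvement of flatness} to $u_k$ produces a unit vector $\tilde\nu$ with $|\tilde\nu-e_2|\le C_0\tilde\varepsilon_k$ and
\begin{align*}
    (Y\cdot\tilde\nu-\tfrac{\bar r}{2}\tilde\varepsilon_k)^{+}\le u_k(Y)\le(Y\cdot\tilde\nu+\tfrac{\bar r}{2}\tilde\varepsilon_k)^{+}\qquad\text{in }B_{\bar r}(0).
\end{align*}
Setting $\nu_{k+1}:=R_k\tilde\nu$ and undoing the rescaling yields (\ref{Formula: R(21(2))}) at level $k+1$ together with $|\nu_{k+1}-\nu_k|=|\tilde\nu-e_2|\le C_0\tilde\varepsilon_k$, which after absorbing the $\bar r^{-k}$ factor into the universal constant produces the claimed bound $|\nu_{k+1}-\nu_k|\le C_0\varepsilon_k$.

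The principal obstacle is the bookkeeping needed to close the induction: at every stage we must have simultaneously (i) the rescaled flatness parameter $\tilde\varepsilon_k$ below the universal threshold $\varepsilon_0$ required by Proposition \ref{Proposition: improvement of flatness}, and (ii) the rescaled coefficient norms below the threshold (\ref{Formula: R(2(2))}) written with $\varepsilon=\tilde\varepsilon_k$. Because the coefficient norms shrink geometrically in $k$ under the rescaling (they carry positive powers of $\bar r<1$), point (ii) is automatic once $\bar\varepsilon$ is chosen small enough in the base step; the delicate part is to fix $\bar r$ small enough (within $(0,r_0)$ and compatible with the geometric constraints $C_1\bar r\le 1/4$ built into the proof of Proposition \ref{Proposition: improvement of flatness}) and then to take $\bar\varepsilon$ so small that $\tilde\varepsilon_0\le\varepsilon_0$. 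With these two universal choices the induction closes and the geometric decay $\varepsilon_{k}=2^{-k}\varepsilon_0(\bar r)^{2}$ together with $|\nu_{k+1}-\nu_k|\le C_0\varepsilon_k$ is obtained by tracking the halving factor produced by each application of the improvement of flatness.
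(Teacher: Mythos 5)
You have the right overall plan---induction with rescaling and rotation, with Proposition \ref{Proposition: improvement of flatness} applied at every stage, is exactly the engine the paper uses for this lemma---but the last step of your inductive argument contains a genuine error. You arrive at $|\nu_{k+1}-\nu_k|\le C_0\tilde\varepsilon_k$ with $\tilde\varepsilon_k=\varepsilon_k/\bar r^{\,k}$, and then assert that the factor $\bar r^{-k}$ can be ``absorbed into the universal constant'' to conclude $|\nu_{k+1}-\nu_k|\le C_0\varepsilon_k$. That is not possible: $\bar r^{-k}$ is unbounded in $k$, so it cannot be hidden inside any $k$-independent constant. Worse, with the numbers you use, $\tilde\varepsilon_k=2^{-k}\varepsilon_0\bar r^{\,2-k}$ does not decay; since the paper forces $\bar r\le(1/4)^{1/\beta}<\tfrac12$, this quantity grows, so the hypothesis $\tilde\varepsilon_k\le\varepsilon_0$ required by Proposition \ref{Proposition: improvement of flatness} fails for large $k$ and the induction would not close at all.

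The culprit is the normalization of $\varepsilon_k$. As the paper's own proof (which rescales and checks smallness against $\varepsilon_k$, not $\varepsilon_k/\bar r^{\,k}$) and its use in Lemma \ref{Lemma: Uniqueness of blow-up limits} make clear, $\varepsilon_k$ is the flatness level of the \emph{rescaled} function $u_k(X)=u(\bar r^{\,k}X)/\bar r^{\,k}$ in $B_1(0)$; equivalently, $u$ itself is $\bar r^{\,k}\varepsilon_k$-flat in $B_{\bar r^{\,k}}(0)$, not $\varepsilon_k$-flat there. Under that reading your quantity $\tilde\varepsilon_k$ is simply $\varepsilon_k$: Proposition \ref{Proposition: improvement of flatness} then gives $|\tilde\nu-e_2|\le C_0\varepsilon_k^2\le C_0\varepsilon_k$ directly, the improved level $\bar r\varepsilon_k/2\le\varepsilon_{k+1}=\varepsilon_k/2$ closes the induction, and there is no unbounded factor to discard. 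If instead you hold to the literal reading of (\ref{Formula: R(21(2))}) you should flag a typo rather than try to remove the $\bar r^{-k}$ factor, because no choice of universal constant accomplishes that. Finally, when you say the rescaled coefficient norms ``shrink geometrically so the smallness is automatic,'' keep in mind that the thresholds $\varepsilon^2$ and $\varepsilon$ in \eqref{Formula: R(4)} must be taken with $\varepsilon=\varepsilon_k$ at each step, and those thresholds also shrink geometrically, by $1/4$ and $1/2$; it is precisely the constraints $\bar r\le 1/4$ and $\bar r^{\beta}\le 1/4$ (the origin of the choice $\bar r\le(1/4)^{1/\beta}$ in the paper's proof) that keep the comparison closed at every step, a point you gesture at but do not actually verify.
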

\begin{proof}
	Let $u$ be a viscosity solution to \eqref{Formula: R(2)} in $B_{1}$ with $0\in\varGamma(u)$, $Q(0)=1$ and $a_{ij}(0)=\delta_{ij}$. Fix a universal constant $\bar{r}>0$ such that
	\begin{align*}
		\bar{r}\leqslant\min\{r_{0},(\tfrac{1}{4})^{1/\beta}\},
	\end{align*}
    where $r_{0}$ is the universal constant in the improvement of flatness Proposition \ref{Proposition: improvement of flatness}. Also, let us fix a universal constant $\tilde{\varepsilon}>0$ such that
    \begin{align*}
    	\tilde{\varepsilon}\leqslant\min\{\varepsilon_{0}(\bar{r})^{2},C_{0}^{-1}\},
    \end{align*}
    where $\varepsilon_{0}$ and $C_{0}$ are constants in Proposition \ref{Proposition: improvement of flatness}. Let now $\bar{\varepsilon}=\tilde{\varepsilon}^{2}$. By our choice of $\tilde{\varepsilon}$, one has
    \begin{align*}
    	(y-\tilde{\varepsilon})^{+}\leqslant u(x)\leqslant(y+\tilde{\varepsilon})^{+}\quad\text{ in }B_{1}(0).
    \end{align*}
    On the other hand, we see that under the assumption of Lemma \ref{Lemma: Flatness implies C1gamma} and the choice of $\tilde{\varepsilon}$, one has
    \begin{align*}
    	&\|\tilde{f}\|_{L^{\infty}(B_{1})}\leqslant\bar{\varepsilon}\leqslant\tilde{\varepsilon}^{2},\qquad\|\mathbf{b}\|_{L^{\infty}(B_{1})}\leqslant\bar{\varepsilon}\leqslant\tilde{\varepsilon}^{2}\\
    	&\|Q(X)-1\|_{L^{\infty}(B_{1})}\leqslant[Q]_{C^{0,\beta}}\leqslant\bar{\varepsilon}\leqslant\tilde{\varepsilon}^{2},\\
    	&\|a_{ij}-\delta_{ij}\|_{L^{\infty}(B_{1})}\leqslant[a_{ij}]_{C^{0,\beta}}\leqslant\bar{\varepsilon}\leqslant\tilde{\varepsilon}^{2}.
    \end{align*}
    Thus, one can conclude from Proposition \ref{Proposition: improvement of flatness} that
    \begin{align*}
    	\left(X\cdot\nu_{1}-\bar{r}\tfrac{\tilde{\varepsilon}}{2}\right)^{+}\leqslant u(X)\leqslant\left(X\cdot\nu_{1}+\bar{r}\tfrac{\tilde{\varepsilon}}{2}\right)^{+}\quad\text{ in }B_{\bar{r}}(0),
    \end{align*}
    with $|\nu_{1}|=1$ and $|\nu_{1}-e_{2}|\leqslant C_{0}\tilde{\varepsilon}$. We therefore rescale and iterate the argument above. For $k=0$, $1$, $\ldots$, set
	\begin{align*}
		\rho_{k}=\bar{r}^{k},\qquad \varepsilon_{k}=2^{-k}\varepsilon_{0}(\bar{r})^{2},\qquad u_{k}(X):=\frac{u(\rho_{k}X)}{\rho_{k}},\qquad\mathbf{b}_{k}(X)=\rho_{k}\mathbf{b}(\rho_{k}X),\qquad\tilde{f}_{k}=\rho_{k}\tilde{f}(\rho_{k}X),
	\end{align*}
    Notice that each $u_{k}$ satisfies
    \begin{align*}
    	\begin{cases}
    		\begin{alignedat}{2}
    			\displaystyle\sum_{i,j=1}^{2}a_{ij}(\rho_{k}X)D_{ij}u_{k}+\rho_{k}\mathbf{b}(\rho_{k}X)\nabla u_{k}&=\rho_{k}\tilde{f}(\rho_{k}X)\quad&&\text{ in }B_{1}(0)\cap\{u_{k}>0\},\\
    			|\nabla u_{k}|&=Q(\rho_{k}X)\quad&&\text{ on }B_{1}(0)\cap\partial\{u_{k}>0\}.
    		\end{alignedat}
    	\end{cases}
    \end{align*}
    Note that for $k$ large enough, the assumption of Proposition \ref{Proposition: improvement of flatness} is satisfied in $B_{1}(0)$. Indeed,
    \begin{align*}
    	|\rho_{k}\tilde{f}(\rho_{k}X)|&\leqslant\|f\|_{L^{\infty}}\rho_{k}\leqslant\bar{\varepsilon}\bar{r}^{k}\leqslant\varepsilon_{k}^{2},\\
    	|\rho_{k}\mathbf{b}(\rho_{k}X)\|_{L^{\infty}}&\leqslant\|\mathbf{b}\|_{L^{\infty}}\rho_{k}\leqslant\bar{\varepsilon}\bar{r}^{k}\leqslant\varepsilon_{k}^{2},\\
    	|Q(\rho_{k}X)-Q(0)|&\leqslant[Q]_{C^{0,\beta}}\rho_{k}^{\beta}\leqslant\bar{\varepsilon}\bar{r}^{k\beta}\leqslant\varepsilon_{k}^{2},
    \end{align*}
    and
    \begin{align*}
    	|a_{ij}(\rho_{k}X)-a_{ij}(0)|&\leqslant[a_{ij}]_{C^{0,\beta}}\rho_{k}^{\beta}\leqslant\bar{\varepsilon}\bar{r}^{k\beta}\leqslant\varepsilon_{k},
    \end{align*}
    with
    \begin{align*}
    	\left(X\cdot\nu_{k}-\varepsilon_{k}\right)^{+}\leqslant u(X)\leqslant\left(X\cdot\nu_{k}+\varepsilon_{k}\right)^{+}\quad\text{ in }B_{\bar{r}^{k}}(0),
    \end{align*}
    as desired.
\end{proof}
We now prove Proposition \ref{Proposition: Flatness implies C1alpha}.
\begin{proof}
	Notice that \eqref{Formula: R(21(2))} implies that $\varGamma(u)$ is $C^{1,\gamma}$ at $0$. Then repeating the procedure for points in a small neighborhood of $0$, we have that there exists vector $\nu_{\infty}=\lim_{k\to\infty}\nu_{k}$, $C>0$ and $\gamma'\in(0,1)$ so that up to a rotation, in the new coordinate system $e_{1}$, $\nu_{\infty}$, one has $y=h(x)$, $h(0)=0$ and
	\begin{align*}
		|h(x)-\nu_{\infty}\cdot X|\leqslant C|X|^{1+\gamma'}
	\end{align*}
    in a small neighborhood of $0$.
\end{proof}
In fact, as a direct corollary of Proposition \ref{Proposition: Flatness implies C1alpha} and can be proved via a standard blow-up argument, we can prove that Lipschitz free boundaries are $C^{1,\gamma}$. The argument is standard, so we give it into the appendix B for the sake of completeness.

With Proposition \ref{Proposition: Flatness implies C1alpha} at hand, we are now in a position to study the regularity of the non-degenerate points. We begin with the following observation.
\begin{lemma}[Uniqueness of blow-up limits]\label{Lemma: Uniqueness of blow-up limits}
	Let $X_{0}=(x_{0},y_{0})\in N_{\psi}$, let $\psi$ be a local minimizer of $J$ in $B_{R_{0}}(X_{0})$, and let $\psi_{X_{0},r}$ be the rescaling defined by
	\begin{align*}
		\psi_{X_{0},r}(X):=\frac{\psi(X_{0}+rX)}{r}\quad X\in B_{1}(0).
	\end{align*}
    Then there is a unique vector $\nu_{X_{0}}\in\partial B_{1}(0)$ so that
    \begin{align}\label{Formula: R(21(3))}
    	\|\psi_{X_{0},r}-\psi_{0}\|_{L^{\infty}(B_{1})}\leqslant C_{1}r^{\gamma}\quad\text{ for every }r\leqslant\frac{1}{2},
    \end{align}
    where
    \begin{align}\label{Formula: R(21(4))}
    	\psi_{0}(X):=x_{0}\sqrt{-y_{0}}(\nu_{X_{0}}\cdot X)^{+}\quad\text{ for every }X=(x,y)\in\mathbb{R}^{2}.
    \end{align}
    Here $C_{1}$ and $\gamma$ are positive constants depending on $X_{0}$, $r_{0}$, $\varepsilon_{0}$ and $C_{0}$ (recall Proposition \ref{Proposition: improvement of flatness}).
\end{lemma}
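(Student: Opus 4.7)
The plan is to combine the half-plane classification of blow-up limits (Section \ref{Section: homogeneity of minimizers}) with the improvement-of-flatness iteration (Lemma \ref{Lemma: iteration}) to extract both the uniqueness of the blow-up direction and a geometric rate of convergence. By Lemma \ref{Lemma: Structure of the blow-up limits}, Corollary \ref{Corollary: Properties of blow-up limits} and Lemma \ref{Lemma: Homogeneity of minimizers}, every blow-up of $\psi$ at $X_{0}$ is a half-plane solution $x_{0}\sqrt{-y_{0}}(X\cdot\nu)^{+}$ for some unit vector $\nu$. First, extract a subsequence $\rho_{n}\to 0^{+}$ and a unit vector $\nu^{*}$ such that $\psi_{X_{0},\rho_{n}}\to x_{0}\sqrt{-y_{0}}(X\cdot\nu^{*})^{+}$ uniformly on $B_{1}(0)$.

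Next, normalize by setting $u_{\rho}(X):=\psi(X_{0}+\rho X)/(\rho x_{0}\sqrt{-y_{0}})$ for small $\rho>0$. A direct computation shows that $u_{\rho}$ is a viscosity solution of \eqref{Formula: R(2)} in $B_{1}(0)$ with $a_{ij}\equiv\delta_{ij}$, $Q_{\rho}(0)=1$, and with drift $\mathbf{b}_{\rho}$, source $\tilde{f}_{\rho}$, and the H\"{o}lder seminorm $[Q_{\rho}]_{C^{0,\beta}(B_{1})}$ all of order $C(X_{0},F_{0})\rho$ (using $x_{0}>0$, $y_{0}<0$). Fix the universal constant $\bar{\varepsilon}$ of Lemma \ref{Lemma: Flatness implies C1gamma} and choose $\rho=\rho_{n}$ with $n$ large enough that the smallness \eqref{Formula: R(2(2))} holds and, by the uniform convergence in the previous step, $u_{\rho}$ is $\bar{\varepsilon}$-flat in direction $\nu^{*}$ in $B_{1}(0)$. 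After rotating coordinates so that $\nu^{*}=e_{2}$, all hypotheses of Lemma \ref{Lemma: Flatness implies C1gamma} and of its constructive version Lemma \ref{Lemma: iteration} are met.

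The iteration Lemma \ref{Lemma: iteration} then produces unit vectors $\{\nu_{k}\}$ and flatness parameters $\{\varepsilon_{k}\}$ satisfying $(X\cdot\nu_{k}-\varepsilon_{k})^{+}\leqslant u_{\rho}(X)\leqslant(X\cdot\nu_{k}+\varepsilon_{k})^{+}$ in $B_{\bar{r}^{k}}(0)$, with $|\nu_{k}-\nu_{k-1}|\leqslant C_{0}\varepsilon_{k-1}$ and with the relative flatness $\varepsilon_{k}/\bar{r}^{k}$ decreasing geometrically like $2^{-k}$. Summing a geometric series, $\{\nu_{k}\}$ is Cauchy with limit $\nu_{\infty}$ satisfying $|\nu_{k}-\nu_{\infty}|\leqslant C2^{-k}$; in particular $\nu^{*}=\nu_{\infty}$. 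Running the same argument along any other subsequence of $\rho_{n}$ forces the same $\nu_{\infty}$, so every blow-up of $\psi$ at $X_{0}$ equals $\psi_{0}(X):=x_{0}\sqrt{-y_{0}}(X\cdot\nu_{\infty})^{+}$, and we set $\nu_{X_{0}}:=\nu_{\infty}$. Undoing the normalization and combining the flatness bound with $|\nu_{k}-\nu_{\infty}|\leqslant C2^{-k}$ gives $\sup_{Y\in B_{\rho\bar{r}^{k}}}|\psi(X_{0}+Y)-x_{0}\sqrt{-y_{0}}(Y\cdot\nu_{X_{0}})^{+}|\leqslant C(\rho\bar{r}^{k})\cdot 2^{-k}$ for every $k\geqslant 1$. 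For $r\in(0,\rho\bar{r}]$, choose $k\geqslant 1$ with $\rho\bar{r}^{k+1}<r\leqslant\rho\bar{r}^{k}$, restrict to $B_{r}\subset B_{\rho\bar{r}^{k}}$ and divide by $r$: this yields $\|\psi_{X_{0},r}-\psi_{0}\|_{L^{\infty}(B_{1})}\leqslant C2^{-k}\leqslant C(r/\rho)^{\gamma}$ with $\gamma=\log 2/\log(\bar{r}^{-1})\in(0,1)$; the remaining range $r\in(\rho\bar{r},1/2]$ is absorbed into $C_{1}$ via the Lipschitz bound (Proposition \ref{Proposition: Lipschitz regularity for local minimizers}).

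The principal technical subtlety is ensuring that the smallness hypotheses of Lemma \ref{Lemma: iteration} are preserved uniformly across all scales $\bar{r}^{k}$ of the iteration: at step $k$ the rescaled coefficients of $u_{\rho}$ pick up an additional factor $\bar{r}^{k}$, and one must check $\rho\bar{r}^{k}\leqslant C\varepsilon_{k}^{2}$ for every $k\geqslant 0$. The calibration $\bar{r}\leqslant(1/4)^{1/\beta}$ made in the proof of Lemma \ref{Lemma: iteration} is tailored precisely so that, for $\rho$ small, this relation is binding only at $k=0$ and automatic thereafter, which is what allows the iteration to run indefinitely and produce the geometric decay used above.
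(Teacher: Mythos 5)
Your proposal is correct and takes essentially the same route as the paper's own proof: both pin everything on the iteration Lemma~\ref{Lemma: iteration}, applied from a single small blow-up scale at which both the coefficient smallness and the $\bar\varepsilon$-flatness (inherited from the subsequential blow-up convergence) are verified, and then read off the Cauchy convergence of $\{\nu_k\}$ together with the geometric decay to get \eqref{Formula: R(21(3))}. The only organizational difference is that the paper's Step~I first upgrades the subsequential flatness to $\bar\varepsilon$-flatness at \emph{every} small scale before invoking the iteration, whereas you run the iteration once from a fixed scale $\rho_{n}$ and recover all other scales by dyadic interpolation at the end; the two packagings are equivalent and your explicit normalization $u_{\rho}=\psi(X_{0}+\rho X)/(\rho\,x_{0}\sqrt{-y_{0}})$ is, if anything, cleaner in matching the $Q(0)=1$ hypothesis of Lemma~\ref{Lemma: Flatness implies C1gamma}.
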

\begin{proof}
	\textbf{Step I}. Since $X_{0}\in N_{\psi}$, it follows from the third property in Corollary \ref{Corollary: Properties of blow-up limits} that there exists a vanishing sequence $\rho_{k}\to0$ so that the rescaling $\psi_{k}(X):=\frac{\psi(X_{0}+\rho_{k} X)}{\rho_{k}}$ converges uniformly in $B_{1}(0)$ to a function $\psi_{0}$ of the form
    \begin{align*}
    	\psi_{0}(X)=x_{0}\sqrt{-y_{0}}(X\cdot\nu)^{+},
    \end{align*}
    for some $\nu\in\mathbb{R}^{2}$. Since $\psi_{k}$ converges to $\psi_{0}$ uniformly, there exists $\varepsilon>0$ so that for $k$ large enough
    \begin{align*}
    	\|\psi_{k}-\psi_{0}\|_{L^{\infty}(B_{1})}<\varepsilon,
    \end{align*}
    this implies that
    \begin{align}\label{Formula: R(22)}
    	\{\psi_{k}>0\}\quad\text{ in }\{X\cdot\nu>\varepsilon\}\qquad\text{ and }\qquad\psi_{k}=0\quad\text{ in }\{X\cdot\nu<-\varepsilon\}.
    \end{align}
    Namely, $\psi_{k}$ satisfies
    \begin{align*}
    	x_{0}\sqrt{-y_{0}}(X\cdot\nu-2\varepsilon)^{+}\leqslant\psi_{k}\leqslant x_{0}\sqrt{-y_{0}}(X\cdot\nu+2\varepsilon)^{+},
    \end{align*}
    for some $\nu\in\partial B_{1}(0)$. Let now $r\in(0,1)$ be arbitrary, then there exists $k\in\mathbb{N}$ so that
    \begin{align*}
    	\rho_{k+1}\leqslant r\leqslant\rho_{k}
    \end{align*}
    Therefore, one can choose $\varrho\in(0,1]$ so that $r=\varrho\rho_{k}$. It follows from \eqref{Formula: R(22)} that $\psi_{X_{0},r}=\psi_{\varrho\rho_{k}}$ satisfies
    \begin{align*}
    	x_{0}\sqrt{-y_{0}}\left(X\cdot\nu-\frac{2\varepsilon}{\varrho}\right)^{+}\leqslant\psi_{X_{0},r}\leqslant x_{0}\sqrt{-y_{0}}\left(X\cdot\nu+\frac{2\varepsilon}{\varrho}\right)^{+}\quad\text{ in }B_{1}(0).
    \end{align*}
    For $\varepsilon$ small enough, we can assume that $0<\frac{2\varepsilon}{\varrho}\leqslant\bar{\varepsilon}$, where $\bar{\varepsilon}$ is determined in the previous Lemma \ref{Lemma: iteration}. Thus, for every $r$ small enough, $\psi_{X_{0},r}$ satisfies
    \begin{align*}
    	x_{0}\sqrt{-y_{0}}(X\cdot\nu-\bar{\varepsilon})^{+}\leqslant\psi_{X_{0},r}\leqslant x_{0}\sqrt{-y_{0}}(X\cdot\nu+\bar{\varepsilon})^{+}\quad\text{ in }B_{1}(0),
    \end{align*}
    for some $\nu\in\partial B_{1}(0)$, thus $\psi_{X_{0},r}$ satisfies the flatness assumption in Lemma \ref{Lemma: Flatness implies C1gamma}.
    
    \textbf{Step II}. Since $\psi$ minimizes $J$ in $B_{R_{0}}(X_{0})$ with $X_{0}\in N_{\psi}$, then $\psi_{X_{0},r}$ is a viscosity solution to the problem
    \begin{align*}
    	\begin{cases}
    		\begin{alignedat}{2}
    			\Delta\psi_{X_{0},r}-\frac{r}{x_{0}+rx}\pd{\psi_{X_{0},r}}{x}&=r\tilde{f}(X_{0}+rX)\quad&&\text{ in }B_{1}(0)\cap\{\psi_{r}>0\},\\
    			|\nabla\psi_{X_{0},r}|&=(x_{0}+rx)\sqrt{-y_{0}-rx}\quad&&\text{ on }B_{1}(0)\cap\partial\{\psi_{r}>0\}.
    		\end{alignedat}
    	\end{cases}
    \end{align*}
    Notice that for $r$ small enough, one has $|\tfrac{r}{x_{0}+rx}|\leqslant\bar{\varepsilon}$ and $|r\tilde{f}(X_{0}+rX)|\leqslant\bar{\varepsilon}$, where $\bar{\varepsilon}$ is defined in Lemma \ref{Lemma: iteration}. Then it follows that the assumption of Lemma \ref{Lemma: Flatness implies C1gamma} is satisfied for $\psi_{X_{0},r}$. Combined with step I, we apply Lemma \ref{Lemma: iteration} that there exist $\bar{r}>0$ and a sequence of unit vectors $\{\nu_{k}\}_{k=1}^{\infty}$ so that
    \begin{align}\label{Formula: R(23)}
    	x_{0}\sqrt{-y_{0}}\left(X\cdot\nu_{k}-\varepsilon_{k}\right)^{+}\leqslant\psi_{X_{0},r}(X)\leqslant x_{0}\sqrt{-y_{0}}\left(X\cdot\nu_{k}+\varepsilon_{k}\right)^{+}\quad\text{ in }B_{1}(0),
    \end{align}
    with
    \begin{align}\label{Formula: R(24)}
    	|\nu_{k}-\nu_{k-1}|\leqslant C_{0}\varepsilon_{k}.
    \end{align}
    This implies that $\{\nu_{k}\}$ is a Cauchy sequence so there is a unit vector $\nu_{X_{0}}$ such that
    \begin{align}\label{Formula: R(25)}
    	\nu_{X_{0}}=\lim_{k\to\infty}\nu_{k}.
    \end{align}
    It follows from the definition of $\varepsilon_{k}$ that there exists $\sigma\in(0,1)$ so that  $\varepsilon_{k}=\tfrac{\varepsilon_{0}(\bar{r})^{2}}{2^{k}}\leqslant\varepsilon_{0}(\bar{r})^{2}\sigma^{k}<1$. Thus it follows from \eqref{Formula: R(24)} and \eqref{Formula: R(25)} that
    \begin{align*}
    	|\nu_{X_{0}}-\nu_{k}|\leqslant\sum_{k=2}^{\infty}|\nu_{k}-\nu_{k-1}|\leqslant\frac{C_{0}\varepsilon_{0}(\bar{r})^{2}}{1-\sigma}\sigma^{k}.
    \end{align*}
    Define now $\psi_{0}=x_{0}\sqrt{-y_{0}}(\nu_{X_{0}}\cdot X)^{+}$ and it follows from \eqref{Formula: R(23)} that
    \begin{align}\label{Formula: R(26)}
    	|\psi_{0}-\psi_{X_{0},r}|\leqslant x_{0}\sqrt{-y_{0}}\left(\left|(X\cdot\nu_{X_{0}})^{+}-(X\cdot\nu_{k})^{+}\right|+\left|\frac{\varepsilon_{0}(\bar{r})^{2}}{2^{k}}\right|\right)\leqslant C_{0}\varepsilon_{0}(\bar{r})^{2}\left(1+\frac{C_{0}}{1-\sigma}\right)\sigma^{k}.
    \end{align}
    Notice that for $k$ large, $(\bar{r})^{k+1}\leqslant2r$. Choose $\gamma$ so that $(\bar{r})^{\gamma}=\sigma$, this gives
    \begin{align*}
    	\sigma^{k}=(\bar{r})^{k\gamma}=\frac{1}{(\bar{r})^{\gamma}}((\bar{r})^{k+1})^{\gamma}\leqslant\frac{2^{\gamma}}{(\bar{r})^{\gamma}}r^{\gamma}.
    \end{align*}
    This together with \eqref{Formula: R(26)} gives the desired result.
\end{proof}
As a direct application of the uniqueness of blow-up limit, we have
\begin{corollary}\label{Corollary: free boundary Lipschitz}
	Let $X_{0}\in N_{\psi}$ and let $\psi$ be a local minimizer of $J$ in $B_{R_{0}}(X_{0})$, then the free boundary $B_{R_{0}}(X_{0})\cap\partial\{\psi>0\}$ is the graph of a Lipschitz function in a small neighborhood of $X_{0}$.
\end{corollary}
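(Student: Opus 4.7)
The plan is to deduce the Lipschitz property of $\partial\{\psi>0\}$ from the uniqueness-plus-rate statement of Lemma \ref{Lemma: Uniqueness of blow-up limits}. Unwinding \eqref{Formula: R(21(3))} into original coordinates gives
\begin{align*}
\bigl|\psi(Y)-x_0\sqrt{-y_0}\,(\nu_{X_0}\cdot(Y-X_0))^{+}\bigr|\le C_1\,|Y-X_0|^{1+\gamma}\quad\text{for all }Y\in B_{R_0/2}(X_0).
\end{align*}
Choosing $r_*>0$ so small that $C_1 r_*^{\gamma}$ is a small fraction of $x_0\sqrt{-y_0}$, this forces $\partial\{\psi>0\}\cap B_{r_*}(X_0)$ to lie inside an arbitrarily thin strip around the hyperplane $\nu_{X_0}\cdot(Y-X_0)=0$, while the set $\{\psi>0\}$ coincides with $\{\nu_{X_0}\cdot(Y-X_0)>0\}$ outside that strip.

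Next I extend this one-point flatness into continuity of the flatness direction at nearby free boundary points. Every $X_1\in\partial\{\psi>0\}\cap B_{r_*/2}(X_0)$ is still non-degenerate, so Lemma \ref{Lemma: Uniqueness of blow-up limits} yields a unit vector $\nu_{X_1}$ with the analogous rate. At scale $r\simeq|X_1-X_0|$ each of the two half-plane profiles is $O(r^{1+\gamma})$-close to $\psi$ on $B_r(X_0)\cap B_r(X_1)$; hence their difference has sup norm $O(r^{1+\gamma})$. On the sub-region where both profiles are simultaneously strictly positive this difference is a genuine affine function on a two-dimensional set, and since $x\sqrt{-y}$ is bounded below near $X_0$, reading off its gradient from the sup bound produces
\begin{align*}
|\nu_{X_1}-\nu_{X_0}|\le C\,|X_1-X_0|^{\gamma}.
\end{align*}
Showing that this sub-region has planar measure comparable to $r^2$ relies on the non-degeneracy Lemma \ref{Lemma: Non-degeneracy of minimizers} and the density result of Corollary \ref{Corollary: Density}.

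With the H\"older continuity of $X\mapsto\nu_X$ in hand, I introduce local coordinates $(s,t)$ with $s=\nu_{X_0}\cdot(Y-X_0)$ and $t$ tangential. The strip confinement forces $\partial\{\psi>0\}\cap B_{r_*/2}(X_0)$ to be the graph of a function $s=h(t)$: every vertical line $t=\mathrm{const}$ meets the free boundary in exactly one point, for a second intersection would carry a local normal $\nu$ far from $\nu_{X_0}$, contradicting the H\"older bound just obtained. The same H\"older bound on $\nu_X$ ensures that $h$ has bounded slope, hence is Lipschitz near $t=0$.

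The principal obstacle is the comparison step producing $|\nu_{X_1}-\nu_{X_0}|\le C|X_1-X_0|^{\gamma}$: to read off the gradient of an affine function from a sup-norm bound, one needs a genuinely two-dimensional sub-region where both half-plane profiles are strictly positive. Quantitatively ensuring such a sub-region has measure of order $r^2$ requires carefully combining both flatness estimates with the non-degeneracy and density of $\psi$, and is the most delicate part of the argument. Once this H\"older continuity of $\nu_X$ is in place, expressing $\partial\{\psi>0\}$ as a Lipschitz graph near $X_0$ is routine.
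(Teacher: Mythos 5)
Your outline follows the same strategy as the paper: first pass from the rate estimate \eqref{Formula: R(21(3))} to a two-sided cone confinement at $X_{0}$, then control the variation of the blow-up direction $\nu_{X}$ at nearby free boundary points, and finally read off the graph property and the Lipschitz bound on the slope from the cone picture. The one place you genuinely diverge is the step that quantifies $|\nu_{X_{1}}-\nu_{X_{0}}|$. You propose to work at the physical scale $r\simeq |X_{1}-X_{0}|$ and compare the two truncated half-plane profiles on a sub-region of $B_{r}(X_{0})\cap B_{r}(X_{1})$ where both are strictly positive, reading the gradient off a sup-norm bound of order $r^{1+\gamma}$; you correctly flag that producing such a sub-region of measure $\sim r^{2}$ is the delicate point. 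The paper sidesteps this entirely by choosing the intermediate scale $r=|X_{1}-X_{0}|^{1-\alpha}$ with $\alpha=\gamma/(1+\gamma)$: at that scale $\|\psi_{X_{0},r}-\psi_{X_{1},r}\|_{L^{\infty}(B_{1})}\le L|X_{1}-X_{0}|^{\alpha}$ by Lipschitz continuity of $\psi$, and $\|\psi_{X_{i}}-\psi_{X_{i},r}\|_{L^{\infty}(B_{1})}\le C_{1}r^{\gamma}=C_{1}|X_{1}-X_{0}|^{\alpha}$, so the triangle inequality compares the two \emph{global} blow-up limits (explicit half-plane functions sharing the apex $0$) directly on all of $B_{1}$, and the normal-vector estimate follows from an elementary computation with no positivity-overlap argument. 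Your route is viable, and would even give the slightly better exponent $\gamma$ in place of $\alpha$, but at the cost of the measure-of-overlap lemma you would still need to prove; the paper's choice of scale avoids that cost. Note also that both arguments implicitly require the constants $C_{1},\gamma$ from Lemma \ref{Lemma: Uniqueness of blow-up limits} to be uniform in a neighborhood of $X_{0}$.
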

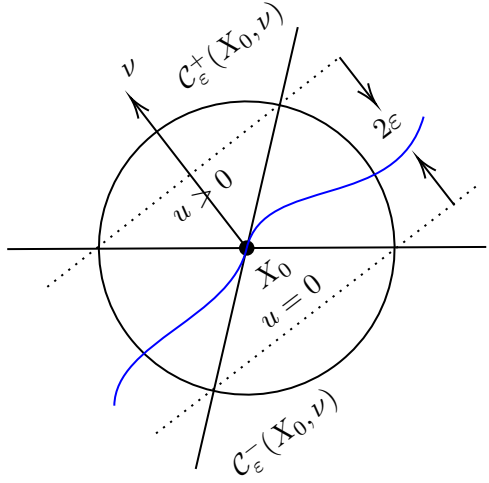
\begin{figure}[ht]
	\tikzset{every picture/.style={line width=0.75pt}} %set default line width to 0.75pt        
	
	\begin{tikzpicture}[x=0.75pt,y=0.75pt,yscale=-0.9,xscale=0.9]
		%uncomment if require: \path (0,300); %set diagram left start at 0, and has height of 300
		
		%Shape: Circle [id:dp555575287847377] 
		\draw  [line width=0.75]  (207.35,176.44) .. controls (182.49,144.18) and (188.5,97.87) .. (220.76,73.01) .. controls (253.03,48.15) and (299.34,54.16) .. (324.2,86.43) .. controls (349.05,118.69) and (343.05,165) .. (310.78,189.86) .. controls (278.51,214.72) and (232.21,208.71) .. (207.35,176.44) -- cycle ;
		%Straight Lines [id:da18683677718860747] 
		\draw [line width=0.75]    (146.27,131.98) -- (385.27,130.89) ;
		%Straight Lines [id:da201195904256378] 
		\draw [line width=0.75]    (291.24,20.3) -- (240.31,242.57) ;
		%Straight Lines [id:da9761347648289005] 
		\draw [line width=0.75]  [dash pattern={on 0.84pt off 2.51pt}]  (166.27,150.97) -- (312.09,35.48) ;
		%Straight Lines [id:da6053314584223788] 
		\draw [line width=0.75]  [dash pattern={on 0.84pt off 2.51pt}]  (220.52,224.68) -- (380.73,100) ;
		%Straight Lines [id:da40090489653348116] 
		\draw [line width=0.75]    (265.77,131.43) ;
		\draw [shift={(265.77,131.43)}, rotate = 0] [color={rgb, 255:red, 0; green, 0; blue, 0 }  ][fill={rgb, 255:red, 0; green, 0; blue, 0 }  ][line width=0.75]      (0, 0) circle [x radius= 3.35, y radius= 3.35]   ;
		%Straight Lines [id:da1526971884338988] 
		\draw [line width=0.75]    (369.45,109.94) -- (352.06,87.37) ;
		\draw [shift={(350.84,85.78)}, rotate = 52.39] [color={rgb, 255:red, 0; green, 0; blue, 0 }  ][line width=0.75]    (10.93,-3.29) .. controls (6.95,-1.4) and (3.31,-0.3) .. (0,0) .. controls (3.31,0.3) and (6.95,1.4) .. (10.93,3.29)   ;
		%Straight Lines [id:da448525571912221] 
		\draw [line width=0.75]    (312.09,35.48) -- (329.17,57.66) ;
		\draw [shift={(330.39,59.24)}, rotate = 232.39] [color={rgb, 255:red, 0; green, 0; blue, 0 }  ][line width=0.75]    (10.93,-3.29) .. controls (6.95,-1.4) and (3.31,-0.3) .. (0,0) .. controls (3.31,0.3) and (6.95,1.4) .. (10.93,3.29)   ;
		%Straight Lines [id:da19380979442227808] 
		\draw [line width=0.75]    (265.77,131.43) -- (208.9,57.22) ;
		\draw [shift={(207.69,55.63)}, rotate = 52.54] [color={rgb, 255:red, 0; green, 0; blue, 0 }  ][line width=0.75]    (10.93,-3.29) .. controls (6.95,-1.4) and (3.31,-0.3) .. (0,0) .. controls (3.31,0.3) and (6.95,1.4) .. (10.93,3.29)   ;
		%Curve Lines [id:da28922248318357235] 
		\draw [color={rgb, 255:red, 0; green, 0; blue, 255 }  ,draw opacity=1 ][line width=0.75]    (199.69,210.74) .. controls (200.28,178.73) and (257.83,166.59) .. (265.77,131.43) ;
		%Curve Lines [id:da15824250275421736] 
		\draw [color={rgb, 255:red, 0; green, 0; blue, 255 }  ,draw opacity=1 ][line width=0.75]    (265.77,131.43) .. controls (274.69,94.27) and (340.65,113.52) .. (354.03,65.34) ;
		
		% Text Node
		\draw (226.3,42.11) node [anchor=north west][inner sep=0.75pt]  [rotate=-322.39]  {$\mathcal{C}_{\varepsilon }^{+}( X_{0} ,\nu )$};
		% Text Node
		\draw (253.43,237.07) node [anchor=north west][inner sep=0.75pt]  [rotate=-322.39]  {$\mathcal{C}_{\varepsilon }^{-}( X_{0} ,\nu )$};
		% Text Node
		\draw (265.43,144.51) node [anchor=north west][inner sep=0.75pt]  [rotate=-322.39]  {$X_{0}$};
		% Text Node
		\draw (325.46,69.23) node [anchor=north west][inner sep=0.75pt]  [rotate=-322.39]  {$2\varepsilon $};
		% Text Node
		\draw (224.4,111.74) node [anchor=north west][inner sep=0.75pt]  [rotate=-322.39]  {$u >0$};
		% Text Node
		\draw (268.49,164.87) node [anchor=north west][inner sep=0.75pt]  [rotate=-322.39]  {$u=0$};
		% Text Node
		\draw (200.48,40.54) node [anchor=north west][inner sep=0.75pt]  [rotate=-322.39]  {$\nu $};
	\end{tikzpicture}
    \caption{$\mathcal{C}_{\varepsilon}^{+}(X_{0},\nu_{X_{0}})$ and $\mathcal{C}_{\varepsilon}^{-}(X_{0},\nu_{X_{0}})$}
    \label{Fig: cones}
\end{figure}
\begin{proof}
	The proof is based on the Lipschitz regularity of $\psi$ at $X_{0}$, the non-degenerate of $\psi$ near $X_{0}$ and the uniqueness of blow-up limit (Lemma \ref{Lemma: Uniqueness of blow-up limits}), and the idea is mainly borrowed from Proposition 8.6 in \cite{V2019}. We divide the proof into three steps.
	
	\textbf{Step I}. In this step, we show that for every $\varepsilon>0$, there exists $R>0$ so that
	\begin{align}\label{Formula: R(27)}
		\begin{cases}
			\psi>0&\quad\text{ in }\mathcal{C}_{\varepsilon}^{+}(X_{0},\nu_{X_{0}})\cap B_{R}(X_{0}),\\
			\psi=0&\quad\text{ in }\mathcal{C}_{\varepsilon}^{-}(X_{0},\nu_{X_{0}})\cap B_{R}(X_{0}),
		\end{cases}
	\end{align}
    where $\mathcal{C}_{\varepsilon}^{\pm}(X_{0},\nu)$ are the cones (please see Fig \ref{Fig: cones}) defined by
    \begin{align*}
    	\mathcal{C}_{\varepsilon}^{\pm}(X_{0},\nu)=\{X=(x,y)\in\mathbb{R}^{2}\colon\pm\nu\cdot(X-X_{0})>\varepsilon|X-X_{0}|\},\quad\varepsilon>0.
    \end{align*}
    To this end, let $\gamma$ and $C_{1}$ be defined as in Lemma \ref{Lemma: Uniqueness of blow-up limits}, then it follows from \eqref{Formula: R(21(3))} and \eqref{Formula: R(21(4))} that
    \begin{align*}
    	\psi_{X_{0},r}\geqslant(X\cdot\nu_{X_{0}}-C_{1}r^{\gamma})^{+}\quad\text{ in }B_{1}(0).
    \end{align*}
    Choose $C\geqslant C_{1}$ be any positive constant, we have
    \begin{align}\label{Formula: R(28)}
    	\{X\cdot\nu_{X_{0}}>Cr^{\gamma}\colon X\in B_{1}(0)\}\subset\{\psi_{X_{0},r}>0\}\cap B_{1}(0).
    \end{align}
    On the other hand, we claim
    \begin{align}\label{Formula: R(29)}
    	\{\psi_{X_{0},r}>0\}\cap\{X\cdot\nu_{X_{0}}<-Cr^{\gamma}\}=\varnothing.
    \end{align}
    We suppose that there is a point $X'\in B_{1}(0)$ so that
    \begin{align*}
    	\psi_{X_{0},r}(X')>0\qquad\text{ and }\qquad X'\cdot\nu_{X_{0}}<-Cr^{\gamma}.
    \end{align*}
    This implies that $\tfrac{X'}{2}\in B_{1/2}$ satisfies
    \begin{align*}
    	\psi_{X_{0},2r}(\tfrac{X'}{2})>0\qquad\text{ and }\qquad(\tfrac{X'}{2})\cdot\nu_{X_{0}}<-\frac{1}{2}Cr^{\gamma}.
    \end{align*}
    Since $\psi_{X_{0},2r}$ is also a local minimizer of $J$, we have that
    \begin{align*}
    	\left\Vert\psi_{X_{0},2r}\right\Vert_{L^{\infty}(B_{\rho}(X'/2))}\geqslant\eta\rho\qquad\text{ where }\qquad\rho:=\frac{1}{2}Cr^{\gamma}.
    \end{align*}
    Notice that $\psi_{0}=0$ on $B_{\rho}(\tfrac{X'}{2})$. On the other hand, choosing $r_{0}$ so that
    \begin{align*}
    	Cr_{0}^{\gamma}\leqslant1,
    \end{align*}
    one has $\rho\leqslant\tfrac{1}{2}$ and thus $B_{\rho}(\tfrac{X'}{2})\subset B_{1}$ and
    \begin{align*}
    	\frac{\eta}{2}Cr^{\gamma}\leqslant\|\psi_{X_{0},2r}-\psi_{0}\|_{L^{\infty}(B_{1})}\leqslant C_{1}(2r)^{\gamma},
    \end{align*}
    which yields a contradiction, provided that $C\geqslant\tfrac{2}{\eta}C_{1}$. Therefore the estimates \eqref{Formula: R(28)} and \eqref{Formula: R(29)} imply \eqref{Formula: R(27)} by taking $R>0$ so that $CR^{\gamma}\leqslant\varepsilon$.
    
    \textbf{Step II}. In this step, we show that the free boundary near the origin can be expressed by the graph of a function $g$. Precisely, there exists $\delta>0$ such that
    \begin{align*}
    	(-\delta,\delta)\times(-\delta,\delta)\cap\{\psi>0\}=\{(x,t)\in(-\delta,\delta)\times(-\delta,\delta)\colon t=g(x)\}.
    \end{align*}
    Without loss of generality, we assume that $\nu_{X_{0}}=e_{2}$ and thus $\psi_{0}=x_{0}\sqrt{-y_{0}}y^{+}$ is the unique blow-up limit of $\psi$ at $X_{0}$. Thus $\{\psi_{0}>0\}=\{y>0\}$. Let now $\varepsilon\in(0,1)$ and $R$ be as in \eqref{Formula: R(27)}, it follows that
    \begin{align*}
    	\begin{cases}
    		\begin{alignedat}{2}
    			\psi&>0\quad&&\text{ in }\mathcal{C}_{\varepsilon}^{+}(X_{0},e_{d})\cap B_{R}(X_{0}),\\
    			\psi&=0\quad&&\text{ in }\mathcal{C}_{\varepsilon}^{-}(X_{0},e_{d})\cap B_{R}(X_{0}),
    		\end{alignedat}
    	\end{cases}
    \end{align*}
    Let $x_{1}\in(-\delta,\delta)$ where $\delta\leqslant R\sqrt{1-\varepsilon^{2}}$, it follows that the segment $\{(x_{1},t)\colon t>\varepsilon R\}$ is contained in $\{\psi>0\}$ and the segment $\{(x_{1},t)\colon t<-\varepsilon R\}$ is contained in $\{\psi=0\}$. Therefore, $g(x_{1}):=\inf\{t\colon\psi(x_{1},t)>0\}$ is well defined and $X_{1}:=(x_{1},g(x_{1}))\in\partial\{\psi>0\}\cap B_{R}(X_{0})$. Moreover, one has
    \begin{align*}
    	-\varepsilon|x_{1}|\leqslant g(x_{1})\leqslant\varepsilon|x_{1}|,
    \end{align*}
    which implies that $|X_{1}|\leqslant|x_{1}|\sqrt{1+\varepsilon^{2}}\leqslant\sqrt{2}\delta$. In order to finish the claim, it suffices to show that if $\delta>0$ is small enough, then
    \begin{align}\label{Formula: R(30)}
    	\begin{cases}
    		\begin{alignedat}{2}
    			\psi&>0\quad&&\text{ in }\mathcal{C}_{2\varepsilon}^{+}(X_{1},e_{d})\cap B_{R}(X_{1}),\\
    		    \psi&=0\quad&&\text{ in }\mathcal{C}_{2\varepsilon}^{-}(X_{1},e_{d})\cap B_{R}(X_{1}).
    		\end{alignedat}
    	\end{cases}
    \end{align}
    Indeed, as a direct consequence of \eqref{Formula: R(30)}, one can easily deduce that
    \begin{align*}
    	&((-\delta,\delta)\times(-\delta,\delta))\cap\{\psi>0\}=\{(x',t)\in(-\delta,\delta)\times(-\delta,\delta)\colon t>g(x')\},\\
    	&((-\delta,\delta)\times(-\delta,\delta))\cap\partial\{\psi>0\}=\{(x',t)\in(-\delta,\delta)\times(-\delta,\delta)\colon t\leqslant g(x')\}.
    \end{align*}
    We now prove \eqref{Formula: R(30)} to conclude this step. Applying \eqref{Formula: R(27)} for the free boundary point $X_{1}$, we have
    \begin{align*}
    	\begin{cases}
    		\begin{alignedat}{2}
    			\psi&>0\quad&&\text{ in }\mathcal{C}_{\varepsilon}^{+}(X_{1},\nu_{X_{1}})\cap B_{R}(X_{1})\\
    			\psi&=0\quad&&\text{ in }\mathcal{C}_{\varepsilon}^{-}(X_{1},\nu_{X_{1}})\cap B_{R}(X_{1}).
    		\end{alignedat}
    	\end{cases}
    \end{align*}
    In view of \eqref{Formula: R(30)}, we only need to show $\mathcal{C}_{2\varepsilon}^{\pm}(X_{1},e_{d})\subset\mathcal{C}_{\varepsilon}^{\pm}(X_{1},\nu_{X_{1}})$. Since $\psi$ is a Lipschitz function in $B_{1}$ and $0$, $X_{1}\in\partial\{\psi>0\}\cap B_{R}(X_{0})$, let $\alpha:=\frac{\gamma}{1+\gamma}$ and $r:=|X_{1}-X_{0}|^{1-\alpha}$,
    \begin{align*}
    	|\psi_{X_{0},r}-\psi_{X_{1},r}|=\frac{1}{r}|\psi(X_{0}+rX)-\psi(X_{1}+rX)|\leqslant\frac{L}{r}|X_{1}-X_{0}|:=L|X_{1}-X_{0}|^{\alpha},
    \end{align*}
    where $L$ is the Lipschitz constant of $\psi$ in $B_{1}(0)$. This implies $\|\psi_{X_{0},r}-\psi_{X_{1},r}\|_{L^{\infty}(B_{1})}\leqslant L|X_{1}-X_{0}|^{\alpha}$. On the other hand, if we denote $\psi_{X_{0}}$ and $\psi_{X_{1}}$ the unique blow-up limit of $\psi$ at $X_{0}$ and $X_{1}$ respectively. It follows from \eqref{Formula: R(21(3))} that
    \begin{align*}
    	\|\psi_{X_{0}}-\psi_{X_{0},r}\|_{L^{\infty}(B_{1})}\leqslant C_{1}r^{\gamma}\quad\text{ and }\quad\|\psi_{X_{1}}-\psi_{X_{1},r}\|_{L^{\infty}(B_{1})}\leqslant C_{1}r^{\gamma}.
    \end{align*}
    Choose $R$ such that $(2R)^{1-\alpha}\leqslant\tfrac{1}{2}$, we have by $r^{\gamma}=|X_{1}-X_{0}|^{\alpha}$ that
    \begin{align*}
    	\|\psi_{X_{0}}-\psi_{0}\|_{L^{\infty}(B_{1})}\leqslant(L+2C_{1})|X_{1}-X_{0}|^{\alpha}.
    \end{align*}
    This implies $|\nu_{X_{0}}-e_{2}|\leqslant C|X_{1}-X_{0}|^{\alpha}$ by a direct calculation. Let $X\in\mathcal{C}_{2\varepsilon}^{\pm}(X_{1},e_{2})$. Then,
    \begin{align*}
    	\nu_{X_{1}}\cdot(X-X_{0})&=e_{2}\cdot(X-X_{1})+(\nu_{X_{1}}-e_{2})\cdot(X-X_{1})\\
    	&>2\varepsilon|X-X_{1}|-C(\sqrt{2}\delta)^{\alpha}|X-X_{1}|>\varepsilon|X-X_{1}|,
    \end{align*}
    where we choose $\delta$ such that $C(\sqrt{2}\delta)^{\alpha}\leqslant\varepsilon$. This finishes $\mathcal{C}_{2\varepsilon}^{\pm}(X_{1},e_{2})\subset\mathcal{C}_{\varepsilon}^{\pm}(X_{1},\nu_{X_{1}})$ and thus \eqref{Formula: R(30)} is concluded.
    
    \textbf{Step III.} We now prove that $g\colon(-\delta,\delta)\to\mathbb{R}$ is Lipschitz continuous on $(-\delta,\delta)$. Let $x_{1}$, $x_{2}\in(-\delta,\delta)$ where $X_{1}=(x_{1},g(x_{1}))$ and $X_{2}=(x_{2},g(x_{2}))$. Since $X_{1}\notin\mathcal{C}_{2\varepsilon}^{+}(X_{2},e_{2})$, we have that
    \begin{align*}
    	g(x_{1})-g(x_{2})=(x_{1}-x_{2})\cdot e_{2}\leqslant2\varepsilon|x_{1}-x_{2}|+2\varepsilon|g(x_{1})-g(x_{2})|.
    \end{align*}
    A same inequality can be derived since $X_{2}\notin\mathcal{C}_{2\varepsilon}^{+}(X_{1},e_{2})$. Two estimates give that  $(1-2\varepsilon)|g(x_{1})-g(x_{2})|\leqslant2\varepsilon|x_{1}-x_{2}|$. Choosing $\varepsilon\leqslant\frac{1}{4}$, we obtain
    \begin{align*}
    	|g(x_{1})-g(x_{2})|\leqslant4\varepsilon|x_{1}-x_{2}|,
    \end{align*}
    which concludes the proof of the Lipschitz continuity of $g$.
\end{proof}
We now provide the proof that near all non-degenerate points, the free boundary is $C^{1,\gamma}$ smooth for some $\gamma\in(0,1)$.
\begin{proof}[Proof of the second conclusion in Theorem \ref{Theorem: main(1)}]
	Let $X_{0}=(x_{0},y_{0})\in N_{\psi}$ and let us consider the blow-up sequence
	\begin{align*}
		\psi_{X_{0},k}(X)=\frac{\psi(X_{0}+\delta_{k}X)}{\delta_{k}},
	\end{align*}
    with $\delta_{k}\to0$ as $k\to\infty$. Notice that for each $k$, $\psi_{X_{0},k}$ is a solution to
    \begin{align}\label{Formula: R(32)}
    	\begin{cases}
    		\begin{alignedat}{2}
    			\Delta\psi_{X_{0},k}-\frac{\delta_{k}}{x_{0}+\delta_{k}x}\pd{\psi_{X_{0},k}}{x}&=\delta_{k}\tilde{f}(X_{0}+\delta_{k}X)\quad&&\text{ in }B_{1}(0)\cap\{\psi_{X_{0},k}>0\},\\
    			|\nabla\psi_{X_{0},k}|&=(x_{0}+\delta_{k}x)\sqrt{(-y_{0}-\delta_{k}y)}\quad&&\text{ on }B_{1}(0)\cap\partial\{\psi_{X_{0},r}>0\}.
    		\end{alignedat}
    	\end{cases}
    \end{align}
    Since $\psi$ is a local minimizer of $J$ in $B_{R_{0}}(X_{0})$, by Proposition \ref{Proposition: minimizers are viscosity solutions}, $\psi_{X_{0},k}$ satisfied \eqref{Formula: R(32)} in the viscosity sense. Let $\bar{\varepsilon}$ be the universal constant in Lemma \ref{Lemma: Flatness implies C1gamma}. For $k$ large enough, the assumption \eqref{Formula: R(2(2))} is satisfied for $\bar{\varepsilon}$. In fact, in $B_{1}(0)$, we have 
    \begin{align*}
    	&|\delta_{k}\tilde{f}(X_{0}+\delta_{k}X)|\leqslant\delta_{k}\|\tilde{f}\|_{L^{\infty}}\leqslant\bar{\varepsilon},\\
    	&\left|\frac{\delta_{k}}{x_{0}+\delta_{k}x}\right|\leqslant\delta_{k}\left|\frac{1}{x_{0}+\delta_{k}x}\right|\leqslant\bar{\varepsilon},
    \end{align*}
    and
    \begin{align*}
    	|(x_{0}+\delta_{k}x)\sqrt{(-y_{0}-\delta_{k}y)}-x_{0}\sqrt{-y_{0}}|\leqslant\bar{\varepsilon}.
    \end{align*}
    Thus, using the nondegeneracy (Lemma \ref{Lemma: Non-degeneracy of minimizers}) and uniform Lipschitz continuity (Proposition \ref{Proposition: Lipschitz regularity for local minimizers}) of $\psi_{X_{0},k}$, we have from Lemma \ref{Lemma: Structure of the blow-up limits} that up to extract a further subsequence
    \begin{enumerate}
    	\item $\psi_{X_{0},k}\to\psi_{0}$ locally and uniformly in $\mathbb{R}^{2}$,
    	\item $\partial\{\psi_{X_{0},k}>0\}\to\partial\{\psi_{0}>0\}$ locally in the Hausdorff distance in $\mathbb{R}^{2}$,
    \end{enumerate}
    for a globally defined function $\psi_{0}\colon\mathbb{R}^{2}\to\mathbb{R}$. Moreover, by the first property of Corollary \ref{Corollary: Properties of blow-up limits}, $\psi_{0}$ is a global solution to the free boundary problem
    \begin{align}\label{Formula: R(31)}
    	\begin{cases}
    		\Delta\psi_{0}=0\quad&\text{ in }\{\psi_{0}>0\}\cap B_{1}(0),\\
    		|\nabla\psi_{0}|=x_{0}\sqrt{-y_{0}}\quad&\text{ on }\{\psi_{0}>0\}\cap B_{1}(0).
    	\end{cases}
    \end{align}
    It follows from Corollary \ref{Corollary: free boundary Lipschitz} that $\partial\{\psi>0\}$ is a Lipschitz graph in a small neighborhood of $X_{0}$. We read from \eqref{Formula: R(31)} that $\partial\{\psi_{0}>0\}$ is Lipschitz continuous. Thus, it follows from \cite{C1987} that up to a rotation, $\psi_{0}=x_{0}\sqrt{-y_{0}}y^{+}$. Consequently, we conclude that for all $k$ large enough, $\psi_{X_{0},k}$ is $\bar{\varepsilon}$-flat in $B_{1}(0)$, i.e., 
    \begin{align*}
    	x_{0}\sqrt{-y_{0}}(y-\bar{\varepsilon})^{+}\leqslant\psi_{X_{0},k}(X)\leqslant x_{0}\sqrt{-y_{0}}(y+\bar{\varepsilon})^{+}\quad\text{ in }B_{1}(0).
    \end{align*}
    Thus, $\psi_{X_{0},k}$ satisfies the assumption required by Proposition \ref{Proposition: Flatness implies C1alpha} and we conclude that $\partial\{\psi_{X_{0},k}>0\}$ and hence $\partial\{\psi>0\}$ is $C^{1,\gamma}$ in a small neighborhood of $X_{0}$.
\end{proof}
\section{Appendix A}
We exhibit the proof of Proposition \ref{Proposition: properties for local minimizers}.
\begin{proof}
	(1). Let $\phi\in W^{1,2}( B_{R_{0}}(X_{0}))$ be a function which satisfies
	\begin{align*}
		\operatorname{div}\left(\frac{1}{x}\nabla\phi\right)+xf(\phi)=0\quad\text{ in }B_{r}(X_{0})\qquad\phi=\psi\quad\text{ on }\partial B_{r}(X_{0}).
	\end{align*} 
	Note that the existence of $\phi$ can be achieved by minimizing the following functional
	\begin{align*}
		\mathcal{I}(\phi):=\int_{B_{R_{0}}(X_{0})}\frac{1}{x}|\nabla\phi|^{2}-2xF(\phi)dX,
	\end{align*}
	among all functions $\phi\in W^{1,2}(B_{R_{0}}(X_{0}))$ with $\phi-\psi\in W_{0}^{1,2}(B_{R_{0}}(X_{0}))$. Since $\psi$ is a local minimizer of $J$ in $B_{r}(X_{0})$, one has
	\begin{align*}
		0&\leqslant J(\phi)-J(\psi)\\
		&=\int_{B_{r}(X_{0})}-\frac{1}{x}|\nabla(\phi-\psi)|^{2}+\frac{2}{x}\nabla\phi\cdot\nabla(\phi-\psi)-2x(F(\phi)-F(\psi))\:dX\\
		&\quad+\int_{B_{r}(X_{0})}-xy\left(I_{\{\phi>0\}}-I_{\{\psi>0\}}\right)\:dX\\
		&\leqslant\int_{B_{r}(X_{0})}-\frac{1}{x}|\nabla(\phi-\psi)|^{2}+\frac{2}{x}\nabla\phi\cdot\nabla(\phi-\psi)-2xF'(\phi)(\phi-\psi)\:dX\\
		&\quad+\int_{B_{r}(X_{0})}-xy\left(I_{\{\phi>0\}}-I_{\{\psi>0\}}\right)\:dX\\
		&\leqslant\int_{B_{r}(X_{0})}-\frac{1}{x}|\nabla(\phi-\psi)|^{2}\:dX-\frac{9}{4}x_{0}y_{0}\int_{B_{R_{0}}(X_{0})}I_{\{\phi>0\}}\:dX.
	\end{align*}
	Consequently,
	\begin{align*}
		\int_{B_{r}(X_{0})}|\nabla(\phi-\psi)|^{2}\:dX\leqslant C\int_{B_{r}(X_{0})}I_{\{\phi>0\}}\:dX\leqslant CR_{0}^{2},
	\end{align*}
	where $C$ depends only on $X_{0}$. Then the H\"{o}lder regularity follows from a similar argument as in \cite[Theorem 2.1]{ACF1984TMS}. 
	
	(2). For every $\varepsilon\in(0,1)$ we define $\phi:=\psi-\varepsilon\min\{\psi,0\}$. It follows from $\psi\geqslant0$ on $\partial B_{R_{0}}(X_{0})$ that $\phi=\psi$ on $\partial B_{R_{0}}(X_{0})$. Since $\psi$ is a local minimizer of $J$ in $B_{R_{0}}(X_{0})$, we have
	\begin{align*}
		0&\leqslant J(\phi)-J(\psi)\\
		&=\int_{B_{R_{0}}(X_{0})}\frac{[(1-\varepsilon)^{2}-1]}{x}|\nabla\min\{\psi,0\}|^{2}-2x(F((1-\varepsilon)\psi)-F(\psi))\:dX\\
		&\leqslant\int_{B_{R_{0}}(X_{0})}\frac{-2\varepsilon}{x}|\nabla\min\{\psi,0\}|^{2}-2xF'((1-\varepsilon)\min\{\psi,0\})(-\varepsilon\min\{\psi,0\})\:dX.
	\end{align*}
	Therefore,
	\begin{align*}
		\int_{B_{R_{0}}(X_{0})}|\nabla\min\{\psi,0\}|^{2}\:dX\leqslant C_{0}\int_{B_{R_{0}}(X_{0})}F'((1-\varepsilon)\min\{\psi,0\})\min\{\psi,0\}\:dX,
	\end{align*}
	where $C_{0}$ is a constant depending only on $X_{0}$. Since $F'(z)\geqslant0$ for any $z\leqslant0$, we have that
	\begin{align*}
		\int_{B_{R_{0}}(X_{0})}|\nabla\min\{\psi,0\}|^{2}\:dX\leqslant0,
	\end{align*}
	this implies that $\psi(x,y)\geqslant0$ in $B_{R_{0}}(X_{0})$.
	
	(3). Let $\xi\in C_{0}^{\infty}(B_{R_{0}}(X_{0}))$ be any non-negative test function, it follows from $J(\psi)\leqslant J(\psi-\varepsilon\xi)$ that
	\begin{align*}
		0\leqslant\lim_{\varepsilon\to0}\frac{J(\psi-\varepsilon\xi)-J(\psi)}{\varepsilon}=-\int_{B_{R_{0}}(X_{0})}\frac{1}{x}\nabla\psi\cdot\nabla\xi-xF'(\psi)\xi\:dX,
	\end{align*}
	as desired.
	
	(4). It follows from the continuity of $\psi$ that $B_{R_{0}}(X_{0})\cap\{\psi>0\}$ is open. For any $\xi\in C_{0}^{\infty}(B_{R_{0}}(X_{0})\cap\{\psi>0\})$, it is easy to check that $\psi-\varepsilon\xi=\psi$ on $\partial B_{R_{0}}(X_{0})$. Since $J(\psi)\leqslant J(\psi-\varepsilon\xi)$, we have
	\begin{align*}
		0\leqslant\int_{B_{R_{0}}(X_{0})}\frac{-2\varepsilon}{x}\nabla\psi\cdot\nabla\xi-2xF'(\psi-\varepsilon\xi)(-\varepsilon\xi)\:dX+o(\varepsilon),
	\end{align*}
	which implies that
	\begin{align*}
		-\varepsilon\int_{B_{R_{0}}(X_{0})}\frac{1}{x}\nabla\psi\cdot\nabla\xi-xF'(\psi-\varepsilon\xi)\xi\:dX+o(\varepsilon)\geqslant0,
	\end{align*}
	and therefore,
	\begin{align*}
		\int_{B_{R_{0}}(X_{0})}\frac{1}{x}\nabla\psi\cdot\nabla\xi-xF'(\psi-\varepsilon\xi)\xi\:dX=0\quad\text{ for any }\varepsilon.
	\end{align*}
	Passing to the limit as $\varepsilon\to0$ gives the desired result.
\end{proof}
\section{Appendix B}
In this appendix, we display that for the free boundary problem \eqref{Formula: R(2)}, we can also prove that Lipschitz free boundaries are $C^{1,\gamma}$. Namely,
\begin{proposition}[Lipschitz implies $C^{1,\gamma}$]
	Let $u$ be a Lipschitz viscosity solution to \eqref{Formula: R(2)} in $B_{1}(0)$. Assume that $0\in\varGamma(u)$ and $Q(0)>0$. If $\varGamma(u)$ is a Lipschitz graph in a small neighborhood of $0$, then $\varGamma(u)$ is $C^{1,\gamma}$ in a small neighborhood of $B_{\rho}$ of $0$ (the size of which depending on a universal constant).
\end{proposition}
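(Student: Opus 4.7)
The plan is to reduce the Lipschitz hypothesis to the flatness hypothesis via a blow-up analysis, and then invoke Proposition \ref{Proposition: Flatness implies C1alpha}. After an affine change of coordinates at $0$ we may assume $a_{ij}(0)=\delta_{ij}$. Normalizing $u$ by $Q(0)$ we further assume $Q(0)=1$. Consider the blow-up sequence
\begin{align*}
    u_{k}(X):=\frac{u(\delta_{k}X)}{\delta_{k}},\qquad\delta_{k}\to0^{+},
\end{align*}
together with the rescaled data $a_{ij}^{k}(X):=a_{ij}(\delta_{k}X)$, $\mathbf{b}^{k}(X):=\delta_{k}\mathbf{b}(\delta_{k}X)$, $\tilde{f}^{k}(X):=\delta_{k}\tilde{f}(\delta_{k}X)$, and $Q_{k}(X):=Q(\delta_{k}X)$. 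A direct computation shows that $u_{k}$ is a viscosity solution to \eqref{Formula: R(2)} in $B_{1}(0)$ with these coefficients, $0\in\varGamma(u_{k})$, and
\begin{align*}
    \|\mathbf{b}^{k}\|_{L^{\infty}(B_{1})}+\|\tilde{f}^{k}\|_{L^{\infty}(B_{1})}+[a_{ij}^{k}-\delta_{ij}]_{C^{0,\beta}(B_{1})}+[Q_{k}-1]_{C^{0,\beta}(B_{1})}\longrightarrow0
\end{align*}
as $k\to\infty$, by the $C^{0,\beta}$-continuity of $a_{ij}$, $Q$ and the boundedness of $\mathbf{b}$, $\tilde{f}$.

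Since $u$ is Lipschitz, the family $\{u_{k}\}$ is uniformly Lipschitz on $B_{1}$; thus, up to a subsequence, $u_{k}\to u_{0}$ locally uniformly in $\mathbb{R}^{2}$ for some globally Lipschitz function $u_{0}\geqslant0$. The Lipschitz graph assumption on $\varGamma(u)$ near $0$ gives uniform exterior/interior cone conditions for $\{u_{k}>0\}$ at $0$, and hence, by standard barrier arguments, the non-degeneracy of $u_{k}$ at $0$ on a fixed scale. Combining this with the uniform convergence, one deduces that $\varGamma(u_{k})\to\varGamma(u_{0})$ locally in the Hausdorff distance and that $\varGamma(u_{0})\cap B_{1}(0)$ is a Lipschitz cone with the same Lipschitz constant as $\varGamma(u)$ near $0$. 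Passing the viscosity relations to the limit (as in Step II of Proposition \ref{Proposition: improvement of flatness}), the blow-up $u_{0}$ is a global viscosity solution to the constant-coefficient one-phase problem
\begin{align*}
    \Delta u_{0}=0\quad\text{ in }\{u_{0}>0\},\qquad|\nabla u_{0}|=1\quad\text{ on }\partial\{u_{0}>0\}.
\end{align*}

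By the classical Caffarelli classification of global Lipschitz solutions of the one-phase Bernoulli problem (\cite{C1987}), any such $u_{0}$ with a Lipschitz free boundary cone through the origin must be a half-plane solution: $u_{0}(X)=(X\cdot\nu)^{+}$ for some unit vector $\nu\in\mathbb{R}^{2}$. After a rotation we may assume $\nu=e_{2}$. The uniform convergence $u_{k}\to u_{0}$ then implies that, for every $\bar{\varepsilon}>0$, there exists $k_{0}=k_{0}(\bar{\varepsilon})$ such that
\begin{align*}
    (y-\bar{\varepsilon})^{+}\leqslant u_{k}(X)\leqslant(y+\bar{\varepsilon})^{+}\quad\text{ in }B_{1}(0)\text{ for all }k\geqslant k_{0}.
\end{align*}
Choose $\bar{\varepsilon}$ to be the universal constant furnished by Proposition \ref{Proposition: Flatness implies C1alpha} (and, simultaneously, so that the smallness bounds on $a_{ij}^{k}$, $\mathbf{b}^{k}$, $\tilde{f}^{k}$, $Q_{k}-1$ are also satisfied for $k\geqslant k_{0}$). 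Then Proposition \ref{Proposition: Flatness implies C1alpha} applies to $u_{k_{0}}$ and yields that $\varGamma(u_{k_{0}})$ is $C^{1,\gamma}$ in $B_{1/2}(0)$. Undoing the rescaling, $\varGamma(u)$ is $C^{1,\gamma}$ in $B_{\delta_{k_{0}}/2}(0)$, which is a small neighborhood of $0$.

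The only delicate point in this outline is the classification step: one needs the global Lipschitz blow-up $u_{0}$ to be exactly a half-plane solution. This is precisely the content of Caffarelli's classification theorem in two dimensions (extended to higher dimensions up to the critical dimension), and it is the essential structural input that converts the Lipschitz hypothesis into the flatness hypothesis required by Proposition \ref{Proposition: Flatness implies C1alpha}. Everything else is a standard compactness and passage-to-the-limit argument, analogous in spirit to the final proof of the second conclusion of Theorem \ref{Theorem: main(1)} given at the end of Section \ref{Section: regularity of the non-degenerate points}.
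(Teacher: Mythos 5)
Your proposal follows essentially the same route as the paper's Appendix B: blow up at the origin, check that the rescaled data $\delta_k \tilde f$, $\delta_k \mathbf b$, $a_{ij}(\delta_k\cdot)-\delta_{ij}$, $Q(\delta_k\cdot)-1$ become as small as needed, pass to a global Lipschitz blow-up limit solving the constant-coefficient Bernoulli problem, classify it as a half-plane solution using the Lipschitz graph hypothesis (the paper cites \cite{AC1981} here, \cite{C1987} at the analogous step in the proof of Theorem \ref{Theorem: main(1)}; both are standard references for this classification), and finally invoke Proposition \ref{Proposition: Flatness implies C1alpha}. You are, if anything, a bit more careful than the paper: you make the normalization $a_{ij}(0)=\delta_{ij}$, $Q(0)=1$ explicit (the paper uses it silently when writing $|a_{ij}(\delta_k X)-\delta_{ij}|=|a_{ij}(\delta_k X)-a_{ij}(0)|$), and you flag that non-degeneracy (from the Lipschitz cone condition) is what underlies the Hausdorff convergence $\varGamma(u_k)\to\varGamma(u_0)$ and the passage of the free boundary condition to the limit, steps the paper merely asserts. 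No genuine gap.
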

The proof is similar to the proof of (2) in Theorem \ref{Theorem: main(1)}, we write it for the sake of completeness.
\begin{proof}
	Consider the blow-up sequence
	\begin{align*}
		u_{k}(X):=\frac{u(\delta_{k}X)}{\delta_{k}},
	\end{align*}
    with $\delta_{k}\to0$ as $k\to\infty$. Note that for each $k$, $u_{k}$ is a solution to the problem
    \begin{align*}
    	\begin{cases}
    		\begin{alignedat}{2}
    			\displaystyle\sum_{i,j=1}^{2}a_{ij}(\delta_{k}X)D_{ij}u_{k}+\delta_{k}\mathbf{b}(\delta_{k}X)\nabla u_{k}&=\delta_{k}\tilde{f}(\delta_{k}X)\quad&&\text{ in }B_{1}(0)\cap\{u_{k}>0\},\\
    			|\nabla u_{k}|&=Q(\delta_{k}X)\quad&&\text{ on }B_{1}(0)\cap\partial\{u_{k}>0\}.
    		\end{alignedat}
    	\end{cases}
    \end{align*}
    Thus, the assumption \eqref{Formula: R(2(2))} is satisfied for the universal constant $\bar{\varepsilon}$. Indeed, in $B_{1}(0)$,
    \begin{align*}
    	|\delta_{k}\tilde{f}(\delta_{k}X)|&\leqslant\delta_{k}|\tilde{f}(\delta_{k}X)|\leqslant\delta_{k}\|\tilde{f}\|_{L^{\infty}}\leqslant\bar{\varepsilon},\\
    	|\delta_{k}\mathbf{b}(\delta_{k}X)|&\leqslant\delta_{k}|\mathbf{b}(\delta_{k}X)|\leqslant\delta_{k}\|\mathbf{b}\|_{L^{\infty}}\leqslant\bar{\varepsilon},\\
    	|a_{ij}(\delta_{k}X)-\delta_{ij}|&=|a_{ij}(\delta_{k}X)-a_{ij}(0)|\leqslant[a_{ij}(\delta_{k}X)]_{C^{0,\beta}}\delta_{k}^{\beta}\leqslant\bar{\varepsilon},
    \end{align*}
    and
    \begin{align*}
    	|Q(\delta_{k}X)-1|=|Q(\delta_{k}X)-Q(0)|\leqslant[Q(\delta_{k}X)]_{C^{0,\beta}}\delta_{k}^{\beta}\leqslant\bar{\varepsilon}.
    \end{align*}
    Thus, using the uniform Lipschitz continuity of $u_{k}$'s, we have
    \begin{align*}
    	u_{k}\to u_{0}\quad\text{ uniformly on compact sets},
    \end{align*}
    and
    \begin{align*}
    	\varGamma(u_{k})\to\varGamma(u_{0})\quad\text{ in the Hausdorff distance},
    \end{align*}
    where $u_{0}$ is the global solution to the problem
    \begin{align*}
    	\begin{cases}
    		\Delta u_{0}=0\quad&\text{ in }\varOmega^{+}(u_{0}),\\
    		|\nabla u_{0}|=1\quad&\text{ on }\varGamma(u_{0}).
    	\end{cases}
    \end{align*}
    We see from the above equation that $\varGamma(u_{0})$ is a Lipschitz graph, by assumption that $\varGamma(u)$ is a Lipschitz graph, it follows from \cite{AC1981} that $u_{0}=y^{+}$ is a one-plane solution. Thus, for all $k$ large enough, $u_{k}$ is $\bar{\varepsilon}$-flat in $B_{1}$, that is,
    \begin{align*}
    	(y-\bar{\varepsilon})^{+}\leqslant u_{k}(X)\leqslant(y+\bar{\varepsilon})^{+}\quad\text{ in }B_{1}(0).
    \end{align*}
    Thus, $u_{k}$ satisfies the assumption of Proposition \ref{Proposition: Flatness implies C1alpha}, and our conclusion follows.
\end{proof}
\subsection*{Acknowledgement}
The authors would like to express their gratitude to the anonymous referees for their careful reading and valuable suggestions on the initial version of the paper. We are also thankful to the editor for providing helpful suggestions, which have improved our paper.
\subsection*{Conflicts of interest}
The authors declare that they have no conflict of interest.

\end{document}